\documentclass[a4paper,11pt]{amsart}

\usepackage{amssymb,amsmath,amsthm,mathrsfs,enumerate,graphicx, color}
\usepackage[pdfpagelabels,colorlinks,linkcolor=blue,citecolor=black,urlcolor=blue]{hyperref}
\usepackage{esint}
\usepackage{tikz}
\usetikzlibrary{arrows}
\usepackage{citeref}

\newtheorem{thm}{Theorem}[section]

\newtheorem{cor}[thm]{Corollary}
\newtheorem{lem}[thm]{Lemma}
\newtheorem{prop}[thm]{Proposition}
\newtheorem{defn}[thm]{Definition}
\newtheorem{rem}[thm]{Remark}













\newcommand{\lesi}{\lesssim}

\newcommand{\supp}{\operatorname{supp}}
\newcommand{\f}{\frac}

\newcommand{\vc}{\infty}
\newcommand{\Rn}{\mathbb{R}^n}

\textwidth =160mm \textheight =240mm
\oddsidemargin 0mm
\evensidemargin 0mm
\setlength{\topmargin}{0cm}

\title[Harmonic analysis associated with Laguerre Expansions]{Riesz transforms, Hardy spaces and Campanato spaces associated with  Laguerre expansions}         

\author[T. A. Bui]{The Anh Bui}
\address{School of Mathematical and Physical Sciences, Macquarie University, NSW 2109,
	Australia}
\email{the.bui@mq.edu.au}

\keywords{Laguerre function expansion; Riesz transform; Hardy space, Campanato space; heat kernel}

\begin{document}

\begin{abstract}
Let $\nu\in [-1/2,\infty)^n$, $n\ge 1$, and let $\mathcal{L}_\nu$ be a self-adjoint extension of the differential operator 
\[
L_\nu := \sum_{i=1}^n \left[-\frac{\partial^2}{\partial x_i^2} + x_i^2 + \frac{1}{x_i^2}(\nu_i^2 - \frac{1}{4})\right]
\] 
on $C_c^\infty(\mathbb{R}_+^n)$ as the natural domain. In this paper, we first prove that the Riesz transform associated with $\mathcal L_\nu$ is a Calderón-Zygmund operator, answering the open problem in [JFA, 244 (2007), 399-443]. In addition, we develop the theory of Hardy spaces and Campanato spaces associated with $\mathcal{L}_\nu$. As applications, we prove that the Riesz transform related to $\mathcal{L}_\nu$ is bounded on these Hardy spaces and Campanato spaces, completing the description of the boundedness of the Riesz transform in the Laguerre expansion setting.

\end{abstract}
\date{}

\maketitle

\tableofcontents

\section{Introduction}\label{sec: intro}
For each $\nu =(\nu_1,\ldots, \nu_n)\in (-1,\vc)^n$, we consider the Laguerre differential operator 
\[
\begin{aligned}
	L_{\nu}
	&=\sum_{i=1}^n\Big[-\f{\partial^2 }{\partial x_i^2} + x_i^2+\frac{1}{x_i^2}\Big(\nu_i^2 - \frac{1}{4}\Big)\Big], \quad \quad x \in (0,\vc)^n.
\end{aligned}
\]
The $j$-th partial derivative associated with $L_{\nu}$ is given by
\[
\delta_j = \frac{\partial}{\partial x_j} + x_j-\frac{1}{x_j}\Big(\nu_j + \f{1}{2}\Big).
\]
Then the  adjoint of $\delta_j$ in $L^2(\mathbb{R}^n_+)$ is
\[
\delta_j^* = -\frac{\partial}{\partial x_j} + x_j-\frac{1}{x_j}\Big(\nu_j + \f{1}{2}\Big).
\]
It is straightforward that
\[
\sum_{i=1}^{n} \delta_i^* \delta_i = L_{\nu} {-2}(|\nu| + n).
\]
Let $k = (k_1, \ldots, k_n) \in \mathbb{N}^n$, $\mathbb N = \{0, 1, \ldots\}$, and $\nu = (\nu_1, \ldots, \nu_n) \in (-1, \infty)^n$ be multi-indices. The Laguerre function $\varphi_k^{\nu}$ on $\mathbb{R}^n_+$ is defined as
\[
\varphi_k^{\nu}(x) = \varphi^{\nu_1}_{k_1}(x_1) \ldots \varphi^{\nu_n}_{k_n}(x_n), \quad x = (x_1, \ldots, x_n) \in \mathbb{R}^n_+,
\]
where $\varphi^{\nu_i}_{k_i}$ are the one-dimensional Laguerre functions
\[
\varphi^{\nu_i}_{k_i}(x_i) = \Big(\f{2\Gamma(k_i+1)}{\Gamma(k_i+\nu_i+1)}\Big)^{1/2}L_{\nu_i}^{k_i}(x_i^2)x_i^{\nu_i + 1/2}e^{-x_i^2/2}, \quad x_i > 0, \quad i = 1, \ldots, n,
\]
given $\nu > -1$ and $k \in \mathbb{N}$, $L_{\nu}^k$ denotes the Laguerre polynomial of degree $k$ and order $\nu$ outlined in  \cite[p.76]{L}.

Then it is well-known that the system $\{\varphi_k^{\nu} : k \in \mathbb{N}^n\}$ is an orthonormal basis of $L^2(\mathbb{R}^n_+, dx)$. Moreover, Each $\varphi_k^{\nu}$ is an eigenfunction of  $L_{\nu}$ corresponding to the eigenvalue of $4|k| + 2|\nu| + 2n$, i.e.,
\[
L_{\nu}\varphi_k^{\nu} = (4|k| + 2|\nu| + 2n) \varphi_k^{\nu},
\]
where $|\nu| = \nu_1 + \ldots + \nu_n$ and  $|k| = k_1 + \ldots + k_n$. The operator $L_{\nu}$ is positive and symmetric in $L^2(\mathbb{R}^n_+, dx)$.

If $\nu = \left(-\frac{1}{2}, \ldots, -\frac{1}{2}\right)$, then $L_{\nu}$ becomes the harmonic oscillator $-\Delta + |x|^2$ on $\mathbb R^n_+$. Note that the Riesz transforms related to the harmonic oscillator $-\Delta + |x|^2$ on $\mathbb R^n$ were investigated in \cite{ST, Th}.

The operator
\[
\mathcal{L}_{\nu}f = \sum_{k\in \mathbb{N}^{n}} (4|k| + 2|\nu| + 2n) \langle f, \varphi_{k}^\nu\rangle \varphi_{k}^\nu 
\]
defined on the domain $\text{Dom}\, \mathcal{L}_{\nu} = \{f \in L^2(\mathbb{R}^n_+): \sum_{k\in \mathbb{N}^{d}} (4|k| + 2|\nu| + 2n) |\langle f, \varphi_{k}^\nu\rangle|^2 < \infty\}$ is a self-adjoint extension of $L_{\nu}$ (the inclusion $C^{\infty}_c(\mathbb{R}^{d}_{+}) \subset \text{Dom}\, \mathcal{L}_{\nu}$ may be easily verified), has the discrete spectrum $\{4\ell + 2|\nu| + 2n: \ell \in \mathbb{N}\}$, and admits the spectral decomposition
\[
\mathcal{L}_{\nu}f = \sum_{\ell=0}^{\infty} (4\ell + 2|\nu| + 2n) P_{\nu,\ell}f,
\]
where the spectral projections are
\[
P_{\nu,\ell}f = \sum_{|k|=\ell} \langle f, \varphi_{k}^\nu\rangle \varphi_{k}^\nu.
\]

Moreover,
\begin{equation}\label{eq- delta and eigenvector}
	\delta_j \varphi_k^\nu =-2\sqrt{k_j} \varphi_{k-e_j}^{\nu+e_j}, \ \ \delta_j^* \varphi_k^\nu =-2\sqrt{k_j+1} \varphi_{k+e_j}^{\nu-e_j},
\end{equation}
where $\{e_1,\ldots, e_n\}$ is the standard basis for $\mathbb R^n$. Here and later on we use the convention that $\varphi_{k-e_j}^{\nu+e_j}=0$ if $k_j-1<0$ and $\varphi_{k+e_j}^{\nu-e_j}=0$ if $\nu_j-1<0$. See for example \cite{NS}.


The study of Riesz transforms  in a context of orthogonal expansions was
initiated by Muckenhoupt and Stein \cite{MS}. Since then  many authors contributed to the subject. See for example \cite{Betancor, Betancor2, Muc, NS, NS2, ST,  Th} and the references therein. In this paper,  we consider the  Riesz transform $R_\nu=(R_\nu^1,\ldots, R_\nu^n)$ defined by
\[
R^j_\nu = \delta_j  \mathcal L_\nu^{-1/2}, \ \ \ j=1,\ldots,n.
\] 
It is well-know that $R_\nu$ is bounded on $L^2(\mathbb R^n_+)$. See \cite{NS}. Regarding the $L^p$-boundedness of the Riesz transforms $R_\nu$ for $p\ne 2$, we refer to \cite{ST, Betancor, Betancor2, ChaLiu} and the references therein. Here, we would like to highlight existing results in the literature. For the case $n=1$, the boundedness of the Riesz transform $R_\nu$ on Lebesgue spaces $L^p(\mathbb R)$ was investigated in \cite{ST, Betancor, Betancor2} for $\nu>-1$, while in \cite{ChaLiu}, the boundedness on the BMO space associated with $\mathcal L_\nu$ was obtained. In the higher-dimensional case $n\ge 2$, the boundedness of the Riesz transforms was studied in \cite{NS}. For the sake of convenience, we would like to recall the main result in \cite{NS}.  

\medskip

\noindent \textbf{Theorem A.}\cite[Theorem 3.3]{NS} Assume that $\nu = (\nu_1, \ldots, \nu_n)$ is a multi-index such that $\nu_i\ge  - \frac{1}{2}$, $\nu_i \notin \left(-\frac{1}{2}, \frac{1}{2}\right)$, $i = 1, \ldots, n$. Then the Riesz operators $R_\nu^j$, $j = 1, \ldots, d$, are Calder\'on--Zygmund operators and hence, the Riesz operators $R_\nu^j$ are bounded on $L^p_w(\mathbb R^n_+)$ for $1<p<\vc$ and $w\in A_p(\mathbb R^n_+)$, where $A_p(\mathbb R^n_+)$ is the Muckenhoupt class of weights defined on $\mathbb R^n_+$.

\bigskip

The condition $\nu_i \ge - \frac{1}{2}, i = 1, \ldots, n$ is necessary. Indeed, in general if $\nu \in (-1,\vc)^n$ the Riesz transforms might not be the Calder\'on-Zygmund operators. See for example \cite{Betancor}. An important reason for the restriction $\nu_i \notin \left(-\frac{1}{2}, \frac{1}{2}\right), i = 1, \ldots, n$, is necessary  to perform some estimates of the gradient of $R_\nu^j(x, y)$. To the best of our knowledge, up to now this is the best result for the higher dimensional case $n\ge 0$. Hence, the boundedness of the Riesz transforms with $\nu \in [-1/2,\vc)^n$ for $n\ge 2$ is still open. One of our main aims is to address this open problem. We first introduce some notations. For  $\nu\in [-1/2,\vc)^n$, we define
\begin{equation}\label{eq- nu min}
\nu_{\min} =  \min\{\nu_j: j\in \mathcal J_\nu\}
\end{equation}
with the convention $\inf \emptyset = \vc$, where $\mathcal J_\nu=\{j: \nu_j>-1/2\}$.

Denote $\delta=(\delta_1,\ldots, \delta_n)$ and $\delta^*=(\delta^*_1,\ldots, \delta^*_n)$. Then we are able to prove the following result.

\begin{thm}\label{thm-Riesz transform} Let $\nu\in [-1/2,\vc)^n$ and $\gamma_\nu = \min\{1, \nu_{\min}+1/2\}$. Denote by $\delta \mathcal L_\nu^{-1/2}(x,y)$ the associated kernel of $\delta \mathcal L_\nu^{-1/2}$. Then the Riesz transform $\delta \mathcal L_\nu^{-1/2}$ is a Calder\'on-Zygmund operator. That is,	
	\[
	|\delta \mathcal L_\nu^{-1/2}(x,y)|\lesi  \f{1}{|x-y|^n} , \ \ \ x\ne y
	\]
	and
	\[
	\begin{aligned}
		| \delta \mathcal L_\nu^{-1/2}(x,y)-\delta \mathcal L_\nu^{-1/2}(x,y')|&+| \delta \mathcal L_\nu^{-1/2}(y,x)-\delta  \mathcal L_\nu^{-1/2}(y',x)|
		&\lesi \Big(\f{|y-y'|}{x-y}\Big)^{\gamma_\nu}\f{1}{|x-y|^n},
	\end{aligned}
	\]
	whenever $|y-y'|\le |x-y|/2$.
\end{thm}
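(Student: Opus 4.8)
The plan is to exploit the subordination formula
\[
\mathcal L_\nu^{-1/2} = \f{1}{\sqrt\pi}\int_0^\vc e^{-t\mathcal L_\nu}\,\f{dt}{\sqrt t},
\]
so that, writing $p_t^\nu(x,y)$ for the heat kernel of $\mathcal L_\nu$, the Riesz kernel becomes
\[
\delta \mathcal L_\nu^{-1/2}(x,y)=\f{1}{\sqrt\pi}\int_0^\vc \delta_x p_t^\nu(x,y)\,\f{dt}{\sqrt t}.
\]
Since $\mathcal L_\nu$ is a sum of commuting one-dimensional operators, $p_t^\nu$ factorizes as a product $\prod_{i=1}^n p_t^{\nu_i}(x_i,y_i)$ of one-dimensional Laguerre heat kernels, and $\delta_j$ acts only on the $j$-th factor. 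Thus the whole problem reduces to establishing size, gradient, and fractional Hölder estimates for each $p_t^{\nu_i}$ and its $\delta_i$-derivative, and then integrating in $t$.

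First I would derive the one-dimensional heat-kernel estimates starting from the explicit Hille--Hardy formula, which expresses $p_t^{\nu_i}(x_i,y_i)$ in terms of the modified Bessel function $I_{\nu_i}$ evaluated at an argument proportional to $x_iy_i$, multiplied by a Gaussian-type factor in $x_i,y_i$. Using the two-sided asymptotics $I_\nu(z)\sim c_\nu z^\nu$ as $z\to 0^+$ and $I_\nu(z)\sim (2\pi z)^{-1/2}e^{z}$ as $z\to\vc$, together with the recurrence and derivative formulas for $I_\nu$, I would obtain a Gaussian-type bound for $p_t^{\nu_i}$, a bound for $\delta_{x_i}p_t^{\nu_i}$ carrying an extra factor $t^{-1/2}$, and a fractional Hölder bound for the increment of $\delta_{x_i}p_t^{\nu_i}$ in each spatial variable. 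The decisive feature is the boundary behavior: near $x_i=0$ the kernel inherits the factor $x_i^{\nu_i+1/2}$ coming from $\varphi_k^{\nu_i}$, and this factor is only Hölder continuous of order $\nu_i+\f12$; this is exactly what caps the attainable smoothness exponent at $\gamma_\nu=\min\{1,\nu_{\min}+\f12\}$.

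With these estimates in hand I would assemble the Calder\'on--Zygmund conditions. For the size bound I substitute the gradient estimate for $\delta_x p_t^\nu$ into the subordination integral; the resulting integral is of the standard form $\int_0^\vc t^{-(n+2)/2}\exp(-c|x-y|^2/t)\,dt\lesi |x-y|^{-n}$, yielding the claimed pointwise bound. For the smoothness bound I split the $t$-integral at the scale $t\sim|x-y|^2$: on the small-$t$ part I insert the fractional Hölder heat-kernel estimate, which produces the gain $(|y-y'|/|x-y|)^{\gamma_\nu}$, while on the large-$t$ part I use the confining potential $x_i^2$, which gives a positive bottom of the spectrum and forces $p_t^\nu$ and its derivatives to decay rapidly in $t$, guaranteeing convergence. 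The passage from the one-dimensional increments to the full $n$-dimensional difference is handled by a telescoping argument: I replace one coordinate at a time in $\prod_i p_t^{\nu_i}(x_i,y_i)$ and apply the product rule, so that each term is a one-dimensional increment times uniformly bounded factors. The kernel in the second (adjoint) variable is treated symmetrically, using the companion relation $\delta_j^*$ and the corresponding estimates.

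The hard part will be the Hölder estimate in the range $\nu_i\in(-\f12,\f12)$, which is precisely the interval excluded in Theorem~A. There the first-order derivative of the kernel is too singular near the boundary for a mean-value argument to apply, so rather than differentiating I must establish fractional Hölder continuity of order $\nu_i+\f12$ directly, simultaneously controlling the increment of the boundary factor $x_i^{\nu_i+1/2}$ and of the Bessel term $I_{\nu_i}$. Obtaining uniform estimates for $I_{\nu_i}$ and its derivative valid across the entire range $z\in(0,\vc)$, and tracking how these combine through the tensor-product structure after integration in $t$, is the technical core of the argument.
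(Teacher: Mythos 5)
Your skeleton (subordination, the tensor-product structure of $p_t^\nu$, one-dimensional estimates via the Bessel formula) coincides with the paper's, and your size bound is fine. The gap is in the smoothness estimate, which is the whole content of the theorem. You reduce it to a ``fractional H\"older heat-kernel estimate'' of order $\nu_i+1/2$ for $\nu_i\in(-1/2,1/2)$, acknowledge it as the technical core, and never prove it or give a concrete route to it beyond citing Bessel asymptotics; as a blind attempt, the decisive step is therefore missing. Worse, the premise you use to justify this detour --- that the first derivative of the kernel is too singular near the boundary for a mean-value argument --- is exactly what the paper refutes. The paper proves \emph{weighted} gradient bounds
\[
|\partial_{y_j}\delta p_t^\nu(x,y)|\lesssim \Big[\frac{1}{\sqrt t}+\frac{1}{\rho_\nu(y)}\Big]\frac{1}{t^{(n+1)/2}}\exp\Big(-\frac{|x-y|^2}{ct}\Big)\Big(1+\frac{\sqrt t}{\rho_\nu(x)}+\frac{\sqrt t}{\rho_\nu(y)}\Big)^{-(\nu_{\min}+1/2)},
\]
integrates in $t$ to get the analogous bounds for $\delta\mathcal L_\nu^{-1/2}(x,y)$ and its gradients, and then runs a dichotomy in the Calder\'on--Zygmund argument: if $|y-y'|\ge\max\{\rho_\nu(y),\rho_\nu(y')\}$, no continuity is needed at all, since the size bound already carries $(\rho_\nu(y)/|x-y|)^{\nu_{\min}+1/2}\le(|y-y'|/|x-y|)^{\nu_{\min}+1/2}$; if $|y-y'|\lesssim\rho_\nu(y)$, then $\rho_\nu$ is essentially constant on the segment, the mean value theorem \emph{does} apply, and the singular factor $1/\rho_\nu(y)$ is absorbed because $|y-y'|/\rho_\nu(y)\le(|y-y'|/\rho_\nu(y))^{\gamma_\nu}$ while the decay factor supplies $(\rho_\nu(y)/|x-y|)^{\gamma_\nu}$. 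The exponent $\gamma_\nu$ thus comes from bookkeeping with the critical function, not from any H\"older continuity of the heat kernel; missing this mechanism is what leaves your plan resting on an unproved (and much harder) lemma.

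There is also a local defect in your assembly step: you split the $t$-integral at $t\sim|x-y|^2$, extract the gain $(|y-y'|/|x-y|)^{\gamma_\nu}$ only from the small-$t$ part, and on the large-$t$ part invoke the spectral gap merely to ``guarantee convergence.'' Convergence is not enough. The right-hand side of the desired estimate vanishes as $|y-y'|\to 0$, so \emph{every} piece of the integral must carry a factor that is small in $|y-y'|$; a bounded, $|y-y'|$-independent contribution from $t\ge|x-y|^2$ destroys the estimate. (The fix is to use the increment estimate --- whether your H\"older bound or the paper's weighted mean-value bound --- on the large-$t$ range as well, where it is in fact easier because $|y-y'|/\sqrt t\le|y-y'|/|x-y|$.)
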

We would like to emphasize that Theorem \ref{thm-Riesz transform} is new, even when $n=1$. In fact, in the case of $n=1$,   the boundedness of the Riesz transform was explored in \cite{NS, Betancor, Betancor2}. In \cite{NS}, the Riesz operator was decomposed into local and global components. The local part emerged as a local Calder\'on-Zygmund operator, while the global part was effectively dealt by using weighted Hardy’s inequalities. In \cite{Betancor, Betancor2}, the investigation leaned on the connections between the Riesz transform associated with the Laguerre operator and that associated with the Hermite operator on $\mathbb{R}$. It's important to note that the methods employed in \cite{NS, Betancor, Betancor2} do not imply the boundedness of the Riesz transform on weighted $L^p_w(\mathbb{R}_+)$ with $1 < p < \infty$ and $w \in A_p(\mathbb{R}_+)$.

The implications of Theorem \ref{thm-Riesz transform} extend to the boundedness of the Riesz transform $\delta \mathcal L_\nu^{-1/2}$ on $L^p(\mathbb R^n_+)$. Naturally, this leads to a question about the endpoint estimates concerning the Hardy space and the BMO-type space estimates for the Riesz transforms. Consequently, the second objective of this paper is to develop the theory of a Hardy space and its duality for the Riesz transform $\delta \mathcal L_\nu^{-1/2}$. To accomplish this, we first introduce a new atomic Hardy space related to a critical function. For $\nu =(\nu_1,\ldots,\nu_n) \in [-1/2,\vc)^n$ and $x=(x_1,\ldots, x_n)\in (0,\vc)^n$, define
\begin{equation}\label{eq- critical function}
	\rho_\nu(x) = \f{1}{16}\min \Big\{\f{1}{|x|}, 1,  x_j: j\in \mathcal J_\nu  \Big\},
\end{equation}
where $\mathcal J_\nu:=\{j: \nu_j>-1/2\}$.

\begin{defn}\label{def: rho atoms}
	Let $\nu\in [-1/2,\vc)^n$ and $\rho_\nu$ be the function as in \eqref{eq- critical function}. Let $p\in (\f{n}{n+1},1]$ and $q\in [1,\vc]\cap (p,\vc]$. A function $a$ is called a  $(p,q,\rho_\nu)$-atom associated to the ball $B(x_0,r)$ if
	\begin{enumerate}[{\rm (i)}]
		\item ${\rm supp}\, a\subset B(x_0,r)$;
		\item $\|a\|_{L^q(\mathbb{R}^n_+)}\leq |B(x_0,r)|^{1/q-1/p}$;
		\item $\displaystyle \int a(x)dx =0$ if $r< \rho_\nu(x_0)$.
	\end{enumerate}
\end{defn}

Let $p\in (\f{n}{n+1},1]$ and $q\in [1,\vc]\cap (p,\vc]$. We  say that $f=\sum_j	\lambda_ja_j$ is an atomic $(p,q,\rho_\nu)$-representation if
$\{\lambda_j\}_{j=0}^\infty\in l^p$, each $a_j$ is a $(p,q,\rho_\nu)$-atom,
and the sum converges in $L^2(\mathbb{R}^n_+)$. The space $H^{p,q}_{{\rm at},\rho_\nu}(\mathbb{R}^n_+)$ is then defined as the completion of
\[
\left\{f\in L^2(\mathbb{R}^n_+):f \ \text{has an atomic
	$(p,q,\rho_\nu)$-representation}\right\},
\]
with the norm given by
$$
\|f\|_{H^{p,q}_{{\rm at},\rho_\nu}(\mathbb{R}^n_+)}=\inf\Big\{\Big(\sum_j|\lambda_j|^p\Big)^{1/p}:
f=\sum_j \lambda_ja_j \ \text{is an atomic $(p,q,\rho_\nu)$-representation}\Big\}.
$$
In Theorem \ref{thm-Hp rho = h vc rho} below, we will show that the Hardy space  $H^{p,q}_{{\rm at},\rho_\nu}(\mathbb{R}^n_+)$ is independent of $q$, i.e., for $p\in (\f{n}{n+1},1]$ and $q\in [1,\vc]\cap (p,\vc]$, we have
\[
H^{p,q}_{{\rm at},\rho_\nu}(\mathbb{R}^n_+) = H^{p,\vc}_{{\rm at},\rho_\nu}(\mathbb{R}^n_+).
\]
For this reason, for $p\in (\f{n}{n+1},1]$ we will define the Hardy space $H^p_\rho(\mathbb R^n_+)$ as any Hardy space $H^{p,q}_{{\rm at},\rho_\nu}(\mathbb{R}^n_+)$ with  $q\in [1,\vc]\cap (p,\vc]$.
 
We next introduce the Hardy space associated with the Laguerre operator $\mathcal L_\nu$. For $f\in L^2(\mathbb{R}^n_+)$ we define 
 \[
 \mathcal M_{\mathcal L_\nu}f(x) = \sup_{t>0}|e^{-t^2\mathcal L_\nu}f(x)|
 \]
 for all $x\in \mathbb R^n_+$.\\

 The maximal Hardy spaces associated to $\mathcal L_\nu$ are defined as follows.
 \begin{defn}
 	Given $p\in (0,1]$,  the Hardy space $H^p_{\mathcal L_\nu}(\mathbb{R}^n_+)$ is defined as the completion of
 	\[
 	\{f
 	\in L^2(\mathbb{R}^n_+): \mathcal M_{\mathcal L_\nu}f \in L^p(\mathbb{R}^n_+)\}
 	\]
 	under the norm given by
 	$$
 	\|f\|_{H^p_{\mathcal L_\nu}(\mathbb{R}^n_+)}=\|\mathcal M_{\mathcal L_\nu}f\|_{L^p(\mathbb{R}^n_+)}.
 	$$
 	
 \end{defn}
 We have the following result.
 \begin{thm}\label{mainthm2s}
 	Let $\nu\in [-1/2,\vc)^n$ and $\gamma_\nu = \min\{1, \nu_{\min}+1/2\}$, where $\nu_{\min}$ is as in \eqref{eq- nu min}. For $p\in (\f{n}{n+\gamma_\nu},1]$, we have
 	\[
 	H^{p}_{\rho}(\mathbb{R}^n_+)\equiv H^p_{\mathcal L_\nu}(\mathbb{R}^n_+)
 	\]
 	with equivalent norms.
 \end{thm}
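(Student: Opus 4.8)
The plan is to prove the two continuous inclusions $H^p_\rho(\mathbb{R}^n_+)\hookrightarrow H^p_{\mathcal L_\nu}(\mathbb{R}^n_+)$ and $H^p_{\mathcal L_\nu}(\mathbb{R}^n_+)\hookrightarrow H^p_\rho(\mathbb{R}^n_+)$ separately, following the template for Hardy spaces attached to an operator carrying a critical function, as developed for Schr\"odinger operators by Dziuba\'nski and Zienkiewicz. The analytic input is the kernel information for the semigroup $e^{-t^2\mathcal L_\nu}$, which I assume established earlier: writing $p_t(x,y)$ for its kernel, for every $N$ one has the Gaussian bound with critical decay $|p_t(x,y)|\lesssim t^{-n}e^{-c|x-y|^2/t^2}(1+t/\rho_\nu(x))^{-N}(1+t/\rho_\nu(y))^{-N}$ together with the matching H\"older estimate of order $\gamma_\nu$ in the space variables. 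The exponent $\gamma_\nu$ and the lower bound $p>n/(n+\gamma_\nu)$ enter precisely when the H\"older modulus is balanced against the homogeneity $|x-y|^{-n}$.

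For $H^p_\rho\hookrightarrow H^p_{\mathcal L_\nu}$, by the $\ell^p$-subadditivity of $\|\cdot\|_{L^p}^p$ and a convergence argument it suffices to show $\|\mathcal M_{\mathcal L_\nu}a\|_{L^p(\mathbb{R}^n_+)}\lesssim 1$ uniformly over $(p,q,\rho_\nu)$-atoms $a$ associated to a ball $B=B(x_0,r)$. On $2B$ I bound the maximal function by H\"older's inequality and the $L^q$-boundedness of $\mathcal M_{\mathcal L_\nu}$. Off $2B$ there are two regimes. If $r<\rho_\nu(x_0)$, the atom has mean zero, so subtracting $p_t(x,x_0)$ and using the $\gamma_\nu$-H\"older regularity yields a gain $(r/|x-x_0|)^{\gamma_\nu}$, integrable in $L^p$ exactly for $p>n/(n+\gamma_\nu)$. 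If $r\ge\rho_\nu(x_0)$, no cancellation is at hand, but the critical decay $(1+t/\rho_\nu)^{-N}$ of the kernel compensates and again produces a summable tail.

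The reverse inclusion $H^p_{\mathcal L_\nu}\hookrightarrow H^p_\rho$ is the main obstacle. I would first replace the maximal characterization by the conical square-function characterization: since $\mathcal L_\nu$ has Gaussian bounds and hence Davies--Gaffney estimates, the general theory of Hardy spaces associated to operators (Auscher--McIntosh--Russ, Hofmann--Lu--Mitrea--Mitrea--Yan and its $p<1$ refinements) identifies $H^p_{\mathcal L_\nu}$ with the square-function Hardy space and, through the tent-space atomic decomposition of Coifman--Meyer--Stein, furnishes a molecular decomposition $f=\sum_j\lambda_j m_j$ into $(p,2,M)$-molecules adapted to $\mathcal L_\nu$ with $(\sum_j|\lambda_j|^p)^{1/p}\lesssim\|f\|_{H^p_{\mathcal L_\nu}}$. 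The new and delicate step is to recast each $\mathcal L_\nu$-molecule as a superposition of $(p,q,\rho_\nu)$-atoms with controlled coefficients.

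For this conversion I would split according to the size of the molecule's ball $B(x_B,r_B)$ against $\rho_\nu(x_B)$. When $r_B\lesssim\rho_\nu(x_B)$ the critical function is essentially constant on $B$, so $\rho_\nu$-atoms behave like ordinary local atoms; the operator cancellation of the molecule passes to the mean-zero condition, and the molecular decay lets me reorganize $m_j$ into $(p,q,\rho_\nu)$-atoms on a sequence of annuli. When $r_B\gtrsim\rho_\nu(x_B)$ the $(p,q,\rho_\nu)$-atoms require no cancellation, so I may simply truncate the molecule onto balls of radius comparable to $\rho_\nu$, the decay of the molecule guaranteeing $\ell^p$-summability. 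The hard part throughout is managing the interaction of the slowly varying $\rho_\nu$ with the dyadic annular decomposition and checking that the constants assemble into $\|f\|_{H^p_\rho}\lesssim\|f\|_{H^p_{\mathcal L_\nu}}$; combined with the first inclusion this yields the claimed norm equivalence.
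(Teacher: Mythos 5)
Your first inclusion follows the paper's Step~1 in outline (atom $\mapsto$ $\|\mathcal M_{\mathcal L_\nu}a\|_p\lesssim 1$, near part by H\"older and $L^2$/$L^q$ boundedness, far part by cancellation or critical decay), but the kernel input you assume is stronger than what is true for $\mathcal L_\nu$. The heat kernel does \emph{not} satisfy $(1+t/\rho_\nu(x))^{-N}$ decay for arbitrary $N$: the decay saturates at order $\nu_{\min}+1/2$, as one sees from $I_\nu(z)\sim z^\nu$ near $z=0$, which forces $p_t^\nu(x,y)\approx t^{-1/2}(xy/t)^{\nu+1/2}e^{-(x^2+y^2)/ct}$ when $xy\ll t$; and a pointwise H\"older estimate of order $\gamma_\nu$ is not available either --- the paper explicitly notes this is unclear, and instead works with gradient bounds of the form $|\partial_{y_j}p_t^\nu(x,y)|\lesssim\bigl(\tfrac{1}{\sqrt t}+\tfrac{1}{\rho_{\nu_j}(y_j)}\bigr)t^{-n/2}e^{-|x-y|^2/ct}\bigl(1+\tfrac{\sqrt t}{\rho_\nu(x)}+\tfrac{\sqrt t}{\rho_\nu(y)}\bigr)^{-(\nu_{\min}+1/2)}$, which only yield the $\gamma_\nu$-gain after combining the mean value theorem with $\rho_\nu(\cdot)\sim\rho_\nu(x_0)$ on the atom's ball. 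Since your argument only needs gain of order $\gamma_\nu$, this part is repairable; but as written it rests on false estimates, and the limitation of the decay to $\nu_{\min}+1/2$ is precisely why the range $p>\tfrac{n}{n+\gamma_\nu}$ appears.

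The genuine gap is in the reverse inclusion, at the sentence ``the operator cancellation of the molecule passes to the mean-zero condition.'' This is exactly the crux of the theorem and it is not automatic: a molecule (or a compactly supported $(p,M)_{\mathcal L_\nu}$-atom $a=\mathcal L_\nu^M b$, which is what the paper uses via Theorem \ref{mainthm1}) carries no integral cancellation whatsoever, while $(p,q,\rho_\nu)$-atoms on balls of radius $r<\rho_\nu(x_0)$ must satisfy $\int a=0$ exactly. Since $e^{-t\mathcal L_\nu}$ is not conservative, bridging these two notions of cancellation requires a quantitative estimate on $\int q_t^\nu(x,y)\,dx$, where $q_t^\nu$ is the kernel of $t\mathcal L_\nu e^{-t\mathcal L_\nu}$; this is the paper's Lemma \ref{lem: Qest dx}, namely $\bigl|\int_B q^\nu_t(x,y)dx\bigr|\lesssim \sqrt t/r_B$ for sub-critical balls and $\bigl|\int_{\mathbb R^n_+} q^\nu_t(x,y)dx\bigr|\lesssim \sqrt t/\rho_\nu(y)$. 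In the paper's decomposition $a_2=(I-e^{-r_B^2\mathcal L_\nu})a$ is split over annuli, means are subtracted to create genuine mean-zero pieces, and the leftover averages telescope into a single term $\frac{\chi_{2B}}{|2B|}\int_{2^{k_0+1}B}a_2$; Lemma \ref{lem: Qest dx} then gives $\bigl|\int_{2^{k_0+1}B}a_2\bigr|\lesssim 2^{-k_0}|B|^{1-1/p}$, which is exactly what allows this piece to be renormalized as a $\rho_\nu$-atom attached to the critical-radius ball $2^{k_0+1}B$ (where no cancellation is required), under the additional constraint $n(1/p-1/q)<1$ that your proposal never encounters. Without this lemma, or an equivalent control of the semigroup's failure to preserve integrals, your conversion of molecules into $\rho_\nu$-atoms cannot produce the required mean-zero pieces on small balls, and the proof does not close; this is also why the off-the-shelf Dziuba\'nski--Zienkiewicz/YZ/BDK template you invoke is explicitly said by the paper to be inapplicable here.
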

Note that the critical function $\rho_\nu$ defined in \eqref{eq- critical function} does not satisfy the property of the critical defined in \cite[Definition 2.2]{YZ} (see also \cite{BDK}). In addition, it is not clear if the heat kernel of $\mathcal L_\nu$ enjoys the H\"older continuity condition, which played an essential roles in \cite{YZ, BDK}. Consequently, the approaches outlined in \cite{YZ, BDK} (including \cite{CFYY}) cannot be directly applied in our specific context. This necessitates the development of new ideas and a novel approach.\\

It is worth noting that the equivalence between the atomic Hardy space $H^p_\rho(\mathbb R^n_+)$ and the Hardy space $H^p_{\mathcal L_\nu}(\mathbb{R}^n_+)$ was established in \cite{Dziu} for the specific case of $n=1, p=1$, and $\nu > -1/2$. However, the approach employed in \cite{Dziu} is not  applicable to higher-dimensional cases.\\

For the duality of the Hardy space, we introduce the Campanato space associated to the critical function $\rho_\nu$. For  $s\in[0,1)$, the Campanato space associated to the critical function $\rho_\nu$ can be defined as the set of all   $L^1_{\rm loc}(\mathbb{R}^n_+)$ functions $f$ such that 
$$
\|f\|_{BMO^s_{\rho_\nu}(\mathbb{R}^n_+)}:=\sup_{\substack{B: {\rm ball}\\ {r_B < \rho_\nu(x_B)}}}\f{1}{|B|^{1+s/n}}\int_{B}|f-f_{B}| dx+ 	\sup_{\substack{B: {\rm ball}\\ {r_B \geq \rho_\nu(x_B)}}}\f{1}{|B|^{1+s/n}}\int_{B} |f|dx <\vc,
$$
where $\displaystyle f_B =\f{1}{|B|}\int_B f$ and the supremum is taken over all balls $B$ in $\mathbb R^n_+$.

The we have the following result regarding the duality of the Hardy space $H^p_{\mathcal L_\nu}(\mathbb R^n_+)$.
\begin{thm} \label{dual}  Let $\nu\in [-1/2,\vc)^n$ and $\gamma_\nu = \min\{1, \nu_{\min}+1/2\}$, where $\nu_{\min}$ is as in \eqref{eq- nu min}. Let $\rho_\nu$ be as in \eqref{eq- critical function}. Then for  $p \in (\f{n}{n+\gamma_\nu},1]$, we have
	\begin{align*}
		\big(H^p_{\mathcal L_\nu} (\mathbb{R}^n_+)\big)^\ast = BMO^{s}_{\rho_{\nu}} (\mathbb{R}^n_+),
		\quad \mbox{where } s: = n (1/p-1).
	\end{align*}
\end{thm}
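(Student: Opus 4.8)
The plan is to prove the duality $(H^p_{\mathcal L_\nu})^\ast = BMO^s_{\rho_\nu}$ by combining Theorem \ref{mainthm2s} with a \emph{direct} atomic duality argument, since Theorem \ref{mainthm2s} identifies $H^p_{\mathcal L_\nu}$ with the atomic space $H^p_\rho$. The problem thus reduces to showing
\[
\big(H^p_\rho(\mathbb{R}^n_+)\big)^\ast = BMO^s_{\rho_\nu}(\mathbb{R}^n_+),\qquad s=n(1/p-1),
\]
where $s\in[0,1)$ precisely because $p\in(\tfrac{n}{n+\gamma_\nu},1]$ with $\gamma_\nu\le 1$. This is the familiar pattern for localized/critical-function Hardy spaces: the Campanato condition splits into a \emph{local} oscillation part (balls with $r_B<\rho_\nu(x_B)$, where atoms have mean zero) and a \emph{global} size part (balls with $r_B\ge\rho_\nu(x_B)$, where atoms need no cancellation).

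First I would prove the inclusion $BMO^s_{\rho_\nu}\subset (H^p_\rho)^\ast$. Given $g\in BMO^s_{\rho_\nu}$, I define the linear functional $\ell_g(f)=\int_{\mathbb{R}^n_+} f g\,dx$ initially on finite linear combinations of atoms. The key estimate is $|\int a\,g|\lesssim \|g\|_{BMO^s_{\rho_\nu}}|B|^{1-1/p}$ for every $(p,\infty,\rho_\nu)$-atom $a$ supported on $B=B(x_0,r)$. For a \emph{local} atom ($r<\rho_\nu(x_0)$) I use the mean-zero property to write $\int a g = \int a (g-g_B)$ and bound by $\|a\|_{L^\infty}\int_B|g-g_B|\le |B|^{-1/p}\cdot\|g\|_{BMO^s_{\rho_\nu}}|B|^{1+s/n}$, which equals the desired bound since $s/n = 1/p-1$. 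For a \emph{global} atom ($r\ge\rho_\nu(x_0)$) I drop cancellation and use $|\int a g|\le \|a\|_{L^\infty}\int_B|g|\le |B|^{-1/p}\|g\|_{BMO^s_{\rho_\nu}}|B|^{1+s/n}$, the same bound. Summing over an atomic representation with $\sum|\lambda_j|^p\le \|f\|^p$ and using $p\le 1$ (so that $\ell^p\hookrightarrow\ell^1$) gives boundedness of $\ell_g$ on $H^p_\rho$, with density of finite combinations extending it to the whole space.

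The converse $(H^p_\rho)^\ast\subset BMO^s_{\rho_\nu}$ is the harder direction. Given $\ell\in (H^p_\rho)^\ast$, I fix a ball $B$ and test $\ell$ against atoms supported in $B$. On a fixed ball, every $L^q(B)$ function with the appropriate mean-zero constraint (for local balls) or no constraint (for global balls), suitably normalized, is a multiple of an atom; hence $\ell$ restricts to a bounded functional on the relevant closed subspace of $L^q(B)$ (take $q=2$ to stay inside $L^2$, where representations converge). By $L^q$-duality this produces a function $g_B\in L^{q'}(B)$ representing $\ell$ there, and the atomic bound $\|\ell\|$ translates into exactly the Campanato/size estimate on $B$: for local balls the mean-zero constraint forces control of $\int_B|g-g_B|$, while for global balls it controls $\int_B|g|$ directly. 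A standard compatibility argument on overlapping balls patches the local representatives $g_B$ into a single globally defined $g\in L^1_{\rm loc}$, and the uniform bound over all $B$ yields $\|g\|_{BMO^s_{\rho_\nu}}\lesssim\|\ell\|$.

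\textbf{The main obstacle} I anticipate is the patching/compatibility step in the converse direction: one must check that the locally defined representatives agree up to constants on overlaps and assemble them consistently, and in particular that the distinction between local balls ($r_B<\rho_\nu(x_B)$) and global balls ($r_B\ge\rho_\nu(x_B)$) is handled coherently, since $\rho_\nu$ is only slowly varying rather than constant. A second delicate point is justifying that finite linear combinations of atoms are dense in $H^p_\rho$ in a way compatible with the $L^2$-convergence in the definition, so that a functional is determined by its values on atoms; here I would invoke the $q$-independence from Theorem \ref{thm-Hp rho = h vc rho} to work with $q=2$ throughout, keeping all series convergent in $L^2(\mathbb{R}^n_+)$ and thereby avoiding subtle completion issues.
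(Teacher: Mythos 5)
Your overall architecture (reduce to atomic duality via Theorem \ref{mainthm2s}, split balls into local $r_B<\rho_\nu(x_B)$ and global $r_B\ge\rho_\nu(x_B)$, prove the two inclusions separately) matches the paper, and your atom-wise computation $\bigl|\int ag\bigr|\lesssim\|g\|_{BMO^s_{\rho_\nu}}$ as well as your $L^2$-duality testing in the converse direction are exactly the estimates that appear in the paper's proof. However, your forward inclusion $BMO^s_{\rho_\nu}\subset (H^p_{\rho})^\ast$ has a genuine gap that your proposed fix does not close.

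The gap is the passage from the uniform bound on atoms to a bounded functional on the whole space. Two distinct problems arise. First, for $f\in H^p_\rho\cap L^2$ with an infinite atomic representation $f=\sum_j\lambda_ja_j$ converging in $L^2$, you cannot pass the pairing through the sum: $g$ is not in $L^2(\mathbb{R}^n_+)$ (it is only locally square-integrable, with growth controlled by Lemma \ref{inte}), so $L^2$-convergence of the tails gives no control on $\int\bigl(\sum_{j>N}\lambda_ja_j\bigr)g$; indeed $\int|f||g|$ can diverge, e.g.\ for disjointly supported small local atoms on balls where $|g_{B_j}|\sim\log\bigl(\rho_\nu(x_{B_j})/r_{B_j}\bigr)$ is large (only the oscillation of $g$ is controlled on local balls). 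Second, even restricting to finite combinations and "extending by density", your estimate bounds $\ell_g(f)$ by the infimum of $(\sum|\lambda_j|^p)^{1/p}$ over \emph{finite} representations, whereas $\|f\|_{H^p_\rho}$ is an infimum over all (possibly infinite) representations; the comparability of these two quantities is the known finite-versus-infinite atomic norm problem (it fails for $(p,\infty)$-atoms in the classical setting, by Bownik's counterexample, and for $q<\infty$ it is a nontrivial theorem of Meda--Sj\"ogren--Vallarino type), and it is not provided by Theorem \ref{thm-Hp rho = h vc rho}, which only gives $q$-independence of the completed spaces. The paper avoids both problems with machinery absent from your proposal: it represents the pairing through the Calder\'on reproducing formula $\int fa = C_0\iint t^2\mathcal L_\nu e^{-t^2\mathcal L_\nu}f\cdot t^2\mathcal L_\nu e^{-t^2\mathcal L_\nu}a\,\frac{dx\,dt}{t}$ (Lemma \ref{07}, whose justification \eqref{jus1}--\eqref{jus2} requires the heat-kernel decay $(1+t/\rho_\nu)^{-\gamma_\nu}$ from Section 3 plus Lemma \ref{inte}), and bounds this absolutely convergent, representation-free integral by the Carleson-measure estimate for $BMO^s_{\rho_\nu}$ (Lemma \ref{03}) combined with the tent-space pairing inequality (Lemma \ref{05}) and the square-function characterization $H^p_{S_{\mathcal L_\nu}}=H^p_{\mathcal L_\nu}$ (Theorem \ref{mainthm1}). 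Some substitute for this (or a proof of the finite/infinite norm equivalence in this local--global atomic setting) is a missing idea, not a routine detail.

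On the converse inclusion your plan is essentially sound, and the obstacle you flag (patching representatives defined only up to constants) dissolves once one organizes the argument around the critical balls: on balls with $r_B\ge\rho_\nu(x_B)$ atoms require no cancellation, so \emph{every} normalized $L^2(B)$ function is an atom, the Riesz representative on such a ball is determined exactly (no free constant), and representatives automatically agree a.e.\ on overlaps. The paper implements precisely this by taking the partition of unity $\{\psi_\xi\}$ subordinate to the balls $B_\xi=B(x_\xi,\rho_\nu(x_\xi))$ from Corollary \ref{cor1}, proving $\|\psi_\xi f\|_{H^p_{\mathcal L_\nu}}\lesssim|B_\xi|^{1/p-1/2}\|f\|_{2}$ (estimate \eqref{eq:L2}, via Proposition \ref{prop- delta k pt d>2}), and assembling the single global function $g=\sum_\xi 1_{B_\xi}g_\xi$ in one step; only afterwards does it test mean-zero $L^2$ functions on local balls and arbitrary $L^2$ functions on global balls, exactly as you propose, and conclude via the $L^2$-based John--Nirenberg equivalence for $BMO^s_{\rho_\nu}$. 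If you make your patching precise along these lines, your Step 2 coincides with the paper's.
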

The particular case of Theorem \ref{dual} when $n=1, p=1$ and $\nu>-1/2$ was investigated in \cite{ChaLiu}. The method in \cite{ChaLiu} relied heavily on the structure of the half-line $(0,\vc)$ and might not be applicable to the case $n\ge 2$.

Returning to the Riesz transform $\delta \mathcal L^{-1/2}\nu$, we successfully establish its boundedness on the Hardy space $H^p{\rho_\nu}(\mathbb R^n_+)$ and the Campanato spaces $BMO^{s}{\rho_{\nu}} (\mathbb{R}^n_+)$, thus providing a comprehensive account of the boundedness properties of the Riesz transform.

\begin{thm}\label{thm- boundedness on Hardy and BMO}
	Let $\nu\in [-1/2, \vc)^n$ and $\gamma_\nu = \nu_{\min} + 1/2$. Then for  $\f{n}{n+\gamma_\nu}<p\le 1$ and $s=n(1/p-1)$, we have
	\begin{enumerate}[{\rm (i)}]
		\item the Riesz transform $\delta \mathcal L_\nu^{-1/2}$ is bounded on $H^p_{\rho_\nu}(\mathbb{R}^n_+)$;
		\item the Riesz transform $\delta \mathcal L_\nu^{-1/2}$ is bounded on $BMO^s_{\rho_\nu}(\mathbb{R}^n_+)$.
	\end{enumerate} 
\end{thm}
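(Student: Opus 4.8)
The plan is to derive both parts from the Calder\'on--Zygmund estimates of Theorem~\ref{thm-Riesz transform}, the identification $H^p_\rho(\mathbb{R}^n_+)\equiv H^p_{\mathcal L_\nu}(\mathbb{R}^n_+)$ of Theorem~\ref{mainthm2s}, and the duality of Theorem~\ref{dual}. Throughout write $T=\delta\mathcal L_\nu^{-1/2}$, let $K(x,y)=\delta\mathcal L_\nu^{-1/2}(x,y)$ be its kernel, and let $\gamma=\min\{1,\nu_{\min}+\tfrac12\}$ be the H\"older exponent furnished by Theorem~\ref{thm-Riesz transform}. Since $T$ is bounded on $L^2(\mathbb{R}^n_+)$ and $H^p_\rho$ is the completion of the $L^2$-functions possessing an atomic $(p,2,\rho_\nu)$-representation, for (i) it suffices to exhibit a constant $C$ with $\|Ta\|_{H^p_\rho(\mathbb{R}^n_+)}\le C$ for every $(p,2,\rho_\nu)$-atom $a$; the $L^2$-boundedness of $T$ then permits summing an atomic representation term by term and passing to the completion. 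Fix such an atom, supported in $B=B(x_0,r)$, and split into the \emph{local} case $r<\rho_\nu(x_0)$ and the \emph{global} case $r\ge\rho_\nu(x_0)$.

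In the local case $a$ has mean zero, and I would run the classical argument that a singular integral maps a mean-zero atom to an $H^p$-molecule. Decompose $Ta=(Ta)\mathbf 1_{2B}+\sum_{j\ge1}(Ta)\mathbf 1_{U_j}$ with $U_j=2^{j+1}B\setminus2^jB$. On $2B$ the $L^2$-bound gives $\|(Ta)\mathbf 1_{2B}\|_{L^2}\lesssim\|a\|_{L^2}\le|B|^{1/2-1/p}$, so this piece is a bounded multiple of an atom on $2B$. On $U_j$, subtract $K(x,x_0)$, invoke the second-variable H\"older estimate of Theorem~\ref{thm-Riesz transform} together with $\|a\|_{L^1}\le|B|^{1-1/p}$, and obtain $|Ta(x)|\lesssim r^{\gamma}(2^jr)^{-n-\gamma}|B|^{1-1/p}$. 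Regrouping the annular pieces into atoms on $2^{j+1}B$ --- enforcing the vanishing moment by the usual telescoping correction while $2^jr<\rho_\nu(x_0)$ and using non-cancellative atoms once $2^jr\ge\rho_\nu(x_0)$ --- yields coefficients $\lambda_j\lesssim2^{-j(\gamma-s)}$. Since $p>\tfrac{n}{n+1}$ (built into the definition of the atoms) and $p>\tfrac{n}{n+\gamma_\nu}$, the number $s=n(1/p-1)$ lies strictly below $\gamma$, so $\sum_j|\lambda_j|^p$ converges and $\|Ta\|_{H^p_\rho}\lesssim1$.

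The global atoms are the delicate point, and I expect this step to be the main obstacle. Here $a$ carries no moment; the part of $Ta$ on $2B$ is again a bounded multiple of a non-cancellative atom on the large ball $2B$, but on the far annuli $U_j$ the bare size bound $|K(x,y)|\lesssim|x-y|^{-n}$ only produces coefficients $\lambda_j\lesssim2^{js}$, which fail to be $p$-summable when $p<1$. To close this case one must exploit that $\mathcal L_\nu$ has a positive spectral gap (its bottom eigenvalue $2|\nu|+2n$) and a confining potential, which force $K$ to decay faster than $|x-y|^{-n}$ once $|x-y|$ exceeds the critical scale $\rho_\nu$. Concretely I would establish, through the subordination formula $\mathcal L_\nu^{-1/2}=\tfrac1{\sqrt\pi}\int_0^\infty e^{-t\mathcal L_\nu}\,t^{-1/2}\,dt$ and the heat-kernel bounds carrying $\rho_\nu$-decay that already underlie Theorems~\ref{mainthm2s} and \ref{dual}, an extra-decay estimate for $K$ valid when $|x-y|\gtrsim\rho_\nu$; since $r\ge\rho_\nu(x_0)$, this replaces $\lambda_j\lesssim2^{js}$ by a geometrically decaying, hence summable, sequence, completing (i).

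For (ii) I would argue by duality rather than repeat the atomic estimate. The transpose $T^\ast=\mathcal L_\nu^{-1/2}\delta^\ast$ is again a Calder\'on--Zygmund operator with kernel $K(y,x)$, and the first-variable H\"older estimate $|K(y,x)-K(y',x)|$ recorded in Theorem~\ref{thm-Riesz transform} is precisely the regularity needed to rerun the argument of (i) for $T^\ast$; thus $T^\ast$ is bounded on $H^p_\rho(\mathbb{R}^n_+)$. Combining Theorems~\ref{mainthm2s} and \ref{dual} gives $\big(H^p_\rho(\mathbb{R}^n_+)\big)^\ast=BMO^s_{\rho_\nu}(\mathbb{R}^n_+)$ with $s=n(1/p-1)$; hence for $g\in BMO^s_{\rho_\nu}$ the functional $f\mapsto\langle g,T^\ast f\rangle$, defined on finite atomic combinations, obeys $|\langle g,T^\ast f\rangle|\le\|g\|_{BMO^s_{\rho_\nu}}\|T^\ast f\|_{H^p_\rho}\lesssim\|g\|_{BMO^s_{\rho_\nu}}\|f\|_{H^p_\rho}$. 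After the routine check that this dual action agrees with $Tg$ on a dense class, this identifies $Tg$ as an element of $BMO^s_{\rho_\nu}$ with $\|Tg\|_{BMO^s_{\rho_\nu}}\lesssim\|g\|_{BMO^s_{\rho_\nu}}$, giving (ii).
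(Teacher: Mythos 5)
There is a genuine gap, and it sits in the step you dismiss as routine: the \emph{local} case of (i). For a cancellative atom $a$ on $B=B(x_0,r)$ with $r<\rho_\nu(x_0)$, your piece $(Ta)\mathbf{1}_{2B}$ is \emph{not} ``a bounded multiple of an atom on $2B$'': the ball $2B$ is still sub-critical, so Definition \ref{def: rho atoms} demands a vanishing moment which $(Ta)\mathbf{1}_{2B}$ does not have. The telescoping correction cannot repair this, because the total mass $\int_{\mathbb{R}^n_+}Ta\,dx$ has to land somewhere: after subtracting means on the annuli and Abel summation outward, every piece is an admissible atom with coefficient $\lesssim 2^{-j(\gamma-s)}$ \emph{except} the leftover $\bigl(\int Ta\bigr)\mathbf{1}_{2B}/|2B|$, and pushing that constant out to the critical scale $2^{k_0}r\sim\rho_\nu(x_0)$ produces cancellative pieces with coefficients $\sim|\int Ta|\,|B|^{1/p-1}2^{ls}$, $l=0,\dots,k_0$. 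The only bound your ingredients give is $|\int Ta|\lesssim|B|^{1-1/p}$ ($L^2$ on $2B$ plus the H\"ormander integral off $2B$), which yields coefficients $2^{ls}$ --- not $\ell^p$-summable, and for $p=1$ a divergent count $k_0\sim\log(\rho_\nu(x_0)/r)$. This is the familiar fact that a Calder\'on--Zygmund operator preserves an $H^p$-type space only under a $T^*1$-type condition; here one would need a quantitative cancellation estimate such as $\bigl|\int_{2^{k_0}B}\delta\mathcal L_\nu^{-1/2}a\bigr|\lesssim|2^{k_0}B|^{1-1/p}$, i.e. H\"older-type control of $y\mapsto\int\delta\mathcal L_\nu^{-1/2}(x,y)\,dx$ at sub-critical scales --- an analogue for the Riesz kernel of Lemma \ref{lem: Qest dx}. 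That estimate does not follow from Theorem \ref{thm-Riesz transform}, even with the extra $\rho_\nu$-decay, and it is nowhere established in your proposal. Since your part (ii) consists of rerunning part (i) for $T^*=\mathcal L_\nu^{-1/2}\delta^*$, it inherits the same gap.

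The paper avoids this issue entirely by never decomposing $Ta$ into atoms. It proves $\|\sup_{t>0}|e^{-t(\mathcal L_{\nu+e_j}+2)}\delta_j\mathcal L_\nu^{-1/2}a|\|_p\lesssim1$ directly, using the intertwining identity $e^{-t(\mathcal L_{\nu+e_j}+2)}\delta_j=\delta_je^{-t\mathcal L_\nu}$ (so the object to estimate is $\delta_j\mathcal L_\nu^{-1/2}e^{-t\mathcal L_\nu}a$, whose kernel satisfies the uniform-in-$t$ size and gradient bounds of Proposition \ref{prop- Riesz and heat semigroup}), and then concludes via the embedding chain $H^p_{\mathcal L_{\nu+e_j}+2}= H^p_{\mathcal L_{\nu+e_j}}\hookrightarrow H^p_{\mathcal L_\nu}\equiv H^p_{\rho_\nu}$ from Proposition \ref{prop-equivalence L +2}, Remark \ref{rem2} and Theorem \ref{mainthm2s}; the cancellation technology you are missing is packaged inside those equivalences (through Lemma \ref{lem: Qest dx} and Lemma \ref{lem: Qest dx with ej}). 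Part (ii) in the paper is likewise a maximal-function argument for $\mathcal L_\nu^{-1/2}\delta_j^*$ after the duality reduction, which your proposal shares. To your credit, your diagnosis of the global case is exactly right --- the bare bound $|K(x,y)|\lesssim|x-y|^{-n}$ fails and the $\rho_\nu$-weighted decay \eqref{eq- R kernel 1}, obtainable by subordination from the heat-kernel estimates, is what saves it, precisely as in Proposition \ref{Prop-Riesz transform} --- but the case you flagged as delicate is the one that works, and the case you called classical is where the proof, as written, breaks down.
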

In the specific case where $n=1, p=1$, and $\nu>-1/2$, the boundedness of the Riesz transform on $BMO^s_{\rho_\nu}(\mathbb{R}_+)$ with $s=0$ was established in \cite{ChaLiu}. The approach employed in this proof leveraged the structure of the half-line $\mathbb{R}_+$ and exploited the connection between the Riesz transform and the Hermite operator. In addition, item (i) in Theorem \ref{thm- boundedness on Hardy and BMO} is new, even in the case of $n=1$.
 
\bigskip

The paper is organized as follows. In Section 2, we establish fundamental properties of the critical function $\rho_\nu$ and revisit the theory of the Hardy space associated with operators, including its maximal characterization. Section 3 is dedicated to proving kernel estimates related to the heat semigroup $e^{-t\mathcal L_\nu}$. Section 4 is devoted to the proof of Theorem \ref{mainthm2s} and Theorem \ref{dual}. Finally, Section 5 provides the analysis of the boundedness of the Riesz transform on Hardy spaces and Campanato spaces associated with $\mathcal L_\nu$ (refer to Theorem \ref{thm- boundedness on Hardy and BMO}).

\bigskip
 
 Throughout the paper, we always use $C$ and $c$ to denote positive constants that are independent of the main parameters involved but whose values may differ from line to line. We will write $A\lesi B$ if there is a universal constant $C$ so that $A\leq CB$ and $A\sim B$ if $A\lesi B$ and $B\lesi A$. For $a \in \mathbb{R}$, we denote the integer part of $a$ by $\lfloor a\rfloor$.  For a given ball $B$, unless specified otherwise, we shall use $x_B$ to denote the center and $r_B$ for the radius of the ball.
 
In the whole paper, we will often use the following inequality without any explanation $e^{-x}\le c(\alpha) x^{-\alpha}$ for any $\alpha>0$ and $x>0$.
\section{Preliminaries}
In this section, we will establish some preliminary results regarding the critical functions and the theory of Hardy spaces associated to an abstract operator.
\subsection{Critical functions}
Let $\nu =(\nu_1,\ldots,\nu_n) \in [-1/2,\vc)^n$ and $\rho_\nu$ be the critical function defined by \eqref{eq- critical function}. It is easy to see that 
\begin{enumerate}[{\rm (i)}]
	\item If $\mathcal J_\nu=\emptyset$, then we have
	\[
	\rho_\nu(x) \sim  \min \Big\{\f{1}{|x|}, 1\Big\}\sim \f{1}{1+|x|}, \ \ x\in \mathbb R_+^n.
	\]
	
	\item If $\mathcal J_\nu\ne \emptyset$, then
	\begin{equation*} 
		\rho_\nu(x) = \f{1}{16}\min \Big\{\f{1}{|x|}, x_j: j\in \mathcal J_\nu  \Big\}, \ \ x\in \mathbb R_+^n.
	\end{equation*}
	\item If $\mathcal J_\nu=\{1,\ldots,n\}$, then
	\begin{equation*}
		\rho_\nu(x) = \f{1}{16}\min \Big\{\f{1}{|x|}, x_1,\ldots,x_n  \Big\}, \ \ x\in \mathbb R_+^n. 
	\end{equation*}
	In this case, the critical function $\rho_\nu$ is independent of $\nu$. More precisely, if $\nu,\nu'\in (-1/2,\vc)^n$, then 
	\[
	\rho_\nu(x) =\rho_{\nu'}(x)= \f{1}{16}\min \Big\{\f{1}{|x|}, x_1,\ldots,x_n  \Big\}, \ \ x\in \mathbb R_+^n.
	\]
\end{enumerate}

For each $j=1,\ldots, n$, define
\[
\rho_{\nu_j}(x_j) =\begin{cases} \displaystyle \f{1}{16}\times \min\{1,x_j^{-1}\}, \ \ \ &\nu_j=-1/2,\\
\displaystyle\f{1}{16}\min\{x_j,x_j^{-1}\}, \ \ \ &\nu_j>-1/2.	
\end{cases}
\]
Then we have 
\begin{equation}\label{eq- equivalence of rho}
\rho_\nu(x) \sim \min \Big\{\rho_{\nu_1}(x_1),\ldots, \rho_{\nu_n}(x_n)\Big\}.
\end{equation}

\begin{lem}\label{lem-critical function}
	Let  $\nu\in [-1/2,\vc)^n$ and $x\in \mathbb R^n_+$. Then for every $y\in 4B(x,\rho_\nu(x))$,
	\[
	\f{1}{2}\rho_\nu(x)\le\rho_\nu(y)\le 2\rho_\nu(x).
	\]
\end{lem}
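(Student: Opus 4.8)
The plan is to argue directly from the definition \eqref{eq- critical function} rather than through the one-dimensional factorization \eqref{eq- equivalence of rho}, since the explicit constant $\frac{1}{16}$ is exactly what lets the slowly-varying estimate come out with the sharp factors $\frac12$ and $2$. Write $r:=\rho_\nu(x)$, so that $16r=\min\{\,1/|x|,\,1,\,x_j:j\in\mathcal J_\nu\,\}$; in particular $16r\le 1$, hence $r\le \frac{1}{16}$, and each of the quantities $1/|x|$, $1$, and $x_j$ ($j\in\mathcal J_\nu$) is at least $16r$. The hypothesis $y\in 4B(x,\rho_\nu(x))$ means $|x-y|<4r$, so every coordinate satisfies $|x_j-y_j|<4r$ and $\big||x|-|y|\big|<4r$. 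Note also that $\mathcal J_\nu$ depends only on $\nu$, so the same index set enters both $\rho_\nu(x)$ and $\rho_\nu(y)$.

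For the lower bound $\rho_\nu(y)\ge \frac12 r$, I would show that every term in the minimum defining $\rho_\nu(y)$ exceeds $8r$, so that $\rho_\nu(y)=\frac{1}{16}\min\{\cdots\}\ge \frac{8r}{16}=\frac r2$. The constant term is harmless since $8r\le\frac12<1$. For $j\in\mathcal J_\nu$ we have $y_j>x_j-4r\ge 16r-4r=12r>8r$. For the term $1/|y|$ I would use $|y|<|x|+4r\le \frac{1}{16r}+4r$ together with $64r^2=(16r)(4r)\le \frac14$, which gives $4r<\frac{1}{16r}$ and hence $|y|<\frac{1}{8r}$, i.e. $1/|y|>8r$. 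This is where the slack built into $r\le \frac{1}{16}$ is used.

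For the upper bound $\rho_\nu(y)\le 2r$, I would observe that the minimum defining $16r$ is attained by at least one term and split into three cases according to which one. If it is the constant $1$, then $16r=1$ and the same constant forces $\rho_\nu(y)\le\frac{1}{16}=r$. If it is $1/|x|$, then $|x|=\frac{1}{16r}$ and $|y|\ge|x|-4r=\frac{1-64r^2}{16r}\ge\frac{3}{64r}$, so $1/|y|\le\frac{64r}{3}<32r$ and therefore $\rho_\nu(y)\le 2r$. If it is some $x_{j_0}$ with $j_0\in\mathcal J_\nu$, then $y_{j_0}<x_{j_0}+4r=20r<32r$, again giving $\rho_\nu(y)\le 2r$. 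In each case one term of the defining minimum of $\rho_\nu(y)$ is at most $32r$, which is all that is needed.

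The argument is elementary and involves no hard analysis; the only real obstacle is the constant bookkeeping, namely arranging the normalization factor $\frac{1}{16}$ against the dilation factor $4$ so that the conclusion lands exactly in $[\frac12\rho_\nu(x),\,2\rho_\nu(x)]$. The inequality $64r^2\le\frac14$ coming from $r\le \frac{1}{16}$ is precisely the margin that makes both the lower and the upper estimate close; a smaller normalization constant relative to the radius $4\rho_\nu(x)$ would yield only equivalence up to larger constants. Routing the proof through \eqref{eq- equivalence of rho} and establishing the one-dimensional analogue for each $\rho_{\nu_j}$ is a viable alternative, but it produces the doubling property only up to unspecified constants, so I prefer the direct computation above.
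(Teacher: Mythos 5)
Your proof is correct: the constant bookkeeping checks out at every step ($64r^2\le\frac14$ from $r\le\frac1{16}$, the lower bound $\ge 8r$ on every term of the minimum at $y$, and the bound $\le 32r$ on one term of the minimum at $y$ in each of the three cases), and the conclusion $\frac12\rho_\nu(x)\le\rho_\nu(y)\le 2\rho_\nu(x)$ follows with room to spare. Your organization, however, differs from the paper's in a way worth noting. The paper runs a nested case analysis: it first splits according to which term attains the minimum defining $\rho_\nu(x)$ (three cases), and inside each case it splits again according to which term attains the minimum defining $\rho_\nu(y)$, producing branch-by-branch constants such as $\frac23$, $\frac34$, $\frac45$, $\frac54$, $\frac43$. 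You instead exploit the logical asymmetry of the two bounds: the lower bound on $\rho_\nu(y)$ needs \emph{every} term of the minimum at $y$ to be large, and this follows uniformly (with no case split at all) from the fact that every term of the minimum at $x$ is at least $16r$; the upper bound needs only \emph{one} term at $y$ to be small, and this follows from a single case split on which term attains the minimum at $x$. This quantifier-aware decomposition eliminates the inner case analysis entirely and roughly halves the number of branches, at the cost of slightly cruder (but still sufficient) intermediate constants; the paper's finer branching yields sharper constants in each case but is longer and, in its third case, requires a somewhat delicate discussion of when the constant term $1$ can be the minimizer. Both arguments use exactly the same ingredients — the triangle inequality applied to $|x-y|<4r$ coordinatewise and to $|x|-|y|$, plus the normalization $r\le\frac1{16}$ — so yours is best described as a streamlined reorganization rather than a new method, but it is the cleaner of the two.
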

\begin{proof}
	Fix $y\in 4B(x,\rho_\nu(x))$. We consider the following three cases.
	
	\textbf{Case 1: $\displaystyle \rho_\nu(x)=\f{1}{16|x|}$.} In this situation, $|x| \ge 1$. Then,
	\begin{equation*}
	|y|\ge |x| -|x-y| \ge |x| - 4\rho_\nu(x) \ge 1 -\f{4}{16} =\f{3}{4},
	\end{equation*}
	since $|x|\ge 1$ and $\rho_\nu(x)\le 1/16$.
	
	Consequently,
	\[
	\f{1}{|y|}-16\rho_\nu(x)=\f{1}{|y|}-\f{1}{|x|}\le \f{|x-y|}{|x||y|}\le \f{4\rho_\nu(x)}{1\times \f{3}{4}}\le \f{16}{3}\rho_\nu(x),
	\]
	which implies
	\[
	\f{1}{|y|} \le \f{16}{3}\rho_\nu(x) + 16\rho_\nu(x).
	\]
	Therefore,
	\begin{equation*}
	\rho_\nu(y) \le \f{1}{16|y|} \le \f{4}{3}\rho_\nu(x).
	\end{equation*}
	
	For the reverse direction, 	since 
	\[
	\rho_\nu(y) =\f{1}{16}\min\{|y|^{-1}, 1, y_j: j\in\mathcal J_\nu\}, 
	\]
	we consider three subcases.
	
	$\bullet$ If $\displaystyle \rho_\nu(y) = \f{1}{16|y|}$,  then we have
	\[
	16\rho_\nu(x)-16\rho_\nu(y)= \f{1}{|x|}-\f{1}{|y|}\le \f{|x-y|}{|x||y|}\le \f{4\rho_\nu(x)}{1\times \f{3}{4}}\le \f{16}{3}\rho_\nu(x),
	\]
	which yields
	\[
	16\rho_\nu(y)\ge 16\rho_\nu(x) - \f{16}{3}\rho_\nu(x).
	\]
	It follows that
	\[
	\rho_\nu(y):=\f{1}{16|y|}\ge  \f{2}{3}\rho_\nu(x),
	\]
	as desired.

	$\bullet$ If $\rho_\nu(y)=1/16$, then obviously, $\displaystyle \rho_\nu(y)\ge   \rho_\nu(x)$ due to $\rho_\nu(x)\le 1/16$.
	
	$\bullet$  If 
	\[
	\rho_\nu(y) =\f{1}{16}\min\{y_j: j\in\mathcal J_\nu\}, 
	\]
	then we might assume that $\nu_1>-1/2$ and $\displaystyle \rho_\nu(y) =\f{y_1}{16}$. Then we have 
	\[
	16\rho_\nu(y)-x_1=y_1-x_1 \ge  -|x-y|\ge -4\rho_\nu(x),
	\]
	which implies
	\[
	16\rho_\nu(y) \ge x_1 -4\rho_\nu(x)\ge 16\rho_\nu(x) -4\rho_\nu(x) = 12\rho_\nu(x).
	\]
	It follows that 
	\[
	\rho_\nu(y)\ge \f{3}{4} \rho_\nu(x).
	\]
This completes our proof in this case.
	
	\bigskip
	
	\textbf{Case 2: $\displaystyle \rho_\nu(x)=\f{1}{16}\min\{x_j: j\in \mathcal J_\nu\}$.} Without loss of generality we might assume that $\nu_1>-1/2$ and $\displaystyle \rho_\nu(x) =\f{x_1}{16}$.
	
	We first have, for $y\in B(x,4\rho_\nu(x))$,
	\[
	y_1-16\rho_\nu(x)=y_1 - x_1 \le |x-y| \le 4\rho_\nu(x),
	\]
	which implies
	\[
	y_1 \le 16\rho_\nu(x) + 4\rho_\nu(x) = 20 \rho_\nu(x).
	\]
	Hence,
		\begin{equation}\label{eq- rhoy less than rho x-case 2}
	\rho_\nu(y)\le \f{y_1}{16}\le \f{5}{4}\rho_\nu(x).
	\end{equation}
	
	It remains to show that 
	\[
	\rho_\nu(y)\ge  \f{1}{2}\rho_\nu(x).
	\]
	We also have three cases.
	
	$\bullet$ If $\rho_\nu(y) =1/16$, this is obvious since $\rho_\nu(x)\le 1/16=\rho_\nu(y)$.
	
	$\bullet$ If $\rho_\nu(y) =\f{1}{16}\min\{y_j: j\in \mathcal J_\nu\}$, then for any $j\in \mathcal J_\nu$, we have
	\[
	y_j - x_j \ge -|x-y| \ge -4\rho_\nu(x),
	\]
	which implies
	\[
	y_j\ge x_j -4\rho_\nu(x)\ge 16\rho_\nu(x) -4\rho_\nu(x) = 12\rho_\nu(x).
	\]
	Consequently,
	\begin{equation*} 
	\rho_\nu(y):=\f{1}{16}\min\{y_j: j\in \mathcal J_\nu\}\ge \f{3}{4} \rho_\nu(x).
	\end{equation*}
	$\bullet$ If  $\rho_\nu(y)=\f{1}{16|y|}$, then $|y|\ge 1$. In this case,
	\[
	|x|\ge  |y| - |x-y|\ge 1 -4\rho_\nu(x)\ge 1 -\f{4}{16} = \f{3}{4},
	\] 
	since $|y|\ge 1$ and $\rho_\nu(x)\le 1/16$.
	
	In this situation, we have
	\[
	16\rho_\nu(y)-\f{1}{|x|}=\f{1}{|y|} -\f{1}{|x|}\ge -\f{|x-y|}{|x||y|}\ge -\f{4\rho_\nu(x)}{1\times \f{3}{4}} =-\f{16}{3}\rho_\nu(x),
	\]
	which implies
	\[
	16\rho_\nu(y)\ge \f{1}{|x|}-\f{16\rho_\nu(x)}{3} \ge 16\rho_\nu(x) -\f{16\rho_\nu(x)}{3},
	\]
	or equivalently,
	\[
	\rho_\nu(y) \ge \f{2}{3}\rho_\nu(x),
	\]
	as desired.
	
	\textbf{Case 3: $\rho_\nu(x)=1/16$.} It is obvious that $\rho_\nu(x)\ge \rho_\nu(y)$  since $\rho_\nu(y)\le 1/16$. It remains to prove 
	\[
	\f{1}{2}\rho_\nu(x)\le \rho_\nu(y).
	\]	
Note that, from the definition of $\rho_\nu(x)$, there are only two possibilities: the first is $|x|=1$ and $n=1$, and the second is $n\ge 2$ with $\mathcal J_\nu=\emptyset$, meaning $\nu_j=-1/2$ for $j=1,\ldots, n$ and $|x|\le 1$.
	
	$\bullet$ If $n=1$ and $x=1$, in this case, since $y\in B(x,4\rho_\nu(x))$,
	\[
	y< x+4\rho_\nu(x) = 1 +\f{4}{16} = \f{5}{4},
	\] 
	which implies 
	$$
	\f{1}{16y}\ge \f{1}{20}=\f{4}{5}\rho_\nu(x).
	$$
	
	In addition, since $y\in B(x,4\rho_\nu(x))$, we also have
	\[
	y\ge x-|x-y|\ge  x - 4\rho_\nu(x)\ge 1 -\f{4}{16} =\f{3}{4},
	\]
	which implies 
	$$\f{|y|}{16}\ge \f{1}{4\times 16}=\f{3}{4}\rho_\nu(x).
	$$
	
	Consequently,
	\[
	\rho_\nu(y)=\f{1}{16}\min\{1, |y|,|y^{-1}|\}= \f{1}{16}\min\{ |y|,|y^{-1}|\}\ge \f{3}{4}\rho_\nu(x),
	\]
	as desired.
	
	$\bullet$ If $n\ge 2$, $\mathcal J_\nu=\emptyset$, i.e., $\nu_j=-1/2$ for $j=1,\ldots, n$ and $|x|\le 1$, then we have $$\rho_\nu(y)=\f{1}{16}\min\{1,\f{1}{|y|}\}.$$ 
	Since $y\in B(x,4\rho_\nu(x))$, if $|y|\le 1$, then $\rho_\nu(y) =1/16$. It follows that $\rho_\nu(y)=\rho_\nu(x)$ and hence this completes our proof. Otherwise, if $|y|>1$, then $\rho_\nu(y) =\f{1}{16|y|}$. In this case, we have
	\[
	|y|\le |x| + |x-y|\le |x| +4\rho_\nu(x)\le 1 +\f{4}{16} = \f{5}{4},  
	\]
	which implies
	\[
	\rho_\nu(y) =\f{1}{16|y|}\ge \f{4}{5\times 16}=\f{4}{5}\rho_\nu(x)
	\]
	as desired.
	
	This completes our proof.

\end{proof}

\begin{cor}\label{cor1}
	Let $\nu\in[-1/2,\vc)^n$. There exist a family of balls $\{B(x_\xi,\rho_\nu(x_\xi)): \xi\in \mathcal I\}$ and a family of functions $\{\psi_\xi: \xi \in \mathcal I\}$ such that 
	\begin{enumerate}[{\rm (i)}]
		\item $\displaystyle \bigcup_{\xi\in \mathcal I}B(x_\xi,\rho_\nu(x_\xi)) = \mathbb R^n_+$;
		\item $\{B(x_\xi,\rho_\nu(x_\xi)/5): \xi\in \mathcal I\}$ is pairwise disjoint;
		\item  $\displaystyle \sum_{\xi\in \mathcal I} \chi_{B(x_\xi,\rho_\nu(x_\xi))}\lesi 1$;
		\item $\supp \psi_\xi\subset B(x_\xi,\rho_\nu(x_\xi))$ and $0\le \psi_\xi \le 1$ for each $\xi \in \mathcal I$;
		\item $\displaystyle \sum_{\xi\in \mathcal I} \psi_\xi =1$.
	\end{enumerate}
\end{cor}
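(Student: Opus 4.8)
The plan is to produce the covering by a Vitali-type selection and then upgrade it to a partition of unity, with the slowly-varying estimate of Lemma \ref{lem-critical function} supplying the crucial bounded-overlap property; everything else is the standard Vitali-plus-partition-of-unity machinery. First I would apply the $5r$-covering (Vitali) lemma to the family $\{B(x,\rho_\nu(x)/5): x\in \mathbb{R}^n_+\}$. Since $\rho_\nu(x)\le 1/16$ the radii are uniformly bounded, so there is a countable subfamily with centers $\{x_\xi\}_{\xi\in \mathcal I}$ for which the balls $B(x_\xi,\rho_\nu(x_\xi)/5)$ are pairwise disjoint and
\[
\mathbb{R}^n_+=\bigcup_{x\in \mathbb{R}^n_+}B(x,\rho_\nu(x)/5)\subset \bigcup_{\xi\in \mathcal I}5\,B(x_\xi,\rho_\nu(x_\xi)/5)=\bigcup_{\xi\in \mathcal I}B(x_\xi,\rho_\nu(x_\xi)).
\]
This delivers (i) and (ii) simultaneously.

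The main content is the bounded overlap (iii), and this is the one step that genuinely uses Lemma \ref{lem-critical function}. Fix $z\in \mathbb{R}^n_+$ and set $\mathcal I_z=\{\xi: z\in B(x_\xi,\rho_\nu(x_\xi))\}$. For $\xi\in \mathcal I_z$ we have $z\in 4B(x_\xi,\rho_\nu(x_\xi))$, so Lemma \ref{lem-critical function} gives $\tfrac12\rho_\nu(z)\le \rho_\nu(x_\xi)\le 2\rho_\nu(z)$. Consequently every center $x_\xi$ with $\xi\in \mathcal I_z$ lies in $B(z,2\rho_\nu(z))$, while the disjointness of the balls $B(x_\xi,\rho_\nu(x_\xi)/5)$ forces the centers to be $\rho_\nu(z)/5$-separated (for $\xi\neq \eta$ in $\mathcal I_z$ one has $|x_\xi-x_\eta|\ge \tfrac15(\rho_\nu(x_\xi)+\rho_\nu(x_\eta))\ge \tfrac15\rho_\nu(z)$). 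Hence the Euclidean balls $B(x_\xi,\rho_\nu(z)/10)$, $\xi\in \mathcal I_z$, are pairwise disjoint and all contained in $B(z,3\rho_\nu(z))$, so comparing Lebesgue measures bounds $\#\mathcal I_z$ by a constant depending only on $n$. This packing estimate is exactly where the doubling/slowly-varying nature of $\rho_\nu$ enters, and it is the crux of the argument; note also that the estimate is purely Euclidean and so is insensitive to balls being clipped by the boundary of $\mathbb{R}^n_+$ in the coordinates with $\nu_j=-1/2$.

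Finally, for the partition of unity (iv)--(v) I would take the tent functions $g_\xi(x)=\big(1-|x-x_\xi|/\rho_\nu(x_\xi)\big)_+$, each supported in $B(x_\xi,\rho_\nu(x_\xi))$ with $\{g_\xi>0\}$ equal to that ball and $0\le g_\xi\le 1$. By (i) the balls where the $g_\xi$ are positive cover $\mathbb{R}^n_+$, so $S:=\sum_{\xi}g_\xi$ is strictly positive everywhere, and by (iii) this sum is locally finite, hence well defined. Setting $\psi_\xi=g_\xi/S$ then gives functions with $0\le \psi_\xi\le 1$, $\supp \psi_\xi\subset B(x_\xi,\rho_\nu(x_\xi))$, and $\sum_{\xi}\psi_\xi=1$, which are precisely (iv) and (v). If smoothness of the $\psi_\xi$ were needed one would simply replace the tent functions by smooth radial bumps adapted to the balls; the argument is otherwise unchanged. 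Thus the only nontrivial input is Lemma \ref{lem-critical function}, used through the overlap bound, and I expect that step to be the sole obstacle.
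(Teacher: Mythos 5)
Your proof is correct and follows essentially the same route as the paper: Vitali's covering lemma applied to $\{B(x,\rho_\nu(x)/5):x\in\mathbb{R}^n_+\}$ gives (i)--(ii), the packing argument based on Lemma \ref{lem-critical function} gives the bounded overlap (iii) (a step the paper asserts without detail and you carry out fully, including the point that the volume comparison survives clipping by the boundary), and normalizing a family of bumps by their sum gives (iv)--(v). The only cosmetic difference is that you normalize tent functions, whereas the paper normalizes the indicator functions $\chi_{B(x_\xi,\rho_\nu(x_\xi))}/\sum_{\theta}\chi_{B(x_\theta,\rho_\nu(x_\theta))}$; both satisfy the stated properties.
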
	
\begin{proof}
	Consider the family $\{B(x,\rho_\nu(x)/5): x\in \mathbb R^n_+\}$. Since $\rho_\nu(x)\le 1$ for every $x\in \mathbb R^n_+$, by Vitali's covering lemma we can extract a sub-family denoted by $\{B(x_\xi,\rho_\nu(x_\xi)/5): \xi\in \mathcal I\}$ satisfying (i) and (ii).
	
	The item (iii) follows directly from (ii) and Lemma \ref{lem-critical function}.
	
	For each $\xi\in \mathcal I$, define
	\[
	\displaystyle \psi_\xi(x) = \begin{cases}
		\displaystyle \f{\chi_{B(x_\xi,\rho_\nu(x_\xi))}(x)}{\sum_{\theta\in \mathcal I}\chi_{B(x_\theta,\rho_\nu(x_\theta))}(x)}, \ \ & x\in B(x_\xi,\rho_\nu(x_\xi)),\\
		0, & x \notin B(x_\xi,\rho_\nu(x_\xi)).		
	\end{cases}
	\] 
	Then $\{\psi_\xi\}_{\xi\in \mathcal I}$ satisfies (iv) and (v).
	
	This completes our proof.
\end{proof}
\subsection{Hardy spaces associated to operators}
On our setting $(\mathbb R^n_+, |\cdot |, dx)$, we consider an operator $L$ satisfying the following two conditions:
\begin{enumerate}
	\item[(A1)] $L$ is  a nonnegative, {possibly unbounded}, self-adjoint operator on $L^2(\mathbb{R}^n_+)$;
	\item[(A2)] $L$ generates a semigroup $\{e^{-tL}\}_{t>0}$ whose kernel $p_t(x,y)$ admits a Gaussian upper bound. That is, there exist two positive finite constants $C$ and  $c$ so that for all $x,y\in \mathbb{R}^n_+$ and $t>0$,
	\begin{equation*}
		\displaystyle |p_t(x,y)|\leq \f{C}{t^{n/2}}\exp\Big(-\f{|x-y|^2}{ct}\Big).
	\end{equation*}
\end{enumerate}

We now recall the definition of the Hardy spaces associated to $L$ in \cite{HLMMY, JY}.

Let $0<p\le 1$. Then the Hardy space $H_{S_L}^p(\mathbb R^n_+)$  is
defined as the completion of
$$
\{f\in L^2 : S_Lf \in L^p(\mathbb R^n_+)\}
$$
under the norm $\|f\|_{H_{S_L}^p(\mathbb R^n_+)}=\|S_Lf\|_{L^p(\mathbb R^n_+)}$, where the square function $S_{L}$ is defined as
$$
S_Lf(x)=\Bigl(\int_0^\vc \int_{|x-y|<t}|t^2Le^{-t^2L}f(y)|^2\f{ dy dt}{t^{n+1}}\Bigr)^{1/2}.
$$

\begin{defn}[Atoms for $L$]\label{def: L-atom}
	Let $p\in (0,1]$ and $M\in \mathbb{N}$. A function $a$ supported in a ball $B$ is called a  $(p,M)_L$-atom if there exists a
	function $b$ in the domain of $L^M$ such that
	\begin{enumerate}[{\rm (i)}]
		\item  $a=L^M b$;
		\item $\supp L ^{k}b\subset B, \ k=0, 1, \dots, M$;
		\item $\|L^{k}b\|_{L^\vc(\mathbb{R}^n_+)}\leq
		r_B^{2(M-k)}|B|^{-\f{1}{p}},\ k=0,1,\dots,M$.
	\end{enumerate}
\end{defn}

\noindent Then the atomic Hardy spaces associated to the operator $L$ are defined as follows:
\begin{defn}[Atomic Hardy spaces for $L$]
	
	Given $p\in (0,1]$ and $M\in \mathbb{N}$, we  say that $f=\sum
	\lambda_ja_j$ is an atomic $(p,M)_L$-representation if
	$\{\lambda_j\}_{j=0}^\infty\in l^p$, each $a_j$ is a $(p,M)_L$-atom,
	and the sum converges in $L^2(\mathbb{R}^n_+)$. The space $H^{p}_{L,at,M}(\mathbb{R}^n_+)$ is then defined as the completion of
	\[
	\left\{f\in L^2(\mathbb{R}^n_+):f \ \text{has an atomic
		$(p,M)_L$-representation}\right\},
	\]
	with the norm given by
	$$
	\|f\|_{H^{p}_{L,at,M}(\mathbb{R}^n_+)}=\inf\left\{\left(\sum|\lambda_j|^p\right)^{1/p}:
	f=\sum \lambda_ja_j \ \text{is an atomic $(p,M)_L$-representation}\right\}.
	$$
\end{defn}

\noindent The maximal Hardy spaces associated to $L$ are defined as follows.

\begin{defn}[Maximal Hardy spaces for $L$]\label{defn-maximal Hardy spaces}
	
	For $f\in L^2(\mathbb{R}^n_+)$, we define  the \textbf{radial} maximal function  by
	\[
	\mathcal M_{L}(x)=\sup_{t>0}|e^{-tL}f(x)|.
	\]
	Given $p\in (0,1]$,  the Hardy space $H^{p}_{L}(\mathbb{R}^n_+)$ is defined as the completion of
	$$
	\left\{f
	\in L^2(\mathbb{R}^n_+): \mathcal M_{L} \in L^p(\mathbb{R}^n_+)\right\},
	$$
	with the norm given by
	$$
	\|f\|_{H^{p}_{L}(\mathbb{R}^n_+)}=\|\mathcal M_{L}\|_{L^p(\mathbb{R}^n_+)}.
	$$
\end{defn}
The following theorem is taken from \cite[Theorem 1.3]{SY}.
\begin{thm}\label{mainthm1}
	Let $p\in (0,1]$, and $M>\f{n}{2}\big(\f{1}{p}-1\big)$. Then the Hardy spaces $H^p_{S_L}(\mathbb R^n_+)$, $H^{p}_{L,at,M}(\mathbb{R}^n_+)$ and $H^{p}_{L}(\mathbb{R}^n_+)$  coincide with equivalent norms.
\end{thm}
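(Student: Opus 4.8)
The plan is to establish the three-way equivalence by inserting the maximal space $H^{p}_{L}(\mathbb{R}^n_+)$ into the classical identification $H^p_{S_L}(\mathbb R^n_+) = H^{p}_{L,at,M}(\mathbb{R}^n_+)$, realizing the cycle of continuous inclusions
\[
H^{p}_{L,at,M}(\mathbb{R}^n_+)\hookrightarrow H^{p}_{L}(\mathbb{R}^n_+)\hookrightarrow H^p_{S_L}(\mathbb R^n_+)\hookrightarrow H^{p}_{L,at,M}(\mathbb{R}^n_+).
\]
The only structural inputs are (A1)--(A2): self-adjointness furnishes a bounded Borel functional calculus on $L^2(\mathbb R^n_+)$, while the Gaussian bound propagates (through analyticity of $z\mapsto e^{-zL}$ in a sector and the spectral theorem) to Gaussian/Davies--Gaffney off-diagonal estimates for the whole family $(t^2L)^k e^{-t^2L}$, and guarantees that the radial maximal operator $\mathcal M_L$ and its nontangential companion are bounded on $L^q(\mathbb R^n_+)$ for $1<q\le\vc$, being dominated by the Hardy--Littlewood maximal function. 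These facts are used throughout without further comment.

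The backbone inclusions $H^p_{S_L}\hookrightarrow H^{p}_{L,at,M}$ and its converse are the tent-space atomic decomposition adapted to $L$ in \cite{HLMMY, JY}. For the first I would start from the Calderón reproducing formula $f=c\int_0^\vc (t^2L)^{M}e^{-t^2L}\big(t^2Le^{-t^2L}f\big)\,\f{dt}{t}$, valid on $\overline{R(L)}=L^2(\mathbb R^n_+)$, in which the factor $t^2Le^{-t^2L}f$, viewed as a section over $\mathbb R^{n+1}_+$, lies in the tent space $T^p$ with norm comparable to $\|S_Lf\|_{L^p}$. Decomposing that section into $T^p$-atoms over Whitney boxes and feeding each one through $(t^2L)^{M}e^{-t^2L}$ produces $(p,M)_L$-molecules; the hypothesis $M>\f{n}{2}\big(\f{1}{p}-1\big)$ is precisely what makes the molecular tails $L^p$-summable, and molecules split into atoms. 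The converse reduces, via the $L^2$-boundedness of $S_L$ (immediate from the spectral theorem since $\int_0^\vc|t^2\lambda e^{-t^2\lambda}|^2\f{dt}{t}$ is a finite constant), to the uniform bound $\|S_L a\|_{L^p}\lesi 1$ on $(p,M)_L$-atoms: split the conical integral over the dilates of the supporting ball, applying $L^2$-boundedness locally and the off-diagonal decay of $t^2Le^{-t^2L}$ together with the representation $a=L^M b$ on the far annuli.

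The easy new inclusion $H^{p}_{L,at,M}\hookrightarrow H^{p}_{L}$ likewise reduces to a uniform estimate $\|\mathcal M_L a\|_{L^p}\lesi 1$ on an atom $a$ supported in a ball $B$. On a fixed multiple of $B$ one uses the $L^q$-boundedness of $\mathcal M_L$, Hölder's inequality and the atomic size bound; on the dyadic annuli $2^{k+1}B\setminus 2^kB$ one writes $a=L^Mb$ and exploits the Gaussian decay of the kernel of $(t^2L)^{M}e^{-t^2L}$, after $M$ pairings against $b$, to gain the factor $2^{-kM}$, which is $p$-summable exactly when $M>\f{n}{2}\big(\f{1}{p}-1\big)$.

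The main obstacle is the remaining inclusion $H^{p}_{L}\hookrightarrow H^p_{S_L}$, that is, controlling the conical square function by the radial maximal function in $L^p$. This is the genuinely new content (the maximal characterization of \cite{SY}) and the hard direction, since one must recover full area information from a pointwise supremum without the harmonic-function tools—finite propagation speed, pointwise subordination—available in the classical Fefferman--Stein setting. I would follow \cite{SY}: first upgrade the radial maximal function to a nontangential one, proving $\big\|\sup_{|x-y|<t}|e^{-t^2L}f(y)|\big\|_{L^p}\lesi\|\mathcal M_Lf\|_{L^p}$ by a Caccioppoli/mean-value argument that uses the analyticity of $z\mapsto e^{-zL}$ and the Gaussian bound to trade a spatial shift for a time derivative; then carry out a Calderón--Zygmund decomposition of $f$ at height $\lambda$ organized by the level sets of this nontangential maximal function, obtaining a good/bad splitting whose bad pieces, after regularization by $(I-e^{-r_B^2 L})^{M}$, are $(p,M)_L$-molecules with coefficients controlled by $\lambda^p\,|\{\mathcal M_Lf>\lambda\}|$. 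Summing over $\lambda=2^k$ yields an atomic representation with $\ell^p$-norm $\lesi\|\mathcal M_Lf\|_{L^p}$, and combining this with the backbone closes the cycle and gives the asserted equivalence of norms.
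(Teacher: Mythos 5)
First, a structural point: the paper does not prove this statement at all --- Theorem \ref{mainthm1} is quoted verbatim from \cite[Theorem 1.3]{SY}, with the square-function/atomic part of the theory resting on \cite{HLMMY, JY}. So your proposal has to be judged against the cited literature. Your skeleton --- the cycle $H^{p}_{L,at,M}\hookrightarrow H^{p}_{L}\hookrightarrow H^p_{S_L}\hookrightarrow H^{p}_{L,at,M}$, with the backbone equivalence $H^p_{S_L}=H^{p}_{L,at,M}$ obtained from the Calder\'on reproducing formula, tent-space atoms and molecules --- is the correct architecture, and your two easy arrows are sound. (One quantitative slip: on the annulus $2^{k+1}B\setminus 2^kB$ the gain for $\mathcal M_L a$ is of order $2^{-k(n+2M)}$, not $2^{-kM}$; it is the former that makes the $L^p$ sum converge exactly when $M>\f{n}{2}(\f{1}{p}-1)$, so the threshold you state is right but your claimed gain would give the wrong threshold $M>n/p$.)

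The genuine gap is in the arrow $H^{p}_{L}\hookrightarrow H^p_{S_L}$, which is precisely the content of \cite{SY}, and both steps you propose there are unavailable under (A1)--(A2) alone. A ``Caccioppoli/mean-value argument'' has no meaning for an abstract nonnegative self-adjoint $L$: there is no Leibniz rule or integration by parts in $x$, no mean-value property for $e^{-t^2L}f$, and no H\"older regularity of the kernel. The only elementary route, writing $e^{-t^2L}=e^{-\f{t^2}{2}L}e^{-\f{t^2}{2}L}$ and using the Gaussian bound, yields $\sup_{|x-y|<t}|e^{-t^2L}f(y)|\lesi M(\mathcal M_Lf)(x)$ with $M$ the Hardy--Littlewood operator, which gives nothing in $L^p$ for $p\le 1$; in this setting the radial/nontangential equivalence is a \emph{corollary} of the full theorem, not a stepping stone toward it. Likewise, the Calder\'on--Zygmund decomposition at level sets with bad pieces regularized by $(I-e^{-r_B^2L})^{M}$ is the classical route whose control of the good parts requires pointwise (H\"older) kernel regularity; that requirement is exactly why the maximal characterization under mere Gaussian bounds was open before \cite{SY}, and why the present paper must invoke \cite{SY} rather than the approaches of \cite{YZ, BDK} --- the author explicitly remarks that H\"older continuity of the Laguerre heat kernel is unclear. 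What \cite{SY} actually does is different: a Merryfield-type quadratic estimate, proved by integrating by parts in $t$ only, exploiting self-adjointness and the pointwise domination $|e^{-t^2L}f(x)|\le \mathcal M_Lf(x)$, applied over sawtooth regions above the level sets of the \emph{radial} maximal function; this bypasses both the nontangential maximal function and any molecular decomposition of $f$. As written, your proof of the key inclusion would not go through.
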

\section{Some   kernel estimates}

This section is devoted to establishing some kernel estimates related to the heat kernel of $\mathcal L_\nu$. These estimates play an essential role in proving our main results. We begin by providing an explicit formula for the heat kernel of $\mathcal L_\nu$.

Let $\nu \in [-1/2,\vc)^n$. For each $j=1,\ldots, n$, similarly to $\mathcal L_\nu$, denote by $\mathcal L_{\nu_j}$ the self-adjoint extension of the differential operator 
\[
\mathcal L_{\nu_j} :=  -\frac{\partial^2}{\partial x_j^2} + x_j^2 + \frac{1}{x_j^2}(\nu_j^2 - \frac{1}{4})
\] 
on $C_c^\infty(\mathbb{R}_+)$ as the natural domain. It is easy to see that 
\[
\mathcal L_\nu =\sum_{j=1}^n \mathcal L_{\nu_j}.
\]

Let $p_t^\nu(x,y)$ be the kernel of $e^{-t\mathcal L_\nu}$ and let $p_t^{\nu_j}(x_j,y_j)$ be the kernel of $e^{-t\mathcal L_{\nu_j}}$ for each $j=1,\ldots, n$. Then we have
\begin{equation}\label{eq- prod ptnu}
	p_t^\nu(x,y)=\prod_{j=1}^n p_t^{\nu_j}(x_j,y_j).
\end{equation}
For $\nu_j\ge -1/2$, $j=1,\ldots, n,$, the kernel of $e^{-t\mathcal L_{\nu_j}}$ is given by
\begin{equation}
	\label{eq1-ptxy}
	p_t^{\nu_j}(x_j,y_j)=\f{2(rx_jy_j)^{1/2}}{1-r}\exp\Big(-\f{1}{2}\f{1+r}{1-r}(x_j^2+y_j^2)\Big)I_{\nu_j}\Big(\f{2r^{1/2}}{1-r}x_jy_j\Big),
\end{equation}
where $r=e^{-4t}$ and $I_\alpha$ is the usual Bessel funtions of an imaginary argument defined by
\[
I_\alpha(z)=\sum_{k=0}^\vc \f{\Big(\f{z}{2}\Big)^{\alpha+2k}}{k! \Gamma(\alpha+k+1)}, \ \ \ \ \alpha >-1.
\]
See for example \cite{Dziu, NS}.

Note that for each $j=1,\ldots, n$, we can rewrite the kernel $p_t^{\nu_j}(x_j,y_j)$ as follows
\begin{equation}
	\label{eq2-ptxy}
	\begin{aligned}
		p_t^{\nu_j}(x_j,y_j)=\f{2(rx_jy_j)^{1/2}}{1-r}&\exp\Big(-\f{1}{2}\f{1+r}{1-r}|x_j-y_j|^2\Big)\exp\Big(-\f{1-r^{1/2}}{1+r^{1/2}}x_jy_j\Big)\\
		&\times \exp\Big(-\f{2r^{1/2}}{1-r}x_jy_j\Big)I_{\nu_j}\Big(\f{2r^{1/2}}{1-r}x_jy_j\Big),
	\end{aligned}
\end{equation}
where $r=e^{-4t}$.

The following  properties of the Bessel function $I_\alpha$  with $\alpha>-1$ are well-known and are taken from \cite{L}:
\begin{equation}
	\label{eq1-Inu}
	I_\alpha(z)\sim z^\alpha, \ \ \ 0<z\le 1,
\end{equation}
\begin{equation}
	\label{eq2-Inu}
	I_\alpha(z)= \f{e^z}{\sqrt{2\pi z}}+S_\alpha(z),
\end{equation}
where
\begin{equation}
	\label{eq3-Inu}
	|S_\alpha(z)|\le  Ce^zz^{-3/2}, \ \ z\ge 1,
\end{equation}
and
\begin{equation}
	\label{eq4-Inu}
	\f{d}{dz}(z^{-\alpha}I_\alpha(z))=z^{-\alpha}I_{\alpha+1}(z).
\end{equation}

\subsection{The case $n=1$}	In this case, we consider $\nu>-1/2$ and $\nu=-1/2$ separately.

\subsubsection{\underline{The case $\nu = -1/2$}}

In this case $\displaystyle \rho_{\nu}(x)\sim \f{1}{ 1+|x|}$ for $x>0$. Without any confusion, in this section we still write $\rho_\nu$ instead of $\rho_{-1/2}$. Let $h_t(x,y)$ be the heat kernel of $e^{-tH}$, where $H$ is the Hermite operator define by
\[
H = -\f{d^2}{dx^2}+x^2.
\]
We first prove the followng estimate.
\begin{lem}\label{lem2}  For any  $k, \ell \in \mathbb{N}$ and $N>0$, there exists $C>0$ so that
	\begin{equation}
		\label{eq- partial alpha x}
		|x^\ell \partial_x^kh_t(x,y)|\le \f{Ce^{-t/8}}{t^{(\ell+k+1)/2}}\exp\Big(-\f{|x-y|^2}{ct}\Big) \Big[1+\f{\sqrt t}{\rho_\nu(x)}+\f{\sqrt t}{\rho_\nu(x)}\Big]^{-N}
	\end{equation}
	for all $t>0$ and $x,y\in \mathbb R$.
\end{lem}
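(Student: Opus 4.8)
The plan is to argue directly from Mehler's formula for the Hermite heat kernel. For $H=-d^2/dx^2+x^2$ on $\mathbb R$ one has
\[
h_t(x,y)=\f{1}{(2\pi\sinh 2t)^{1/2}}\exp\Big(-\f{(x-y)^2}{2\sinh 2t}-\f{1}{2}(x^2+y^2)\tanh t\Big),
\]
which follows from \eqref{eq1-ptxy} with $\nu=-1/2$ (using $I_{-1/2}(z)=\sqrt{2/\pi z}\,\cosh z$), or from the classical Mehler kernel after writing $(x^2+y^2)\cosh 2t-2xy=(x-y)^2+2(x^2+y^2)\sinh^2 t$. The point of this form is that it factors the kernel into a Gaussian in $x-y$ of width $\sim\sinh 2t$ times the symmetric factor $\exp(-\f{1}{2}(x^2+y^2)\tanh t)$, which will be the source of the rapid decay in $\sqrt t/\rho_\nu(x)\sim\sqrt t\,(1+|x|)$ (note $\rho_\nu(x)\sim 1/(1+|x|)$; I read the bracket in the statement, up to its evident typo, as such a decay, and the symmetry of this factor also gives the $y$-version if intended).

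Writing $\phi(x)=\f{(x-y)^2}{2\sinh 2t}+\f{1}{2}(x^2+y^2)\tanh t$, the exponent is quadratic in $x$, so $u:=\partial_x\phi=\f{x-y}{\sinh 2t}+x\tanh t$ is affine, $v:=\partial_x^2\phi=\f{1}{\sinh 2t}+\tanh t$ is constant in $x$, and $\partial_x^j\phi=0$ for $j\ge 3$. Hence $\partial_x^k h_t=(2\pi\sinh 2t)^{-1/2}P_k(u,v)\,e^{-\phi}$, where the polynomials $P_k$ obey the recursion $P_{k+1}=v\,\partial_u P_k-uP_k$ (because $\partial_x u=v$); these are Hermite-type polynomials, and I will use only the crude bound $|P_k(u,v)|\lesssim\sum_{2a+b=k}|v|^a|u|^b$. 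It then remains to absorb the factors $|x|^\ell$, $|v|^a$, $|u|^b$ into the two exponentials of $e^{-\phi}$.

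I would do this by splitting $|u|^b\lesssim(\sinh 2t)^{-b}|x-y|^b+(\tanh t)^b|x|^b$ and $|v|^a\lesssim(\sinh 2t)^{-a}+(\tanh t)^a$, multiplying out, and absorbing the resulting powers of $|x-y|$ and $|x|$ into fractions of the exponent via the elementary inequalities
\[
\f{|x-y|^m}{(\sinh 2t)^{m}}\exp\Big(-\f{(x-y)^2}{2\sinh 2t}\Big)\lesssim(\sinh 2t)^{-m/2}\exp\Big(-\f{(x-y)^2}{4\sinh 2t}\Big)
\]
and $|x|^{m}(\tanh t)^{m/2}\exp(-\tfrac14 x^2\tanh t)\lesssim\exp(-\tfrac18 x^2\tanh t)$. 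Since $2a+b=k$, the derivative factors contribute in aggregate at most $(\sinh 2t)^{-k/2}\lesssim t^{-k/2}$ on $t\le 1$, while $|x|^\ell$ absorbed into $\exp(-\tfrac18 x^2\tanh t)$ gives $(\tanh t)^{-\ell/2}\lesssim t^{-\ell/2}$; together with the prefactor this yields $t^{-(\ell+k+1)/2}$ (surplus powers of $t$ from the $x\tanh t$ piece of $u$ only help, as $t\le 1$). On $t\le 1$ the surviving Gaussian gives $\exp(-|x-y|^2/ct)$ because $\sinh 2t\lesssim t$, the surviving factor $\exp(-c(x^2+y^2)\tanh t)\sim\exp(-c\,t x^2)$ dominates $[1+\sqrt t\,(1+|x|)]^{-N}$ for every $N$ (Gaussian beats polynomial, and $[1+\sqrt t\,(1+|x|)]^{-N}\sim[1+\sqrt t\,|x|]^{-N}$ there), and the clean prefactor comes from $(\sinh 2t)^{-1/2}\lesssim t^{-1/2}e^{-t/8}$, valid for all $t>0$.

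The main obstacle is the large-$t$ regime, where $\exp(-(x-y)^2/2\sinh 2t)\to 1$ and the Gaussian can no longer produce $\exp(-(x-y)^2/ct)$ by itself. Here I would instead use that $(\sinh 2t)^{-1/2}\sim e^{-t}$ and $\tanh t\ge\tanh 1$: the factor $\exp(-\tfrac12(x^2+y^2)\tanh t)\le\exp(-c(x-y)^2)\le\exp(-(x-y)^2/ct)$ for $t\ge 1$ via $(x-y)^2\le 2(x^2+y^2)$, while the surplus exponential $e^{-7t/8}$ left over from the prefactor absorbs all polynomial powers of $t$, including the $t^{N/2}$ arising from expanding $[1+\sqrt t\,(1+|x|)]^{N}$. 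No single estimate is hard; the only real difficulty is keeping the two $t$-regimes and the two pieces of $u$ and $v$ straight in the bookkeeping so that the exponent $t^{-(\ell+k+1)/2}$, the Gaussian, and the $\rho_\nu$-decay all come out simultaneously.
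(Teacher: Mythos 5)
Your proof is correct, and it is genuinely different from the paper's. The paper never touches Mehler's formula: it imports from \cite[Lemma 2.2]{BD} the bound $|\partial_x^k h_t(x,y)|\lesssim e^{-t/4}t^{-(k+1)/2}\exp(-|x-y|^2/ct)\exp[-t/(c\rho_\nu(y)^2)]$, which carries decay only in $\rho_\nu(y)$, and then transfers that decay to $\rho_\nu(x)$ by a case analysis: when $|x|\le 1$ the factor $(1+\sqrt t/\rho_\nu(x))^{-N}$ is simply dominated by $e^{-t/8}$; when $|x|>1$ and $y$ is far from $x$ (so $|x-y|\gtrsim |x|$) part of the Gaussian is converted into powers of $\sqrt t/|x|\sim \sqrt t\,\rho_\nu(x)$; and when $y\sim x$ one uses $\rho_\nu(x)\sim\rho_\nu(y)$. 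The weight $x^\ell$ is then handled by a bootstrap: apply the $\ell=0$ bound with exponent $N+\ell$, peel off $(\rho_\nu(x)/\sqrt t)^\ell$, and use $|x|\le 1/\rho_\nu(x)$. You instead compute everything from scratch: since the Mehler exponent is quadratic in $x$, the derivatives are explicit Hermite-type polynomials $P_k(u,v)$ times $e^{-\phi}$, and all weights are absorbed into fixed fractions of the two exponential factors, with only a $t\le 1$ versus $t\ge 1$ dichotomy and no case analysis in $x,y$. Your route buys self-containedness and symmetry --- the decay in $\sqrt t/\rho_\nu(x)$ and in $\sqrt t/\rho_\nu(y)$ (your reading of the duplicated term in the bracket as a typo is right; the paper's own proof produces both) comes at once from the factor $\exp(-\tfrac12(x^2+y^2)\tanh t)$, and your bookkeeping of the $u$, $v$ pieces against the two exponentials is sound. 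The paper's route buys brevity, and its transfer-of-decay trick (Gaussian decay in $|x-y|$ traded for $\rho_\nu$-decay when $x$ and $y$ are far apart, $\rho_\nu(x)\sim\rho_\nu(y)$ when they are close) is reused in several later kernel estimates in Section 3. One small inaccuracy worth fixing, though it does not affect your proof: Mehler's formula for $h_t$ does not follow from \eqref{eq1-ptxy} with $\nu=-1/2$; that formula gives the half-line kernel $p_t^{-1/2}(x,y)=h_t(x,y)+h_t(-x,y)$, i.e.\ the even reflection, not $h_t$ itself. Your alternative derivation from the classical Mehler kernel, via $(x^2+y^2)\cosh 2t-2xy=(x-y)^2+2(x^2+y^2)\sinh^2 t$, is the correct one and is what your argument actually uses.
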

\begin{proof}
	We first prove \eqref{eq- partial alpha x} for $\ell=0$. From \cite[Lemma 2.2]{BD}, we have
	\begin{equation}
		\label{eq- partial alpha x BD}
		|\partial_x^kh_t(x,y)|\le \f{Ce^{-t/4}}{t^{(k+1)/2}}\exp\Big(-\f{|x-y|^2}{ct}\Big)\exp\Big[-\f{t}{c\rho_\nu(y)^2}\Big].
	\end{equation}
	If $|x|\le 1$, then $\rho_\nu(x)\sim 1$. Consequently,
	\[
	e^{-t/8}\lesi  \exp\Big[-\f{t}{c'\rho_\nu(x)^2}\Big].
	\]
	This, along with \eqref{eq- partial alpha x BD}, implies
	\[
	|\partial_x^kh_t(x,y)|\le \f{Ce^{-t/8}}{t^{(k+1)/2}}\exp\Big(-\f{|x-y|^2}{ct}\Big)\exp\Big[-\f{t}{c\rho_\nu(y)^2}- \f{t}{c'\rho_\nu(x)^2}\Big],
	\]
	which implies \eqref{eq- partial alpha x} for $\ell = 0$ whenever $|x|\le 1$.

	If $|x|>1$, then $\displaystyle \rho_\nu(x)\sim \f{1}{|x|}$. We now consider two cases.
	
	\textbf{Case 1: $|y|\le |x|/2$ or $|y|>2|x|$}. In this case $|x-y|>x/2$. This, together with \eqref {eq- partial alpha x BD}, implies, for any $K\ge 0$,
	\[
	\begin{aligned}
		|\partial_x^kh_t(x,y)|&\le \f{Ce^{-t/4}}{t^{(k+1)/2}}\exp\Big(-\f{|x-y|^2}{2ct}\Big)\Big( \f{\sqrt t}{|x-y|}\Big)^{K}\exp\Big[-\f{t}{c\rho_\nu(y)^2}\Big]\\
		&\le \f{Ce^{-t/4}}{t^{(k+1)/2}}\exp\Big(-\f{|x-y|^2}{2ct}\Big)\Big(  \f{\sqrt t}{x}\Big)^{K}\exp\Big[-\f{t}{c\rho_\nu(y)^2}\Big]\\
		&\sim  \f{Ce^{-t/4}}{t^{(k+1)/2}}\exp\Big(-\f{|x-y|^2}{2ct}\Big)(\sqrt t\rho_\nu(x))^{K}\exp\Big[-\f{t}{c\rho_\nu(y)^2}\Big].
	\end{aligned}
	\]
	Since $e^{-t/8}(\sqrt t\rho_\nu(x))^{K}\lesi (\f{\rho_\nu(x)}{\sqrt t})^K$, we further imply
	\[
	\begin{aligned}
		|\partial_x^kh_t(x,y)|&\lesi   \f{e^{-t/8}}{t^{(k+1)/2}}\exp\Big(-\f{|x-y|^2}{2ct}\Big)\Big(\f{\rho_\nu(x)}{\sqrt t}\Big)^{K}\exp\Big[-\f{t}{c\rho_\nu(y)^2}\Big]
	\end{aligned}
	\]
	for every $K\ge 0$.
	
	Consequently,
	\[
	\begin{aligned}
		|\partial_x^kh_t(x,y)|&\lesi   \f{e^{-t/8}}{t^{(k+1)/2}}\exp\Big(-\f{|x-y|^2}{2ct}\Big)\Big(1+\f{\sqrt t}{\rho_\nu(x)}\Big)^{-N}\exp\Big[-\f{t}{c\rho_\nu(y)^2}\Big].
	\end{aligned}
	\]
	This ensures \eqref{eq- partial alpha x} for $\ell = 0$ in this case.
	
	\noindent \textbf{Case 2: $ |x|/2\le |y|\le 2|x|$}. In this case, $\rho_\nu(x)\sim \rho_\nu(y)$ since $|y|\sim |x| >1$. Hence, the estimate \eqref{eq- partial alpha x} for $\ell = 0$ follows directly from \eqref{eq- partial alpha x BD}.
	
	\medskip
	
	We have proved \eqref{eq- partial alpha x} for $\ell = 0$. We now prove \eqref{eq- partial alpha x} for $\ell \ge 0$. Indeed, applying \eqref{eq- partial alpha x} for $\ell = 0$ we have, for any $N, l>0$,
	\[
	|\partial_x^kh_t(x,y)|\le \f{Ce^{-t/8}}{t^{(k+1)/2}}\exp\Big(-\f{|x-y|^2}{ct}\Big) \Big[1+\f{\sqrt t}{\rho_\nu(x)}+\f{\sqrt t}{\rho_\nu(x)}\Big]^{-N-l},
	\]
	which implies
	\[
	|\partial_x^kh_t(x,y)|\le \f{Ce^{-t/8}}{t^{(k+1)/2}}\exp\Big(-\f{|x-y|^2}{ct}\Big) \Big[1+\f{\sqrt t}{\rho_\nu(x)}+\f{\sqrt t}{\rho_\nu(x)}\Big]^{-N}\Big(\f{\rho_\nu(x)}{\sqrt t}\Big)^l.
	\]
	Therefore,
	\[
	\Big(\f{1}{\rho_\nu(x)}\Big)^{l}|\partial_x^kh_t(x,y)|\le \f{Ce^{-t/8}}{t^{(\ell+k+1)/2}}\exp\Big(-\f{|x-y|^2}{ct}\Big) \Big[1+\f{\sqrt t}{\rho_\nu(x)}+\f{\sqrt t}{\rho_\nu(x)}\Big]^{-N}.
	\]
	This, along with the fact $|x|\le \f{1}{\rho_\nu(x)}$ for all $x\in \mathbb R$, yields \eqref{eq- partial alpha x}.
	
	This completes our proof.
\end{proof}

\begin{prop}
	\label{prop- derivative heat kernel nu = -1/2}
	For any  $k, \ell \in \mathbb{N}$ and $N>0$, there exists $C>0$ so that
	\begin{equation}
		\label{eq- partial alpha x nu = -1/2}
		|x^\ell \delta^kp^{-1/2}_t(x,y)|\le \f{1}{t^{(\ell+k+1)/2}}\exp\Big(-\f{|x-y|^2}{ct}\Big) \Big[1+\f{\sqrt t}{\rho_\nu(x)}+\f{\sqrt t}{\rho_\nu(x)}\Big]^{-N}
	\end{equation}
	and
\begin{equation}
	\label{eq- partial alpha x nu = -1/2 dual}
	|x^\ell (\delta^*)^kp^{-1/2}_t(x,y)|\le \f{1}{t^{(\ell+k+1)/2}}\exp\Big(-\f{|x-y|^2}{ct}\Big) \Big[1+\f{\sqrt t}{\rho_\nu(x)}+\f{\sqrt t}{\rho_\nu(x)}\Big]^{-N}
\end{equation}
	
	for all $t>0$ and $x,y>0$.
\end{prop}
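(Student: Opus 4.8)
The plan is to reduce everything to the Hermite heat kernel estimates already established in Lemma~\ref{lem2}. The starting point is the observation that for $\nu=-1/2$ the potential term $\f{1}{x^2}(\nu^2-\f14)$ vanishes, so $\mathcal L_{-1/2}=-\f{d^2}{dx^2}+x^2=H$, while the partial derivative reduces to $\delta=\partial_x+x$ and its adjoint to $\delta^*=-\partial_x+x$. Using the identity $I_{-1/2}(z)=\sqrt{\f{2}{\pi z}}\cosh z$ in the explicit formula \eqref{eq1-ptxy}, I would first establish the reflection formula
\[
p_t^{-1/2}(x,y)=h_t(x,y)+h_t(x,-y),\qquad x,y>0,
\]
by writing $\cosh z=\f12(e^z+e^{-z})$ and matching the two resulting Gaussians against the Mehler kernel of $H$; here the prefactor $\f{2(rxy)^{1/2}}{1-r}$ combines with the $z^{-1/2}$ from $I_{-1/2}$ to produce exactly the Mehler normalisation (with $r=e^{-4t}$, so that $\coth 2t=\f{1+r}{1-r}$ and $(\sinh 2t)^{-1}=\f{2r^{1/2}}{1-r}$).

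Granting this, the second step is purely algebraic: expand the differential operator $\delta^k=(\partial_x+x)^k$ as a finite sum $\sum_{j=0}^k P_{k,j}(x)\,\partial_x^j$, where each coefficient $P_{k,j}$ is a polynomial with $\deg P_{k,j}\le k-j$ (this follows inductively from $[\partial_x,x]=1$). Consequently $x^\ell\delta^k p_t^{-1/2}(x,y)$ is a finite linear combination of terms $x^m\,\partial_x^j h_t(x,\pm y)$ with $m+j\le \ell+k$. Each such term is bounded by Lemma~\ref{lem2}: for the reflected argument $-y$ one uses that $\rho_\nu$ depends only on the first variable $x$, and that $|x-(-y)|=x+y\ge |x-y|$ for $x,y>0$, so the reflected terms obey the same—indeed a better—Gaussian bound. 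This yields, for each term,
\[
\big|x^m\partial_x^j h_t(x,\pm y)\big|\lesi \f{e^{-t/8}}{t^{(m+j+1)/2}}\exp\Big(-\f{|x-y|^2}{ct}\Big)\Big[1+\f{\sqrt t}{\rho_\nu(x)}\Big]^{-N}.
\]

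The final step removes the factor $e^{-t/8}$ and upgrades the power of $t$ to the required $t^{-(\ell+k+1)/2}$. Here I would split into $0<t\le 1$ and $t\ge 1$: since $m+j\le \ell+k$, for $t\le 1$ one has $t^{-(m+j+1)/2}\le t^{-(\ell+k+1)/2}$ directly, while for $t\ge 1$ the elementary bound $e^{-t/8}t^{(\ell+k-m-j)/2}\lesi 1$ lets the exponential absorb the excess power of $t$. Summing the finitely many terms gives \eqref{eq- partial alpha x nu = -1/2}, and the identical argument applied to $(-\partial_x+x)^k$ gives \eqref{eq- partial alpha x nu = -1/2 dual}.

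The only genuinely delicate point is the reflection identity in the first step; once $p_t^{-1/2}$ is expressed through $h_t$, everything reduces mechanically to Lemma~\ref{lem2} together with the elementary bookkeeping of the polynomial coefficients $P_{k,j}$ and the two-regime comparison of powers of $t$. I therefore expect no serious obstacle beyond verifying the normalisation constants in $p_t^{-1/2}(x,y)=h_t(x,y)+h_t(x,-y)$.
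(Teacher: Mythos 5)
Your proposal is correct and follows essentially the same route as the paper: both rest on the reflection identity $p_t^{-1/2}(x,y)=h_t(x,y)+h_t(-x,y)$ (the paper writes $h_t(-x,y)$, which equals your $h_t(x,-y)$ by symmetry of the Mehler kernel), the expansion of $x^\ell\delta^k$ and $x^\ell(\delta^*)^k$ into finitely many terms $x^m\partial_x^j$ with $m+j\le \ell+k$, and an application of Lemma \ref{lem2} to each such term. The only differences are presentational: the paper cites \cite[p.~441]{NS} for the reflection formula where you derive it from $I_{-1/2}(z)=\sqrt{2/(\pi z)}\cosh z$, and you make explicit the two-regime absorption of the excess powers of $t$ by the factor $e^{-t/8}$, which the paper leaves implicit.
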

\begin{proof}
	Since $\nu=-1/2$, $\delta =\partial_x + x$ and $\delta^* =-\partial_x + x$. Hence,
	\[
	|x^\ell \delta^kp^{-1/2}_t(x,y)|+|x^\ell (\delta^*)^kp^{-1/2}_t(x,y)|\lesi \sum_{i,j: i+j\le \ell+k} x^i |\partial^j p_t^{-1/2}(x,y)|.
	\]
	On the other hand, 
	\[
	p_t^{-1/2}(x,y) = h_t(x,y)+h_t(-x,y)
	\]	
	for all $t>0$ and $x,y>0$. See \cite[p. 441]{NS}.
	
	Therefore,
	\[
	|x^\ell \delta^kp^{-1/2}_t(x,y)|+|x^\ell (\delta^*)^kp^{-1/2}_t(x,y)|\lesi \sum_{i,j: i+j\le \ell+k} \Big[x^i |\partial^j h_t(x,y)|+ x^i |\partial^j h_t(-x,y)|\Big].
	\]
	This, together with Proposition \ref{lem2}, implies \eqref{eq- partial alpha x nu = -1/2}  and \eqref{eq- partial alpha x nu = -1/2 dual}.

	This completes our proof.
	
\end{proof}

\bigskip

\subsubsection{\underline{The case $\nu > -1/2$}}

\bigskip

In this case, note that $\displaystyle \rho_\nu(x)=\tfrac{1}{16}\min \{x,x^{-1}\}$ for $x\in (0,\vc)$. We first have the following result.
\begin{prop}\label{prop-heat kernel}
	Let $\nu >-1/2$. Then we have
	\begin{equation}
	\label{eq-ptxy rho}
	\begin{aligned}
	p_t^\nu(x,y)\lesi \f{e^{-t/2}}{\sqrt t}\exp\Big(-\f{|x-y|^2}{ct}\Big)\Big(1+\f{\sqrt t}{\rho_\nu(x)} +\f{\sqrt t}{\rho_\nu(y)}\Big)^{-(\nu+1/2)}
	\end{aligned}
	\end{equation}
	for all $t>0$ and all $x,  y\in (0,\vc)$.
\end{prop}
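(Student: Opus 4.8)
The plan is to start from the factorized formula \eqref{eq2-ptxy} and to track its four factors separately. Writing $r=e^{-4t}$ and
\[
z=\f{2r^{1/2}}{1-r}xy,
\]
the kernel $p_t^\nu(x,y)$ is the product of the prefactor $A=\f{2(rxy)^{1/2}}{1-r}$, the Gaussian factor $B=\exp\big(-\tfrac12\tfrac{1+r}{1-r}|x-y|^2\big)$, the factor $C=\exp\big(-\tfrac{1-r^{1/2}}{1+r^{1/2}}xy\big)$, and the Bessel factor $e^{-z}I_\nu(z)$ (all are positive, so no absolute values are needed). The first thing I would record is the uniform Bessel bound
\[
e^{-z}I_\nu(z)\lesi z^\nu(1+z)^{-\nu-1/2},\qquad z>0,
\]
which follows from \eqref{eq1-Inu} for $0<z\le1$ (both sides $\sim z^\nu$) and from \eqref{eq2-Inu}--\eqref{eq3-Inu} for $z\ge1$ (both sides $\sim z^{-1/2}$).

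Next I would carry out the algebra that collapses $A\,e^{-z}I_\nu(z)$ into a clean shape. Substituting $z$ and using $(xy)^{\nu+1/2}=(w/2r^{1/2})^{\nu+1/2}$ with $w=2r^{1/2}xy$, a direct computation gives
\[
A\,z^\nu(1+z)^{-\nu-1/2}=\sqrt2\,\f{r^{1/4}}{(1-r)^{1/2}}\Big(\f{w}{(1-r)+w}\Big)^{\nu+1/2}=\sqrt2\,\f{r^{1/4}}{(1-r)^{1/2}}\Big(1+\f{\sinh 2t}{xy}\Big)^{-(\nu+1/2)},
\]
where I used $\tfrac{1-r}{2r^{1/2}}=\sinh 2t$. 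Since $\tfrac{r^{1/4}}{(1-r)^{1/2}}=\tfrac{e^{-t}}{(1-e^{-4t})^{1/2}}\lesi \tfrac{e^{-t/2}}{\sqrt t}$ (checking small and large $t$ separately), and since $1-r\le 4t$ yields $B\le \exp(-|x-y|^2/8t)$, while $\tfrac{1-r^{1/2}}{1+r^{1/2}}=\tanh t$ gives $C=\exp(-\tanh t\,xy)$, combining the four factors produces the intermediate estimate
\[
p_t^\nu(x,y)\lesi \f{e^{-t/2}}{\sqrt t}\exp\Big(-\f{|x-y|^2}{8t}\Big)\,e^{-\tanh t\,xy}\Big(1+\f{\sinh 2t}{xy}\Big)^{-(\nu+1/2)}.
\]

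It then remains to convert the $xy$-dependent factor into the claimed $\rho_\nu$-decay. I would split $\exp(-|x-y|^2/8t)=\exp(-|x-y|^2/16t)\cdot\exp(-|x-y|^2/16t)$, keep the first half as the Gaussian in the statement (with $c=16$), and reduce the proposition to the pointwise inequality
\[
\Big(1+\f{\sqrt t}{\rho_\nu(x)}+\f{\sqrt t}{\rho_\nu(y)}\Big)^{\nu+1/2}e^{-\tanh t\,xy}\Big(1+\f{\sinh 2t}{xy}\Big)^{-(\nu+1/2)}\lesi \exp\Big(\f{|x-y|^2}{16t}\Big).
\]
Using $\tfrac{1}{\rho_\nu(x)}\sim\max\{x,1/x\}$, after taking $(\nu+1/2)$-th roots this is an elementary (if tedious) comparison between $1+\sqrt t\max\{x,1/x\}+\sqrt t\max\{y,1/y\}$ and $\big(1+\tfrac{\sinh2t}{xy}\big)\exp\big(c\tanh t\,xy+c|x-y|^2/t\big)$. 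I expect this last step to be the main obstacle, since the left-hand side sees $x$ and $y$ individually while the Bessel and $C$ quantities see only the product $xy$; the discrepancy is absorbed by distinguishing the regime where $x\sim y$ (so that $\rho_\nu(x)\sim\rho_\nu(y)$ and the substitution $\xi=\sqrt t/x$ reduces everything to bounds of the type $1+2\xi^2\gtrsim 1+2\xi$) from the regime where $x$ and $y$ are far apart (so that $|x-y|$ is large and the retained Gaussian $\exp(|x-y|^2/16t)$ dominates any polynomial growth), with the exponential $e^{-\tanh t\,xy}$ handling the range where $xy$ is large. The estimate is symmetric in $x$ and $y$, which cuts down the number of cases, and for large $t$ the quantity $\sinh 2t\sim e^{2t}/2$ is so large that the denominator already dominates and no Gaussian help is needed.
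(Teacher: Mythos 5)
Your proposal is correct, but it takes a genuinely different route from the paper's proof, and the difference is worth recording. The paper never forms your intermediate product bound: it splits at the level of the kernel into $t\le 1$ versus $t>1$ and $xy$ small versus large, using \eqref{eq1-ptxy} with the small-argument asymptotics \eqref{eq1-Inu} in one regime and \eqref{eq2-ptxy} with \eqref{eq2-Inu}--\eqref{eq3-Inu} in the other, and then in each regime runs a further case analysis on the positions of $x$ and $y$ (e.g.\ $|x-y|>y/2$ versus $x>y/2$) to extract the $\rho_\nu$-decay directly. You instead merge the two Bessel asymptotics into the single uniform bound $e^{-z}I_\nu(z)\lesssim z^\nu(1+z)^{-\nu-1/2}$ for all $z>0$, which, together with the identities $\tfrac{1-r}{2r^{1/2}}=\sinh 2t$ and $\tfrac{1-r^{1/2}}{1+r^{1/2}}=\tanh t$, collapses \eqref{eq2-ptxy} into
\[
p_t^\nu(x,y)\lesssim \frac{e^{-t/2}}{\sqrt t}\exp\Big(-\frac{|x-y|^2}{8t}\Big)\,e^{-\tanh t\,xy}\Big(1+\frac{\sinh 2t}{xy}\Big)^{-(\nu+1/2)};
\]
your algebra here is correct, and this estimate is sharper and more transparent than anything stated in the paper. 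What your route buys is exactly this clean intermediate bound plus the reduction of the proposition to a kernel-free pointwise inequality; what it does not buy is an escape from case analysis, since verifying that inequality needs essentially the same regime splitting the paper performs, only now on elementary functions rather than on the kernel. I checked that your final inequality is true and that your sketched mechanisms close it: for $y\le 2x$, the bound $\sinh 2t\ge 2t$ gives $\frac{\sqrt t}{x}\lesssim 1+\frac{t}{x^2}\lesssim 1+\frac{\sinh 2t}{xy}$; for $y>2x$ one combines $\frac{\sinh 2t}{xy}\ge\frac{2t}{xy}$ with $|x-y|>y/2$ to reduce to $\frac{y}{\sqrt t}\lesssim e^{cy^2/t}$; and the terms $\sqrt t\,x$, $\sqrt t\,y$ are absorbed by $e^{c\tanh t\,xy}$ near the diagonal and by the Gaussian together with the $\sinh$ factor off the diagonal, the cases $t\ge 1$ using $\sinh 2t\gtrsim e^{2t}$. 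So there is no gap, only unfinished bookkeeping. One side remark of yours is inaccurate, though: it is not true that for large $t$ the factor $1+\frac{\sinh 2t}{xy}$ alone dominates and ``no Gaussian help is needed'' --- take $xy=1$ with $x>e^{3t}$, where $\sqrt t\,x$ far exceeds $(1+\sinh 2t)e^{c\tanh t}$ and only the retained factor $e^{c|x-y|^2/t}\ge e^{cx^2/4t}$ rescues the inequality --- so in a full write-up the large-$t$ and off-diagonal mechanisms must be deployed together rather than treated as separate, non-interacting cases.
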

\begin{proof}
 By a careful examination the proof below we will also have
	\begin{equation}\label{eq-Gaussian}
	p_t^\nu(x,y)\lesi \f{e^{-t/2}}{\sqrt{t}}\exp\Big(-\f{|x-y|^2}{ct}\Big)
	\end{equation}
	for all $t>0$ and $x,y>0$.
	
	Hence, by the fact that $p_t^\nu(x,y)=p_t^\nu(y,x)$, it suffices to prove that 
	\begin{equation}\label{eq- heat kernel only x}
	p_t^\nu(x,y)\lesi \f{e^{-t/2}}{\sqrt{t}}\exp\Big(-\f{|x-y|^2}{ct}\Big)\Big(\f{\sqrt t}{\rho_\nu(x)}\Big)^{-(\nu+1/2)}
	\end{equation}
	for all $t>0$ and $x,y>0$.
	
	To do this, we consider the following cases $0<t\le 1$ and $t>1$.
	
	\noindent\textbf{Case 1: $0<t\le 1$.} In this situation, we have $r \sim r^{1/2}\sim 1$ and $1-r\sim 1-r^{1/2}\sim t$ where $r=e^{-4t}$. We now consider two subcases: $xy<t$ and $xy>t$.
	 
	 \textbf{Subcase 1.1: $xy<t$.} By \eqref{eq1-ptxy} and \eqref{eq1-Inu} we have
	 \begin{equation}
	 \label{eq-est1-proof Prop1}
	 \begin{aligned}
	 p_t^\nu(x,y)&\lesi \f{1}{e^{t}}\exp\Big(-\f{x^2+y^2}{ct}\Big)\Big(\f{xy}{t}\Big)^{\nu+1/2}\\
	 &\lesi \f{1}{e^t}\exp\Big(-\f{x^2+y^2}{2ct}\Big)\exp\Big(-\f{y^2}{2ct}\Big)\Big(\f{xy}{t}\Big)^{\nu+1/2}\\
	 &\lesi \f{1}{\sqrt{t}}\exp\Big(-\f{x^2+y^2}{2ct}\Big)\Big(\f{x}{\sqrt{t}}\Big)^{\nu+1/2}.
	 	\end{aligned}
	 \end{equation}
	 On the other hand, we have
	 \begin{equation}
	 \label{eq-est2-proof Prop1}
	 \begin{aligned}
	 p_t^\nu(x,y)&\lesi \f{1}{\sqrt{t}}\exp\Big(-\f{x^2+y^2}{ct}\Big)\Big(\f{xy}{t}\Big)^{\nu+1/2}\\
	 &\lesi \f{1}{\sqrt{t}}\exp\Big(-\f{x^2+y^2}{2ct}\Big)\exp\Big(-\f{x^2}{2ct}\Big)\\
	 &\lesi \f{1}{\sqrt{t}}\exp\Big(-\f{x^2+y^2}{2ct}\Big)\Big(\f{\sqrt{t}}{x}\Big)^{\nu+1/2}\\
	 &\lesi \f{1}{\sqrt{t}}\exp\Big(-\f{x^2+y^2}{2ct}\Big)\Big(\f{1}{x\sqrt{t}}\Big)^{\nu+1/2},
	 \end{aligned}
	 \end{equation}
	 where in the last inequality we used 
	 \[
	\Big(\f{\sqrt{t}}{x}\Big)^{\nu+1/2}\le \Big(\f{1}{x\sqrt{t}}\Big)^{\nu+1/2}
	 \]
	 since $t\in (0,1]$.
	 
	 From \eqref{eq-est1-proof Prop1} and \eqref{eq-est2-proof Prop1}, 
	 \begin{equation*}
	 p_t^\nu(x,y)\lesi \f{C}{\sqrt{t}}\exp\Big(-\f{x^2+y^2}{ct}\Big)\Big(\f{\rho_\nu(x)}{\sqrt{t}}\Big)^{\nu+1/2}.
	 \end{equation*}
	In addition, the first inequality in \eqref{eq-est1-proof Prop1} also implies \eqref{eq-Gaussian}. Hence,
	 \begin{equation}\label{eq14}
	 	p_t^\nu(x,y)\lesi \f{C}{\sqrt{t}}\exp\Big(-\f{x^2+y^2}{ct}\Big)\Big(1+\f{\sqrt{t}}{\rho_\nu(x)}+\f{\sqrt{t}}{\rho_\nu(y)}\Big)^{-(\nu+1/2)};
	 \end{equation}
	 which implies \eqref{eq- heat kernel only x}.
	 
	 \textbf{Subcase 1.2: $xy\ge t$.} By \eqref{eq2-ptxy}, \eqref{eq2-Inu} and \eqref{eq3-Inu}  we have
	 \begin{equation}
	 \label{ptxy subcase 1.2}
	 \begin{aligned}
	 	 p_t^\nu(x,y)&\le \f{C}{\sqrt{t}}\exp\Big(-\f{|x-y|^2}{ct}\Big)\exp\Big(-c'txy\Big)\\
	 	&\le \f{C}{\sqrt{t}}\exp\Big(-\f{|x-y|^2}{2ct}\Big)\Big(\f{t}{|x-y|^2}\Big)^{\nu+1/2}\\
	 	&\le \f{C}{\sqrt{t}}\exp\Big(-\f{|x-y|^2}{2ct}\Big)\Big(\f{xy}{|x-y|^2}\Big)^{\nu+1/2}\\
	 \end{aligned}
	 \end{equation}
	 
	 \underline{If $x\le 1$}, then $\rho_\nu(x)\sim x$. Note that $|x-y|+x\ge y$ so that either $|x-y|>y/2$ or $x>y/2$. If $|x-y|>y/2$, then $|x-y|\ge |x-y|/4+ |x-y|/2\ge |x-y|/4 + |y|/4\ge |x|/4$. This, together with  \eqref{ptxy subcase 1.2} and $xy\ge t$, implies	 \[
	 \begin{aligned}
	 p_t^\nu(x,y)&\le \f{C}{\sqrt{t}}\exp\Big(-\f{|x-y|^2}{ct}\Big) \Big(\f{xy}{y\sqrt {xy}}\Big)^{\nu+1/2}\\
	 &\le \f{C}{\sqrt{t}}\exp\Big(-\f{|x-y|^2}{ct}\Big)\Big(\f{x}{\sqrt{xy}}\Big)^{\nu+1/2}\\
	 &\le\f{C}{\sqrt{t}}\exp\Big(-\f{|x-y|^2}{ct}\Big)\Big(\f{\rho_\nu(x)}{\sqrt t}\Big)^{\nu+1/2}.
	 \end{aligned}
	  \]
	  Otherwise, if $x>y/2$, then $\rho_\nu(x)\sim x\gtrsim \sqrt t$ since $xy\ge t$. This, together with the first inequality in \eqref{ptxy subcase 1.2}, yields 
	  \[
	  \begin{aligned}
	  p_t^\nu(x,y)&\le \f{C}{\sqrt{t}}\exp\Big(-\f{|x-y|^2}{ct}\Big) \\
	  &\le\f{C}{\sqrt{t}}\exp\Big(-\f{|x-y|^2}{ct}\Big)\Big(\f{\rho_\nu(x)}{\sqrt t}\Big)^{\nu+1/2},
	  \end{aligned}
	  \]
	  as desired.
	  
	  \medskip
	  
	  \underline{If $x>1$}, then $\rho_\nu(x)\sim 1/x$. Since $|x-y|+y\ge x$, either $|x-y|>x/2$ or $y>x/2$. If $|x-y|>x/2$,  then $|x-y|\ge |y|/4$. This, in combination with  the second inequality of \eqref{ptxy subcase 1.2}, the fact $t\in (0,1]$ and $xy\ge t$, implies
	  \[
	  \begin{aligned}
	  p_t^\nu(x,y)&\le \f{C}{\sqrt{t}}\exp\Big(-\f{|x-y|^2}{ct}\Big)\Big(\f{t}{|x-y|^2}\Big)^{\nu+1/2}\\
	  &\lesi \f{C}{\sqrt{t}}\exp\Big(-\f{|x-y|^2}{ct}\Big)\Big(\f{1}{x\sqrt {xy}}\Big)^{\nu+1/2}\\
	  &\sim \f{C}{\sqrt{t}}\exp\Big(-\f{|x-y|^2}{ct}\Big)\Big(\f{\rho_\nu(x)}{\sqrt t}\Big)^{\nu+1/2}.
	  \end{aligned}
	  \]
	  If $y>x/2$, then from the first inequality of \eqref{ptxy subcase 1.2} we have
	  \[
	  \begin{aligned}
	  p_t^\nu(x,y)&\le \f{C}{\sqrt{t}}\exp\Big(-\f{|x-y|^2}{ct}\Big)\Big(\f{1}{(txy)^{1/2}}\Big)^{\nu+1/2}\\
	  &\le \f{C}{\sqrt{t}}\exp\Big(-\f{|x-y|^2}{ct}\Big)\Big(\f{1}{(tx^2)^{1/2}}\Big)^{\nu+1/2}\\
	  &\lesi \f{C}{\sqrt{t}}\exp\Big(-\f{|x-y|^2}{ct}\Big)\Big(\f{\rho_\nu(x)}{\sqrt{t}}\Big)^{\nu+1/2}
	  \end{aligned}
	  \]
	  which ensures \eqref{eq- heat kernel only x}.

	  \bigskip
	  
	  \noindent\textbf{Case 2: $t> 1$.} In this situation, we have $1-r\sim 1+r\sim 1+r^{1/2}\sim 1-r^{1/2}\sim 1$, where $r=e^{-4t}$. We also consider two subcases: $xy<t$ and $xy>t$.
	  
	  \textbf{Subcase 2.1: $xy<e^{2t}$.} By \eqref{eq1-ptxy} and \eqref{eq1-Inu} we have
	  \begin{equation*}
	  \begin{aligned}
	  p_t^\nu(x,y)&\lesi \f{1}{e^t}\exp\Big(-c(x^2+y^2)\Big)\Big(e^{-2t}xy\Big)^{\nu+1/2}
	  \end{aligned}
	  \end{equation*}
	  which implies \eqref{eq-Gaussian} and
	  \[
	  p_t^\nu(x,y)\lesi \f{1}{e^{t}}\exp\Big(-\f{x^2+y^2}{ct}\Big)\Big(\f{xy}{t}\Big)^{\nu+1/2}.
	  \]
	  Using the inequality
	  \[
	  \exp\Big(-\f{x^2+y^2}{ct}\Big)\lesi \exp\Big(-\f{x^2+y^2}{2ct}\Big) \Big(\f{\sqrt t}{y}\Big)^{\nu+1/2},
	  \]
	  we obtain
	  \begin{equation}\label{eq1}
	  p_t^\nu(x,y)\lesi \f{1}{e^{t}}\exp\Big(-\f{x^2+y^2}{2ct}\Big)\Big(\f{x}{\sqrt t}\Big)^{\nu+1/2}.
	  \end{equation}
	 On the other hand, from the following inequality
	 \[
	 \exp\Big(-\f{x^2+y^2}{ct}\Big)\lesi \exp\Big(-\f{x^2+y^2}{2ct}\Big) \Big(\f{\sqrt t}{x}\Big)^{\nu+1/2}
	 \]
	and the fact $t\ge 1$, we obtain
	 \[
	 \begin{aligned}
	 	p_t^\nu(x,y)&\lesi \f{1}{e^{t}}\exp\Big(-\f{x^2+y^2}{2ct}\Big)\Big(\f{\sqrt t}{x}\Big)^{\nu+1/2}\\
	 	&\lesi \f{1}{e^{t/2}\sqrt{t}}\exp\Big(-\f{x^2+y^2}{2ct}\Big)\Big(\f{1}{x\sqrt t}\Big)^{\nu+1/2}.
	 \end{aligned}
	 \]
	 This and \eqref{eq1} deduce that
	  \begin{equation*}
	  	p_t^\nu(x,y)\lesi  \f{1}{e^{t/2}\sqrt{t}}\exp\Big(-\f{x^2+y^2}{2ct}\Big)\Big(\f{\sqrt t}{\rho_\nu(x)}\Big)^{-(\nu+1/2)}
	  \end{equation*}
	  and hence,
	  \begin{equation}
	  	\label{eq- estimate x > 1 xy < e2t}
	  	p_t^\nu(x,y)\lesi  \f{1}{e^{t/2}\sqrt{t}}\exp\Big(-\f{x^2+y^2}{2ct}\Big)\Big(1+\f{\sqrt t}{\rho_\nu(x)}+\f{\sqrt t}{\rho_\nu(y)}\Big)^{-(\nu+1/2)},
	  \end{equation}
	  as desired.

	  \textbf{Subcase 2.2: $xy\ge e^{2t}$.} By \eqref{eq2-ptxy}, \eqref{eq2-Inu} and \eqref{eq3-Inu}  we have
	  \begin{equation}
	  \label{ptxy subcase 2.2}
	  p_t^\nu(x,y)\le \f{C}{e^t}\exp\Big(-c|x-y|^2\Big)\exp\Big(-c'xy\Big).
	  \end{equation}
	  \underline{If $x\le 1$}, then $\rho_\nu(x)\sim x$. Since $|x-y|+x\ge y$, either $|x-y|>y/2$ or $x>y/2$. If $|x-y|>y/2$, then from \eqref{ptxy subcase 2.2} and the fact that $xy\ge e^{2t}$ and $t>1$, we have
	  \begin{equation}
	 	\label{eq1- Subcase 22}
	 	\begin{aligned}
	 		p_t^\nu(x,y)&\lesi\f{1}{e^{t}}\exp\Big(-c|x-y|^2/2\Big)\Big(\f{1}{|x-y|^2}\Big)^{\nu/2+1/4}\\ &\lesi \f{C}{e^{t}}\exp\Big(-c|x-y|^2/2\Big)\Big(\f{1}{y^2}\Big)^{\nu/2+1/4}\\
	 		&\sim \f{C}{e^{t}}\exp\Big(-c|x-y|^2/2\Big)\Big(\f{x^2}{(xy)^2}\Big)^{\nu/2+1/4}\\
	 		&\sim \f{C}{e^{t/2}\sqrt{t}}\exp\Big(-c|x-y|^2/2\Big)\Big(\f{x^2}{e^{4t}}\Big)^{\nu/2+1/4}\\
	 		&\le\f{C}{e^{t/2}\sqrt{t}}\exp\Big(-\f{|x-y|^2}{ct}\Big)\Big(\f{\rho_\nu(x)}{\sqrt t}\Big)^{\nu+1/2}.
	 	\end{aligned}
	 \end{equation}
	   If $x>y/2$, then $\rho_\nu(x)\sim x\gtrsim \sqrt t$ since $xy\ge e^{2t}>t$.  This, along with \eqref{ptxy subcase 2.2}, yields
	  \begin{equation}
	  \label{eq2- Subcase 22}
	  \begin{aligned}
	  	p_t^\nu(x,y)&\le \f{C}{e^{t/2}\sqrt{t}}\exp\Big(-\f{|x-y|^2}{ct}\Big) \\
	  	&\le\f{C}{e^{t/2}\sqrt{t}}\exp\Big(-\f{|x-y|^2}{ct}\Big)\Big(\f{\rho_\nu(x)}{\sqrt t}\Big)^{\nu+1/2}.
	  \end{aligned}
	  \end{equation}

	  \underline{For the case $x>1$,} since $|x-y|+y\ge x$, either $|x-y|>x/2$ or $y>x/2$. If $|x-y|>x/2$,  then from \eqref{ptxy subcase 2.2},
	  \[
	  \begin{aligned}
	  	p_t^\nu(x,y)&\lesi \f{1}{e^t}\exp\Big(-\f{c|x-y|^2}{2}\Big)\Big(\f{1}{|x-y|}\Big)^{\nu+1/2}\\
	  	&\sim \f{1}{e^t}\exp\Big(-\f{|x-y|^2}{ct}\Big)\Big(\f{1}{x}\Big)^{\nu+1/2}\\
	  	&\sim \f{1}{e^t}\exp\Big(-\f{|x-y|^2}{ct}\Big)\rho_\nu(x)^{\nu+1/2}\\
	  	&\lesi  \f{1}{e^{t/2}\sqrt{t}}\exp\Big(-\f{|x-y|^2}{ct}\Big)\Big(\f{\rho_\nu(x)}{\sqrt t}\Big)^{\nu+1/2}.
	  \end{aligned}
	  \]
	  
	  If $y>x/2$, then from \eqref{ptxy subcase 2.2} we have
	  \[
	  \begin{aligned}
	  p_t^\nu(x,y)&\lesi \f{1}{e^t}\exp\Big(-\f{|x-y|^2}{ct}\Big)\Big(\f{1}{(xy)^{1/2}}\Big)^{\nu+1/2}\\
	  &\lesi \f{1}{e^{t}}\exp\Big(-\f{|x-y|^2}{ct}\Big)\Big(\f{1}{x}\Big)^{\nu+1/2}\sim \f{1}{e^{t}}\exp\Big(-\f{|x-y|^2}{ct}\Big)\rho_\nu(x)^{\nu+1/2}\\
	  &\lesi \f{1}{e^{t/2}\sqrt{t}}\exp\Big(-\f{|x-y|^2}{ct}\Big)\Big(\f{\rho_\nu(x)}{\sqrt{t}}\Big)^{\nu+1/2}
	  \end{aligned}
	  \]
	  which yields \eqref{eq- heat kernel only x}.
	  
	  This completes our proof.
\end{proof}

\bigskip

Next we will estimate the derivatives associated with $\mathcal L_\nu$ of the heat kernel. To do this, note that from \eqref{eq1-ptxy} we can rewrite $p_t^\nu(x,y)$ as
$$
p_t^\nu(x,y)=\Big(\f{2\sqrt r}{1-r}\Big)^{1/2}\Big(\f{2r^{1/2}}{1-r}xy\Big)^{\nu+1/2}\exp\Big(-\f{1}{2}\f{1+r}{1-r}(x^2+y^2)\Big)\Big(\f{2r^{1/2}}{1-r}xy\Big)^{-\nu}I_\nu\Big(\f{2r^{1/2}}{1-r}xy\Big),
$$
where $r=e^{-4t}$.

Setting 
$$
H_\nu(r;x,y)=\Big(\f{2r^{1/2}}{1-r}xy\Big)^{-\nu}I_\nu\Big(\f{2r^{1/2}}{1-r}xy\Big),
$$
then
\begin{equation}\label{eq-new formula of heat kernel}
p_t^\nu(x,y)=\Big(\f{2r^{1/2}}{1-r}\Big)^{1/2}\Big(\f{2r^{1/2}}{1-r}xy\Big)^{\nu+1/2}\exp\Big(-\f{1}{2}\f{1+r}{1-r}(x^2+y^2)\Big)H_\nu(r;x,y). 
\end{equation}
From \eqref{eq-new formula of heat kernel}, applying the chain rule,
\begin{equation}\label{eq- chain rule}
	\begin{aligned}
		\partial_x p_t^\nu(x,y)&=  \f{\nu+1/2}{x}p_t^\nu(x,y)- \f{1+r}{1-r}xp_t^\nu(x,y) + \f{2r^{1/2}}{1-r}y p_t^{\nu+1}(x,y),
	\end{aligned}
\end{equation}
which, in combination with the fact $\displaystyle \delta = \partial_x + x - \f{\nu+1/2}{x}$, implies
\begin{equation}\label{eq- chain rule 2}
	\begin{aligned}
		\delta p_t^\nu(x,y)&=  xp_t^\nu(x,y)- \f{1+r}{1-r}xp_t^\nu(x,y) + \f{2r^{1/2}}{1-r}y p_t^{\nu+1}(x,y).
	\end{aligned}
\end{equation}
On the other hand, since $\partial (xf) = f + \partial f$, we have 
\begin{equation}
	\label{eq-formula for delta m}
	\delta (xf) =   f + x\delta f
\end{equation}
for a suitable function $f$.
\begin{prop}
	\label{prop- prop 1} Let $\nu>-1/2$ and $\ell=0,1,2$.  For $k \in \mathbb N$, we have
	\begin{equation}
		\label{eq- delta k pt k ge 1}
		\Big| \Big(\f{x}{\sqrt t}\Big)^k\delta^\ell p_t^\nu(x,y)\Big|\lesi_{\nu,k}   \f{1}{t^{(\ell+1)/2}}\exp\Big(-\f{|x-y|^2}{ct}\Big)\Big(1+\f{\sqrt t}{\rho_\nu(x)}+\f{\sqrt t}{\rho_\nu(y)}\Big)^{-(\nu+1/2)}
	\end{equation}
	for $t\in (0,1)$ and $(x,y)\in D$, where 
	$$D:=\{(x,y): xy<t\} \cup\{(x,y): xy \ge t, y<x/2\}\cup\{(x,y): xy \ge t, y>2x\}.
	$$
\end{prop}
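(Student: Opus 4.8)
The plan is to reduce everything to the pointwise bound \eqref{eq-ptxy rho} of Proposition~\ref{prop-heat kernel} by first writing $\delta^\ell p_t^\nu$ ($\ell\in\{0,1,2\}$) as an explicit finite combination of shifted kernels, and then exploiting the geometry of $D$ to absorb all polynomial factors into the Gaussian. For the algebraic reduction I start from \eqref{eq- chain rule 2}, which simplifies (via $x-\tfrac{1+r}{1-r}x=-\tfrac{2r}{1-r}x$) to $\delta p_t^\nu=-\tfrac{2r}{1-r}x\,p_t^\nu+\tfrac{2r^{1/2}}{1-r}y\,p_t^{\nu+1}$, and apply the product rule \eqref{eq-formula for delta m} together with the analogue of \eqref{eq- chain rule} at the index $\nu+1$ (which produces an \emph{extra} term $\tfrac1x p_t^{\nu+1}$, because the fixed operator $\delta$ carries the index $\nu$, not $\nu+1$). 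Iterating, I obtain for $\ell\in\{0,1,2\}$ a representation
\[
\delta^\ell p_t^\nu(x,y)=\sum_{\text{finite}} c_{\ell}(r)\,x^{a}y^{b}x^{-d}\,p_t^{\nu+c}(x,y),
\]
where each $c_\ell(r)$ is a product of factors $\tfrac{2r}{1-r}$ and $\tfrac{2r^{1/2}}{1-r}$ (so $|c_\ell(r)|\lesi t^{-m}$ on $(0,1)$ with $r=e^{-4t}$), and the nonnegative integer exponents obey $c\ge d$ together with the bookkeeping inequality $2m-(a+b)+d\le\ell$. For $\ell=2$ the five terms are $-\tfrac{2r}{1-r}p_t^\nu$, $(\tfrac{2r}{1-r})^2x^2p_t^\nu$, $-2\tfrac{2r}{1-r}\tfrac{2r^{1/2}}{1-r}xy\,p_t^{\nu+1}$, $\tfrac{2r^{1/2}}{1-r}\tfrac yx\,p_t^{\nu+1}$ and $(\tfrac{2r^{1/2}}{1-r})^2y^2p_t^{\nu+2}$, each of which I check satisfies both constraints.

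The second ingredient is the geometry of $D$. I would prove the key fact that for every $(x,y)\in D$ one has either $|x-y|\ge\tfrac14\max\{x,y\}$ or $\max\{x,y\}\le 2\sqrt t$: on $\{y<x/2\}$ and $\{y>2x\}$ this is immediate, while on $\{xy<t\}$, if $\max\{x,y\}\ge 2\sqrt t$ then the smaller variable is $<\sqrt t/2$, forcing $|x-y|\ge\tfrac34\max\{x,y\}$. Consequently, for any $M\ge0$ and suitable $c'>c$,
\[
\Big(\frac{\max\{x,y\}}{\sqrt t}\Big)^{M}\exp\Big(-\frac{|x-y|^2}{ct}\Big)\lesssim_M \exp\Big(-\frac{|x-y|^2}{c't}\Big),\qquad (x,y)\in D,
\]
so that any factor $x^{a}y^{b}(x/\sqrt t)^k$ is harmlessly absorbed into the Gaussian.

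To assemble the bound I treat each term separately. Applying \eqref{eq-ptxy rho} to $p_t^{\nu+c}$ (legitimate since $\nu+c>-1/2$), multiplying by $(x/\sqrt t)^k$, and absorbing $x^{a+k}y^{b}$ via the inequality above converts the polynomial prefactor into a power of $t$, leaving the reduced expression $t^{-m+(a+b)/2-1/2}\,x^{-d}\,e^{-|x-y|^2/c't}\,(1+\sqrt t/\rho_\nu(x)+\sqrt t/\rho_\nu(y))^{-(\nu+c+1/2)}$. The negative power $x^{-d}$ is then tamed by the critical function: since $x^{-1}(1+\sqrt t/\rho_\nu(x)+\sqrt t/\rho_\nu(y))^{-1}\lesi t^{-1/2}$ (for $x\le1$ one has $\rho_\nu(x)\sim x$, giving $x^{-1}(1+\sqrt t/x)^{-1}\sim(x+\sqrt t)^{-1}\le t^{-1/2}$, while for $x>1$ simply $x^{-1}\le1\le t^{-1/2}$), I spend $d$ copies of the critical factor to absorb $x^{-d}$ at the cost of $t^{-d/2}$; the surviving exponent $\nu+c+1/2-d\ge\nu+1/2$ (here $c\ge d$ is used) only improves the decay. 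Collecting powers of $t$ yields the exponent $-m+(a+b)/2-1/2-d/2\ge-(\ell+1)/2$, which is exactly the inequality $2m-(a+b)+d\le\ell$ of the reduction step; this establishes \eqref{eq- delta k pt k ge 1}.

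The delicate point is not the Gaussian absorption, which is routine once the geometry of $D$ is noticed, but the singular factors $x^{-d}$ created in the first step by the index mismatch of $\delta$: near $x=0$ they blow up and can only be controlled by spending the \emph{extra} decay carried by the higher-index kernels $p_t^{\nu+c}$. Thus the whole argument hinges on the relation $c\ge d$ being preserved through the iteration, and verifying this relation together with the power count $2m-(a+b)+d\le\ell$ for every term produced by $\delta^2$ is where the care is required.
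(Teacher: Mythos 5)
Your proposal is correct and follows essentially the same route as the paper's own proof: both iterate the identity \eqref{eq- chain rule 2} together with the product rule \eqref{eq-formula for delta m} and the index shift $\delta = \widetilde{\delta} + \tfrac{1}{x}$, bound the resulting shifted kernels $p_t^{\nu+c}$ by Proposition \ref{prop-heat kernel}, absorb the polynomial factors into the Gaussian using the geometry of $D$, and tame the singular factors $x^{-d}$ by trading them against the critical-function weight (the paper's inequality $\rho_\nu(x)/x \lesi 1$ is exactly your $x^{-1}\bigl(1+\sqrt t/\rho_\nu(x)\bigr)^{-1}\lesi t^{-1/2}$, and your requirement $c\ge d$ is the paper's use of the extra decay of $p_t^{\nu+1}$). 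The only difference is organizational: the paper runs an induction on $\ell=0,1,2$ carrying the factor $(x/\sqrt t)^k$ in the inductive statement, whereas you flatten the recursion into an explicit five-term expansion and verify the uniform bookkeeping constraints term by term, which is the same computation presented in closed form.
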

\begin{proof}
	Denote
	\[
	\widetilde{ \delta} =  \partial_x  + x-\frac{1}{x}\Big(\nu+1 + \f{1}{2}\Big),
	\]
	which is the derivative associated with $\mathcal L_{\nu+1}$.
	
	Then we have $\delta = \widetilde{ \delta} +\f{1}{x}$.

	\noindent $\bullet$ \underline{For $\ell = 0$}, we consider two cases.
	\medskip
	
	- \textbf{Case 1.} If $xy< t$, then the desired estimate follows directly from \eqref{eq14}. 
	
	\medskip
	
	- \textbf{Case 2.} If $xy \ge t$  and ($ y<x/2$ or $y>2x$), then  $|x-y|\gtrsim x$. Consequently, by Proposition \ref{prop-heat kernel},
	\begin{equation*}
		\begin{aligned}
			\Big(\f{x}{\sqrt t}\Big)^k p_t^\nu(x,y)&\lesi \Big(\f{x}{\sqrt t}\Big)^k\Big(\f{\sqrt t}{|x-y|}\Big)^k \f{1}{\sqrt{t}}\exp\Big(-\f{|x-y|^2}{ct}\Big) \Big(1+\f{\sqrt t}{\rho_\nu(x)}+\f{\sqrt t}{\rho_\nu(y)}\Big)^{-(\nu+1/2)}\\
			&\lesi \f{1}{\sqrt{t}}\exp\Big(-\f{|x-y|^2}{ct}\Big) \Big(1+\f{\sqrt t}{\rho_\nu(x)}+\f{\sqrt t}{\rho_\nu(y)}\Big)^{-(\nu+1/2)}, 
		\end{aligned}
	\end{equation*}
	which proves \eqref{eq- delta k pt k ge 1} for $\ell = 0$.
	
	\bigskip
	
	\noindent $\bullet$ \underline{For $\ell = 1$}, from \eqref{eq- chain rule 2}  we have
	\[
	\delta  p_t^\nu(x,y)=  xp_t^\nu(x,y)- \f{1+r}{1-r}xp_t^\nu(x,y) + \f{2r^{1/2}}{1-r}y p_t^{\nu+1}(x,y),
	\]
	where $r=e^{-4t}$.
	
	In addition, $1+r\sim r\sim 1$ and $1-r\sim t$ as $t\in (0,1)$. This and the above identity imply
	\[
	\begin{aligned}
		\Big(\f{x}{\sqrt t}\Big)^k|\delta p_t^\nu(x,y)|&\lesi   \Big(\f{x}{\sqrt t}\Big)^k|xp_t^\nu(x,y)|+ \f{1}{t}\Big(\f{x}{\sqrt t}\Big)^k|xp_t^\nu(x,y)| + \Big(\f{x}{\sqrt t}\Big)^k\f{y}{t}|  p_t^{\nu+1}(x,y)|\\
		&\lesi    \f{1}{t}\Big(\f{x}{\sqrt t}\Big)^k|xp_t^\nu(x,y)| + \Big(\f{x}{\sqrt t}\Big)^k\f{y}{t}|  p_t^{\nu+1}(x,y)|\\
		&\lesi    \f{1}{t}\Big(\f{x}{\sqrt t}\Big)^k|xp_t^\nu(x,y)| + \Big(\f{x}{\sqrt t}\Big)^k\f{|y-x|}{t}|  p_t^{\nu+1}(x,y)|+ \Big(\f{x}{\sqrt t}\Big)^k\f{x}{t}|  p_t^{\nu+1}(x,y)|\\
		&\lesi    \f{1}{\sqrt t}\Big(\f{x}{\sqrt t}\Big)^{k+1} p_t^\nu(x,y)  + \Big(\f{x}{\sqrt t}\Big)^k\f{|y-x|}{t}  p_t^{\nu+1}(x,y) + \f{1}{\sqrt t}\Big(\f{x}{\sqrt t}\Big)^{k+1}  p_t^{\nu+1}(x,y).
	\end{aligned}
	\]
	Applying \eqref{eq- delta k pt k ge 1} with $\ell = 0$, 
	\[
		\begin{aligned}
	\f{1}{\sqrt t}\Big(\f{x}{\sqrt t}\Big)^{k+1} p_t^\nu(x,y)  +  \f{1}{\sqrt t}\Big(\f{x}{\sqrt t}\Big)^{k+1}  p_t^{\nu+1}(x,y)&\lesi \f{1}{t}\exp\Big(-\f{|x-y|^2}{ct}\Big) \Big(1+\f{\sqrt t}{\rho_\nu(x)}+\f{\sqrt t}{\rho_\nu(y)}\Big)^{-(\nu+1/2)}\\
	&\ \ \ \ + \f{1}{t}\exp\Big(-\f{|x-y|^2}{ct}\Big) \Big(1+\f{\sqrt t}{\rho_\nu(x)}+\f{\sqrt t}{\rho_\nu(y)}\Big)^{-(\nu+1+1/2)}\\
	&\lesi \f{1}{t}\exp\Big(-\f{|x-y|^2}{ct}\Big) \Big(1+\f{\sqrt t}{\rho_\nu(x)}+\f{\sqrt t}{\rho_\nu(y)}\Big)^{-(\nu+1/2)}
\end{aligned}
	\]
	and
	\[
	\begin{aligned}
		\Big(\f{x}{\sqrt t}\Big)^k\f{|y-x|}{t}  p_t^{\nu+1}(x,y)&\lesi \f{|y-x|}{t}\f{1}{\sqrt t}\exp\Big(-\f{|x-y|^2}{ct}\Big) \Big(1+\f{\sqrt t}{\rho_\nu(x)}+\f{\sqrt t}{\rho_\nu(y)}\Big)^{-(\nu+1+1/2)}\\
		&\lesi \f{1}{t}\exp\Big(-\f{|x-y|^2}{2ct}\Big) \Big(1+\f{\sqrt t}{\rho_\nu(x)}+\f{\sqrt t}{\rho_\nu(y)}\Big)^{-(\nu+1/2)}.
	\end{aligned}
	\]
	This ensures \eqref{eq- delta k pt k ge 1} for $\ell = 1$.
	
	\bigskip
	
	\noindent $\bullet$ \underline{For $\ell = 2$},  from \eqref{eq- chain rule 2} we have
	\[
	\delta^{2} p_t^\nu(x,y)=  \delta[xp_t^\nu(x,y)]- \f{1+r}{1-r}\delta[xp_t^\nu(x,y)] + \f{2r^{1/2}}{1-r}y \delta p_t^{\nu+1}(x,y).
	\]
	Moreover, we have $1+r\sim r\sim 1$ and $1-r\sim t$ as $t\in (0,1)$. This and the above identity imply
	\[
	\begin{aligned}
		\Big(\f{x}{\sqrt t}\Big)^k|\delta^{2} p_t^\nu(x,y)|&\lesi   \Big(\f{x}{\sqrt t}\Big)^k|\delta[xp_t^\nu(x,y)]|+ \f{1}{t}\Big(\f{x}{\sqrt t}\Big)^k|\delta[xp_t^\nu(x,y)]| + \Big(\f{x}{\sqrt t}\Big)^k\f{y}{t}| \delta p_t^{\nu+1}(x,y)|\\
		&\lesi    \f{1}{t}\Big(\f{x}{\sqrt t}\Big)^k|\delta[xp_t^\nu(x,y)]| + \f{y}{t}\Big(\f{x}{\sqrt t}\Big)^k| \delta p_t^{\nu+1}(x,y)|\\
		&\lesi    \f{1}{t}\Big(\f{x}{\sqrt t}\Big)^k|\delta [xp_t^\nu(x,y)]|+\f{1}{\sqrt t}\Big(\f{x}{\sqrt t}\Big)^{k+1}|\delta [xp_t^{\nu+1}(x,y)]|  + \f{|y-x|}{t}\Big(\f{x}{\sqrt t}\Big)^k| \delta p_t^{\nu+1}(x,y)|.
	\end{aligned}
	\]
	We now estimate each term in the right hand side of the above inequality. Firstly, by using \eqref{eq-formula for delta m} and \eqref{eq- delta k pt k ge 1} for $\ell = 0,1$ we have
	\[
	\begin{aligned}
		\f{1}{t}\Big(\f{x}{\sqrt t}\Big)^k|\delta [xp_t^\nu(x,y)]|&\lesi \f{1}{t}\Big(\f{x}{\sqrt t}\Big)^k| p_t^\nu(x,y)| +\f{x}{t}\Big(\f{x}{\sqrt t}\Big)^{k}|\delta p_t^\nu(x,y)|\\
		&\lesi \f{1}{t}\Big(\f{x}{\sqrt t}\Big)^k| p_t^\nu(x,y)| +\f{1}{\sqrt t}\Big(\f{x}{\sqrt t}\Big)^{k+1}|\delta p_t^\nu(x,y)|\\
		&\lesi \f{1}{t^{3/2}}\exp\Big(-\f{|x-y|^2}{ct}\Big)\Big(1+\f{\sqrt t}{\rho_\nu(x)}+\f{\sqrt t}{\rho_\nu(y)}\Big)^{-(\nu+1/2)}.
	\end{aligned}
	\] 
	For the second terms, by \eqref{eq-formula for delta m} and the fact $\delta =\widetilde \delta + \f{1}{x}$, 
	\[
	\begin{aligned}
		\f{1}{\sqrt t}\Big(\f{x}{\sqrt t}\Big)^{k+1}|\delta [xp_t^{\nu+1}(x,y)]|  &\lesi \f{1}{\sqrt t}\Big(\f{x}{\sqrt t}\Big)^{k+1}\big[ p_t^{\nu+1}(x,y) + x|\delta p_t^{\nu+1}(x,y)|\big]\\
		&\lesi \f{1}{\sqrt t}\Big(\f{x}{\sqrt t}\Big)^{k+1}  p_t^{\nu+1}(x,y) +\f{x}{\sqrt t}\Big(\f{x}{\sqrt t}\Big)^{k+1}|\widetilde \delta p_t^{\nu+1}(x,y)|\\
		& \ \ \ \ + \f{1}{\sqrt t}\Big(\f{x}{\sqrt t}\Big)^{k+1}  p_t^{\nu+1}(x,y).
	\end{aligned}
	\]
	This, in combination with \eqref{eq- delta k pt k ge 1} with $\ell = 0,1$ and $\nu+1$ taking place of $\nu$, implies that 
	\[
	\begin{aligned}
		\f{1}{\sqrt t}\Big(\f{x}{\sqrt t}\Big)^{k+1}|\delta [xp_t^{\nu+1}(x,y)]|  &\lesi \f{1}{t}\exp\Big(-\f{|x-y|^2}{ct}\Big)\Big(1+\f{\sqrt t}{\rho_\nu(x)}+\f{\sqrt t}{\rho_\nu(y)}\Big)^{-(\nu+1+1/2)}\\
		&\lesi \f{1}{t^{3/2}}\exp\Big(-\f{|x-y|^2}{ct}\Big)\Big(1+\f{\sqrt t}{\rho_\nu(x)}+\f{\sqrt t}{\rho_\nu(y)}\Big)^{-(\nu+1/2)}.
	\end{aligned}
	\]
	For the last term, using the fact $\delta =\widetilde \delta + \f{1}{x}$ we have
	\[
	\f{|y-x|}{t}\Big(\f{x}{\sqrt t}\Big)^k| \delta p_t^{\nu+1}(x,y)|\le \f{|y-x|}{t}\Big(\f{x}{\sqrt t}\Big)^k| \widetilde \delta p_t^{\nu+1}(x,y)|+\f{|y-x|}{t}\Big(\f{x}{\sqrt t}\Big)^k \f{1}{x}| p_t^{\nu+1}(x,y)|.
	\]
	Applying \eqref{eq- delta k pt k ge 1} with $\ell = 0,1$ and $\nu+1$ taking place of $\nu$,
	\[
	\begin{aligned}
		\f{|y-x|}{t}\Big(\f{x}{\sqrt t}\Big)^k| \widetilde \delta p_t^{\nu+1}(x,y)|&\lesi \f{|y-x|}{t}\f{1}{t}\exp\Big(-\f{|x-y|^2}{ct}\Big)\Big(1+\f{\sqrt t}{\rho_\nu(x)}+\f{\sqrt t}{\rho_\nu(y)}\Big)^{-(\nu+1+1/2)}\\
		&\lesi \f{1}{t^{3/2}}\exp\Big(-\f{|x-y|^2}{2ct}\Big)\Big(1+\f{\sqrt t}{\rho_\nu(x)}+\f{\sqrt t}{\rho_\nu(y)}\Big)^{-(\nu+1+1/2)}
	\end{aligned}
	\]
	and
	\[
	\begin{aligned}
		\f{|y-x|}{t}\Big(\f{x}{\sqrt t}\Big)^k \f{1}{x}| p_t^{\nu+1}(x,y)|&\lesi \f{|y-x|}{t}\f{1}{x\sqrt t}\exp\Big(-\f{|x-y|^2}{ct}\Big)\Big(1+\f{\sqrt t}{\rho_\nu(x)}+\f{\sqrt t}{\rho_\nu(y)}\Big)^{-(\nu+1+1/2)}\\
		&\lesi \f{1}{xt}\exp\Big(-\f{|x-y|^2}{2ct}\Big)\Big(1+\f{\sqrt t}{\rho_\nu(x)}+\f{\sqrt t}{\rho_\nu(y)}\Big)^{-(\nu+1+1/2)}\\
		&\lesi \f{1}{t^{3/2}}\exp\Big(-\f{|x-y|^2}{2ct}\Big)\Big(1+\f{\sqrt t}{\rho_\nu(x)}+\f{\sqrt t}{\rho_\nu(y)}\Big)^{-(\nu+1/2)},
	\end{aligned}
	\]
	where in the last inequality we used the fact $\f{\rho_\nu(x)}{x}\lesi 1$.
	
	This  ensures \eqref{eq- delta k pt k ge 1} for $\ell = 2$.
	
	This completes our proof.
\end{proof}

\begin{prop}
	\label{prop- prop 2} Let $\nu>-1/2$ and $\ell=0,1,2$. For $k \in \mathbb N$ we have
	\begin{equation}
		\label{eq- delta k pt k ge 1 the case t > 1}
		\Big| x^k\delta^\ell p_t^\nu(x,y)\Big|\lesi_{\nu,k}   \f{e^{-t/2^{\ell+2}}}{t^{(\ell+1)/2}}\exp\Big(-\f{|x-y|^2}{ct}\Big)\Big(1+\f{\sqrt t}{\rho_\nu(x)}+\f{\sqrt t}{\rho_\nu(y)}\Big)^{-(\nu+1/2)}
	\end{equation}
	for $t\ge 1$ and $x,y>0$.
	
	Consequently, for $k \in \mathbb N$ we have
	\begin{equation*}
		\Big| \delta^\ell p_t^\nu(x,y)\Big|\lesi_{\nu,k}   \f{1}{t^{(\ell+1)/2}}\exp\Big(-\f{|x-y|^2}{ct}\Big)\Big(1+\f{\sqrt t}{\rho_\nu(x)}+\f{\sqrt t}{\rho_\nu(y)}\Big)^{-(\nu+1/2)}
	\end{equation*}
	for $t\ge 1$ and $x,y>0$.
\end{prop}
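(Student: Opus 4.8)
The plan is to establish \eqref{eq- delta k pt k ge 1 the case t > 1} by first settling the case $\ell=0$ directly from the heat kernel bounds valid for $t\ge 1$, and then bootstrapping to $\ell=1,2$ through the chain rule identity \eqref{eq- chain rule 2}, exactly in the spirit of Proposition \ref{prop- prop 1} but now exploiting the exponential reserve $e^{-t/2}$ that is available for large $t$. Note that the stated rates $e^{-t/2^{\ell+2}}$ are deliberately wasteful; the point is only that each application of $\delta$ costs at most a polynomial-in-$t$ factor, which we pay for by halving the exponent.

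For $\ell=0$ I would revisit the two subcases appearing in Case 2 of the proof of Proposition \ref{prop-heat kernel}. When $xy<e^{2t}$, the sharper intermediate bound \eqref{eq- estimate x > 1 xy < e2t} retains the full Gaussian factor $\exp(-(x^2+y^2)/(2ct))$; inserting $x^k$ and using $x^k\exp(-x^2/(2ct))\lesi t^{k/2}\exp(-x^2/(4ct))$ turns the extra power into a harmless factor $t^{k/2}$, which is absorbed into $e^{-t/2}$ to leave $e^{-t/4}$, after which $\exp(-(x^2+y^2)/(4ct))\lesi \exp(-|x-y|^2/(8ct))$ restores the stated form. When $xy\ge e^{2t}$, I would rerun the computations \eqref{ptxy subcase 2.2}--\eqref{eq2- Subcase 22}, this time carrying the factor $x^k$: the available decay $\exp(-c'xy)$, together with $\exp(-c|x-y|^2)$, dominates $x^k$ up to a bounded or $t^{k/2}$ factor, which the spare exponential $e^{-t}\to e^{-t/4}$ easily absorbs, while the reconstruction of the critical factor $(\rho_\nu(x)/\sqrt t)^{\nu+1/2}$ proceeds verbatim. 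This yields \eqref{eq- delta k pt k ge 1 the case t > 1} for $\ell=0$.

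For $\ell=1$ I would start from \eqref{eq- chain rule 2}. Since $t\ge 1$ forces $1+r\sim 1-r\sim 1$, the coefficients $\tfrac{1+r}{1-r}$ and $\tfrac{2r^{1/2}}{1-r}$ are bounded, so
\[
x^k|\delta p_t^\nu(x,y)|\lesi x^{k+1}p_t^\nu(x,y)+x^k y\,p_t^{\nu+1}(x,y).
\]
Applying the $\ell=0$ case (to $p_t^\nu$, and to $p_t^{\nu+1}$, noting $(\nu+1)+1/2\ge\nu+1/2$ so the critical factor only improves) to the first term, and, after writing $y\le|y-x|+x$, to the second, together with $|y-x|\exp(-|x-y|^2/(ct))\lesi \sqrt t\,\exp(-|x-y|^2/(2ct))$, produces a bound of the form $\tfrac{e^{-t/4}}{\sqrt t}\exp(-|x-y|^2/(c't))\big(1+\cdots\big)^{-(\nu+1/2)}$; the polynomial loss $\sqrt t$ is absorbed by halving the exponent, $e^{-t/4}\to e^{-t/8}$, and $\tfrac{e^{-t/4}}{\sqrt t}\lesi \tfrac{e^{-t/8}}{t}$ gives the claimed $t^{-1}$. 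For $\ell=2$ I would differentiate \eqref{eq- chain rule 2} once more, use $\delta(xf)=f+x\delta f$ from \eqref{eq-formula for delta m} and the identity $\delta=\widetilde\delta+\tfrac1x$ to re-express $\delta p_t^{\nu+1}$ through the derivative $\widetilde\delta$ associated with $\mathcal L_{\nu+1}$, thereby reducing everything to the already-established $\ell=0,1$ bounds for $\nu$ and $\nu+1$, with one further halving $e^{-t/8}\to e^{-t/16}$ to absorb the polynomial factors. The concluding consequence is then immediate: take $k=0$ and bound $e^{-t/2^{\ell+2}}\le 1$.

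The genuinely technical step is the $\ell=0$ case in the region $xy\ge e^{2t}$, where the critical-function factor must be rebuilt by hand while simultaneously disposing of $x^k$. However, because for $t\ge 1$ we begin with an exponential reserve of size $e^{-t}$ rather than the mere power of $t$ that governs Proposition \ref{prop- prop 1}, the powers of $x$ never really compete with the decay, and this step is tedious rather than delicate; everything beyond it is a mechanical descent through the chain rule that parallels Proposition \ref{prop- prop 1}.
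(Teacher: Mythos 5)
Your proposal is correct and follows essentially the same route as the paper's own proof: settle $\ell=0$ by splitting into the regimes $xy<e^{2t}$ (using \eqref{eq- estimate x > 1 xy < e2t} and absorbing $x^k$ into the Gaussian/exponential reserve) and $xy\ge e^{2t}$ (using \eqref{ptxy subcase 2.2}, where $\exp(-c'xy)$ and $\exp(-c|x-y|^2)$ dominate $x^k$), then bootstrap to $\ell=1,2$ via \eqref{eq- chain rule 2}, \eqref{eq-formula for delta m} and $\delta=\widetilde\delta+\tfrac1x$, trading polynomial factors for halvings of the exponential rate. The only cosmetic difference is at $\ell=1$, where you split $y\le|y-x|+x$ while the paper instead uses $y\rho_\nu(y)\le 1$ against the improved critical exponent of $p_t^{\nu+1}$; both work, and the paper itself uses your splitting at $\ell=2$.
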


\begin{proof}
	We will prove the proposition by induction.
	
	\noindent $\bullet$ \underline{For $\ell = 0$}, we have two cases.
	
	\medskip
	
	+  {\textbf{Case 1: $xy< e^{2t}$.}} The desired estimate follows directly from \eqref{eq- estimate x > 1 xy < e2t}.
	
	\medskip
	
	+  {\textbf{Case 2: $xy \ge e^{2t}$.}}  If $ y<x/2$ or $y>2x$, then  $|x-y|\gtrsim x$. Consequently, by Proposition \ref{prop-heat kernel},
	\begin{equation*}
		\begin{aligned}
			x^k p_t^\nu(x,y)&\lesi x^k\Big(\f{\sqrt t}{|x-y|}\Big)^k \f{e^{-t/2}}{\sqrt{t}}\exp\Big(-\f{|x-y|^2}{ct}\Big) \Big(1+\f{\sqrt t}{\rho_\nu(x)}+\f{\sqrt t}{\rho_\nu(y)}\Big)^{-(\nu+1/2)}\\
			&\lesi \f{e^{-t/4}}{\sqrt{t}}\exp\Big(-\f{|x-y|^2}{ct}\Big) \Big(1+\f{\sqrt t}{\rho_\nu(x)}+\f{\sqrt t}{\rho_\nu(y)}\Big)^{-(\nu+1/2)}. 
		\end{aligned}
	\end{equation*}
	If $x/2\le y\le 2x$, then we have $x\sim y\gtrsim e^t>1$ and hence, $\rho_\nu(x)\sim \rho_\nu(y)\sim 1/x$. This, together with \eqref{ptxy subcase 2.2}, implies
	\[
	\begin{aligned}
		x^kp_t^\nu(x,y)&\lesi x^k e^{-t}\exp\Big(-c|x-y|^2\Big)\exp\Big(-c'x^2\Big)\\
		&\lesi \f{1}{e^t}\exp\Big(-\f{|x-y|^2}{ct}\Big) \Big(\f{1}{x}\Big)^{\nu+1/2}\\
		&\lesi \f{e^{-t/2}}{\sqrt t}\exp\Big(-\f{|x-y|^2}{ct}\Big) \Big(\f{1}{\sqrt t x}\Big)^{\nu+1/2}\\
		&\sim \f{e^{-t/2}}{\sqrt t}\exp\Big(-\f{|x-y|^2}{ct}\Big)\Big(1+\f{\sqrt t}{\rho_\nu(x)}+\f{\sqrt t}{\rho_\nu(y)}\Big)^{-(\nu+1/2)}.
	\end{aligned}
	\]
	
	This proves \eqref{eq- delta k pt k ge 1 the case t > 1} for $\ell = 0$.

	\bigskip
	
	\noindent $\bullet$ \underline{For $\ell = 1$}, from \eqref{eq- chain rule 2} we have
	\[
	\delta  p_t^\nu(x,y)=   xp_t^\nu(x,y)- \f{1+r}{1-r} xp_t^\nu(x,y)+ \f{2r^{1/2}}{1-r}y  p_t^{\nu+1}(x,y),
	\]
	which implies
	\[
	\begin{aligned}
		x^k|\delta p_t^\nu(x,y)|&\lesi   x^{k+1}p_t^\nu(x,y) + yx^k p_t^{\nu+1}(x,y),
	\end{aligned}
	\]
	since $1+r\sim 1-r\sim  1$ and $r\le 1$ as $t\ge 1$.\\

	Applying \eqref{eq- delta k pt k ge 1 the case t > 1} with $\ell = 0$ to obtain
	\[
	\begin{aligned}
		x^k|\delta p_t^\nu(x,y)|
		&\lesi \f{e^{-t/4}}{\sqrt t}\exp\Big(-\f{|x-y|^2}{ct}\Big)\Big(1+\f{\sqrt t}{\rho_\nu(x)}+\f{\sqrt t}{\rho_\nu(y)}\Big)^{-(\nu+1/2)}\\
		& \ \ +\f{ye^{-t/4}}{\sqrt t}\exp\Big(-\f{|x-y|^2}{ct}\Big)\Big(1+\f{\sqrt t}{\rho_\nu(x)}+\f{\sqrt t}{\rho_\nu(y)}\Big)^{-(\nu+1+1/2)}\\
		&\lesi \f{e^{-t/4}}{\sqrt t}\exp\Big(-\f{|x-y|^2}{ct}\Big)\Big(1+\f{\sqrt t}{\rho_\nu(x)}+\f{\sqrt t}{\rho_\nu(y)}\Big)^{-(\nu+1/2)}\\
		& \ \ +\f{e^{-t/4}}{t}\exp\Big(-\f{|x-y|^2}{ct}\Big)\Big(1+\f{\sqrt t}{\rho_\nu(x)}+\f{\sqrt t}{\rho_\nu(y)}\Big)^{-(\nu+1+1/2)}\\
		&\lesi \f{e^{-t/8}}{t}\exp\Big(-\f{|x-y|^2}{ct}\Big)\Big(1+\f{\sqrt t}{\rho_\nu(x)}+\f{\sqrt t}{\rho_\nu(y)}\Big)^{-(\nu+1/2)},
	\end{aligned}
	\] 
	where in the second inequality we used the fact $y\rho_\nu(y)\le 1$.
	
	This proves \eqref{eq- delta k pt k ge 1 the case t > 1} for $\ell = 1$.
	\bigskip
	
	\noindent $\bullet$ \underline{For $\ell = 2$},  from \eqref{eq- chain rule 2} we have
	\[
	\delta^{2} p_t^\nu(x,y)=  \delta[xp_t^\nu(x,y)]- \f{1+r}{1-r}\delta [xp_t^\nu(x,y)] + \f{2r^{1/2}}{1-r}y \delta p_t^{\nu+1}(x,y),
	\]
	which implies
	\[
	\begin{aligned}
		x^k|\delta^{2} p_t^\nu(x,y)|&\lesi   x^k|\delta [xp_t^\nu(x,y)]| + yx^k| \delta p_t^{\nu+1}(x,y)|,
	\end{aligned}
	\]
	since $1+r\sim 1-r\sim  1$ and $r\le 1$ as $t\ge 1$.

	This, together with \eqref{eq-formula for delta m}, further implies
	\[
	\begin{aligned}
		x^k|\delta^{2} p_t^\nu(x,y)|&\lesi   x^k p_t^\nu(x,y) + x^{k+1}|\delta p_t^\nu(x,y)| + yx^k| \delta p_t^{\nu+1}(x,y)|,
	\end{aligned}
	\]
	By invoking \eqref{eq- delta k pt k ge 1 the case t > 1} with $\ell = 0,1$ we have
	\[
	\begin{aligned}
		x^k p_t^\nu(x,y) + x^{k+1}|\delta p_t^\nu(x,y)|&\lesi \Big[\f{e^{-t/4}}{\sqrt t} + \f{e^{-t/8}}{t}\Big]\exp\Big(-\f{|x-y|^2}{ct}\Big)\Big(1+\f{\sqrt t}{\rho_\nu(x)}+\f{\sqrt t}{\rho_\nu(y)}\Big)^{-(\nu+1/2)}\\
		&\lesi \f{e^{-t/16}}{t^{3/2}}\exp\Big(-\f{|x-y|^2}{ct}\Big)\Big(1+\f{\sqrt t}{\rho_\nu(x)}+\f{\sqrt t}{\rho_\nu(y)}\Big)^{-(\nu+1/2)}.
	\end{aligned}
	\]
	For the remaining term $yx^k| \delta p_t^{\nu+1}(x,y)|$, since $\delta = \widetilde \delta + \f{1}{x}$ we have
	\[
	\begin{aligned}
		yx^k| \delta p_t^{\nu+1}(x,y)|&\lesi |x-y| x^k| \delta p_t^{\nu+1}(x,y)| + x^{k+1}| \delta p_t^{\nu+1}(x,y)|\\
		&\lesi |x-y| x^k| \widetilde\delta p_t^{\nu+1}(x,y)|+ \f{|x-y|}{x} x^k| p_t^{\nu+1}(x,y)|+ x^{k+1}| \widetilde \delta p_t^{\nu+1}(x,y)|\\
		& \ \ \  + x^{k}|p_t^{\nu+1}(x,y)|.
	\end{aligned}
	\]
	Using \eqref{eq- delta k pt k ge 1 the case t > 1} with $\ell = 1$ and $\nu+1$ instead of $\nu$, we have
	\[
	\begin{aligned}
		|x-y| x^k| \widetilde\delta p_t^{\nu+1}(x,y)|&\lesi |x-y|\f{e^{-t/8}}{t}\exp\Big(-\f{|x-y|^2}{ct}\Big)\Big(1+\f{\sqrt t}{\rho_\nu(x)}+\f{\sqrt t}{\rho_\nu(y)}\Big)^{-(\nu+1+1/2)}\\
		&\lesi  \f{e^{-t/8}}{t^{3/2}}\exp\Big(-\f{|x-y|^2}{2ct}\Big)\Big(1+\f{\sqrt t}{\rho_\nu(x)}+\f{\sqrt t}{\rho_\nu(y)}\Big)^{-(\nu+1/2)}.
	\end{aligned}	
	\]
	Appying \eqref{eq- delta k pt k ge 1 the case t > 1} with $\ell = 0$ and $\nu+1$ instead of $\nu$,
	\[
	\begin{aligned}
		\f{|x-y|}{x} x^k| p_t^{\nu+1}(x,y)| &\lesi \f{|x-y|}{x}\f{e^{-t/4}}{\sqrt t}\exp\Big(-\f{|x-y|^2}{ct}\Big)\Big(1+\f{\sqrt t}{\rho_\nu(x)}+\f{\sqrt t}{\rho_\nu(y)}\Big)^{-(\nu+1+1/2)}\\
		&\lesi \f{1}{x}\f{e^{-t/4}}{ t}\exp\Big(-\f{|x-y|^2}{2ct}\Big)\Big(1+\f{\sqrt t}{\rho_\nu(x)}+\f{\sqrt t}{\rho_\nu(y)}\Big)^{-(\nu+1+1/2)}\\
		&\lesi  \f{e^{-t/4}}{ t^{3/2}}\exp\Big(-\f{|x-y|^2}{2ct}\Big)\Big(1+\f{\sqrt t}{\rho_\nu(x)}+\f{\sqrt t}{\rho_\nu(y)}\Big)^{-(\nu+1/2)},
	\end{aligned}
	\]
	where in the last inequality we used $\rho_\nu(x)/x\lesi 1$.
	
	For the last two terms, we apply \eqref{eq- delta k pt k ge 1 the case t > 1} with $\ell = 0,1$ and $\nu+1$ instead of $\nu$ to obtain
	\[
	\begin{aligned}
		x^{k+1}| \widetilde \delta p_t^{\nu+1}(x,y)|  + x^{k}|p_t^{\nu+1}(x,y)|&\lesi \Big[\f{e^{-t/8}}{ t} + \f{e^{-t/4}}{\sqrt t}\Big]\exp\Big(-\f{|x-y|^2}{ct}\Big)\Big(1+\f{\sqrt t}{\rho_\nu(x)}+\f{\sqrt t}{\rho_\nu(y)}\Big)^{-(\nu+1+1/2)}\\
		&\lesi \f{e^{-t/16}}{t^{3/2}}\exp\Big(-\f{|x-y|^2}{ct}\Big)\Big(1+\f{\sqrt t}{\rho_\nu(x)}+\f{\sqrt t}{\rho_\nu(y)}\Big)^{-(\nu+1/2)}.		
	\end{aligned}
	\]
	This ensures 	\eqref{eq- delta k pt k ge 1 the case t > 1} for $\ell=2$.
	
	This completes our proof.
\end{proof}

\bigskip

We are able to prove the following.
\begin{prop}
	\label{prop- delta k pt} Let $\nu>-1/2$. For $k=0,1,2$ we have
	\begin{equation}
		\label{eq- delta k pt k}
		| \delta^k p_t^\nu(x,y)|\lesi   \f{1}{t^{(k+1)/2}}\exp\Big(-\f{|x-y|^2}{ct}\Big)\Big(1+\f{\sqrt t}{\rho_\nu(x)}+\f{\sqrt t}{\rho_\nu(y)}\Big)^{-(\nu+1/2)}
	\end{equation}
	for $t>0$ and $x,y\in (0,\vc)$.
\end{prop}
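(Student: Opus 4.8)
The plan is to assemble the pointwise bound \eqref{eq- delta k pt k} from the two regimes already handled. Proposition \ref{prop- prop 2} (its ``consequently'' form, with $\ell=k$) gives \eqref{eq- delta k pt k} for all $x,y>0$ once $t\ge 1$, while Proposition \ref{prop- prop 1} gives it for $t\in(0,1)$ on the set $D$. Comparing domains, the only configuration left uncovered is the near-diagonal one for small times,
\[
R:=(0,\vc)^2\setminus D=\{(x,y): t\in(0,1),\ xy\ge t,\ x/2\le y\le 2x\}.
\]
I would therefore split into (a) $t\ge 1$, (b) $t\in(0,1)$ with $(x,y)\in D$, and (c) $t\in(0,1)$ with $(x,y)\in R$, the last being the only genuinely new case.

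For (a) the estimate is immediate from Proposition \ref{prop- prop 2}. For (b) one specialises the free exponent in Proposition \ref{prop- prop 1} to the zeroth power of $x/\sqrt t$, which is precisely \eqref{eq- delta k pt k} on $D$. The case $k=0$ on $R$ is also free: it is Proposition \ref{prop-heat kernel} after discarding the harmless factor $e^{-t/2}\le 1$. On $R$ one has $x\sim y$ and, since $xy\ge t$, also $x\sim y\gtrsim\sqrt t$, so $\rho_\nu(x)\sim\rho_\nu(y)$ follows directly from the explicit form $\rho_\nu(x)=\tfrac1{16}\min\{x,x^{-1}\}$. Thus only $k\in\{1,2\}$ on $R$ remains.

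The delicate point on $R$ is that the triangle-inequality bound coming from \eqref{eq- chain rule 2}, namely $|\delta p_t^\nu|\lesi \f{x}{t}p_t^\nu+\f{y}{t}p_t^{\nu+1}$, is too lossy: with $p_t^\nu\sim t^{-1/2}$ near the diagonal it yields $t^{-3/2}$ rather than the required $t^{-1}$, and for $x\le 1$ there is no compensating decay. The remedy is to expose the cancellation hidden in \eqref{eq- chain rule 2}. Combining its first two terms and writing $r=e^{-4t}$ gives
\[
\delta p_t^\nu(x,y)=\f{2r^{1/2}}{1-r}\Big[y\,p_t^{\nu+1}(x,y)-r^{1/2}x\,p_t^\nu(x,y)\Big].
\]
Since $p_t^\nu$ and $p_t^{\nu+1}$ share the prefactor $\f{2(rxy)^{1/2}}{1-r}\exp\!\big(-\tfrac12\tfrac{1+r}{1-r}(x^2+y^2)\big)$ and differ only through $I_\nu(w)$ versus $I_{\nu+1}(w)$ with $w=\f{2r^{1/2}}{1-r}xy\gtrsim 1$, the bracket is proportional to $yI_{\nu+1}(w)-r^{1/2}xI_\nu(w)=y\big(I_{\nu+1}(w)-I_\nu(w)\big)+(y-r^{1/2}x)I_\nu(w)$. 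By \eqref{eq2-Inu}--\eqref{eq3-Inu} the leading terms $e^w/\sqrt{2\pi w}$ of $I_{\nu+1}$ and $I_\nu$ cancel, so $|I_{\nu+1}(w)-I_\nu(w)|\lesi e^w w^{-3/2}$, while $|y-r^{1/2}x|\lesi |x-y|+tx$. This gains one power of $w^{-1}\sim t/(xy)$ over the crude estimate, and a short power count (using $x\gtrsim\sqrt t$) delivers \eqref{eq- delta k pt k} for $k=1$ on $R$; the surviving exponential $\exp\!\big(-\tfrac{1-r^{1/2}}{1+r^{1/2}}xy\big)\sim\exp(-ctxy)$ supplies the $\rho_\nu$-decay in the regime $x>1$.

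For $k=2$ I would iterate: applying $\delta$ again and using \eqref{eq-formula for delta m} together with $\delta=\wt{\delta}+\tfrac1x$ (where $\wt{\delta}$ is the derivative attached to $\mathcal L_{\nu+1}$) reduces $\delta^2 p_t^\nu$ to a combination of $p_t^{\nu+2},p_t^{\nu+1},p_t^\nu$; the same mechanism, now applied to a second-order combination of $I_{\nu+2},I_{\nu+1},I_\nu$ whose leading asymptotics again cancel, gains two powers of $w^{-1}$ and produces the bound $t^{-3/2}$. I expect region $R$ to be the main obstacle: everywhere else the proposition is a bookkeeping merge of Propositions \ref{prop- prop 1} and \ref{prop- prop 2}, whereas on the near-diagonal set the required decay appears only after the cancellation between consecutive-order Bessel functions is made explicit and the resulting powers of $t$, $x$ and $\exp(-ctxy)$ are balanced against the critical-function factor.
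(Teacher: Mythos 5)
Your proposal is correct, and in the only genuinely new region it takes a different route from the paper. For $t\ge 1$, and for $t\in(0,1)$ with $(x,y)\in D$, you and the paper do the same thing: quote Proposition \ref{prop- prop 2} and Proposition \ref{prop- prop 1} (the latter with the zeroth power of $x/\sqrt t$). On the near-diagonal set $R$, however, the paper never touches the Bessel functions: it transfers to the Hermite setting, invoking the comparison inequality of \cite{Betancor} (p.~53) between $\delta^k p_t^\nu(x,y)$ and $(\partial_x+x)^k h_t(x,y)$, valid for $x/2\le y\le 2x$, and then concludes from the Hermite kernel bounds of Lemma \ref{lem2}, splitting into $x<1$ (where the $\rho_\nu$-factor is $\sim 1$) and $x\ge 1$ (where $\rho_\nu(x)\sim \rho_\nu(y)\sim 1/x$). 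Your route stays inside the Laguerre setting and extracts the cancellation directly from \eqref{eq- chain rule 2} and the asymptotics \eqref{eq2-Inu}--\eqref{eq3-Inu}: the identity $\delta p_t^\nu=\f{2r^{1/2}}{1-r}\bigl[y\,p_t^{\nu+1}-r^{1/2}x\,p_t^\nu\bigr]$ is exact, the bounds $|I_{\nu+1}(w)-I_\nu(w)|\lesi e^w w^{-3/2}$ and $|y-r^{1/2}x|\lesi |x-y|+tx$ are correct, and the power count (using $x\sim y\gtrsim\sqrt t$ on $R$, and $e^{-ctxy}\sim e^{-ctx^2}\lesi (\sqrt t\,x)^{-(\nu+1/2)}$ when $x>1$) does deliver the $k=1$ bound. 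One caveat for $k=2$: the leading parts $e^w/\sqrt{2\pi w}$ of $y^2I_{\nu+2}-2r^{1/2}xy\,I_{\nu+1}+rx^2I_\nu$ do not cancel outright; they combine into $(y-r^{1/2}x)^2\,e^w/\sqrt{2\pi w}$, and the remainder terms gain only one power ($w^{-3/2}$ versus $w^{-1/2}$), not two. What closes the estimate is the quadratic smallness $(y-r^{1/2}x)^2\lesi(|x-y|+tx)^2$ on the leading piece together with that single extra power on the remainders, plus crude bounds on the lower-order terms $\f{y}{x}p_t^{\nu+1}$ and $p_t^\nu$, which already sit at level $t^{-3/2}$. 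With that bookkeeping corrected your argument goes through, and it has the merit of being self-contained (no appeal to the transference inequality of \cite{Betancor} or to the Hermite estimates), at the price of a heavier second-order Bessel computation that the paper sidesteps by citation.
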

\begin{proof}
	The estimate \eqref{eq- delta k pt k} for $t\ge 1$ has been proved in Proposition \ref{prop- prop 2}. 
	
	It remains to prove the estimate for $t \in (0,1)$.   By Proposition \ref{prop- prop 1}, \eqref{eq- delta k pt k} holds true for $ (x,y)\in D:=\{(x,y): xy<t\} \cup\{(x,y): xy \ge t, y<x/2\}\cup\{(x,y): xy \ge t, y>2x\}.$ 
	
	It remains to prove \eqref{eq- delta k pt k} for $t\in (0,1)$ and $(x,y)$ satisfying $xy\ge t$ and $x/2\le y\le 2x$. To do this, 	recall that  $h_t(x,y)$ is the heat kernel of  the Hermite operator 
	\[
	H = -\f{d^2}{dx^2}+x^2.
	\]
	From the inequality in \cite[p.53, lines 8-9]{Betancor}, for $t>0$ and $x/2\le y\le 2x$, 
	\[
	\begin{aligned}
		\Big|\delta^kp_t^\nu(x,y) &- (\partial_x+x)^kh_t(x,y)\Big|
		&\lesi \sum_{j=0}^k \sum_{0\le \rho+\sigma\le k-j}x^\rho  |\partial^\sigma_xh_t(x,y) |\Big(\f{y}{\sinh t}\Big)^j\Big(\f{\sinh t}{xy}\Big)^{\lfloor \f{j}{2}\rfloor +1},
	\end{aligned}
	\]
	where $\lfloor a\rfloor$ denotes  the integer part of $a$. 
	
	This is equivalent to that
	\begin{equation}\label{eq-betancor}
		\begin{aligned}
			\Big|\delta^kp_t^\nu(x,y) -(\partial_x+x)^kh_t(x,y)\Big|
			&\lesi \sum_{j=0}^k \sum_{0\le \rho+\sigma\le k-j}x^\rho  |\partial^\sigma_xh_t(x,y)|\Big(\f{1}{\sinh t}\Big)^{j-\lfloor \f{j}{2}\rfloor-1}\Big(\f{1}{x}\Big)^{-j+2\lfloor \f{j}{2}\rfloor +2}
		\end{aligned}
	\end{equation}
	for $t>0$ and $x/2\le y\le 2x$.
	
	Note that in our situation, $x\sim y\gtrsim \sqrt t$ and $\sinh t\sim t$ due to $t\in (0,1)$. This, in combination with  \eqref{eq-betancor} and the fact $-j+2\lfloor \f{j}{2}\rfloor +2\ge 0$, further implies
	\begin{equation*}
		\begin{aligned}
			\Big|\delta^kp_t^\nu(x,y) -(\partial_x+x)^kh_t(x,y)\Big|
			&\lesi \sum_{j=0}^k \sum_{0\le \rho+\sigma\le k-j}x^\rho \Big|\partial^\sigma_xh_t(x,y)\Big| \f{1}{t^{j-\lfloor \f{j}{2}\rfloor-1}}  \f{1}{t^{-j/2+\lfloor \f{j}{2}\rfloor+1}}\\
			&\lesi \sum_{j=0}^k \sum_{0\le \rho+\sigma\le k-j}t^{-j/2}x^\rho \Big|\partial^\sigma_xh_t(x,y)\Big|,
		\end{aligned}
	\end{equation*}
	which implies
	\[
	|\delta^kp_t^\nu(x,y)| \lesi \Big|(\partial_x+x)^kh_t(x,y)\Big|+\sum_{j=0}^k \sum_{0\le \rho+\sigma\le k-j}t^{-j/2}x^\rho \Big|\partial^\sigma_xh_t(x,y)\Big|
	\]
	
	We have two  cases. 
	
	\textit{Case 1: $x<1$.} From Lemma \ref{lem2} and the fact $t\in (0,1)$, we have 
	\[
	 |(\partial_x+x)^kh_t(x,y) |\lesi \f{1}{t^{(k+1)/2}}\exp\Big(-\f{|x-y|^2}{ct}\Big)
	\]
	and
	\[
	\begin{aligned}
		t^{-j/2}x^\rho  |\partial^\sigma_xh_t(x,y) |&\lesi \f{1}{t^{(\rho+\sigma+j+1)/2}}\exp\Big(-\f{|x-y|^2}{ct}\Big)\\
		&\lesi \f{1}{t^{(k+1)/2}}\exp\Big(-\f{|x-y|^2}{ct}\Big),
	\end{aligned}
	\]
	as long as $\rho+\sigma\le k-j$ and $t\in (0,1)$.
	
	Consequently,
	\[
	|\delta^kp_t^\nu(x,y)| \lesi \f{1}{t^{(k+1)/2}}\exp\Big(-\f{|x-y|^2}{ct}\Big).
	\]
	On the other hand, we have $\rho_\nu(x) \sim \rho_\nu(y)\sim x \gtrsim \sqrt t$ since $y\sim x <1$. Hence,
	\[
	\Big(1+\f{\sqrt t}{\rho_\nu(x)}+\f{\sqrt t}{\rho_\nu(y)}\Big)^{-\nu-1/2}\sim 1.
	\]
	It follows
	\[
	|\delta^kp_t^\nu(x,y)| \lesi \f{1}{t^{(k+1)/2}}\exp\Big(-\f{|x-y|^2}{ct}\Big)\Big(1+\f{\sqrt t}{\rho_\nu(x)}+\f{\sqrt t}{\rho_\nu(y)}\Big)^{-(\nu+1/2)}.
	\]
	
	\medskip
	
	\textit{Case 2. $x\ge 1$}. In this case $\rho_\nu(x)\sim \rho_\nu(y)\sim 1/x$. By  Lemma \ref{lem2} and the fact $t\in (0,1)$ again,
	\[
	\Big|(\partial_x+x)^kh_t(x,y)\Big|\lesi \f{1}{t^{(k+1)/2}}\exp\Big(-\f{|x-y|^2}{ct}\Big)\Big(1+\f{\sqrt t}{\rho_\nu(x)}+\f{\sqrt t}{\rho_\nu(y)}\Big)^{-\nu-1/2}
	\]
	and
	\[
	\begin{aligned}
		t^{-j/2}x^\rho \Big|\partial^\sigma_xh_t(x,y)\Big|&\lesi \f{1}{t^{(\rho+\sigma+j+1)/2}}\exp\Big(-\f{|x-y|^2}{ct}\Big)\Big(1+\f{\sqrt t}{\rho_\nu(x)}+\f{\sqrt t}{\rho_\nu(y)}\Big)^{-\nu-1/2}\\
		&\lesi \f{1}{t^{(k+1)/2}}\exp\Big(-\f{|x-y|^2}{ct}\Big)\Big(1+\f{\sqrt t}{\rho_\nu(x)}+\f{\sqrt t}{\rho_\nu(y)}\Big)^{-\nu-1/2},
	\end{aligned}
	\]
	as long as $\rho+\sigma\le k-j$ and $t\in (0,1)$.
	
	This proves \eqref{eq- delta k pt k}.
	
	This completes our proof.
	
\end{proof}

\begin{prop}\label{prop-gradient x y}
	Let $\nu>-1/2$ and $\ell=0,1$. Then we have
	\begin{equation}
		\label{eq-dy delta x pt}
		\begin{aligned}
			|\partial_y \delta^\ell p_t^\nu(x,y)|&\lesi \Big[\f{1}{\sqrt t}\ + \f{1}{\rho_\nu(y)}\Big]\f{1 }{ t^{(\ell+1)/2}}\exp\Big(-\f{|x-y|^2}{ct}\Big)\Big(1+\f{\sqrt t}{\rho_\nu(x)}+\f{\sqrt t}{\rho_\nu(y)}\Big)^{-(\nu+1/2)}
		\end{aligned}
	\end{equation}
	and
	\begin{equation}
		\label{eq-dx delta x pt}
		\begin{aligned}
			|\partial_x \delta^\ell p_t^\nu(x,y)|&\lesi \Big[\f{1}{\sqrt t}\ + \f{1}{\rho_\nu(x)}\Big]\f{1 }{ t^{(\ell+1)/2}}\exp\Big(-\f{|x-y|^2}{ct}\Big)\Big(1+\f{\sqrt t}{\rho_\nu(x)}+\f{\sqrt t}{\rho_\nu(y)}\Big)^{-(\nu+1/2)}
		\end{aligned}
	\end{equation}
	for all $t>0$ and all $x, y\in (0,\vc)$.
\end{prop}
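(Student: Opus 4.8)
The plan is to reduce both gradient bounds to the $\delta$-derivative estimates of Proposition \ref{prop- delta k pt}, exploiting two elementary facts about the one-dimensional critical function: since $\rho_\nu(x)=\f1{16}\min\{x,x^{-1}\}$ for $\nu>-1/2$, one has $x\lesi\f1{\rho_\nu(x)}$ and $\f1x\lesi\f1{\rho_\nu(x)}$ for every $x>0$, and likewise in the $y$ variable. I would prove \eqref{eq-dx delta x pt} first, since the hardest part of \eqref{eq-dy delta x pt} feeds on it. Throughout I use the symmetry $p_t^\nu(x,y)=p_t^\nu(y,x)$, and I treat $t<1$ and $t\ge 1$ together; the regime $t\ge 1$ is only easier, because of the extra exponential-in-$t$ decay already present in Proposition \ref{prop- prop 2}.

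For \eqref{eq-dx delta x pt}, the point is the algebraic identity $\partial_x=\delta-\big(x-\f{\nu+1/2}{x}\big)$, giving $\partial_x\delta^\ell p_t^\nu=\delta^{\ell+1}p_t^\nu-\big(x-\f{\nu+1/2}{x}\big)\delta^\ell p_t^\nu$. For $\ell\in\{0,1\}$ every $\delta$-order occurring is at most $2$, so Proposition \ref{prop- delta k pt} bounds $\delta^{\ell+1}p_t^\nu$ by $t^{-(\ell+2)/2}$ and $\delta^\ell p_t^\nu$ by $t^{-(\ell+1)/2}$ (each times the Gaussian and the decay factor). Estimating the coefficient by $x+\f1x\lesi\f1{\rho_\nu(x)}$ then produces exactly the prefactor $\big(\f1{\sqrt t}+\f1{\rho_\nu(x)}\big)t^{-(\ell+1)/2}$. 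This part is routine.

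The estimate \eqref{eq-dy delta x pt} is the substantive one, since the differentiation is in the variable opposite to the one carrying $\delta$. With $\delta_y:=\partial_y+y-\f{\nu+1/2}{y}$ I write $\partial_y\delta^\ell p_t^\nu=\delta_y\delta^\ell p_t^\nu-\big(y-\f{\nu+1/2}{y}\big)\delta^\ell p_t^\nu$; the second term is handled exactly as before via Proposition \ref{prop- delta k pt} and $y+\f1y\lesi\f1{\rho_\nu(y)}$. For $\ell=0$ the first term is, by symmetry, $\delta_y p_t^\nu(x,y)=\delta p_t^\nu(y,x)$, which Proposition \ref{prop- delta k pt} (with $k=1$) controls, settling $\ell=0$. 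It is essential \emph{not} to expand $\delta_y p_t^\nu$ through the chain rule \eqref{eq- chain rule 2}: that introduces coefficients $\f{1+r}{1-r}y\sim\f yt$ and $\f{2r^{1/2}}{1-r}x\sim\f xt$ whose individual sizes are too large, and the bound survives only because of the cancellation between them already encoded in Proposition \ref{prop- delta k pt}.

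The genuinely hard case, and the main obstacle, is the mixed term $\delta_y\delta p_t^\nu$ when $\ell=1$. Here I would pass to the spectral picture: applying the ladder relations \eqref{eq- delta and eigenvector} in each variable to $p_t^\nu(x,y)=\sum_k e^{-t\lambda_k}\varphi_k^\nu(x)\varphi_k^\nu(y)$, with $\lambda_k=4k+2\nu+2$, yields $\delta_y\delta p_t^\nu(x,y)=\sum_k 4k\,e^{-t\lambda_k}\varphi_{k-1}^{\nu+1}(x)\varphi_{k-1}^{\nu+1}(y)$; reindexing $m=k-1$ and comparing with the eigenvalues of $\mathcal L_{\nu+1}$ gives the clean identity $\delta_y\delta p_t^\nu=e^{-2t}\big(-\partial_t-2\nu\big)p_t^{\nu+1}$. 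This reduces the mixed derivative to a bound on the \emph{time} derivative of the $(\nu+1)$-kernel. Writing $-\partial_t p_t^{\nu+1}=\mathcal L_{\nu+1}p_t^{\nu+1}$ and using $\mathcal L_{\nu+1}=\widetilde{\delta}^{*}\widetilde{\delta}+2(\nu+2)$ with $\widetilde{\delta}=\partial_x+x-\f{\nu+3/2}{x}$ and $\widetilde{\delta}^{*}=\widetilde{\delta}-2\partial_x$, I obtain $\mathcal L_{\nu+1}p_t^{\nu+1}=\widetilde{\delta}^{2}p_t^{\nu+1}-2\partial_x\widetilde{\delta}p_t^{\nu+1}+2(\nu+2)p_t^{\nu+1}$. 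Now $\widetilde{\delta}^{2}p_t^{\nu+1}$ is bounded by Proposition \ref{prop- delta k pt} (with $\nu+1$, $k=2$), $\partial_x\widetilde{\delta}p_t^{\nu+1}$ is precisely \eqref{eq-dx delta x pt} applied to $\nu+1$ with $\ell=1$ (already proved, so no circularity), and $p_t^{\nu+1}$ is the heat-kernel bound; all three carry the decay factor with exponent $\nu+3/2$. The one extra unit of decay is exactly what absorbs the unwanted $\f1{\rho_\nu(x)}$ from \eqref{eq-dx delta x pt}, via $\f1{\rho_\nu(x)}\big(1+\f{\sqrt t}{\rho_\nu(x)}\big)^{-1}\le\f1{\sqrt t}$, turning the error into the permitted $\f1{\sqrt t}$ prefactor while restoring the exponent $\nu+1/2$ demanded by \eqref{eq-dy delta x pt}. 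This gives $|\partial_t p_t^{\nu+1}|\lesi t^{-3/2}\exp\big(-\f{|x-y|^2}{ct}\big)\big(1+\f{\sqrt t}{\rho_\nu(x)}+\f{\sqrt t}{\rho_\nu(y)}\big)^{-(\nu+1/2)}$, and hence the mixed term obeys the required bound with the $\f1{\sqrt t}$ part of the bracket, completing \eqref{eq-dy delta x pt}. The crux, therefore, is the reduction of $\delta_y\delta p_t^\nu$ to $\partial_t p_t^{\nu+1}$ together with the observation that raising $\nu$ to $\nu+1$ gains exactly the extra critical-function decay needed to absorb the spatially localized error.
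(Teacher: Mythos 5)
Your proposal is correct, and on the routine parts it coincides with the paper: both proofs write $\partial_x=\delta-(x-\f{\nu+1/2}{x})$, $\partial_y=\delta_y-(y-\f{\nu+1/2}{y})$, invoke Proposition \ref{prop- delta k pt} for the pure $\delta$-terms, and absorb the coefficients via $\max\{x,1/x\}\lesi 1/\rho_\nu(x)$ (and likewise in $y$). Where you genuinely diverge is the mixed term $\delta_y\delta^\ell p_t^\nu$, which is indeed the crux. The paper dispatches it for $\ell=0,1$ simultaneously by the semigroup property, writing $\delta_y\delta^\ell p_t^\nu(x,y)=\int_0^\vc \delta^\ell p^\nu_{t/2}(x,z)\,\delta_y p^\nu_{t/2}(z,y)\,dz$, so that Proposition \ref{prop- delta k pt} applies to each half-time factor and the Gaussian composition supplies the extra $t^{-1/2}$. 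You instead treat $\ell=0$ by kernel symmetry and $\ell=1$ by the ladder identity $\delta_y\delta_x p_t^\nu=e^{-2t}(-\partial_t-2\nu)p_t^{\nu+1}$, then use the heat equation together with $\mathcal L_{\nu+1}=\widetilde\delta^{*}\widetilde\delta+2(\nu+2)$ and $\widetilde\delta^{*}=\widetilde\delta-2\partial_x$ to reduce to Proposition \ref{prop- delta k pt} and to \eqref{eq-dx delta x pt} at level $\nu+1$ (no circularity, since you prove \eqref{eq-dx delta x pt} for all $\nu>-1/2$ first), the improved exponent $\nu+3/2$ absorbing the stray $\f{1}{\rho_\nu(x)}$ through $\f{1}{\rho_\nu(x)}\big(1+\f{\sqrt t}{\rho_\nu(x)}\big)^{-1}\le\f{1}{\sqrt t}$, and the prefactor $e^{-2t}$ covering $t\ge 1$ (needed for the zero-order term $2(\nu+2)p_t^{\nu+1}$, whose bare bound $t^{-1/2}$ would not suffice for large $t$). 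I verified the identity ($\lambda_k=4k+2\nu+2$, $4k=\lambda^{(\nu+1)}_{k-1}-2\nu$, with the shift producing $e^{-2t}$), the operator ordering $\widetilde\delta^{*}\widetilde\delta=\widetilde\delta^{2}-2\partial_x\widetilde\delta$, the fact that $\rho_{\nu+1}=\rho_\nu$ here, and the exponent bookkeeping: all correct. As to what each approach buys: the paper's semigroup splitting is shorter, works verbatim for every $\ell$, and needs nothing beyond the kernel bounds themselves, so it transfers immediately to the product setting $n\ge2$; your route is tied to the Laguerre ladder structure, requires establishing \eqref{eq-dx delta x pt} first, and tacitly uses term-by-term differentiation of the spectral series (routine, and of the same kind the paper performs in Section 5), but in exchange it gives an exact formula for the mixed derivative and, as a byproduct, the time-derivative bound $|\partial_t p_t^{\nu+1}|\lesi t^{-3/2}\exp\big(-\f{|x-y|^2}{ct}\big)\big(1+\f{\sqrt t}{\rho_\nu(x)}+\f{\sqrt t}{\rho_\nu(y)}\big)^{-(\nu+1/2)}$, which the paper's argument never makes explicit.
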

\begin{proof}

We first prove \eqref{eq-dy delta x pt}. Since $\delta =\partial_x + x -\f{\nu+1/2}{x}$, it is easy to see that 
\[
|\partial_y \delta^\ell p_t^\nu(x,y)|\lesi |\delta_y \delta^\ell p_t^\nu(x,y)| + y| \delta^\ell p_t^\nu(x,y)| + \f{1}{y}| \delta^\ell p_t^\nu(x,y)|.
\]
Note that
\[\delta_y \delta^\ell p_t^\nu(x,y) =\int_0^\vc \delta^\ell p_{t/2}(x,z) \delta_y p_{t/2}(z,y)dz.
\]
This, in combination with Proposition \ref{prop- delta k pt}, implies that 
\[
|\delta_y \delta^\ell p_t^\nu(x,y)|\lesi \f{1 }{ t^{(\ell+2)/2}}\exp\Big(-\f{|x-y|^2}{ct}\Big)\Big(1+\f{\sqrt t}{\rho_\nu(x)}+\f{\sqrt t}{\rho_\nu(y)}\Big)^{-(\nu+1/2)}.
\] 
In addition, applying Proposition \ref{prop- delta k pt} to obtain that
\[
\begin{aligned}
	y| \delta^\ell p_t^\nu(x,y)| + \f{1}{y}| \delta^\ell p_t^\nu(x,y)|&\lesi \Big(y+\f{1}{y}\Big)  \f{1 }{ t^{(\ell+1)/2}}\exp\Big(-\f{|x-y|^2}{ct}\Big)\Big(1+\f{\sqrt t}{\rho_\nu(x)}+\f{\sqrt t}{\rho_\nu(y)}\Big)^{-(\nu+1/2)}\\
	&\lesi   \f{1}{\rho_\nu(y)}   \f{1 }{ t^{(\ell+1)/2}}\exp\Big(-\f{|x-y|^2}{ct}\Big)\Big(1+\f{\sqrt t}{\rho_\nu(x)}+\f{\sqrt t}{\rho_\nu(y)}\Big)^{-(\nu+1/2)},	
\end{aligned}
\]
where in the last inequality we used $\max\{y,1/y\}\lesi 1/\rho_\nu(y)$.

\bigskip

For the estimate  \eqref{eq-dx delta x pt}, similarly we have
\[
|\partial_x \delta^\ell p_t^\nu(x,y)|\lesi |\delta^{^\ell+1} p_t^\nu(x,y)| + x| \delta^\ell p_t^\nu(x,y)| + \f{1}{x}| \delta^\ell p_t^\nu(x,y)|.
\]
This, in combination with  Proposition \ref{prop- delta k pt} and the fact  $\max\{x,1/x\}\lesi 1/\rho_\nu(x)$, yields \eqref{eq-dx delta x pt}.

\medskip

This completes our proof.

\end{proof}

\begin{prop}
	\label{prop-delta dual heat kernel}
	Let $\nu>-1/2$. Then we have
	\[
	| \delta^*p^{\nu+1}_t(x,y)|\lesi \f{1}{t}\exp\Big(-\f{|x-y|^2}{ct}\Big)\Big(1+\f{\sqrt t}{\rho_\nu(x)}+\f{\sqrt t}{\rho_\nu(y)}\Big)^{-(\nu+1/2)}
	\]
	for all $t>0$ and $x,y\in (0,\vc)$.
\end{prop}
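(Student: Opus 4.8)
The plan is to reduce the bound on $\delta^{*}p^{\nu+1}_t$ to the first‑order derivative estimates already in hand, by expressing $\delta^{*}$ through operators whose action on $p^{\nu+1}_t$ is controlled, rather than differentiating the closed form directly. Recall that $\delta^{*}=-\partial_x+x-\f{\nu+1/2}{x}$ differs from $\delta=\partial_x+x-\f{\nu+1/2}{x}$ only in the sign of $\partial_x$, so that $\delta^{*}=\delta-2\partial_x$. Writing $\widetilde\delta=\partial_x+x-\f{\nu+3/2}{x}$ for the derivative attached to $\mathcal L_{\nu+1}$, we also have $\delta=\widetilde\delta+\f1x$. Combining the two identities gives the decomposition
\[
\delta^{*}p_t^{\nu+1}(x,y)=\widetilde\delta\, p_t^{\nu+1}(x,y)+\f1x\,p_t^{\nu+1}(x,y)-2\partial_x p_t^{\nu+1}(x,y),
\]
and I would estimate the three terms on the right separately. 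Throughout I use that in one dimension $\rho_{\nu+1}=\rho_\nu$ for $\nu>-1/2$ (both equal $\tfrac1{16}\min\{x,x^{-1}\}$), so that all bounds for the $(\nu+1)$‑kernel may be phrased with $\rho_\nu$.

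For the first term, Proposition \ref{prop- delta k pt} applied with $\nu+1$ in place of $\nu$ (legitimate since $\nu+1>-1/2$) and $k=1$ yields $|\widetilde\delta\,p_t^{\nu+1}|\lesi t^{-1}\exp(-|x-y|^2/ct)(1+\tfrac{\sqrt t}{\rho_\nu(x)}+\tfrac{\sqrt t}{\rho_\nu(y)})^{-(\nu+3/2)}$. For the third term, \eqref{eq-dx delta x pt} of Proposition \ref{prop-gradient x y} with $\ell=0$ and $\nu+1$ in place of $\nu$ gives $|\partial_x p_t^{\nu+1}|\lesi[\tfrac1{\sqrt t}+\tfrac1{\rho_\nu(x)}]\tfrac1{\sqrt t}\exp(-|x-y|^2/ct)(1+\tfrac{\sqrt t}{\rho_\nu(x)}+\tfrac{\sqrt t}{\rho_\nu(y)})^{-(\nu+3/2)}$. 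For the middle term, Proposition \ref{prop-heat kernel} (again with $\nu+1$) bounds $p_t^{\nu+1}$, and the elementary inequality $\tfrac1x\le \tfrac1{16\rho_\nu(x)}$ gives $\tfrac1x p_t^{\nu+1}\lesi \tfrac1{\rho_\nu(x)\sqrt t}\exp(-|x-y|^2/ct)(1+\tfrac{\sqrt t}{\rho_\nu(x)}+\tfrac{\sqrt t}{\rho_\nu(y)})^{-(\nu+3/2)}$.

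It then remains to dominate each of these by the claimed right–hand side, which carries the weaker weight $(\cdots)^{-(\nu+1/2)}$. The pure $\tfrac1t(\cdots)^{-(\nu+3/2)}$ contributions are immediate, since the bracket is $\ge 1$. For the contributions carrying the extra factor $\tfrac1{\rho_\nu(x)\sqrt t}$ I would use
\[
\f1{\rho_\nu(x)\sqrt t}=\f1t\cdot\f{\sqrt t}{\rho_\nu(x)}\le \f1t\Big(1+\f{\sqrt t}{\rho_\nu(x)}+\f{\sqrt t}{\rho_\nu(y)}\Big),
\]
so that the surplus factor raises the exponent exactly from $-(\nu+3/2)$ to $-(\nu+1/2)$. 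Adding the three resulting bounds yields the proposition.

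The step I expect to be the real pitfall is not the bookkeeping above but the choice of decomposition. The naive route is to differentiate \eqref{eq-new formula of heat kernel} using the Bessel recurrence $\tfrac{d}{dz}(z^{\nu}I_\nu)=z^{\nu}I_{\nu-1}$, which produces the clean identity $\delta^{*}p_t^{\nu+1}=\tfrac{2}{1-r}x\,p_t^{\nu+1}-\tfrac{2r^{1/2}}{1-r}y\,p_t^{\nu}$. This is tempting but treacherous: for small $t$ one has $\tfrac1{1-r}\sim t^{-1}$, so each term is individually of size $\sim \tfrac{x}{t}\,p$, which on the diagonal $x\sim y$ exceeds the asserted $t^{-1}$ bound; only the \emph{difference} is of the correct order, the two pieces cancelling there. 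Routing instead through $\widetilde\delta$, $\partial_x$ and $\tfrac1x$ sidesteps this, because those derivative estimates were proved earlier via the Hermite comparison precisely on the region $x\sim y$ and therefore already absorb the cancellation; the only genuine work left is then the $\rho_\nu$–weight accounting, i.e. verifying $\tfrac{\sqrt t}{\rho_\nu(x)}\big(1+\tfrac{\sqrt t}{\rho_\nu(x)}\big)^{-1}\lesi 1$.
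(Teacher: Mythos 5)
Your proof is correct and follows essentially the same route as the paper: both reduce $\delta^*$ acting on $p_t^{\nu+1}$ to the $(\nu+1)$-derivative $\widetilde\delta$ plus terms dominated by $\frac{1}{\rho_\nu(x)}p_t^{\nu+1}$, then invoke Propositions \ref{prop-heat kernel} and \ref{prop- delta k pt} (with $\nu+1$ in place of $\nu$) and absorb the surplus factor $\frac{\sqrt t}{\rho_\nu(x)}$ into the weight, lowering the exponent from $-(\nu+3/2)$ to $-(\nu+1/2)$. The only cosmetic difference is the algebra: the paper writes $\delta^*=-\widetilde\delta+2x-\frac{2(\nu+1)}{x}$ and bounds the multiplication terms by $1/\rho_\nu(x)$ directly, whereas you write $\delta^*=\widetilde\delta+\frac1x-2\partial_x$ and handle $\partial_x p_t^{\nu+1}$ via Proposition \ref{prop-gradient x y}, which is itself proved by the same decomposition trick.
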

\begin{proof}
	Denote 
	\[
	\widetilde{ \delta} =  \partial_x  + x-\frac{1}{x}\Big(\nu+1 + \f{1}{2}\Big),
	\]
	which is the derivative associated with $\mathcal L_{\nu+1}$.
	
	Then we have
	\[
	\begin{aligned}
		\delta^* &=: -\widetilde{ \delta} +2x - \f{1}{x}\Big(2\nu+1 + 1\Big).
	\end{aligned}
	\]
	Hence,
	\[
	\begin{aligned}
		|\delta^*p^{\nu+1}_t(x,y)|&\lesi |\widetilde{\delta} p^{\nu+1}_t(x,y)| + x p^{\nu+1}_t(x,y) + \f{1}{x}p^{\nu+1}_t(x,y)\\
		&\lesi |\widetilde{\delta} p^{\nu+1}_t(x,y)| + \f{1}{\rho_\nu(x)} p^{\nu+1}_t(x,y),
	\end{aligned}
	\]
	where in the last inequality we used the fact $\max\{x,1/x\}\lesi 1/\rho_\nu(x)$.
	
	By Proposition \ref{prop- delta k pt},
	\[
	\begin{aligned}
		|\widetilde{\delta} p^{\nu+1}_t(x,y)|&\lesi \f{1}{t}\exp\Big(-\f{|x-y|^2}{ct}\Big)\Big(1+\f{\sqrt t}{\rho_\nu(x)}+\f{\sqrt t}{\rho_\nu(y)}\Big)^{-(\nu+1+1/2)}\\
		&\lesi \f{1}{t}\exp\Big(-\f{|x-y|^2}{ct}\Big)\Big(1+\f{\sqrt t}{\rho_\nu(x)}+\f{\sqrt t}{\rho_\nu(y)}\Big)^{-(\nu+1/2)}.
	\end{aligned}
	\]
	In addition, by Proposition \ref{prop-heat kernel},
	\[
	\begin{aligned}
		\f{1}{\rho_\nu(x)} p^{\nu+1}_t(x,y)&\lesi \f{1}{\rho_\nu(x)} \f{1}{t}\exp\Big(-\f{|x-y|^2}{ct}\Big)\Big(1+\f{\sqrt t}{\rho_\nu(x)}+\f{\sqrt t}{\rho_\nu(y)}\Big)^{-(\nu+1+1/2)}\\
		&\lesi \f{1}{t}\exp\Big(-\f{|x-y|^2}{ct}\Big)\Big(1+\f{\sqrt t}{\rho_\nu(x)}+\f{\sqrt t}{\rho_\nu(y)}\Big)^{-(\nu+1/2)}.
	\end{aligned}
	\]
	This completes our proof.
\end{proof}
Arguing similarly to Proposition \ref{prop-gradient x y}, we have:
\begin{prop}\label{prop-gradient x y dual delta}
	Let $\nu>-1/2$. Then we have
	\begin{equation*}
		\begin{aligned}
			|\partial_y \delta^* p_t^{\nu+1}(x,y)|&\lesi \Big[\f{1}{\sqrt t}\ + \f{1}{\rho_\nu(y)}\Big]\f{1 }{ t}\exp\Big(-\f{|x-y|^2}{ct}\Big)\Big(1+\f{\sqrt t}{\rho_\nu(x)}+\f{\sqrt t}{\rho_\nu(y)}\Big)^{-(\nu+1/2)}
		\end{aligned}
	\end{equation*}
	for all $t>0$ and all $x, y\in (0,\vc)$.
\end{prop}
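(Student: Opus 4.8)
The plan is to follow the argument of Proposition \ref{prop-gradient x y} almost verbatim, with $\delta^*$ in place of $\delta^\ell$ and the index $\nu+1$ in place of $\nu$. Write $\widetilde{\delta}_y = \partial_y + y - \f{1}{y}(\nu+\f{3}{2})$ for the derivative associated with $\mathcal L_{\nu+1}$ acting in the $y$-variable; solving for $\partial_y$ gives $\partial_y = \widetilde{\delta}_y - y + \f{1}{y}(\nu+\f{3}{2})$, so that
\[
|\partial_y \delta^* p_t^{\nu+1}(x,y)| \lesi |\widetilde{\delta}_y \delta^* p_t^{\nu+1}(x,y)| + \Big(y+\f{1}{y}\Big)|\delta^* p_t^{\nu+1}(x,y)|.
\]
Since $n=1$ and $\nu>-1/2$, one has $\rho_\nu(y)=\f{1}{16}\min\{y,1/y\}=\rho_{\nu+1}(y)$, so the two critical functions coincide and $\max\{y,1/y\}\lesi 1/\rho_\nu(y)$.

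For the second group of terms I would invoke Proposition \ref{prop-delta dual heat kernel}, which bounds $|\delta^* p_t^{\nu+1}(x,y)|$ by $\f{1}{t}\exp(-|x-y|^2/ct)(1+\sqrt t/\rho_\nu(x)+\sqrt t/\rho_\nu(y))^{-(\nu+1/2)}$; multiplying by $y+1/y\lesi 1/\rho_\nu(y)$ yields exactly the $\f{1}{\rho_\nu(y)}\cdot\f{1}{t}$ contribution in the claimed bound. For the mixed term I would use the semigroup identity
\[
\widetilde{\delta}_y \delta^* p_t^{\nu+1}(x,y) = \int_0^\vc \delta^* p_{t/2}^{\nu+1}(x,z)\,\widetilde{\delta}_y p_{t/2}^{\nu+1}(z,y)\,dz,
\]
estimating the first factor by Proposition \ref{prop-delta dual heat kernel} (of size $\f{1}{t}$, weight exponent $\nu+1/2$) and the second by Proposition \ref{prop- delta k pt} applied with $\nu+1$ in place of $\nu$ and $k=1$ (of size $\f{1}{t}$, weight exponent $\nu+3/2$). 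The product of the two Gaussians integrates in $z$ to the usual factor $\sqrt t\,\exp(-|x-y|^2/c't)$, so the mixed term is controlled by $\f{1}{t^{3/2}}\exp(-|x-y|^2/c't)(1+\sqrt t/\rho_\nu(x)+\sqrt t/\rho_\nu(y))^{-(\nu+1/2)}$, which is the $\f{1}{\sqrt t}\cdot\f{1}{t}$ contribution. Adding the two bounds gives the claim.

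The main obstacle, exactly as in Proposition \ref{prop-gradient x y}, is the convolution step: one must verify that the product of the weights $(1+\sqrt t/\rho_\nu(x)+\sqrt t/\rho_\nu(z))^{-(\nu+1/2)}$ and $(1+\sqrt t/\rho_\nu(z)+\sqrt t/\rho_\nu(y))^{-(\nu+3/2)}$, integrated against the two Gaussians, reproduces the single weight with exponent $\nu+1/2$. This relies on the slow variation of $\rho_\nu$ (Lemma \ref{lem-critical function}) together with the Gaussian localization, which confines $z$ near both $x$ and $y$; the stronger decay of the second weight more than absorbs the $z$-dependence. As this mechanism is already established in the proof of Proposition \ref{prop-gradient x y}, I would simply appeal to that computation rather than reproduce it.
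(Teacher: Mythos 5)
Your proof is correct and is exactly the argument the paper intends: the paper's entire proof of this proposition is the single line ``Arguing similarly to Proposition \ref{prop-gradient x y}'', and your decomposition $\partial_y=\widetilde{\delta}_y-y+\frac{1}{y}\left(\nu+\frac{3}{2}\right)$, the semigroup identity for the mixed term, and the appeals to Propositions \ref{prop-delta dual heat kernel} and \ref{prop- delta k pt} (with $\nu+1$, using $\rho_{\nu+1}=\rho_\nu$) are precisely that argument transposed to $p_t^{\nu+1}$ and $\delta^*$. One small remark: the weight interaction you flag as the main obstacle needs no slow-variation property of $\rho_\nu$ and no Gaussian localization --- since the exponents are positive one may simply drop the $z$-terms via $(1+a+c)^{-\gamma}\le(1+a)^{-\gamma}$ and recombine via $(1+a)^{-\gamma}(1+b)^{-\gamma}\le(1+a+b)^{-\gamma}$.
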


\bigskip

\subsection{The case $n\ge 2$} For  $\nu\in [-1/2,\vc)^n$, we define
\[
\nu_{\min} =  \min\{\nu_j: j\in \mathcal J_\nu\}
\]
with the convention $\inf \emptyset = \vc$, where $\mathcal J_\nu=\{j: \nu_j>-1/2\}$. In what follows, by $A\le CB^{-\vc}$ we mean for every $N>0$, $A\lesi_N CB^{-N}$.

From \eqref{eq- prod ptnu} and \eqref{eq- equivalence of rho}, the following propositions are just direct consequences of Propositions \ref{prop- derivative heat kernel nu = -1/2}, \ref{prop-heat kernel}, \ref{prop- delta k pt}, \ref{prop-gradient x y}, \ref{prop-delta dual heat kernel} and \ref{prop-gradient x y dual delta} and the fact $\rho_{\nu+e_j}(x)\le \rho_{\nu}(x)$. 


\begin{prop}
	\label{prop- delta k pt d>2} Let $\nu\in [-1/2,\vc)^n$ and $\ell=0,1,2$. Then we have
	\begin{equation*}
		| \delta^\ell  p_t^\nu(x,y)|\lesi   \f{1}{t^{(n+\ell)/2}}\exp\Big(-\f{|x-y|^2}{ct}\Big)\Big(1+\f{\sqrt t}{\rho_\nu(x)}+\f{\sqrt t}{\rho_\nu(y)}\Big)^{-(\nu_{\min}+1/2)}
	\end{equation*}
	for $t>0$ and $x,y\in \mathbb R^n_+$.
\end{prop}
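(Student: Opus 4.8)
The plan is to reduce the $n$-dimensional estimate to the one-dimensional estimates already proved, using the tensor-product structure of the heat kernel recorded in \eqref{eq- prod ptnu}. Since each component operator $\delta_j$ differentiates only in the $j$-th variable, any $\ell$-fold derivative $\delta_{i_1}\cdots\delta_{i_\ell}$ with $\ell\le 2$ factors along coordinates:
\[
\delta_{i_1}\cdots\delta_{i_\ell}\, p_t^\nu(x,y)=\prod_{j=1}^n \delta_j^{a_j} p_t^{\nu_j}(x_j,y_j),
\]
where $a_j\in\{0,1,2\}$ records how many of the indices $i_1,\ldots,i_\ell$ equal $j$ and $\sum_j a_j=\ell$. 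To each one-dimensional factor I would apply Proposition \ref{prop- delta k pt} when $\nu_j>-1/2$ and Proposition \ref{prop- derivative heat kernel nu = -1/2} when $\nu_j=-1/2$; both cover the needed range $a_j\le 2$ (the case $a_j=0$ being the bare heat-kernel bound).

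Multiplying these factors, the prefactors $t^{-(1+a_j)/2}$ combine to $t^{-(n+\ell)/2}$ and the Gaussian factors $\prod_j\exp\big(-|x_j-y_j|^2/(ct)\big)$ collapse to $\exp\big(-|x-y|^2/(ct)\big)$, which already yields the claimed $t$-power and Gaussian. The remaining task is to control the product of the one-dimensional decay factors
\[
\prod_{j=1}^n\Big(1+\f{\sqrt t}{\rho_{\nu_j}(x_j)}+\f{\sqrt t}{\rho_{\nu_j}(y_j)}\Big)^{-\beta_j},
\]
where $\beta_j=\nu_j+1/2$ when $\nu_j>-1/2$, while for $\nu_j=-1/2$ the exponent $\beta_j$ may be taken arbitrarily large since Proposition \ref{prop- derivative heat kernel nu = -1/2} supplies decay of any prescribed order $N$.

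Write $A_j:=1+\f{\sqrt t}{\rho_{\nu_j}(x_j)}+\f{\sqrt t}{\rho_{\nu_j}(y_j)}\ge 1$. The equivalence \eqref{eq- equivalence of rho} gives $\rho_\nu\sim\min_j\rho_{\nu_j}$, hence $1+\f{\sqrt t}{\rho_\nu(x)}+\f{\sqrt t}{\rho_\nu(y)}\sim\max_j A_j$. Because every $A_j\ge 1$, each factor in the product is at most $1$, so it suffices to extract a single factor carrying the exponent $\nu_{\min}+1/2$ at the index $j^\ast$ realizing $\max_j A_j$. If $\nu_{j^\ast}>-1/2$, then $\beta_{j^\ast}=\nu_{j^\ast}+1/2\ge\nu_{\min}+1/2$, so $A_{j^\ast}^{-\beta_{j^\ast}}\le A_{j^\ast}^{-(\nu_{\min}+1/2)}$; if $\nu_{j^\ast}=-1/2$, I instead select the arbitrary-order exponent $\beta_{j^\ast}=\nu_{\min}+1/2$ (and $\beta_{j^\ast}=N$ for any $N$ in the degenerate case $\mathcal J_\nu=\emptyset$, matching the convention $\nu_{\min}=\vc$). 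In either case the whole product is dominated by $(\max_j A_j)^{-(\nu_{\min}+1/2)}\sim\big(1+\f{\sqrt t}{\rho_\nu(x)}+\f{\sqrt t}{\rho_\nu(y)}\big)^{-(\nu_{\min}+1/2)}$, completing the estimate.

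The only genuinely non-routine point is this last bookkeeping step: one must recognize that the decay governed by the $n$-dimensional critical function is controlled by the \emph{worst} coordinate (smallest $\rho_{\nu_j}$, equivalently largest $A_j$), and that the exponent at that coordinate can always be forced up to $\nu_{\min}+1/2$—either for free from the natural $\nu_j+1/2$ at a non-degenerate coordinate, or from the arbitrary-order decay available at a degenerate coordinate $\nu_j=-1/2$. Everything else is a direct multiplication of the one-dimensional bounds, together with the monotonicity $\rho_{\nu+e_j}(x)\le\rho_\nu(x)$ used when matching intermediate exponents.
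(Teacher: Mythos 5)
Your proof is correct and follows essentially the same route as the paper, which obtains the $n$-dimensional bound as a direct consequence of the tensor-product formula \eqref{eq- prod ptnu}, the equivalence \eqref{eq- equivalence of rho}, and the one-dimensional Propositions \ref{prop- derivative heat kernel nu = -1/2} and \ref{prop- delta k pt}. You simply spell out the coordinate bookkeeping (factoring $\delta_{i_1}\cdots\delta_{i_\ell}$ across coordinates, extracting the worst factor $A_{j^\ast}$, and forcing its exponent up to $\nu_{\min}+1/2$, with arbitrary-order decay at coordinates where $\nu_j=-1/2$) that the paper leaves implicit.
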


\begin{prop}\label{prop-gradient x y d>2}
	Let $\nu\in [-1/2,\vc)^n$ and $\ell=0,1$.  Then for each $j=1,\ldots,n$ we have
	\begin{equation*}
		\begin{aligned}
			|\partial_{y_j} \delta^\ell p_t^\nu(x,y)|&\lesi \Big[\f{1}{\sqrt t}\ + \f{1}{\rho_{\nu_j}(y_j)}\Big]\f{1 }{ t^{(n+\ell)/2}}\exp\Big(-\f{|x-y|^2}{ct}\Big)\Big(1+\f{\sqrt t}{\rho_\nu(x)}+\f{\sqrt t}{\rho_\nu(y)}\Big)^{-(\nu_{\min}+1/2)}
		\end{aligned}
	\end{equation*}
	and
	\begin{equation*}
		\begin{aligned}
			|\partial_{x_j} \delta^\ell p_t^\nu(x,y)|&\lesi \Big[\f{1}{\sqrt t}\ + \f{1}{\rho_{\nu_j}(x_j)}\Big]\f{1 }{ t^{(n+\ell)/2}}\exp\Big(-\f{|x-y|^2}{ct}\Big)\Big(1+\f{\sqrt t}{\rho_\nu(x)}+\f{\sqrt t}{\rho_\nu(y)}\Big)^{-(\nu_{\min}+1/2)}
		\end{aligned}
	\end{equation*}
	for all $t>0$ and all $x, y\in \mathbb R^n_+$.
\end{prop}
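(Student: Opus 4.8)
The plan is to reduce everything to the one-dimensional estimates already established, using the tensor-product structure \eqref{eq- prod ptnu}, $p_t^\nu(x,y)=\prod_{k=1}^n p_t^{\nu_k}(x_k,y_k)$. Reading $\delta^\ell$ (for $\ell=0,1$) as a single component $\delta_i$ (and as the identity when $\ell=0$), I note that $\delta_i$ and $\partial_{y_j}$ are first-order operators acting only in the $i$-th and $j$-th coordinates, so $\partial_{y_j}\delta^\ell$ differentiates at most two of the $n$ tensor factors. First I would split into the cases $j=i$ and $j\neq i$: when $j=i$ the single factor $p_t^{\nu_i}(x_i,y_i)$ is hit by the full operator $\partial_{y_i}\delta_i$; when $j\neq i$ the factor $p_t^{\nu_i}$ is hit by $\delta_i$ and the factor $p_t^{\nu_j}$ by $\partial_{y_j}$, while all other factors stay untouched.

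Next I would insert the one-dimensional bounds. For a coordinate with $\nu_k>-1/2$ the distinguished factor is controlled by Proposition \ref{prop- delta k pt} (for $\delta_i p_t^{\nu_i}$) and Proposition \ref{prop-gradient x y} (for $\partial_{y_j}p_t^{\nu_j}$ or $\partial_{y_i}\delta_i p_t^{\nu_i}$), the latter producing the bracket $\frac{1}{\sqrt t}+\frac{1}{\rho_{\nu_j}(y_j)}$; for a coordinate with $\nu_k=-1/2$ the analogous $\partial_y$-bounds follow from Proposition \ref{prop- derivative heat kernel nu = -1/2} via the symmetry $p_t^{-1/2}(x,y)=p_t^{-1/2}(y,x)$, writing $\partial_y=\tfrac12(\delta_y-\delta_y^*)$ and splitting any mixed $x$–$y$ derivative by the semigroup identity exactly as in the proof of Proposition \ref{prop-gradient x y}; here the decay is of arbitrary polynomial order and the bracket is dominated trivially by $\frac{1}{\sqrt t}$. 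The untouched factors are bounded by the Gaussian estimate of Proposition \ref{prop-heat kernel} (or Proposition \ref{prop- derivative heat kernel nu = -1/2} when $\nu_k=-1/2$). Multiplying the factors, the Gaussians combine through $|x-y|^2=\sum_k|x_k-y_k|^2$ into $\exp(-|x-y|^2/ct)$, and a direct count shows the powers of $t$ add to $t^{-(n+\ell)/2}$ (each factor giving $t^{-1/2}$, the $\delta_i$ an extra $t^{-1/2}$), while the $\partial_{y_j}$-factor carries the bracket $\frac{1}{\sqrt t}+\frac{1}{\rho_{\nu_j}(y_j)}$ to the front.

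The decisive step is the recombination of the $n$ one-dimensional decay factors into a single factor of order $\nu_{\min}+\tfrac12$ attached to $\rho_\nu$. Writing $A_k:=1+\frac{\sqrt t}{\rho_{\nu_k}(x_k)}+\frac{\sqrt t}{\rho_{\nu_k}(y_k)}\ge 1$, the equivalence \eqref{eq- equivalence of rho}, $\rho_\nu\sim\min_k\rho_{\nu_k}$, yields $1+\frac{\sqrt t}{\rho_\nu(x)}+\frac{\sqrt t}{\rho_\nu(y)}\sim\max_k A_k$. The one-dimensional bounds give the product $\prod_k A_k^{-e_k}$ with $e_k=\nu_k+\tfrac12\ge0$ when $\nu_k>-1/2$ and $e_k$ arbitrarily large when $\nu_k=-1/2$. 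Choosing $m$ with $A_m=\max_k A_k$ and discarding the factors $A_k^{-e_k}\le1$ for $k\neq m$, I would use $\nu_m\ge\nu_{\min}$ (when $\nu_m>-1/2$) or the free exponent (when $\nu_m=-1/2$) to obtain $\prod_k A_k^{-e_k}\le A_m^{-(\nu_{\min}+1/2)}\sim\bigl(1+\frac{\sqrt t}{\rho_\nu(x)}+\frac{\sqrt t}{\rho_\nu(y)}\bigr)^{-(\nu_{\min}+1/2)}$. The degenerate case $\mathcal J_\nu=\emptyset$ (all $\nu_k=-1/2$, $\nu_{\min}=\infty$) is then automatic, since every $e_k$ may be taken arbitrarily large, matching the convention $B^{-\infty}$. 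The $\partial_{x_j}$ estimate is entirely symmetric, invoking the $\partial_x$-parts of the one-dimensional results with $\rho_{\nu_j}(x_j)$ in place of $\rho_{\nu_j}(y_j)$; the monotonicity $\rho_{\nu+e_j}\le\rho_\nu$ keeps the intermediate $\nu\mapsto\nu+1$ shifts in the one-dimensional estimates compatible with $\rho_\nu$.

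I expect the main obstacle to be exactly this last recombination, rather than the routine Gaussian and power-of-$t$ bookkeeping: one must check that the mismatch between the per-coordinate critical functions $\rho_{\nu_k}$ and the global $\rho_\nu$, together with the possibility that the coordinate realizing the worst localization $\max_k A_k$ carries the smallest exponent $\nu_k+\tfrac12$, still collapses to the single clean exponent $\nu_{\min}+\tfrac12$. The inequalities $A_k\ge1$ and $\nu_k+\tfrac12\ge0$ are precisely what license discarding all but the maximal factor, and the care needed lies in confirming that the $\nu_k=-1/2$ coordinates, whose one-dimensional decay is dictated by $\rho$-localization of the Hermite kernel rather than by a fixed power, can be absorbed with an arbitrarily high exponent.
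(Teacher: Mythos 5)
Your proposal is correct and takes essentially the same route as the paper: the paper proves this proposition in a single line, declaring it a direct consequence of the one-dimensional Propositions \ref{prop- derivative heat kernel nu = -1/2}, \ref{prop-heat kernel}, \ref{prop- delta k pt} and \ref{prop-gradient x y} via the product formula \eqref{eq- prod ptnu} and the equivalence \eqref{eq- equivalence of rho}, which is exactly the reduction you carry out. Your explicit recombination step (setting $A_k=1+\sqrt t/\rho_{\nu_k}(x_k)+\sqrt t/\rho_{\nu_k}(y_k)$, noting $\max_k A_k\sim 1+\sqrt t/\rho_\nu(x)+\sqrt t/\rho_\nu(y)$, and discarding all but the maximal factor using $\nu_m\ge\nu_{\min}$ or the arbitrarily large exponent when $\nu_m=-1/2$) correctly fills in the detail the paper leaves implicit.
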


\begin{prop}
	\label{prop- delta k pt d>2 dual delta} Let $\nu\in [-1/2,\vc)^n$. Then for each $j=1,\ldots,n$ we have
	\begin{equation*}
		|  \delta_j^*  p_t^{\nu+e_j}(x,y)|\lesi   \f{1}{t^{(n+1)/2}}\exp\Big(-\f{|x-y|^2}{ct}\Big)\Big(1+\f{\sqrt t}{\rho_\nu(x)}+\f{\sqrt t}{\rho_\nu(y)}\Big)^{-(\nu_{\min}+1/2)}
	\end{equation*}
	for $t>0$ and $x,y\in \mathbb R^n_+$.
\end{prop}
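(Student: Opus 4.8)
The plan is to obtain the estimate purely by tensorisation of the one‑dimensional bounds already proved. By \eqref{eq- prod ptnu} the kernel factorises, and since $\delta_j^*$ differentiates only in the $j$‑th variable,
\[
\delta_j^* p_t^{\nu+e_j}(x,y)=\big[\delta_j^* p_t^{\nu_j+1}(x_j,y_j)\big]\prod_{i\ne j}p_t^{\nu_i}(x_i,y_i),
\]
where in the $j$‑th factor $\delta_j^*$ carries the parameter $\nu_j$ while the kernel carries $\nu_j+1$. First I would dispose of the $n-1$ undifferentiated factors: for $i$ with $\nu_i>-1/2$, Proposition \ref{prop-heat kernel} gives $p_t^{\nu_i}(x_i,y_i)\lesi t^{-1/2}\exp(-|x_i-y_i|^2/ct)\,(1+\sqrt t/\rho_{\nu_i}(x_i)+\sqrt t/\rho_{\nu_i}(y_i))^{-(\nu_i+1/2)}$, and for $i$ with $\nu_i=-1/2$, Proposition \ref{prop- derivative heat kernel nu = -1/2} (taken with $k=\ell=0$) gives the same shape but with an arbitrarily large decay exponent $N$ in place of $\nu_i+1/2$. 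For the differentiated factor with $\nu_j>-1/2$ the bound is exactly Proposition \ref{prop-delta dual heat kernel}, namely $t^{-1}\exp(-|x_j-y_j|^2/ct)\,(1+\sqrt t/\rho_{\nu_j}(x_j)+\sqrt t/\rho_{\nu_j}(y_j))^{-(\nu_j+1/2)}$.

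Multiplying all factors, the powers of $t$ combine as $t^{-1}\cdot t^{-(n-1)/2}=t^{-(n+1)/2}$, and the Gaussians multiply to $\exp(-|x-y|^2/ct)$ (with $c$ the largest of the constants, using $\sum_i|x_i-y_i|^2=|x-y|^2$). It then remains to collapse the product of decay factors into a single power of $1+\sqrt t/\rho_\nu(x)+\sqrt t/\rho_\nu(y)$. Writing $A_i:=1+\sqrt t/\rho_{\nu_i}(x_i)+\sqrt t/\rho_{\nu_i}(y_i)$, equivalence \eqref{eq- equivalence of rho} gives $1+\sqrt t/\rho_\nu(x)+\sqrt t/\rho_\nu(y)\sim\max_i A_i=:M$. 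Since every $A_i\ge 1$, the whole product is dominated by the single factor $A_{i_0}=M$ attaining the maximum, raised to its own exponent; that exponent is $\nu_{i_0}+1/2\ge\nu_{\min}+1/2$ when $i_0\in\mathcal J_\nu$ and arbitrarily large otherwise, so the product is $\lesi M^{-(\nu_{\min}+1/2)}$ (with the convention that $\mathcal J_\nu=\emptyset$ forces arbitrary decay, matching $\nu_{\min}=\infty$). The inequality $\rho_{\nu+e_j}\le\rho_\nu$ is what lets me pass from the critical function attached to the shifted index in the differentiated slot back to $\rho_\nu$. The whole scheme closes \emph{provided} each factor, including the differentiated one, carries a decay exponent at least $\nu_{\min}+1/2$.

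The delicate point, which I expect to be the main obstacle, is precisely the differentiated factor when $\nu_j=-1/2$. There $\delta_j^*=-\partial_{x_j}+x_j$ is applied to $p_t^{1/2}$, so operator and kernel parameters no longer coincide, and mimicking the proof of Proposition \ref{prop-delta dual heat kernel} — splitting $\delta_j^*=-\widetilde\delta+2x_j-1/x_j$ and invoking Propositions \ref{prop- delta k pt} and \ref{prop-heat kernel} — only yields decay of order $\nu_j+1/2=0$, i.e.\ none at all; a short test with $x_j=y_j$ large shows this is genuinely insufficient for the maximising‑index argument above. The resolution is to use the reflection identity $p_t^{1/2}(x_j,y_j)=h_t(x_j,y_j)-h_t(-x_j,y_j)$, the odd counterpart of the even reflection $p_t^{-1/2}=h_t(x_j,y_j)+h_t(-x_j,y_j)$ that underlies Proposition \ref{prop- derivative heat kernel nu = -1/2} (see \cite{NS}). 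Because $\delta_j^*$ is the same first‑order operator $-\partial_{x_j}+x_j$, applying it to each Hermite term and invoking Lemma \ref{lem2} (noting $\rho_{-1/2}(-x_j)=\rho_{-1/2}(x_j)$) produces $|\delta_j^* p_t^{1/2}(x_j,y_j)|\lesi t^{-1}\exp(-|x_j-y_j|^2/ct)\,(1+\sqrt t/\rho_{-1/2}(x_j)+\sqrt t/\rho_{-1/2}(y_j))^{-N}$ for every $N$. This restores arbitrarily large decay for the $\nu_j=-1/2$ factor, so the combination step of the previous paragraph applies verbatim and the proof is complete.
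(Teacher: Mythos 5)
Your argument is correct, and its skeleton coincides with the paper's: the paper disposes of this proposition in one line, by factorising the kernel via \eqref{eq- prod ptnu}, quoting the one-dimensional bounds (Propositions \ref{prop- derivative heat kernel nu = -1/2}, \ref{prop-heat kernel}, \ref{prop- delta k pt}, \ref{prop-delta dual heat kernel}), and recombining through \eqref{eq- equivalence of rho} and the inequality $\rho_{\nu+e_j}\le\rho_\nu$ --- exactly your tensorisation-plus-maximising-index scheme, whose combination step you carry out correctly. The genuine difference is your treatment of the differentiated slot when $\nu_j=-1/2$, a case the paper's citation list does not actually cover: Proposition \ref{prop-delta dual heat kernel} is stated and proved only for $\nu>-1/2$ (its proof rests on $\max\{x,1/x\}\lesssim 1/\rho_\nu(x)$, which fails at $\nu=-1/2$, and its decay exponent $\nu+1/2$ degenerates to $0$ there), while Proposition \ref{prop- derivative heat kernel nu = -1/2} concerns $(\delta^*)^k p_t^{-1/2}$, not $\delta^* p_t^{1/2}$. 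Since $\nu_{\min}$ is determined by the \emph{other} coordinates, this factor must supply decay of arbitrarily large order, so, as you observe, a zero-decay bound cannot be rescued by the combination step. Your fix --- the odd reflection $p_t^{1/2}(x_j,y_j)=h_t(x_j,y_j)-h_t(-x_j,y_j)$, the odd-Hermite counterpart of the identity $p_t^{-1/2}(x,y)=h_t(x,y)+h_t(-x,y)$ that the paper quotes from \cite{NS}, combined with Lemma \ref{lem2}, the evenness of $\rho_{-1/2}$, and $|x_j+y_j|\ge|x_j-y_j|$ for $x_j,y_j>0$ --- is valid and yields the required arbitrary-order decay together with the correct factor $t^{-1}$. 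In short, the paper's one-liner buys brevity but leaves the $\nu_j=-1/2$ case unjustified within its stated toolkit; your version supplies the missing one-dimensional estimate and is the more complete argument.
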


\begin{prop}\label{prop-gradient x y d>2 dual delta}
	Let $\nu\in [-1/2,\vc)^n$.  Then for each $j,k=1,\ldots,n$ we have
	\begin{equation*}
		\begin{aligned}
			|\partial_{y_j}  \delta_k^*  p_t^{\nu+e_k}(x,y)|&\lesi \Big[\f{1}{\sqrt t}\ + \f{1}{\rho_{\nu_j}(y_j)}\Big]\f{1 }{ t^{(n+1)/2}}\exp\Big(-\f{|x-y|^2}{ct}\Big)\Big(1+\f{\sqrt t}{\rho_\nu(x)}+\f{\sqrt t}{\rho_\nu(y)}\Big)^{-(\nu_{\min}+1/2)}
		\end{aligned}
	\end{equation*}
	for all $t>0$ and all $x, y\in \mathbb R^n_+$.
\end{prop}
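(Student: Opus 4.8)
The plan is to reduce everything to the one-dimensional estimates via the tensor-product structure \eqref{eq- prod ptnu}, exactly as for the companion statements. Writing $p_t^{\nu+e_k}(x,y)=p_t^{\nu_k+1}(x_k,y_k)\prod_{i\neq k}p_t^{\nu_i}(x_i,y_i)$ and observing that $\delta_k^*$ acts only in the variable $x_k$ while $\partial_{y_j}$ acts only in $y_j$, I would split into the two cases $j=k$ and $j\neq k$. When $j=k$ the operator $\partial_{y_k}\delta_k^*$ hits the single factor $p_t^{\nu_k+1}(x_k,y_k)$, so the kernel factors as $[\partial_{y_k}\delta_k^* p_t^{\nu_k+1}(x_k,y_k)]\prod_{i\neq k}p_t^{\nu_i}(x_i,y_i)$; when $j\neq k$ it factors as $[\delta_k^* p_t^{\nu_k+1}(x_k,y_k)][\partial_{y_j}p_t^{\nu_j}(x_j,y_j)]\prod_{i\neq j,k}p_t^{\nu_i}(x_i,y_i)$.

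Next I would estimate each factor by the matching one-dimensional result. For a slot with $\nu_i>-1/2$ I use Proposition \ref{prop-gradient x y dual delta} on the $\partial_{y_k}\delta_k^*$ factor, Proposition \ref{prop-delta dual heat kernel} on the $\delta_k^*$ factor, Proposition \ref{prop-gradient x y} with $\ell=0$ on the $\partial_{y_j}$ factor, and Proposition \ref{prop-heat kernel} on each plain factor $p_t^{\nu_i}$ (dropping the harmless extra decay $e^{-t/2}\le 1$). For a slot with $\nu_i=-1/2$ the factor is controlled by Proposition \ref{prop- derivative heat kernel nu = -1/2}; the only point needing a word is that a pure $y$-derivative there is written $\partial_{y_i}=\tfrac12(\delta_{i,y}-\delta_{i,y}^*)$ and estimated through the symmetry $p_t^{-1/2}(x_i,y_i)=p_t^{-1/2}(y_i,x_i)$ together with Lemma \ref{lem2}, which moreover yields arbitrarily fast decay in $1+\sqrt t/\rho_{\nu_i}(x_i)+\sqrt t/\rho_{\nu_i}(y_i)$. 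A straightforward bookkeeping of the powers of $t$ then matches the claim: the factor carrying $\delta_k^*$ produces a full power $t^{-1}$, the factor carrying $\partial_{y_j}$ produces the bracket $[\tfrac1{\sqrt t}+\tfrac1{\rho_{\nu_j}(y_j)}]$ together with one half-power $t^{-1/2}$ (these two coincide in the single $k$-slot when $j=k$), and each remaining plain slot produces $t^{-1/2}$, so that in both cases the product is exactly $[\tfrac1{\sqrt t}+\tfrac1{\rho_{\nu_j}(y_j)}]t^{-(n+1)/2}$. The product of the Gaussian factors reassembles into $\exp(-|x-y|^2/(ct))$ upon taking the largest of the finitely many constants $c$.

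The step that actually requires an argument, and which I expect to be the main obstacle, is reassembling the product of one-dimensional decay factors into the single multi-dimensional factor with exponent $\nu_{\min}+1/2$. Set $A_i=1+\sqrt t/\rho_{\nu_i}(x_i)+\sqrt t/\rho_{\nu_i}(y_i)\ge 1$, so that each factor carries $A_i^{-\beta_i}$ with $\beta_i=\nu_i+1/2$ when $\nu_i>-1/2$ and $\beta_i$ arbitrarily large when $\nu_i=-1/2$. In every case one may take $\beta_i\ge \nu_{\min}+1/2$: for $i\in\mathcal J_\nu$ this holds since $\nu_i\ge\nu_{\min}$, for $i\notin\mathcal J_\nu$ it holds by choosing the free exponent large, and if $\mathcal J_\nu=\emptyset$ then $\nu_{\min}=\infty$ and every factor already decays arbitrarily fast. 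Since each $A_i\ge1$ this gives $\prod_i A_i^{-\beta_i}\le\big(\prod_i A_i\big)^{-(\nu_{\min}+1/2)}\le\big(\max_i A_i\big)^{-(\nu_{\min}+1/2)}$, and by \eqref{eq- equivalence of rho} we have $\max_i A_i\sim 1+\sqrt t/\rho_\nu(x)+\sqrt t/\rho_\nu(y)$, which converts the product into the desired factor $(1+\sqrt t/\rho_\nu(x)+\sqrt t/\rho_\nu(y))^{-(\nu_{\min}+1/2)}$. Combining this with the power-counting above completes the proof; the argument is structurally identical to, and is invoked alongside, Propositions \ref{prop- delta k pt d>2}--\ref{prop- delta k pt d>2 dual delta}.
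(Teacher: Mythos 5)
Your overall route is the same as the paper's: factor the kernel through \eqref{eq- prod ptnu}, estimate each one-dimensional factor by the Propositions of Section 3.1, and reassemble the Gaussian and decay factors via \eqref{eq- equivalence of rho}. Your power counting is right, your reassembly step ($A_i\ge 1$, each exponent $\beta_i\ge \nu_{\min}+1/2$, hence $\prod_i A_i^{-\beta_i}\le (\max_i A_i)^{-(\nu_{\min}+1/2)}$ with $\max_i A_i\sim 1+\sqrt t/\rho_\nu(x)+\sqrt t/\rho_\nu(y)$) is correct and in fact more explicit than anything the paper writes, and your device $\partial_{y_i}=\tfrac12(\delta_{i,y}-\delta_{i,y}^*)$ for a slot $i\ne k$ with $\nu_i=-1/2$ is a legitimate variant of the paper's choice $\partial_{y_j}=\delta_{j,y}-y_j$ combined with $y_j\le 1/\rho_{\nu_j}(y_j)$ (the paper organizes the proof by the dichotomy $\nu_j>-1/2$ versus $\nu_j=-1/2$ rather than by $j=k$ versus $j\ne k$, but that is only bookkeeping).

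There is, however, a genuine gap in the slot $i=k$ when $\nu_k=-1/2$. In that slot the factor you must bound is $\delta_k^*p_t^{\nu_k+1}(x_k,y_k)=(-\partial_{x_k}+x_k)\,p_t^{1/2}(x_k,y_k)$ (and $\partial_{y_k}\delta_k^*p_t^{1/2}(x_k,y_k)$ when $j=k$): the kernel parameter has been shifted to $\nu_k+1=1/2$, while $\delta_k^*$ is still the adjoint attached to $\nu_k=-1/2$. None of your cited results covers this factor: Propositions \ref{prop-delta dual heat kernel} and \ref{prop-gradient x y dual delta} are stated and proved only for $\nu>-1/2$, while Proposition \ref{prop- derivative heat kernel nu = -1/2} concerns the kernel $p_t^{-1/2}$, not $p_t^{1/2}$, so your blanket claim that ``a slot with $\nu_i=-1/2$ is controlled by Proposition \ref{prop- derivative heat kernel nu = -1/2}'' fails exactly here. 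The case is not marginal: when $\mathcal J_\nu=\emptyset$ (e.g. $\nu=(-1/2,\ldots,-1/2)$) the statement demands decay of \emph{every} order, and all of it must come from slots of precisely this type; note also that the cheap repair of writing $\delta_k^*=-\widetilde\delta_k+2x_k-\tfrac{1}{x_k}$ with $\widetilde\delta_k$ attached to $1/2$ and invoking Propositions \ref{prop-heat kernel} and \ref{prop- delta k pt} with parameter $1/2$ only yields the exponent $(1/2+1/2)-1=0$, which is insufficient. A correct repair is either (i) to use the odd reflection $p_t^{1/2}(x_k,y_k)=h_t(x_k,y_k)-h_t(-x_k,y_k)$ (the odd companion of the identity $p_t^{-1/2}=h_t(x,y)+h_t(-x,y)$ used in the paper), apply Lemma \ref{lem2} to each Hermite term (using $x_k+y_k\ge|x_k-y_k|$ for the reflected one) to get arbitrarily fast decay, and reach the mixed derivative $\partial_{y_k}\delta_k^*$ by the semigroup composition trick from the proof of Proposition \ref{prop-gradient x y}; or (ii) to do what the paper's own proof does, namely treat the multi-dimensional bound of Proposition \ref{prop- delta k pt d>2 dual delta} as a black box, so that only one extra $y$-derivative remains to be produced.
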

\begin{proof}
	If $\nu_j>-1/2$,  then the proposition is just a direct consequence of  Propositions \ref{prop-gradient x y d>2} and \ref{prop-gradient x y d>2 dual delta}.

	If $j=  -1/2$, then $\partial_{y_j}=(\delta_{j})_y - y_j$. Hence,
	\[
	|\partial_{y_j}  \delta_k^*  p_t^{\nu+e_k}(x,y)|\lesi |\delta_{y}  \delta_k^*  p_t^{\nu+e_k}(x,y)| + y_j|\delta_k^*  p_t^{\nu+e_k}(x,y)|.
	\]
	By using Propositions \ref{prop-gradient x y d>2} and \ref{prop-gradient x y d>2 dual delta} and the fact $y_j\le 1/\rho_{\nu_j}(y_j)$,
	\[
	|\delta_{y}  \delta_k^*  p_t^{\nu+e_k}(x,y)|\lesi \f{1}{t^{(n+2)/2}}\exp\Big(-\f{|x-y|^2}{ct}\Big)\Big(1+\f{\sqrt t}{\rho_\nu(x)}+\f{\sqrt t}{\rho_\nu(y)}\Big)^{-(\nu_{\min}+1/2)}
	\]
	and
	\[
	y_j|\delta_k^*  p_t^{\nu+e_k}(x,y)|\lesi \f{1}{\rho_{\nu_j}(y_j)t^{(n+2)/1}}\exp\Big(-\f{|x-y|^2}{ct}\Big)\Big(1+\f{\sqrt t}{\rho_\nu(x)}+\f{\sqrt t}{\rho_\nu(y)}\Big)^{-(\nu_{\min}+1/2)}.
	\]
	This completes our proof.
\end{proof}
\bigskip

\section{Hardy spaces associated to the Laguerre operator and its duality}

\subsection{Hardy spaces associated to the Laguerre operator}

We first prove the following theorem.

\begin{thm}\label{thm-Hp rho = h vc rho}
	Let $\nu\in [-1/2,\vc)^n$ and $\rho_\nu$ be the function as in \eqref{eq- critical function}. Let $p\in (\f{n}{n+1},1]$ and $q\in [1,\vc]\cap (p,\vc]$. Then we have
	\[
	H^{p,q}_{{\rm at},\rho_\nu}(\mathbb{R}^n_+) = H^{p,\vc}_{{\rm at},\rho_\nu}(\mathbb{R}^n_+).
	\]
\end{thm}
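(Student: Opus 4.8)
The plan is to prove the two inclusions separately, the nontrivial content lying in $H^{p,q}_{{\rm at},\rho_\nu}(\mathbb{R}^n_+)\subseteq H^{p,\infty}_{{\rm at},\rho_\nu}(\mathbb{R}^n_+)$. The reverse inclusion is immediate: by Hölder's inequality on the supporting ball, a $(p,\infty,\rho_\nu)$-atom $a$ on $B$ satisfies $\|a\|_{L^q}\le\|a\|_{L^\infty}|B|^{1/q}\le|B|^{1/q-1/p}$ and has the same cancellation, so it is itself a $(p,q,\rho_\nu)$-atom. Hence every atomic $(p,\infty,\rho_\nu)$-representation is already an atomic $(p,q,\rho_\nu)$-representation with identical coefficients, giving $\|f\|_{H^{p,q}}\le\|f\|_{H^{p,\infty}}$.

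For the hard inclusion it suffices, by $\ell^p$-control of coefficients, to show that a single $(p,q,\rho_\nu)$-atom $a$ on $B=B(x_0,r_B)$ admits an atomic $(p,\infty,\rho_\nu)$-representation $a=\lambda_0b_0+\sum_{j,k}\lambda_{j,k}b_{j,k}$ with $\sum|\lambda|^p\le C$ independent of $a$. First I would run a Calderón--Zygmund decomposition of $a$ \emph{stopped at the scale of} $B$: fix $2^{j_*}\sim|B|^{-1/p}$ and, for each $j\ge j_*$, select the maximal dyadic cubes $\{Q_{j,k}\}$ on which the mean of $|a|$ exceeds $2^j$, writing $\Omega_j=\bigcup_kQ_{j,k}$. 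The telescoping identity $a=g_{j_*}+\sum_{j\ge j_*}(g_{j+1}-g_j)$ then splits $a$ into a bounded coarse part $g_{j_*}$, supported in a fixed dilate $B(x_0,Cr_B)$ with $\|g_{j_*}\|_{L^\infty}\lesssim|B|^{-1/p}$, plus fine pieces $g_{j+1}-g_j=\sum_kh_{j,k}$, where each $h_{j,k}$ is supported in $Q_{j,k}$, has mean zero, and obeys $\|h_{j,k}\|_{L^\infty}\lesssim2^j$. After normalization the $h_{j,k}$ become mean-zero, hence automatically admissible, $(p,\infty,\rho_\nu)$-atoms $b_{j,k}$ with $\lambda_{j,k}\sim2^j|Q_{j,k}|^{1/p}$.

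The key quantitative step is $\sum_{j\ge j_*,k}|\lambda_{j,k}|^p\sim\sum_{j\ge j_*}2^{jp}|\Omega_j|\lesssim1$, which is exactly where stopping at scale $B$ is essential. Combining the Chebyshev-type bound $|\Omega_j|\le2^{-jq}\|a\|_{L^q}^q$ (valid since $q>p$) with $2^{j_*}\sim|B|^{-1/p}$ and $\|a\|_{L^q}\le|B|^{1/q-1/p}$, the geometric series in $j$ is dominated by its term $j=j_*$ and collapses to a constant; the divergent large-scale contributions that a naive global decomposition (continued to $j\to-\infty$) would produce for $p<1$ are precisely those absorbed into the single coarse atom. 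It then remains to treat $g_{j_*}$. If $r_B<\rho_\nu(x_0)$ then $a$ has mean zero, and since every $h_{j,k}$ does too, so does $g_{j_*}$, so its normalization by $\lambda_0\lesssim1$ is a legitimate mean-zero atom. If instead $r_B\ge\rho_\nu(x_0)$, no cancellation is needed: $g_{j_*}$ is supported in the ball $B(x_0,Cr_B)$, centered at $x_0$ with radius $Cr_B\ge r_B\ge\rho_\nu(x_0)$, so its normalization is an admissible non-cancellation $(p,\infty,\rho_\nu)$-atom.

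The main obstacle is exactly this bookkeeping at the coarsest scale: making the single dichotomy $r_B<\rho_\nu(x_0)$ versus $r_B\ge\rho_\nu(x_0)$ carry the whole burden of keeping the $\ell^p$ sum finite when $p<1$. A secondary, routine point is the $L^2$-convergence of the resulting representation; I would handle it by first establishing the uniform atomic estimate $\|a\|_{H^{p,\infty}}\lesssim1$ for $L^2$ atoms, for which the Calderón--Zygmund series converges in $L^2$, and then passing to completions, so that the identity of the two spaces follows with equivalent norms.
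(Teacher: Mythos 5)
Your proof is correct, and it takes a genuinely different route from the paper's. The paper reduces the hard inclusion to classical results: it writes $a=[a-a_B\chi_B]+a_B\chi_B$, observes that the second piece is already a non-cancellative $(p,\infty,\rho_\nu)$-atom (this is where $r_B\ge\rho_\nu(x_B)$ enters), notes that the first piece is a classical Coifman--Weiss $(p,q)$-atom, and then invokes \cite[Theorem A]{CW} together with \cite[Theorem 7.1]{He et al} to decompose that piece into classical $(p,\infty)$-atoms (automatically $(p,\infty,\rho_\nu)$-atoms, since they carry cancellation), checking the required $L^2$-convergence by inspecting the internals of the proof of \cite[Theorem A]{CW}. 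You instead inline the Calder\'on--Zygmund machinery: a CZ decomposition of the atom at dyadic heights $2^j$, $j\ge j_*$, stopped at $2^{j_*}\sim|B|^{-1/p}$, whose mean-zero fine pieces are automatically admissible and whose single coarse piece absorbs the non-cancellative part via the same dichotomy $r_B\ge\rho_\nu(x_B)$ versus $r_B<\rho_\nu(x_B)$; your bookkeeping $\sum_{j\ge j_*}2^{jp}|\Omega_j|\lesssim 1$ is exactly the core of the Coifman--Weiss argument, so in effect you re-prove the cited theorem with the local-atom modification built in. The paper's route buys brevity at the price of a detour through $(p,2)$-atoms and a somewhat delicate appeal to the cited proof for $L^2$-convergence; your route buys self-containedness, and the $L^2$-convergence is transparent from the construction (for $a\in L^2$ one has $g_J\to a$ in $L^2$, and atoms occurring in $L^2$-convergent representations are automatically in $L^2$). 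Two small points to nail down in a final write-up: perform the dyadic decomposition intrinsically on $\mathbb{R}^n_+=(0,\infty)^n$ (which carries a standard nested dyadic grid), since extending $a$ by zero to $\mathbb{R}^n$ would create fine pieces whose supports and cancellation live on cubes protruding out of $\mathbb{R}^n_+$; and at $q=1$ your Chebyshev-type bound is really the weak-$(1,1)$ estimate for the dyadic maximal function, $|\Omega_j|\le 2^{-j}\|a\|_{L^1}$ --- the hypothesis $q>p$ is what makes the geometric series collapse at $j=j_*$, not what makes that bound valid.
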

In order to prove the theorem we recall the Hardy spaces introduced by Coifman and Weiss in \cite{CW}.  For $p\in (\f{n}{n+1},1]$ and $q\in [1,\vc]\cap (p,\vc]$, we  say that a function $a$ is a $(p,q)$ atom if there exists a ball $B$ such that
\begin{enumerate}[(i)]
	\item supp $a\subset B$;
	\item $\|a\|_{q}\leq |B|^{1/q-1/p}$;
	\item $\displaystyle \int a(x)dx=0$.
\end{enumerate}
For $p=1$ the atomic Hardy space $H^{1,q}_{\rm CW}(\mathbb{R}^n_+)$ is defined as follows. We say that a function $f\in H^1_{\rm CW} (\mathbb{R}^n_+)$, if $f\in L^1(\mathbb{R}^n_+)$ and there exist a sequence $(\lambda_j)_{j\in \mathbb{N}}\in \ell^1$ and a sequence of $(p,q)$-atoms $(a_j)_{j\in \mathbb{N}}$ such that $f=\sum_{j}\lambda_ja_j$. We set
$$
\|f\|_{H^1_{\rm CW}(\mathbb{R}^n_+) }=\inf\Big\{\sum_{j}|\lambda_j|: f=\sum_{j}\lambda_ja_j\Big\},
$$
where the infimum is taken over all possible atomic decomposition of $f$.

For $0<p<1$, as in \cite{CW}, we need to introduce the Lipschitz space $\mathfrak{L}_\alpha$. We say that the function $f\in \mathfrak{L}_\alpha$ if there exists a constant $c>0$, such that
$$
|f(x)-f(y)|\leq c|B|^{\alpha}
$$
for all balls $B$ and $x,  y\in B$. The best constant $c$ above can be taken to be the norm of $f$ and is denoted by $\|f\|_{\mathfrak{L}_\alpha}$.

Now let $0<p<1$ and $\alpha=1/p-1$. We say that a function $f\in H^{p,q}_{\rm CW} (\mathbb{R}^n_+)$, if $f\in (\mathfrak{L}_{\alpha})^*$ and there are a sequence $(\lambda_j)_{j\in \mathbb{N}}\in \ell^p$ and a sequence of $(p,q)$-atoms $(a_j)_{j\in \mathbb{N}}$ such that $f=\sum_{j}\lambda_ja_j$. Furthermore, we set
$$
\|f\|_{H^{p,q}_{\rm CW}(dw) }=\inf\Big\{\Big(\sum_{j}|\lambda_j|^p\Big)^{1/p}: f=\sum_{j}\lambda_ja_j\Big\},
$$
where the infimum is taken over all possible atomic decomposition of $f$. It was proved in \cite[Theorem A]{CW} that for $p\in (\f{n}{n+1},1]$ and $q\in [1,\vc]\cap (p,\vc]$,
\begin{equation}\label{eq-CW}
H^{p,q}_{\rm CW}(\mathbb{R}^n_+) = H^{p,\vc}_{\rm CW}(\mathbb{R}^n_+).
\end{equation}

We now ready to prove Theorem \ref{thm-Hp rho = h vc rho}.

\begin{proof}
	[Proof of Theorem \ref{thm-Hp rho = h vc rho}:] Fix $p\in (\f{n}{n+1},1]$ and $q\in [1,\vc]\cap (p,\vc]$. It is obvious that $H^{p,\vc}_{{\rm at},\rho_\nu}(\mathbb{R}^n_+)\hookrightarrow H^{p,q}_{{\rm at},\rho_\nu}(\mathbb{R}^n_+)$. It remains to show that 
	\[
	H^{p,q}_{{\rm at},\rho_\nu}(\mathbb{R}^n_+)\cap L^2(\mathbb{R}^n_+)\hookrightarrow H^{p,\vc}_{{\rm at},\rho_\nu}(\mathbb{R}^n_+).
	\]
	To do this, it suffices to show that  each $(p,q, \rho_\nu)$ atom has an atomic $(p,\vc,\rho_\nu)$ representation. Indeed, assume that $a$ is a $(p,q, \rho_\nu)$ atom associated to $B$. There are two possibilities: $r_B<\rho_\nu(x_B)$ and $ r_B\ge \rho_\nu(x_B)$. However, we only consider the case $r_B\ge \rho_\nu(x_B)$, since the remaining case is similar to the decomposition of the function $a_1$ below. Given that $r_B\ge \rho_\nu(x_B)$, we write
	\[
	a = [a - (a_B)\chi_B] + (a_B)\chi_B=: a_1+a_2,
	\] 
	where $\displaystyle a_B = \f{1}{|B|}\int_B a(x)dx$.
	  
	Obviously, $\supp a_2\subset B$ with $r_B\ge \rho_\nu(x_B)$ and
	\[
	\|a_2\|_\vc \lesi |B|^{-1}\|a\|_1 \lesi \|B\|^{-1/p},
	\]  
	which implies $a_2$ is a $(p,\vc,\rho_\nu)$ atom and hence, $a_2\in H^{p,\vc}_{{\rm at},\rho_\nu}(\mathbb{R}^n_+)$.
	
	For $a_1$, we have $\supp a_1\subset B$ and $\displaystyle \int a_1(x)dx =0$. Moreover,
	\[
	\|a_1\|_q \lesi \|a\|_q + |B|^{1/q-1}\|a\|_1\lesi  |B|^{1/q-1/p}.
	\]
	This means that $a_1$ is a $(p,q)$ atom. This and \eqref{eq-CW} imply that $a_1 \in H^{p,2}_{\rm CW}(\mathbb{R}^n_+)$. Applying Theorem 7.1 in \cite{He et al}\footnote{We can choose $\eta$ close to 1 in \cite[Theorem 7.1]{He et al} since $X$ is a metric space}, then we can write 
	\[
	a_1 = \sum_{j=1}^N\lambda_j b_j
	\]
	for some $N\in \mathbb N$, where $b_j$ are $(p,2)$ atoms and 
	\[
	\sum_{j=1}^N|\lambda_j|^p \sim \|a_1\|_{H^{p,2}_{\rm CW}(\mathbb{R}^n_+)}\sim 1.
	\] 
	By \eqref{eq-CW},  for each $(p,2)$ atom $b_j$ we can write 
	\[
	b_j =\sum_{k}\lambda_{j,k}b_{j,k},
	\] 
	where $b_{j,k}$ are $(p,\vc)$ atoms and 
	\[
	\sum_{k}|\lambda_{j,k}|^p\sim 1.
	\]
	Moreover, by a careful examination of the proof of \cite[Theorem A]{CW}, we can see that in the equation (3.11) in proof of \cite[Theorem A]{CW}, the series converges in $L^2$ if the atom $b$ is in $L^2$. As a matter of fact, the series above converges in $L^2(\mathbb{R}^n_+)$ due to $b_j\in L^2(\mathbb{R}^n_+)$. Consequently, we have
	\[
	a_1 = \sum_{k}\sum_{j=1}^N\lambda_{j,k} b_{j,k},
	\]   
	where $b_{j,k}$ are $(p,\vc)$ atom and hence are $(p,\vc,\rho_\nu)$ atoms and the series converges in $L^2(\mathbb{R}^n_+)$. Hence, $a_1 \in H^{p,\vc}_{{\rm at},\rho_\nu}(\mathbb{R}^n_+)$.
	
	This completes our proof.
\end{proof}

We would like to make the following important remark.

\begin{rem}
	\label{rem1}
	In Definition \ref{def: rho atoms}, for a $(p,q,\rho_\nu)$ atom $a$ associated with the ball $B(x_0,r)$, if we impose an additional  restriction  $r\le \rho_\nu(x_0)$, then the Hardy spaces defined by using these atoms are equivalent to those defined by $(p,q,\rho_\nu)$ atoms as in Definition \ref{def: rho atoms}, which is without the restriction on $r\le \rho_\nu(x_0)$. Indeed, assume that $a$ a $(p,q,\rho_\nu)$ atom $a$ associated with the ball $B(\bar x,r)$ as in Definition \ref{def: rho atoms} with $r>\rho_\nu(\bar x)$. Since the closure of $B(\bar x,r)$ is compact, we can cover $B(\bar x,r)$ by a finite family of balls denoted by $\{B(x_j,r_j): j=1,\ldots, N\}$ for some $N$, where $r_j=\rho_\nu(x_j)/3$ for each $j$. By Vitali's covering lemma, we can extract a sub-family of ball which might be still denoted by  $\{B(x_j,r_j): j=1,\ldots, N\}$ such that $B(\bar x, r)\subset \cup_{j=1}^N \{B(x_j,3r_j)$ and $\{B(x_j,r_j): j=1,\ldots, N\}$ is pairwise disjoint. By Lemma \ref{lem-critical function}, we have
	\begin{enumerate}[\rm (i)]
		\item $B(\bar x, r)\subset \cup_j B(x_j, 3r_j)$;
		\item $\sum_j \chi_{B(x_j, 3r_j)}\le 9^n$;
		\item $\sum_j |B(x_j, 3r_j)|\sim |B(\bar x, r)|$.
	\end{enumerate}
	For each $j$, set
	\[
	\widetilde{a}_j(x) = \begin{cases}
		\displaystyle \f{ a(x)\chi_{3B_j}(x)}{\sum_{i=1}^N \chi_{3B_i}(x)}, \ \ &x\in 3B_j,\\
		0, & x\notin 3B_j.
	\end{cases}
	\]
	Then we can decompose the atom $a$ as follows
	\[
	a(x) = \sum_{j=1}^{N}\lambda_j a_j(x),
	\]
	where 
	\[
	a_j(x)=\widetilde{a}_j(x) \f{|B(x_j,3r_j)|^{1/q-1/p}}{\|\widetilde{a}_j\|_q}\ , \ \ \lambda_j =\f{\|\widetilde{a}_j\|_q}{|B(x_j,3r_j)|^{1/q-1/p}}.
	\]
	It is clear that $a_j$ is a $(p,q,\rho_\nu)$ atom $a$ associated with $B(x_j,3r_j)$ and $3r_j =\rho_\nu(x_j)$. It remains to show that 
	\[
	\sum_j \lambda_j^p \lesi 1.
	\]
	Indeed, by (ii), (iii) and H\"older's inequality,
	\[
	\begin{aligned}
		\sum_j \lambda_j^p &\lesi \sum_j   \|a\chi_{3B_j}\|^p_q |B(x_j,3r_j)|^{1-p/q}\\
		&\lesi \Big(\sum_j \|a\chi_{3B_j}\|^q_q\Big)^{p/q}\Big(\sum_j |B(x_j,3r_j)| \Big)^{1-p/q}\\
		&\lesi \|a\|_q^p|B|^{1-p/q}\\
		&\lesi 1.
	\end{aligned}
	\]
\end{rem}

We now give the proof of Theorem \ref{mainthm2s}. 
\begin{proof}[Proof of Theorem \ref{mainthm2s}:]
	We divide the proof into two steps: $H^{p,q}_{{\rm at},\rho_\nu}(\mathbb{R}^n_+)\hookrightarrow H^p_{\mathcal L_\nu}(\mathbb{R}^n_+)$ and $H^p_{\mathcal L_\nu}(\mathbb{R}^n_+)\hookrightarrow H^{p,q}_{{\rm at},\rho_\nu}(\mathbb{R}^n_+)$.

	\bigskip
	
	\noindent \textbf{Step 1: Proof of $H^{p}_{{\rm at},\rho_\nu}(\mathbb{R}^n_+)\hookrightarrow H^p_{\mathcal L_\nu}(\mathbb{R}^n_+)$.} By Theorem \ref{thm-Hp rho = h vc rho}, we need only to show that $H^{p,2}_{{\rm at},\rho_\nu}(\mathbb{R}^n_+)\hookrightarrow H^p_{\mathcal L_\nu}(\mathbb{R}^n_+)$. Let $a$ be a $(p,2,\rho_\nu)$ atom associated with a ball $B:=B(x_0,r)$. By Remark \ref{rem1}, we might assume that $r\le \rho_\nu(x_0)$. We need to verify that 
	\[
	\|\mathcal M_{\mathcal L_\nu} a\|_p \lesi 1.
	\]
	To do this, we write
	\[
	\begin{aligned}
		\|\mathcal M_{\mathcal L_\nu} a\|_p&\lesi \|\mathcal M_{\mathcal L_\nu} a\|_{L^p(4B)} +\|\mathcal M_{\mathcal L_\nu} a\|_{L^p((4B)^c)}\\
		&\lesi E_1 + E_2.
	\end{aligned}
	\]
	By H\"older's inequality and the $L^2$-boundedness of $\mathcal M_{\mathcal L_\nu}$,
	\[
	\begin{aligned}
		\|\mathcal M_{\mathcal L_\nu} a\|_{L^p(4B)}&\lesi |4B|^{1/p-1/2} \|\mathcal M_{\mathcal L_\nu} a\|_{L^2(4B)}\\
		 &\lesi |4B|^{1/p-1/2} \|a\|_{L^2(B)}\\
		 &\lesi 1.
	\end{aligned}
	\]
	It remains to take care of the second term $E_2$. We now consider two cases.
	\medskip
	
	\textbf{Case 1: $r=\rho_\nu(x_0)$.} By Proposition \ref{prop-heat kernel}, for $x\in (4B)^c$,
	\[
	\begin{aligned}
		|\mathcal M_{\mathcal L_\nu} a(x)|\lesi \sup_{t>0}\int_{B}\f{1}{t^{n/2}}\exp\Big(-\f{|x-y|^2}{ct}\Big)\Big(\f{\rho_\nu(y)}{\sqrt t}\Big)^{\gamma_\nu}|a(y)|dy,
	\end{aligned}
	\]
	where we recall that $\gamma_\nu =\min\{1, \nu_{\min}+1/2\}$.

	By Lemma \ref{lem-critical function}, $\rho_\nu(y)\sim \rho_\nu(x_0)$ for $y\in B$. This, together with the fact that $|x-y|\sim |x-x_0|$ for $x\in (4B)^c$ and $y\in B$, further imply
	\[
	\begin{aligned}
		\mathcal M_{\mathcal L_\nu} a(x)&\lesi \sup_{t>0}\int_{B}\f{1}{t^{n/2}}\exp\Big(-\f{|x-x_0|^2}{ct}\Big)\Big(\f{\rho_\nu(x_0)}{\sqrt t}\Big)^{\gamma_\nu}|a(y)|dy\\
		&\lesi \Big(\f{\rho_\nu(x_0)}{|x-x_0|}\Big)^{\gamma_\nu}\f{1}{|x-x_0|^n}\|a\|_1\\
		&\lesi \Big(\f{r}{|x-x_0|}\Big)^{\gamma_\nu}\f{1}{|x-x_0|^n}|B|^{1-1/p},
	\end{aligned}
	\]
	which implies
	\[
	\begin{aligned}
		\|\mathcal M_{\mathcal L_\nu} a\|_{L^p((4B)^c)}&\lesi |B|^{1-1/p} \Big[\int_{(4B)^c}\Big(\f{r}{|x-x_0|}\Big)^{p\gamma_\nu}\f{1}{|x-x_0|^{np}}dx\Big]^{1/p}\\
		&\lesi 1,
	\end{aligned}
	\]
	as long as $\f{n}{n+\gamma_\nu}<p\le 1$.
	
	\bigskip
	

\textbf{Case 2: $r<\rho_\nu(x_0)$.} Using the cancellation property $\int a(x) dx= 0$, we have
	\[
	\begin{aligned}
		\sup_{t>0} |e^{-t\mathcal L_\nu}a(x)|&= \sup_{t>0}\Big|\int_{B}[p_t^\nu(x,y)-p_t^\nu(x,x_0)]a(y)dy\Big|.
	\end{aligned}
	\]
	By mean value theorem, Proposition \ref{prop-gradient x y} and the fact that $\rho_{\nu_j}(z_j)\ge \rho_\nu(z)\sim \rho_\nu(x_0)$ for all $z\in B$ and $j=1,\ldots, n$, we have, for $x\in (4B)^c$,
	\[
	\begin{aligned}
		\sup_{t>0} |e^{-t\mathcal L_\nu}a(x)|&\lesi  \sup_{t>0}\int_{B}\Big(\f{|y-y_0|}{\sqrt t}+\f{|y-y_0|}{\rho_\nu(x_0)}\Big)\f{1}{t^{n/2}}\exp\Big(-\f{|x-y|^2}{ct}\Big)\Big(1+\f{\sqrt t}{\rho_\nu(x_0)}\Big)^{-\gamma_\nu} |a(y)|dy\\
		&\sim  \sup_{t>0}\int_{B}\Big(\f{r}{\sqrt t}+\f{r}{\rho_\nu(x_0)}\Big)\f{1}{t^{n/2}}\exp\Big(-\f{|x-x_0|^2}{ct}\Big)\Big(1+\f{\sqrt t}{\rho_\nu(x_0)}\Big)^{-\gamma_\nu} \|a\|_1\\
		&\lesi   \sup_{t>0}  \f{r}{\sqrt t} \f{1}{t^{n/2}}\exp\Big(-\f{|x-x_0|^2}{ct}\Big) \|a\|_1\\
		& \ \ +   \sup_{t>0}  \f{r}{\rho_\nu(x_0)} \f{1}{t^{n/2}}\exp\Big(-\f{|x-x_0|^2}{ct}\Big)\Big(1+\f{\sqrt t}{\rho_\nu(x_0)}\Big)^{-\gamma_\nu} \|a\|_1\\	
		&= F_1 + F_2.
	\end{aligned}
	\]
	For the term $F_1$, it is straightforward to see that 
	\[
	\begin{aligned}
		F_1&\lesi \f{r}{|x-x_0|}\f{1}{|x-x_0|^n}|B|^{1-1/p}\\
		&\lesi \Big(\f{r}{|x-x_0|}\Big)^{\gamma_\nu}\f{1}{|x-x_0|}|B|^{1-1/p},
	\end{aligned}
	\]
	where in the last inequality we used
	\[
	\f{r}{|x-x_0|}\le \Big(\f{r}{|x-x_0|}\Big)^{\gamma_\nu},
	\]
	since both $\f{r}{|x-x_0|}$ and $\gamma_\nu$ are less than or equal to $1$.
	
	For $F_2$, since $r<\rho_\nu(x_0)$ and $\gamma_\nu\le 1$, we have $\f{r}{\rho_\nu(x_0)}\le \Big(\f{r}{\rho_\nu(x_0)}\Big)^{\gamma_\nu}$. Hence,
	\[
	\begin{aligned}
		F_2&\lesi \sup_{t>0} \Big(\f{r}{\rho_\nu(x_0)}\Big)^{\gamma_\nu}\f{1}{t^{n/2}}\exp\Big(-\f{|x-x_0|^2}{ct}\Big)\Big(\f{\sqrt t}{\rho_\nu(x_0)}\Big)^{-\gamma_\nu} \|a\|_1\\
		&\lesi \Big(\f{r}{|x-x_0|}\Big)^{\gamma_\nu}\f{1}{|x-x_0|^n}|B|^{1-1/p}.
	\end{aligned}
	\]
	 Taking this and the estimate of $F_1$ into account then we obtain
	 \[
	 \sup_{t>0} |e^{-t\mathcal L_\nu}a(x)|\lesi \Big(\f{r}{|x-x_0|}\Big)^{\gamma_\nu}\f{1}{|x-x_0|^n}|B|^{1-1/p}.
	 \]
	 Therefore,
	 \[
	 |\mathcal M_{\mathcal L_\nu} a(x)| \lesi \Big(\f{r}{|x-x_0|}\Big)^{\gamma_\nu}\f{1}{|x-x_0|}|B|^{1-1/p},
	 \]
	 which implies
	 \[
	 \|\mathcal M_{\mathcal L_\nu} a\|_p\lesi 1,
	 \]
	 as long as $\f{n}{n+\gamma_\nu}< p\le 1$.

	 This completes the proof of the first step.

	 \bigskip
	 
	 \noindent \textbf{Step 2: Proof of $H^p_{\mathcal L_\nu}(\mathbb{R}^n_+)\hookrightarrow  H^{p,q}_{{\rm at},\rho_\nu}(\mathbb{R}^n_+)$.} In order to do this, we need the following result, whose proof will be given later.\\
	 
	  Recall that $q^{\nu}_t(x,y)$ is the kernel of  $t\mathcal L_\nu e^{-t\mathcal L_\nu}$. Then we have
\begin{lem}\label{lem: Qest dx} Let $\nu\in [-1/2,\vc)^n$. Then we have: \begin{enumerate}[{\rm (a)}]
		\item For any ball $B$ with $r_B\le 2\rho_\nu(x_B)$ we have
		\begin{align*}
			\Big|\int_{B}q^\nu_t(x,y)dx\Big| \lesi  \f{\sqrt t}{r_B}
		\end{align*}
		for any $\forall y\in \f{1}{2}B$ and $0<t<r_B$. 
		
		\item We have
		\begin{align*}
			\Big|\int_{\mathbb R^n_+}q^\nu_t(x,y)dx\Big| \lesi  \f{\sqrt t}{\rho_\nu(y)}
		\end{align*}
		for any $y\in \mathbb R^n_+$ and $0<t<\rho_\nu(y)$. \	
	\end{enumerate}
\end{lem}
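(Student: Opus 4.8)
The plan is to prove (b) first and then obtain (a) from it. For (a), note that since $y\in\frac12 B$ and $r_B\le 2\rho_\nu(x_B)$ we have $|y-x_B|<r_B/2\le\rho_\nu(x_B)$, so Lemma \ref{lem-critical function} gives $\rho_\nu(y)\sim\rho_\nu(x_B)\gtrsim r_B$; in particular $t<r_B$ forces $t\lesssim\rho_\nu(y)$, and $\sqrt t/\rho_\nu(y)\lesssim\sqrt t/r_B$. I would then split $\int_B q^\nu_t(x,y)\,dx=\int_{\mathbb R^n_+}q^\nu_t(x,y)\,dx-\int_{(B)^c}q^\nu_t(x,y)\,dx$; the first piece is exactly (b). For the second piece I would use the Gaussian upper bound $|q^\nu_t(x,y)|\lesssim t^{-n/2}\exp(-|x-y|^2/ct)$ (a standard consequence of the heat-kernel Gaussian bound together with the analyticity of the semigroup, since $q^\nu_t=-t\partial_t p^\nu_t$), and since $|x-y|\gtrsim r_B$ for $x\in(B)^c$ and $y\in\frac12 B$, the elementary inequality $e^{-s}\lesssim s^{-\alpha}$ yields $\int_{(B)^c}|q^\nu_t(x,y)|\,dx\lesssim\sqrt t/r_B$. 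Thus (a) follows from (b).

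For (b) I would exploit the product structure. By \eqref{eq- prod ptnu}, $\int_{\mathbb R^n_+}p^\nu_t(x,y)\,dx=\prod_{j=1}^n u^{\nu_j}(t,y_j)$ with $u^{\nu_j}(t,y_j)=\int_0^\infty p^{\nu_j}_t(x_j,y_j)\,dx_j$, and the Gaussian bound \eqref{eq-Gaussian} (respectively Proposition \ref{prop- derivative heat kernel nu = -1/2} when $\nu_j=-1/2$) gives $u^{\nu_j}\lesssim 1$. Since $q^\nu_t(x,y)=-t\partial_t p^\nu_t(x,y)$, differentiating the product gives
\[
\int_{\mathbb R^n_+}q^\nu_t(x,y)\,dx=-t\sum_{k}\partial_t u^{\nu_k}(t,y_k)\prod_{j\neq k}u^{\nu_j}(t,y_j),
\]
so because each factor is $\lesssim 1$ it suffices to prove the one-dimensional estimate $|t\partial_t u^{\nu_k}(t,y_k)|=\bigl|\int_0^\infty q^{\nu_k}_t(x,y_k)\,dx\bigr|\lesssim\sqrt t/\rho_{\nu_k}(y_k)$ for $0<t\lesssim\rho_{\nu_k}(y_k)$. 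Summing over $k$ and using \eqref{eq- equivalence of rho} together with $\rho_\nu(y)\lesssim\rho_{\nu_k}(y_k)$ then yields (b).

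The one-dimensional estimate is the heart of the matter. Here I would use the factorization $\mathcal L_\nu=\delta^*\delta+2(\nu+1)$ to write $q^\nu_t(x,y)=t\delta^*_x\delta_x p^\nu_t(x,y)+2(\nu+1)t\,p^\nu_t(x,y)$. Integrating $\delta^*_x=-\partial_x+\bigl(x-\frac{\nu+1/2}{x}\bigr)$ over $(0,\infty)$, the $-\partial_x$ part contributes only the boundary values of $\delta_x p^\nu_t(\cdot,y)$, which vanish at $\infty$ (Gaussian decay) and at $0$ (where $\delta_x p^\nu_t\sim(x/\sqrt t)^{\nu+1/2}\to0$, as encoded by the factor $(\rho_\nu(x)/\sqrt t)^{\nu+1/2}$ in Proposition \ref{prop- delta k pt}). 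This leaves
\[
\int_0^\infty q^\nu_t(x,y)\,dx=t\int_0^\infty\Bigl(x-\frac{\nu+1/2}{x}\Bigr)\delta_x p^\nu_t(x,y)\,dx+2(\nu+1)t\int_0^\infty p^\nu_t(x,y)\,dx.
\]
The last term is $\lesssim t\lesssim\sqrt t/\rho_\nu(y)$ since $t\lesssim\rho_\nu(y)\le1$. For the first term I would insert the pointwise bound of Proposition \ref{prop- delta k pt}, estimate $\int_0^\infty(x+\tfrac1x)|\delta_x p^\nu_t(x,y)|\,dx$ by splitting into $x\sim y$ (where $x+\frac1x\lesssim\max\{y,1/y\}=1/(16\rho_\nu(y))$) and $x$ away from $y$ (where near $x=0$ the singular weight $1/x$ is absorbed by the $x^{\nu+1/2}$ decay, integrable since $\nu>-1/2$, and the off-diagonal Gaussian factor crushes the rest), obtaining $\lesssim t^{-1/2}/\rho_\nu(y)$ and hence a contribution $\lesssim\sqrt t/\rho_\nu(y)$. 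The case $\nu_k=-1/2$ is identical but easier, since then the weight is simply $x$ with no singular part and Proposition \ref{prop- derivative heat kernel nu = -1/2} provides arbitrary polynomial decay.

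The main obstacle is precisely this one-dimensional weighted integral: the coefficient $x-\frac{\nu+1/2}{x}$ is unbounded at both endpoints, so the estimate depends on matching its growth and its singularity against the sharp $\rho_\nu$-dependent decay of $\delta_x p^\nu_t$ from Proposition \ref{prop- delta k pt}; the near-boundary behaviour $(\rho_\nu(x)/\sqrt t)^{\nu+1/2}$ is what simultaneously kills the boundary term and renders the singular weight integrable. A minor additional point is that the reduction of (a) to (b) uses the slightly larger range $t<r_B\le4\rho_\nu(y)$ rather than $t<\rho_\nu(y)$; this is harmless because the proof of (b) only ever uses $t\lesssim\rho_\nu(y)$.
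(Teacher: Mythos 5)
Your proposal is sound in its overall architecture, but it proves the lemma by a genuinely different route than the paper. The paper proves (a) \emph{first} and directly: it takes a cutoff $\varphi$ supported in $B$ and equal to $1$ on $\frac{3}{4}B$, writes $q_t^\nu = t\mathcal L_\nu p_t^\nu$, controls the potential terms $x_j^2 + |\nu_j^2-\frac{1}{4}|x_j^{-2} \lesssim \rho_{\nu_j}(x_j)^{-2} \sim \rho_\nu(x_B)^{-2}$ on $B$ (via Lemma \ref{lem-critical function}), and integrates the second-derivative terms by parts against $\partial_j\varphi$, so that only $|\nabla\varphi|\lesssim 1/r_B$ and the gradient bounds of Proposition \ref{prop-gradient x y d>2} enter --- no boundary terms ever appear and everything stays local to the ball; part (b) is then deduced from (a) plus a Gaussian tail estimate. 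You invert this order: (b) comes first, via the tensor-product structure \eqref{eq- prod ptnu} and a one-dimensional identity obtained by integrating the factorization $\mathcal L_\nu = \delta^*\delta + 2(\nu+1)$ exactly over $(0,\infty)$; then (a) follows from (b) by the same kind of tail estimate (your remark that the range $t<r_B\le 4\rho_\nu(y)$ rather than $t<\rho_\nu(y)$ is harmless is correct, since the one-dimensional argument only uses $t\lesssim\rho_{\nu_k}(y_k)$). Your route trades the paper's compactly supported cutoff for genuine boundary terms at $x=0$ and a global weighted integral $\int_0^\infty\bigl(x+x^{-1}\bigr)|\delta_x p_t^\nu(x,y)|\,dx$, which must then be beaten by the sharp weights of Proposition \ref{prop- delta k pt}; the case analysis you sketch ($x\sim y$ versus off-diagonal, $x$ near $0$) does close, so the approach works, and it has the merit of making the global statement (b) the primary object.

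There is, however, one concretely wrong justification: the boundary term at $x=0$ when $\nu_k=-\frac{1}{2}$. You claim it vanishes because Proposition \ref{prop- derivative heat kernel nu = -1/2} ``provides arbitrary polynomial decay'', but that decay sits in the factor $\bigl[1+\sqrt t/\rho_\nu(x)\bigr]^{-N}$ with $\rho_{-1/2}(x)=\frac{1}{16}\min\{1,x^{-1}\}$, which tends to $\frac{1}{16}$ as $x\to 0^+$: the factor is bounded below near the boundary and gives no vanishing at all (equivalently, the exponent $\nu+\frac{1}{2}$ in your heuristic $(x/\sqrt t)^{\nu+1/2}$ is $0$ in this case). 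The quantity $\delta p_t^{-1/2}(0^+,y)$ does vanish, but for a different reason: by the reflection formula $p_t^{-1/2}(x,y)=h_t(x,y)+h_t(-x,y)$ (the identity used in the paper's proof of Proposition \ref{prop- derivative heat kernel nu = -1/2}), the kernel is even in $x$, so $\partial_x p_t^{-1/2}(0^+,y)=0$ and hence $\delta p_t^{-1/2}(0^+,y)=\lim_{x\to 0}\bigl[\partial_x p_t^{-1/2}(x,y)+x\,p_t^{-1/2}(x,y)\bigr]=0$. With that repair (and the routine justification for differentiating $\int_0^\infty p_t^{\nu_k}\,dx_k$ in $t$ under the integral sign, which your Gaussian bounds permit), your proof goes through.
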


We now turn to the proof of the second step assuming Lemma \ref{lem: Qest dx} being true. 

	Fix $\f{n}{n+\gamma_\nu}<p\le 1$ and choose $q>1$ such that $n(1/p-1/q)<1$. Due to  Theorem \ref{mainthm1} and Theorem \ref{thm-Hp rho = h vc rho}, it suffices therefore to prove that for each $(p,q,M)_{{\mathcal L_{\nu}}}$-atom $a$ with $M>\f{n}{2}(\f{1}{p}-1)$ we have 
\begin{align}\label{eq-mi}
	\|a\|_{H^{p,q}_{{\rm at},\rho_\nu}(\mathbb{R}^n_+)}\lesi 1.
\end{align}
If  $a$ is a $(p,M)_{{\mathcal L_{\nu}}}$-atom as in Definition \ref{def: L-atom} associated to a ball $B$. If $r_B\ge \rho_\nu(x_B)$, then $a$ is also a $(p,\vc,\rho_\nu)$ atom and hence \eqref{eq-mi} holds true. 
It remains to consider the case $r_B<\rho_\nu(x_B)$. Since $a={\mathcal L_{\nu}}^Mb$, we have
\[
a={\mathcal L_{\nu}}e^{-r_B^2{\mathcal L_{\nu}}}\tilde{b}+{\mathcal L_{\nu}}(I-e^{-r_B^2{\mathcal L_{\nu}}})\tilde{b}={\mathcal L_{\nu}}e^{-r_B^2{\mathcal L_{\nu}}}\tilde{b}+(I-e^{-r_B^2{\mathcal L_{\nu}}})a=:a_1+a_2
\]
where $\tilde{b}={\mathcal L_{\nu}}^{M-1}b$. 

It deduces to prove that 
\[
	\|a_1\|_{{H^{p,q}_{{\rm at},\rho_\nu}(\mathbb{R}^n_+)}}+\|a_2\|_{{H^{p,q}_{{\rm at},\rho_\nu}(\mathbb{R}^n_+)}}\lesi 1.
\]
We only treat  $a_2$ since $a_1$ can be handled similarly and easier. 

To do this let $k_0$ be the positive integer such that $2^{k_0}r_B< \rho_\nu(x_B)\le 2^{k_0+1}r_B$.   We decompose $a_2$ as follows:
\[
\begin{aligned}
	a_2&=\sum_{j=k_0+1}^\vc a_2\chi_{S_j(B)}+\sum_{j=0}^{k_0}\Big(a_2\chi_{S_j(B)}-\f{\chi_{S_j(B)}}{|S_j(B)|}\int_{S_j(B)}a_2\Big)+\sum_{j=0}^{k_0}\f{\chi_{S_j(B)}}{|S_j(B)|}\int_{S_j(B)}a_2\\
	&=:\sum_{j=k_0+1}^\vc 2^{-j}\pi_{1,j}+\sum_{j=0}^{k_0}2^{-j}\pi_{2,j}+\sum_{j=0}^{k_0}\f{\chi_{S_j(B)}}{|S_j(B)|}\int_{S_j(B)}a_2.
\end{aligned}
\]
For the first summation it is clear that $\supp\,\pi_{1,j}\subset S_j(B)\subset B_j:=2^{j}B$ for all $j\ge k_0+1$. Moreover, we have
\[
\pi_{1,j}= 2^j (I-e^{-r_B^2{\mathcal L_{\nu}}})a\cdot\chi_{S_j(B)}.
\]
For $j=0,1,2$ using the $L^q$-boundedness of $(I-e^{-r_B^2{\mathcal L_{\nu}}})$ we have
\begin{equation}
	\label{eq-j012}
	\|\pi_{1,j}\|_{q}\lesi \|a\|_{q}\lesi |B|^{1/q-1/p}\sim |B_j|^{1/q-1/p}.
\end{equation}
For $j\ge 3$ we use
\[
\begin{aligned}
	\pi_{1,j}=2^j\int_0^{r_B^2} s{\mathcal L_{\nu}}e^{-s{\mathcal L_{\nu}}}a\cdot\chi_{S_j(B)} \f{ds}{s}.
\end{aligned}
\]
and Gaussian bounds on the kernel of $s{\mathcal L_{\nu}}e^{-s{\mathcal L_{\nu}}}$ (due to Proposition \ref{prop-qt kernel}) to obtain
\begin{equation}
	\label{eq-j ge 3}
	\begin{aligned}
		\|\pi_{1,j}\|_{L^q(S_j(B))}&\le 2^j\int_0^{r_B^2} \|s{\mathcal L_{\nu}}e^{-s{\mathcal L_{\nu}}}a\|_{L^q(S_j(B))} \f{ds}{s}\\
		&\lesi 2^j\int_0^{r_B^2} e^{-\f{2^jr_B^2}{cs}}\|a\|_{q} \f{ds}{s}\\
		&\lesi |2^jB|^{1/q-1/p}=|B_j|^{1/q-1/p}.
	\end{aligned}
\end{equation}
From \eqref{eq-j012},  \eqref{eq-j ge 3} and the fact that $r_{B_j}\ge \rho_\nu(x_{B_j})=\rho_\nu(x_{B})$,  we have $ \pi_{1,j} $ is a $(p,q,\rho_\nu)$ atom for each $j\ge k_0+1$, and hence $\sum_{j=k_0+1}^\vc 2^{-j}\pi_{1,j} \in H^{p,q}_{\rho,at}(\mathbb{R}^n_+)$.

For the terms $\pi_{2,j}$, we note that $\int \pi_{2,j} = 0$ and $\supp \pi_{2,j}\subset B_j:=2^{j}B$. Arguing similarly to the estimates of $\pi_{1,j}$ we also find that $\|\pi_{2,j}\|_{q}\lesi |B_j|^{1/q-1/p}$. Hence, $\pi_{2,j}$ is an $(p,q,\rho_\nu)$ atom for each $j$. This implies $\sum_{j=0}^{k_0} 2^{-j}\pi_{2,j} \in H^{p,q}_{\rho,at}(\mathbb{R}^n_+)$.

For the last term, we decompose further as follows:
\[
\begin{aligned}
	\sum_{j=0}^{k_0}\f{\chi_{S_j(B)}}{|S_j(B)|}\int_{S_j(B)}a_2&=\sum_{j=0}^{k_0}\Big(\f{\chi_{S_j(B)}}{|S_j(B)|}-\f{\chi_{S_{j-1}(B)}}{|S_{j-1}(B)|}\Big)\int_{2^{k_0+1}\backslash 2^jB}a_2+\f{\chi_{2B}}{|2B|}\int_{2^{k_0+1}B}a_2.
\end{aligned}
\]
Now arguing as above, we can show that for  $j=0,1,\ldots, k_0+1$
\[
\Big\|\Big(\f{\chi_{S_j(B)}}{|S_j(B)|}-\f{\chi_{S_{j-1}(B)}}{|S_{j-1}(B)|}\Big)\int_{2^{k_0+1}\backslash 2^jB}a_2\Big\|_{H^{p,q}_{\rho,at}(\mathbb{R}^n_+)}\lesi 2^{-j}.
\]
For the remaining term $\f{\chi_{2B}}{|2B|}\int_{2^{k_0+1}B}a_2$  we have
\[
\int_{2^{k_0+1}B}a_2=\int_{2^{k_0+1}B}\int_0^{r_B^2} s{\mathcal L_{\nu}}e^{-s{\mathcal L_{\nu}}}a(x) \f{ds}{s}dx=\int_0^{r_B^2}\int_B \int_{2^{k_0+1}B}q_s(x,y)a(y)dxdy \f{ds}{s}.
\]
On the other hand, by Lemma \ref{lem: Qest dx} we obtain
\[
\Big|\int_{2^{k_0+1}B}q_s(x,y)dx\Big|\lesi \f{\sqrt{s}}{2^{k_0}r_B}.
\]
Hence 
\[
\int_{2^{k_0+1}B}a_2\lesi \int_0^{r_B^2}\|a\|_{L^1}\f{\sqrt{s}}{2^{k_0}r_B} \f{ds}{s}\lesi 2^{-k_0} |B|^{1-1/p},
\]
which implies
\[
\begin{aligned}
	\Big\|\f{\chi_{2B}}{|2B|}\int_{2^{k_0+1}B}a_2\Big\|_{L^q(2^{k_0+1}B)}&\lesi  2^{-k_0} |B|^{1/q-1/p}\\
	&\lesi |2^{k_0+1}B|^{1/q-1/p},
\end{aligned}
\]
as long as $n(1/p-1/q)<1$.

As a consequence, $\f{\chi_{2B}}{|2B|}\int_{2^{k_0+1}B}a_2$ is an $(p,q,\rho_\nu)$-atom associated to the ball $2^{k_0+1}B$ (since $r_{2^{k_0+1}B}=2^{k_0+1}r_B\ge \rho_\nu(x_B)$). It follows that
\[
\Big\|\f{\chi_{2B}}{|2B|}\int_{2^{k_0+1}B}a_2\Big\|_{H^{p,q}_{{\rm at},\rho_\nu}(\mathbb{R}^n_+)}\lesi 1.
\]
This completes our proof.
\end{proof}

\bigskip

\begin{rem}
	\label{rem2} \begin{enumerate}[{\rm (i)}]
		
		\item Let $\nu\in [-1/2,\vc)^n$ and $\gamma_\nu = \nu_{\rm min}+ 1/2$. Let $\nu'\ge \nu$, i.e., $\nu'_j\ge \nu_j$ for $j=1,\ldots, n$, then we have $\rho_{\nu'}(x)\le \rho_{\nu}(x)$ for all $x\in \mathbb R^n_+$. This, together with Theorem \ref{mainthm2s}, implies that $H^p_{\mathcal L_{\nu'}}(\mathbb R^n_+) \hookrightarrow H^p_{\mathcal L_{\nu}}(\mathbb R^n_+)$ for $\f{n}{n+\gamma_\nu}<p\le 1$. 
		
		\item Let $\nu\in (-1/2,\vc)^n$ and $\gamma_\nu = \nu_{\rm min}+ 1/2$. Let $\nu'\ge \nu$. In this case, $\rho_{\nu'}(x)= \rho_{\nu}(x)\sim \min \{|x|,|x|^{-1}\}$ for all $x\in \mathbb R^n_+$. This, together with Theorem \ref{mainthm2s}, implies that $H^p_{\mathcal L_\nu}(\mathbb R^n_+) = H^p_{\mathcal L_{\nu}}(\mathbb R^n_+)$ for $\f{n}{n+\gamma_\nu}<p\le 1$.
	\end{enumerate}
\end{rem}

We now give the proof of Lemma \ref{lem: Qest dx}
\begin{proof}[Proof of Lemma \ref{lem: Qest dx}:]
	(a) From Proposition \ref{prop- delta k pt d>2}, the kernel $p^\nu_{t} (x,y)$ satisfies the Gaussian upper bound. This, together with \cite[Lemma 2.5]{CD}, implies that $q^\nu_t(x,y)$ satisfies the Gaussian upper bound. That is, 
	\begin{equation}\label{eq - Gaussian qt}
	|q^\nu_t(x,y)|\lesi \f{1}{ t^{n/2}}\exp\Big(-\f{|x-y|^2}{ct}\Big) 
	\end{equation}
	for all $t>0$ and $x,y\in \mathbb R^n_+$.

	Define the cutoff function $\varphi\in C^\infty_c(\mathbb{R}^n_+)$ supported in $B$ with $0\le \varphi\le 1$, equal to 1 on $\f{3}{4}B$ and whose derivative satisfies $|\nabla \varphi(x)|\lesssim 1/r_B$.
	Then 
	\begin{align*}
		\Big|\int_{B}q^\nu_t(x,y)dx\Big|&\lesi  \Big|\int_{\mathbb R^n_+}t\partial_tp_t^\nu(x,y)\varphi(x)dx\Big|+\Big|\int_{B\backslash \tfrac{3}{4}B }q^\nu_t(x,y)[1-\varphi(x)]dx\Big|
		=:I_1+I_2.
	\end{align*}
	For the term $I_2$, using \eqref{eq - Gaussian qt} and that $|x-y|\sim r_B$ we have
	\[
	\begin{aligned}
		I_2&\lesi \int_{B\backslash \tfrac{3}{4}B }\f{1}{\sqrt{t}}e^{-\f{|x-y|^2}{4t}}dx
		\lesi e^{-\f{r_B^2}{ct}}\int_{B\backslash \tfrac{3}{4}B }\f{1}{\sqrt{t}}e^{-\f{|x-y|^2}{8t}}dx 
		\lesi e^{-\f{r_B^2}{ct}}.
	\end{aligned}
	\]
	For the first term,  since $\partial_t p^\nu_t(\cdot,y)=-\mathcal L_\nu p^\nu_t(\cdot,y)$, then 
	\[
	\begin{aligned}
		I_1
		&\lesi \sum_{j=1}^n\Big|\int_{\mathbb R^n_+}t\partial^2_{j}p_t^\nu(x,y)\varphi(x)dx\Big|+\sum_{j=1}^n\Big|\int_{\f{3}{4}B}tp_t^\nu(x,y)\varphi(x)x_j^2dx\Big|\\
		& \ \ \ \ + \sum_{j=1}^n\Big|\nu_j^2-\f{1}{4}\Big|\times\Big|\int_{\f{3}{4}B}tp_t^\nu(x,y)\varphi(x)\f{dx}{x_j^2}\Big|\\
		&\lesi \sum_{j=1}^n\Big|\int_{\mathbb R^n_+}t\partial^2_{j}p_t^\nu(x,y)\varphi(x)dx\Big|+ \sum_{j=1}^n\Big|\int_{\f{3}{4}B}tp_t^\nu(x,y)\varphi(x)\f{dx}{\rho_{\nu_j}(x_j)^2}\Big|\\
		&\lesi \sum_{j=1}^n\Big|\int_{\mathbb R^n_+}t\partial^2_{j}p_t^\nu(x,y)\varphi(x)dx\Big|+ \Big|\int_{\f{3}{4}B}tp_t^\nu(x,y)\varphi(x)\f{dx}{\rho_{\nu}(x)^2}\Big|\\
		&=:I_{11}+I_{12},
	\end{aligned}
	\]
	where in the second inequality we used
	\[
	x^2_j\lesi \f{1}{\rho_{\nu_j}(x_j)^2} \ \ \ \text{and} \ \ \ \ \Big|\nu_j^2-\f{1}{4}\Big| x^2_j\lesi \f{1}{\rho_{\nu_j}(x_j)^2},
	\]
	and in the third inequality we use \eqref{eq- equivalence of rho}.

	Using Lemma \ref{lem-critical function}, $\rho_\nu(x)\sim \rho_\nu(x_B)$ for all $x\in \f{3}{4}B$. This and the upper bound in Proposition \ref{prop- delta k pt d>2} imply
	\[
	\begin{aligned}
		I_{12}&\lesi \f{t}{\rho_\nu(x_B)^2}\int_{\f{3}{4}B}p_t^\nu(x,y)dx\\
		&\lesi \f{t}{\rho_\nu(x_B)^2}\\
		&\lesi \f{t}{r_B^2}.
	\end{aligned}
	\]
	
	For the term $I_{11}$, by integration by parts we have
	\[
	\begin{aligned}
		I_{11}&=\sum_{j=1}^n\Big|\int_{B}t\partial_j p_t^\nu(x,y)\partial_j\varphi(x)dx\Big|\\
		&\lesi \sum_{j=1}^n\Big|\int_{B}\sqrt t p_t^\nu(x,y)\partial_j\varphi(x)dx\Big|+\sum_{j=1}^n\Big|\int_{B}\f{t}{\rho_\nu(x_j)} p_t^\nu(x,y)\partial_j\varphi(x)dx\Big|\\
		&\lesi  \Big|\int_{B}\sqrt t p_t^\nu(x,y)\nabla\varphi(x)dx\Big|+ \Big|\int_{B}\f{t}{\rho_\nu(x)} p_t^\nu(x,y)\partial_j\varphi(x)dx\Big|,
	\end{aligned}
	\]
	where in the second inequality we used Proposition \ref{prop-gradient x y d>2} and in the third inequality we used $\rho_\nu(x_j)\ge \rho_\nu(x)$ for all $j=1,\ldots,n$.

	From the fact that $|\nabla \varphi |\lesi r_B^{-1}$ and $\rho_\nu(x)\sim \rho_\nu(x_B)$ for $x\in B$, we have
	\[
	\begin{aligned}
		I_{11}&\lesi \f{\sqrt t}{r_B} \int_{B}p_t^\nu(x,y)dx +\f{t}{r_B\rho_\nu(x_B)}\int_{B}p_t^\nu(x,y)dx\\
		&\lesi \f{\sqrt t}{r_B}+\f{t}{r_B\rho_\nu(x_B)}\\
		&\lesi \f{\sqrt t}{r_B}+\f{t}{r_B^2}\\
		&\lesi \f{\sqrt t}{r_B}.
	\end{aligned}
	\]
	This completes our proof.
	
	\bigskip
	
	(b) We write 
	\[
	\Big|\int_{\mathbb R^n_+}q^\nu_t(x,y)dx\Big| \lesi   \Big|\int_{B(y,\rho_\nu(y))}q^\nu_t(x,y)dx\Big| +\Big|\int_{B(y,\rho_\nu(y))^c}q^\nu_t(x,y)dx\Big|.
	\]
	By (a), we have
	\[
	\Big|\int_{B(y,\rho_\nu(y))}q^\nu_t(x,y)dx\Big|\lesi \f{\sqrt t}{\rho_\nu(y)}.
	\]
	Using \eqref{eq - Gaussian qt} and the fact that $|x-y|\ge \rho_\nu(y)$ for $x\in B(y,\rho_\nu(y))^c$,
	\[
	\begin{aligned}
		\Big|\int_{B(y,\rho_\nu(y))^c}q^\nu_t(x,y)dx\Big|&\lesi  \int_{B(y,\rho_\nu(y))^c}\f{1}{t^{n/2}}\exp\Big(-\f{|x-y|^2}{ct}\Big)dx\\
		&\lesi \f{\sqrt t}{\rho_\nu(y)}\int_{B(y,\rho_\nu(y))^c}\f{1}{t^{n/2}}\exp\Big(-\f{|x-y|^2}{2ct}\Big)dx\\
		&\lesi \f{\sqrt t}{\rho_\nu(y)}.
	\end{aligned}
	\]
	This completes our proof.
\end{proof}

\bigskip

We end this section by the following result.
\begin{prop}\label{prop-equivalence L +2}
	Let $\nu\in [-1/2,\vc)^n$ and $\gamma_\nu = \nu_{\rm min}+ 1/2$. Then for $\f{n}{n+\gamma_\nu}<p\le 1$ we have
	\[
	H^p_{\mathcal L_\nu+2}(\mathbb R^n_+) = H^p_{\mathcal L_\nu}(\mathbb R^n_+).
	\]
\end{prop}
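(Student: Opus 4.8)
The plan is to establish the two inclusions separately, obtaining $H^p_{\mathcal L_\nu}(\mathbb{R}^n_+)\hookrightarrow H^p_{\mathcal L_\nu+2}(\mathbb{R}^n_+)$ for free from a pointwise comparison of the maximal functions, and the reverse inclusion by routing through the atomic space $H^p_\rho(\mathbb{R}^n_+)$ and re-running Step~2 of the proof of Theorem \ref{mainthm2s} for the shifted operator. The guiding observation is that the two semigroups are related by $e^{-s(\mathcal L_\nu+2)}=e^{-2s}e^{-s\mathcal L_\nu}$, so that passing from $\mathcal L_\nu$ to $\mathcal L_\nu+2$ simply multiplies every relevant kernel by a factor $e^{-2s}\le 1$, which can only improve the estimates of Section~3.

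First I would dispatch the inclusion $H^p_{\mathcal L_\nu}(\mathbb{R}^n_+)\hookrightarrow H^p_{\mathcal L_\nu+2}(\mathbb{R}^n_+)$. Taking $s=t^2$ in the identity above and using $e^{-2t^2}\le 1$ gives $|e^{-t^2(\mathcal L_\nu+2)}f(x)|\le |e^{-t^2\mathcal L_\nu}f(x)|$ for every $t>0$, hence $\mathcal M_{\mathcal L_\nu+2}f(x)\le \mathcal M_{\mathcal L_\nu}f(x)$ pointwise. Consequently $\|f\|_{H^p_{\mathcal L_\nu+2}(\mathbb{R}^n_+)}\le \|f\|_{H^p_{\mathcal L_\nu}(\mathbb{R}^n_+)}$ on the dense subspace $\{f\in L^2:\mathcal M_{\mathcal L_\nu}f\in L^p\}$, and the inclusion follows by completion.

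For the reverse inclusion I would factor it as $H^p_{\mathcal L_\nu+2}(\mathbb{R}^n_+)\hookrightarrow H^p_\rho(\mathbb{R}^n_+)=H^p_{\mathcal L_\nu}(\mathbb{R}^n_+)$, the last equality being Theorem \ref{mainthm2s}. The operator $\mathcal L_\nu+2$ satisfies (A1) and (A2), since its heat kernel is $e^{-2t}p_t^\nu(x,y)$ and $e^{-2t}\le 1$, so Theorem \ref{mainthm1} applies to it and reduces the task to showing that every $(p,q,M)_{\mathcal L_\nu+2}$-atom has uniformly bounded $H^p_\rho(\mathbb{R}^n_+)$-norm. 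Because each kernel attached to $\mathcal L_\nu+2$ is $e^{-2t}$ times the corresponding kernel for $\mathcal L_\nu$, the weighted pointwise and gradient bounds of Propositions \ref{prop- delta k pt d>2} and \ref{prop-gradient x y d>2} hold verbatim for $\mathcal L_\nu+2$, and the argument of Step~2 in the proof of Theorem \ref{mainthm2s} carries over line by line. The single ingredient that is not literally identical is Lemma \ref{lem: Qest dx}, and this is the only point I expect to require a genuine (though short) verification. Writing $\widetilde{q}_t$ for the kernel of $t(\mathcal L_\nu+2)e^{-t(\mathcal L_\nu+2)}$, a direct computation gives
\[
\widetilde{q}_t(x,y)=e^{-2t}\big(q_t^\nu(x,y)+2t\,p_t^\nu(x,y)\big).
\]
The first summand is handled exactly as in Lemma \ref{lem: Qest dx}. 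For the second, the Gaussian bound yields $\int_{\mathbb{R}^n_+}p_t^\nu(x,y)\,dx\lesi 1$, so that $2t\,e^{-2t}\int_B p_t^\nu(x,y)\,dx\lesi t$; since $r_B\le 2\rho_\nu(x_B)\le 1$ and $0<t<r_B<1$ in part (a) (respectively $0<t<\rho_\nu(y)<1$ in part (b)), one has $t\le \sqrt t\le \sqrt t/r_B$ (respectively $t\le \sqrt t/\rho_\nu(y)$), so the extra term does not spoil the stated bound.

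Thus the main obstacle is essentially the only obstacle: confirming that the additional term $2t\,p_t^\nu$ in $\widetilde{q}_t$ does not destroy the cancellation-type estimate of Lemma \ref{lem: Qest dx}. As sketched above, a crude size bound disposes of it, so no new analytic difficulty arises, and the bulk of the proof is a transfer of the machinery already developed for $\mathcal L_\nu$. Combining the analogue of Lemma \ref{lem: Qest dx} with the repeated Step~2 gives $H^p_{\mathcal L_\nu+2}(\mathbb{R}^n_+)\hookrightarrow H^p_\rho(\mathbb{R}^n_+)=H^p_{\mathcal L_\nu}(\mathbb{R}^n_+)$, which together with the first inclusion completes the proof.
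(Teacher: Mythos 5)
Your proposal is correct and follows essentially the same route as the paper: the pointwise domination $|e^{-t(\mathcal L_\nu+2)}f|\le|e^{-t\mathcal L_\nu}f|$ for one inclusion, and for the reverse a reduction via Theorem \ref{mainthm2s} to re-running Step~2 of its proof, with the only new ingredient being the analogue of Lemma \ref{lem: Qest dx} for $\mathcal L_\nu+2$. Your key computation $\widetilde q_t = e^{-2t}q_t^\nu + 2te^{-2t}p_t^\nu$ and the crude bound $t\le \sqrt t/r_B$ (valid since $t<r_B\le 2\rho_\nu(x_B)<1$) are exactly the paper's Lemma \ref{lem: Qest dx with ej}.
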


To prove the above result, we need the following technical lemma. Let  $\widetilde q^{\nu}_t(x,y)$ is the kernel of  $t(\mathcal L_{\nu}+2) e^{-t(\mathcal L_{\nu}+2)}$.
\begin{lem}\label{lem: Qest dx with ej}
	\begin{enumerate}[{\rm (a)}]
		\item 	For any ball $B$ with $r_B\le 2\rho_\nu(x_B)$ we have
		\begin{align*}
			\Big|\int_{B}\widetilde q^{\nu}_t(x,y)dx\Big| \lesi  \f{\sqrt t}{r_B}
		\end{align*}
		for any $\forall y\in \f{1}{2}B$ and $0<t<r_B$. 
		
		\item We have
		\begin{align*}
			\Big|\int_{\mathbb R^n_+}\widetilde q^{\nu}_t(x,y)dx\Big| \lesi  \f{\sqrt t}{\rho_\nu(y)}
		\end{align*}
		for any $y\in \mathbb R^n_+$ and $0<t<\rho_\nu(y)$. 
	\end{enumerate}
\end{lem}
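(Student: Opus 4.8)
The plan is to reduce the entire statement to the already-established Lemma~\ref{lem: Qest dx} by exploiting the fact that shifting the generator by the constant $2$ only produces an exponentially decaying prefactor together with a harmless lower-order term. Concretely, since $\mathcal L_\nu$ and $2I$ commute, the spectral theorem gives $e^{-t(\mathcal L_\nu+2)}=e^{-2t}e^{-t\mathcal L_\nu}$, and hence
\[
t(\mathcal L_\nu+2)e^{-t(\mathcal L_\nu+2)}=e^{-2t}\big[\,t\mathcal L_\nu e^{-t\mathcal L_\nu}+2t\,e^{-t\mathcal L_\nu}\,\big].
\]
On the level of kernels this reads $\widetilde q^{\nu}_t(x,y)=e^{-2t}\big[q^\nu_t(x,y)+2t\,p^\nu_t(x,y)\big]$, so that for any measurable set $E$,
\[
\int_E \widetilde q^{\nu}_t(x,y)\,dx=e^{-2t}\Big[\int_E q^\nu_t(x,y)\,dx+2t\int_E p^\nu_t(x,y)\,dx\Big].
\]
The first integral is exactly the quantity controlled in Lemma~\ref{lem: Qest dx}, and the second I would dominate by the crude bound $\int_E p^\nu_t(x,y)\,dx\le\int_{\mathbb R^n_+}p^\nu_t(x,y)\,dx\lesi 1$, which follows from the Gaussian upper bound for $p^\nu_t$ (Proposition~\ref{prop- delta k pt d>2} with $\ell=0$, after discarding the nonnegative decay factor).

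For part (a) I would fix a ball $B$ with $r_B\le 2\rho_\nu(x_B)$, take $y\in\tfrac12 B$ and $0<t<r_B$, and apply the identity above with $E=B$. Lemma~\ref{lem: Qest dx}(a) bounds the $q^\nu_t$-integral by $\sqrt t/r_B$, while the heat-kernel integral contributes at most $2t\cdot C$. Using $e^{-2t}\le1$, it remains only to check the elementary inequality $t\lesi \sqrt t/r_B$ in the given range, i.e. $r_B\sqrt t\lesi 1$. Since $\rho_\nu\le \tfrac1{16}$ by \eqref{eq- critical function}, we have $r_B\le 2\rho_\nu(x_B)\le\tfrac18$ and $\sqrt t<\sqrt{r_B}$, whence $r_B\sqrt t<r_B^{3/2}\le(\tfrac18)^{3/2}<1$. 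This yields $\big|\int_B\widetilde q^\nu_t(x,y)\,dx\big|\lesi \sqrt t/r_B$.

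Part (b) is identical in spirit, now taking $E=\mathbb R^n_+$, $y\in\mathbb R^n_+$ and $0<t<\rho_\nu(y)$. Lemma~\ref{lem: Qest dx}(b) gives $\big|\int q^\nu_t\,dx\big|\lesi \sqrt t/\rho_\nu(y)$, the heat-kernel integral again contributes $2t\cdot C$, and the same elementary estimate $t\lesi \sqrt t/\rho_\nu(y)$ holds because $\sqrt t\,\rho_\nu(y)<\rho_\nu(y)^{3/2}\le(\tfrac1{16})^{3/2}<1$ in the stated range. There is genuinely no analytic obstacle here: all the real work---the cancellation coming from integrating $q^\nu_t$ against $1$ and the derivative estimates on $p^\nu_t$---has already been carried out for $\mathcal L_\nu$ in Lemma~\ref{lem: Qest dx} and Propositions~\ref{prop- delta k pt d>2}--\ref{prop-gradient x y d>2}. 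The only point requiring (minor) care is to ensure that the extra zeroth-order term $2t\,p^\nu_t$ is controlled by the same right-hand side, and this is exactly where the smallness of the time variable relative to the scale ($r_B$, resp. $\rho_\nu(y)$), together with the universal bound $\rho_\nu\le\tfrac1{16}$, is used.
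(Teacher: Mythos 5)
Your proposal is correct and uses essentially the same argument as the paper: the factorization $\widetilde q^{\nu}_t(x,y)=e^{-2t}\bigl[q^{\nu}_t(x,y)+2t\,p^{\nu}_t(x,y)\bigr]$, the reduction of the $q^{\nu}_t$-term to Lemma \ref{lem: Qest dx}, the Gaussian bound to control $\int p^{\nu}_t\,dx\lesi 1$, and the elementary inequality $t\le \sqrt t/r_B$ (resp.\ $t\lesi\sqrt t/\rho_\nu(y)$) valid because $t$ and the scale are both below $1$. For part (b) the paper merely says the proof is ``similar'' to Lemma \ref{lem: Qest dx}(b); your direct application of the same kernel identity on all of $\mathbb R^n_+$ combined with Lemma \ref{lem: Qest dx}(b) is a clean and valid way to supply those omitted details.
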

\begin{proof}
	We have
	\[
	\begin{aligned}
		t(\mathcal L_{\nu}+2) e^{-t(\mathcal L_{\nu}+2)}=2 te^{-2 t} e^{-t\mathcal L_{\nu}}+e^{-2 t}(t\mathcal L_{\nu})e^{-t\mathcal L_{\nu}}, 
	\end{aligned}
	\]
	which implies
	\[
	\widetilde q^{\nu}_t(x,y) =2 te^{- 2 t}p_t^{\nu}(x,y) + e^{-2 t}q_t^{\nu}(x,y). 
	\]
	Hence,
	\[
	\begin{aligned}
		\Big|\int_{B}\widetilde q^{\nu}_t(x,y)dx\Big| \lesi t\Big|\int_{B}p^{\nu}_t(x,y)dx\Big|+\Big|\int_{B}q_t^{\nu}(x,y)dx\Big|.
	\end{aligned}
	\]
	From Proposition \ref{prop-heat kernel},
	\[
	\begin{aligned}
		t\Big|\int_{B}p^{\nu}_t(x,y)dx\Big|&\lesi t \le \f{\sqrt t}{r_B}
	\end{aligned}
	\]
	as long as $t<r_B\le 2\rho_\nu(x_B)<1$.
	
	In addition, by Lemma \ref{lem: Qest dx},
	\[
	\Big|\int_{B}q_t^{\nu}(x,y)dx\Big|\lesi \f{\sqrt t}{r_B}.
	\]
	This completes the proof of (a).
	
	The proof of (b) is similar to the proof of Lemma \ref{lem: Qest dx} (b)  and hence we omit the details.
	
	This completes our proof.
\end{proof}
We are now ready to give the proof of Proposition \ref{prop-equivalence L +2}.

\begin{proof}[Proof of Proposition \ref{prop-equivalence L +2}:] Fix $\f{n}{n+\gamma_\nu}<p\le 1$. Since $|e^{-t(\mathcal L_\nu+2)}f|\lesi |e^{-t \mathcal L_\nu }f|$ for all $t>0$ and $f\in L^2(\mathbb R^n_+)$, we have $H^p_{\mathcal L_\nu}(\mathbb R^n_+)\hookrightarrow H^p_{\mathcal L_\nu+2}(\mathbb R^n_+)$. Hence, it suffices to prove that $H^p_{\mathcal L_\nu+2}(\mathbb R^n_+)\hookrightarrow H^p_{\mathcal L_\nu}(\mathbb R^n_+)$. Due to Theorem \ref{mainthm2s}, we need only to prove $H^p_{\mathcal L_\nu+2}(\mathbb R^n_+)\hookrightarrow H^p_{\rho_\nu}(\mathbb R^n_+)$. To do this, we proceed exactly the same as Step 2 of the proof of Theorem \ref{mainthm2s} by using Lemma \ref{lem: Qest dx with ej} in place of Lemma \ref{lem: Qest dx}. Due to the similarity we omit the details.
	
	This completes our proof.

\end{proof}
\subsection{Campanato spaces associated to the Laguerre operator $\mathcal L_\nu$} This section is devoted to establish the dual spaces of the Hardy space  $H^p_{\mathcal L_\nu}(\mathbb R^n_+)$.

Recall that the (classical) Campanato space $BMO^s (\mathbb{R}^n_+), s\in [0,1)$ is defined as the set of all   $L^1_{\rm loc}(\mathbb{R}^n_+)$ functions $f$ such that
$$
\|f\|_{BMO^s (\mathbb{R}^n_+)}:=\sup_{B: {\rm ball}}\f{1}{|B|^{1+s/n}}\int_{B}|f-f_{B}|dx  <\vc,
$$
where the supremum is taken over all balls $B$ in $\mathbb R^n_+$.

It is well-know that for $s\in [0,1)$ and $q\in [1,\vc)$, we have
\begin{equation}\label{eq-JN inequality}
	\|f\|_{BMO^s (\mathbb{R}^n_+)}\sim \sup_{B: {\rm ball}}\Big(\f{1}{|B|^{1+sq/n}}\int_{B}|f-f_{B}|^qdx\Big)^{1/q}
\end{equation}

Let $\nu\in [-1/2,\vc)^n$ and $\rho_\nu$ be as in \eqref{eq- critical function}. For  $s\in[0,1)$, recall that the Campanato space associated to the critical function $\rho_\nu$ can be defined as the set of all   $L^1_{\rm loc}(\mathbb{R}^n_+)$ functions $f$ such that 
$$
\|f\|_{BMO^s_{\rho_\nu}(\mathbb{R}^n_+)}:=\sup_{\substack{B: {\rm ball}\\ {r_B < \rho_\nu(x_B)}}}\f{1}{|B|^{1+s/n}}\int_{B}|f-f_{B}| dx+ 	\sup_{\substack{B: {\rm ball}\\ {r_B \geq \rho_\nu(x_B)}}}\f{1}{|B|^{1+s/n}}\int_{B} |f|dx <\vc,
$$
where $\displaystyle f_B =\f{1}{|B|}\int_B f$ and the supremum is taken over all balls $B$ in $\mathbb R^n_+$.

\medskip

It is straightforward that
\[
\|f\|_{BMO^s_{\rho_\nu}(\mathbb{R}^n_+)}\sim \|f\|_{BMO^s(\mathbb{R}^n_+)}+\sup_{\substack{B: {\rm ball}\\ {r_B \geq \rho_\nu(x_B)}}}\f{1}{|B|^{1+s/n}}\int_{B} |f|dx.
\]

Similarly to \eqref{eq-JN inequality}, we have:
\begin{lem}
	Let $\nu\in [-1/2,\vc)^n$ and $\rho_\nu$ be as in \eqref{eq- critical function}. For $s\in [0,1)$ and $q\in [1,\vc)$, we have
	\[
	\|f\|_{BMO^s_{\rho_\nu}(\mathbb{R}^n_+)}\sim \sup_{\substack{B: {\rm ball}\\ {r_B < \rho_\nu(x_B)}}}\f{1}{|B|^{s/n}}\Big(\f{1}{|B|}\int_{B}|f-f_{B}|^qdx\Big)^{1/q} + 	\sup_{\substack{B: {\rm ball}\\ {r_B \geq \rho_\nu(x_B)}}}\f{1}{|B|^{s/n}}\Big(\f{1}{|B|}\int_{B} |f|dx\Big)^{1/q}
	\]
\end{lem}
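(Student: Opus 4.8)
The plan is to prove the two directions of the equivalence separately, the easy one by Hölder's inequality and the substantive one by reducing everything to the classical John--Nirenberg equivalence \eqref{eq-JN inequality} for the ordinary Campanato space $BMO^s(\mathbb R^n_+)$, together with the already-observed splitting
\[
\|f\|_{BMO^s_{\rho_\nu}(\mathbb R^n_+)}\sim \|f\|_{BMO^s(\mathbb R^n_+)}+\sup_{\substack{B:\,{\rm ball}\\ r_B\ge \rho_\nu(x_B)}}\f{1}{|B|^{1+s/n}}\int_B|f|\,dx.
\]
Write $N_q(f)$ for the right-hand side of the asserted equivalence. The bound $N_q(f)\gtrsim \|f\|_{BMO^s_{\rho_\nu}(\mathbb R^n_+)}$ is immediate: by Hölder's inequality the $L^1$-average over a ball is dominated by the corresponding $L^q$-average, so each of the two suprema defining $\|f\|_{BMO^s_{\rho_\nu}}$ is controlled by the matching supremum in $N_q(f)$. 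Thus the real content is the reverse estimate $N_q(f)\lesi \|f\|_{BMO^s_{\rho_\nu}(\mathbb R^n_+)}$, which I would establish by treating the two suprema in $N_q(f)$ in turn.

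For the first supremum, over balls $B$ with $r_B<\rho_\nu(x_B)$, I would simply discard the restriction on $r_B$ and apply \eqref{eq-JN inequality} to the arbitrary ball $B$, obtaining $|B|^{-s/n}\big(\f{1}{|B|}\int_B|f-f_B|^q\,dx\big)^{1/q}\lesi \|f\|_{BMO^s(\mathbb R^n_+)}$; since $\|f\|_{BMO^s}\lesi \|f\|_{BMO^s_{\rho_\nu}}$ by the splitting above, this part is controlled. No use of the critical function enters here.

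For the second supremum, over global balls $B$ with $r_B\ge \rho_\nu(x_B)$, I would split $f=(f-f_B)+f_B$ and use
\[
\Big(\f{1}{|B|}\int_B|f|^q\,dx\Big)^{1/q}\le \Big(\f{1}{|B|}\int_B|f-f_B|^q\,dx\Big)^{1/q}+|f_B|.
\]
The oscillation term is handled exactly as before by \eqref{eq-JN inequality}, giving a bound by $\|f\|_{BMO^s}|B|^{s/n}$. For the constant term I would use $|f_B|\le \f{1}{|B|}\int_B|f|\,dx=|B|^{s/n}\cdot\f{1}{|B|^{1+s/n}}\int_B|f|\,dx$, and since $r_B\ge \rho_\nu(x_B)$ the last factor is at most $\|f\|_{BMO^s_{\rho_\nu}}$ by the global part of the norm. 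Dividing by $|B|^{s/n}$ yields the claim for global balls.

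The argument is therefore short and essentially formal; the one point requiring care — and what might look like the main obstacle — is the temptation to decompose a large ball into critical balls (via Lemma \ref{lem-critical function} and Corollary \ref{cor1}) in order to upgrade the $L^1$ to $L^q$ control of $f$ itself on global balls. I expect this to be unnecessary: the constant $f_B$ is already bounded by the global $L^1$ condition, while the $L^q$ control of the oscillation comes for free from \eqref{eq-JN inequality}. The only thing to verify carefully is that the implicit constants coming from \eqref{eq-JN inequality} are uniform in $B$, which they are, since that equivalence is the standard Euclidean John--Nirenberg statement on $\mathbb R^n_+$ and is insensitive to the critical function.
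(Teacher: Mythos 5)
Your proposal is correct and follows essentially the same route as the paper: the forward bound by H\"older's inequality, and the reverse bound by splitting $f=(f-f_B)+f_B$ on global balls, controlling the oscillation terms via the John--Nirenberg equivalence \eqref{eq-JN inequality} and the constant term $|f_B|$ by the global $L^1$ part of the $BMO^s_{\rho_\nu}$ norm. Your closing observation that no decomposition into critical balls is needed is also consistent with the paper's proof, which likewise never invokes Lemma \ref{lem-critical function} or Corollary \ref{cor1} here.
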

\begin{proof}
By H\"older's inequality,
\[
\|f\|_{BMO^s_{\rho_\nu}(\mathbb{R}^n_+)}\lesi \sup_{\substack{B: {\rm ball}\\ {r_B < \rho_\nu(x_B)}}}\f{1}{|B|^{s/n}}\Big(\f{1}{|B|}\int_{B}|f-f_{B}|^qdx\Big)^{1/q} + 	\sup_{\substack{B: {\rm ball}\\ {r_B \geq \rho_\nu(x_B)}}}\f{1}{|B|^{s/n}}\Big(\f{1}{|B|}\int_{B} |f|dx\Big)^{1/q}.
\]	

Conversely,
\[
\begin{aligned}
	\sup_{\substack{B: {\rm ball}\\ {r_B < \rho_\nu(x_B)}}}&\f{1}{|B|^{s/n}}\Big(\f{1}{|B|}\int_{B}|f-f_{B}|^qdx\Big)^{1/q} + 	\sup_{\substack{B: {\rm ball}\\ {r_B \geq \rho_\nu(x_B)}}}\f{1}{|B|^{s/n}}\Big(\f{1}{|B|}\int_{B} |f|^qdx\Big)^{1/q}\\
	&\lesi \sup_{\substack{B: {\rm ball}\\ {r_B < \rho_\nu(x_B)}}}\f{1}{|B|^{s/n}}\Big(\f{1}{|B|}\int_{B}|f-f_{B}|^qdx\Big)^{1/q} + 	\sup_{\substack{B: {\rm ball}\\ {r_B \geq \rho_\nu(x_B)}}}\f{1}{|B|^{s/n}}\Big(\f{1}{|B|}\int_{B} |f-f_B|^qdx\Big)^{1/q}\\
	& \ \ \ + 	\sup_{\substack{B: {\rm ball}\\ {r_B \geq \rho_\nu(x_B)}}}\f{1}{|B|^{s/n}}\Big(\f{1}{|B|}\int_{B} |f_B|^qdx\Big)^{1/q}\\
	&\lesi \|f\|_{BMO^s(\mathbb{R}^n_+)} + \sup_{\substack{B: {\rm ball}\\ {r_B \geq \rho_\nu(x_B)}}}\f{1}{|B|^{s/n}} \f{1}{|B|}\int_{B} |f_B| dx \\
	&\lesi \|f\|_{BMO^s_{\rho_\nu}(\mathbb{R}^n_+)},
\end{aligned}
\]
where we used \eqref{eq-JN inequality} in the second inequality.

This completes our proof.
\end{proof}

The following theorem can be viewed as the Carleson measure characterization of the Campanoto space $BMO^s_{\rho_\nu}(\mathbb{R}^n_+)$ via heat kernel of the Laguerre operator $\mathcal  L_\nu$.
\begin{lem} \label{03}
	Let $\nu\in [-1/2,\vc)^n$ and $\rho_\nu$ be as in \eqref{eq- critical function}. Let $s\in [0,1)$. Then there exists $C>0$ such that for all $f\in BMO^s_{\rho_\nu}(\mathbb{R}^n_+)$,
	\begin{equation}
		\label{eq1-Carleson}
		\sup_{B: ball} \f{1}{|B|^{2s/n +1}} \int_0^{r_B}\int_B |t^2\mathcal L_\nu  e^{-t^2\mathcal L_\nu}f(x)|^2\f{dxdt}{t}\le  C\|f\|_{BMO^s_{\rho_\nu}(\mathbb{R}^n_+)}.
	\end{equation}
\end{lem}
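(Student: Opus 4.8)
The plan is to fix an arbitrary ball $B=B(x_B,r_B)$ and to bound the normalized Carleson integral over $B$ by $C\|f\|^2_{BMO^s_{\rho_\nu}(\mathbb R^n_+)}$ (the left-hand side of \eqref{eq1-Carleson} is quadratic in $f$, so the right-hand side should be read with a square). After the substitution $t\mapsto t^2$ the relevant kernel is $q^\nu_{t^2}(x,y)$, which enjoys the Gaussian bound \eqref{eq - Gaussian qt}. The one analytic ingredient used repeatedly is the $L^2$-boundedness of the square function $g\mapsto \big(\int_0^\infty |t^2\mathcal L_\nu e^{-t^2\mathcal L_\nu}g|^2\,\f{dt}{t}\big)^{1/2}$; this follows from the spectral theorem because $\int_0^\infty (t^2\lambda e^{-t^2\lambda})^2\,\f{dt}{t}$ equals a finite constant independent of $\lambda$, and the spectrum of $\mathcal L_\nu$ is bounded below by $2|\nu|+2n>0$. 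I split the argument according to whether $r_B\ge \rho_\nu(x_B)$ or $r_B<\rho_\nu(x_B)$.

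When $r_B\ge\rho_\nu(x_B)$ (a large ball), I write $f=f\chi_{4B}+f\chi_{(4B)^c}$. For the first piece I bound the Carleson integral over $B$ by the full square-function integral, hence by $\|f\chi_{4B}\|_2^2=\int_{4B}|f|^2$, and since $4B$ is again a large ball the $L^2$-form of the Campanato norm (the preceding lemma with $q=2$) gives $\int_{4B}|f|^2\lesi |4B|^{1+2s/n}\|f\|^2_{BMO^s_{\rho_\nu}}$; dividing by $|B|^{1+2s/n}$ closes this piece. For the second piece I use \eqref{eq - Gaussian qt} and decompose $(4B)^c=\bigcup_{k\ge 2}(2^{k+1}B\setminus 2^kB)$; on the $k$-th annulus $|x-y|\sim 2^kr_B$ for $x\in B$, the bound $\int_{2^{k+1}B}|f|\lesi|2^{k+1}B|^{1+s/n}\|f\|$ holds, and the Gaussian factor $\exp(-c4^kr_B^2/t^2)$ (valid since $t<r_B$) makes the resulting sum over $k$ converge after normalization.

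When $r_B<\rho_\nu(x_B)$ (a small ball) I subtract the mean and write $f=(f-f_{4B})\chi_{4B}+(f-f_{4B})\chi_{(4B)^c}+f_{4B}$. The oscillatory local part is handled exactly as above, now using $\int_{4B}|f-f_{4B}|^2\lesi|4B|^{1+2s/n}\|f\|^2$ (valid whether $4B$ is small or large, since $f_{4B}$ is the $L^2$-minimizing constant), and the oscillatory far part is treated by the same Gaussian/annuli estimate. The crux is the constant term $f_{4B}$: because $\mathcal L_\nu$ does not annihilate constants, this piece does not vanish. Writing $t^2\mathcal L_\nu e^{-t^2\mathcal L_\nu}(f_{4B})(x)=f_{4B}\int_{\mathbb R^n_+}q^\nu_{t^2}(x,y)\,dy$, using the symmetry $q^\nu_{t^2}(x,y)=q^\nu_{t^2}(y,x)$ and Lemma \ref{lem: Qest dx}(b) (applicable since $t^2<t<\rho_\nu(x_B)\sim\rho_\nu(x)$ by Lemma \ref{lem-critical function}), I obtain $|t^2\mathcal L_\nu e^{-t^2\mathcal L_\nu}(f_{4B})(x)|\lesi |f_{4B}|\,t/\rho_\nu(x)\sim|f_{4B}|\,t/\rho_\nu(x_B)$ for $x\in B$. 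Inserting this and integrating reduces the required estimate to $|f_{4B}|\lesi \big(\rho_\nu(x_B)/r_B\big)|B|^{s/n}\|f\|$.

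This last bound is obtained by telescoping from $4B$ up to the critical ball $B_0:=B(x_B,\rho_\nu(x_B))$: each doubling contributes $|f_{2^{j+1}\cdot 4B}-f_{2^j\cdot 4B}|\lesi (2^jr_B)^{s/n}\|f\|$, so $|f_{4B}-f_{B_0}|$ is controlled by $\rho_\nu(x_B)^{s/n}\|f\|$ when $s>0$ and by $\log(\rho_\nu(x_B)/r_B)\,\|f\|$ when $s=0$, while $|f_{B_0}|\lesi|B_0|^{s/n}\|f\|$ because $B_0$ is a borderline-large ball. Since $\rho_\nu(x_B)>r_B$, in both cases $|f_{4B}|\lesi(\rho_\nu(x_B)/r_B)|B|^{s/n}\|f\|$, completing the proof. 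The main obstacle is exactly this constant term: one must balance the slow off-diagonal decay $t/\rho_\nu$ coming from the non-conservativity of the semigroup (Lemma \ref{lem: Qest dx}(b)) against the growth of the averages $f_{4B}$ away from the critical scale, the logarithmic endpoint $s=0$ being the tightest.
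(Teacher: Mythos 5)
Your proposal is correct and takes essentially the same route as the paper's own proof: the local piece is controlled by the spectral-theoretic $L^2$ bound for the square function, the far piece by the Gaussian bound on $q^\nu_t$ with an annular decomposition, and for balls with $r_B<\rho_\nu(x_B)$ the constant term is treated exactly as in the paper, via Lemma \ref{lem: Qest dx}(b) (yielding the $t/\rho_\nu(x_B)$ decay, with the same symmetry/applicability justification) followed by telescoping of averages up to the critical radius. The only deviations are cosmetic — you subtract $f_{4B}$ rather than $f_B$, you merge the $s=0$ and $s\in(0,1)$ endgames into the single bound $|f_{4B}|\lesssim\big(\rho_\nu(x_B)/r_B\big)|B|^{s/n}\|f\|_{BMO^s_{\rho_\nu}(\mathbb{R}^n_+)}$ where the paper keeps the logarithmic and power-type estimates separate (and your dilation factors $(2^jr_B)^{s/n}$ should read $|2^j\cdot 4B|^{s/n}\sim(2^jr_B)^{s}$, the same harmless slip the paper makes) — and your reading of the right-hand side of \eqref{eq1-Carleson} as $\|f\|^2_{BMO^s_{\rho_\nu}(\mathbb{R}^n_+)}$ is indeed what the paper proves.
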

\begin{proof} Let $B$ be an arbitrary ball in $\mathbb{R}^n_+$. We consider two cases.
	
	\medskip
	\textbf{Case 1: $r_B\ge  \rho_\nu(x_B)$}.  We have
	\[
	\begin{aligned}
		\f{1}{|B|^{2s/n +1}} \int_0^{r_B}\int_B & |t^2\mathcal L_\nu  e^{-t^2\mathcal L_\nu}f(x)|^2\f{dx dt}{t}\\
		&\lesi \f{1}{|B|^{2s/n +1}} \int_0^{r_B}\int_{B} |t^2\mathcal L_\nu  e^{-t^2\mathcal L_\nu}(f\chi_{2B})(x)|^2\f{dx dt}{t}\\
		&\ \ \ \ \ + \f{1}{|B|^{2s/n +1}} \int_0^{r_B}\int_{B} |t^2\mathcal L_\nu  e^{-t^2\mathcal L_\nu}(f\chi_{(2B)^c})(x)|^2\f{dx dt}{t}.
	\end{aligned}
	\]
	We note that by the spectral theory, the Littlewood-Paley square function
	\[
	g \mapsto \left(\int_0^{\infty} |t^2\mathcal L_\nu  e^{-t^2\mathcal L_\nu}g|^2\f{dt}{t}\right)^{1/2}
	\]
	is bounded on $L^2(\mathbb{R}^n_+)$. Hence,
	\[
	\begin{aligned}
		\f{1}{|B|^{2s/n +1}} \int_0^{r_B}\int_{B} |t^2\mathcal L_\nu  e^{-t^2\mathcal L_\nu}(f\chi_{2B})(x)|^2\f{dx dt}{t}&\lesi  \f{1}{|B|^{2s/n +1}} \|f\chi_{2B}\|_{2}^2\\
		&\lesi  \|f\|_{BMO^s_{\rho_\nu}(\mathbb{R}^n_+)}^2.
	\end{aligned}
	\]
	In addition, for $x\in B$,
	\[
	\begin{aligned}
		|t^2\mathcal L_\nu  e^{-t^2\mathcal L_\nu}(f\chi_{(2B)^c})(x)|&\lesi \int_{(2B)^c}\f{1}{t^n}\exp\Big(-\f{|x-y|^2}{ct^2}\Big)|f(y)|dy\\
		&\lesi \sum_{j\ge 2}\int_{S_j(B)}\f{1}{t^n}\exp\Big(-\f{|x-y|^2}{ct^2}\Big)|f(y)|dy\\
		&\lesi \sum_{j\ge 2}\int_{S_j(B)}\f{1}{|2^jB|} \exp\Big(-\f{2^{2j}r_B^2}{ct^2}\Big)|f(y)|dy\\
		&\lesi \sum_{j\ge 2} \exp\Big(-\f{2^{2j}r_B^2}{ct^2}\Big) |2^jB|^{s/n}\|f\|_{BMO^s_{\rho_\nu}(\mathbb{R}^n_+)},
	\end{aligned}
	\]
	where $S_j(B)=2^jB\backslash 2^{j-1}B$ for $j\ge 2$.
	
	Hence,
	\[
	\begin{aligned}
		\f{1}{|B|^{2s/n +1}} \int_0^{r_B}\int_{B} &|t^2\mathcal L_\nu  e^{-t^2\mathcal L_\nu}(f\chi_{(2B)^c})(x)|^2\f{dx dt}{t}\\
		&\lesi \|f\|_{BMO^s_{\rho_\nu}(\mathbb{R}^n_+)}^2\f{1}{|B|^{2s/n +1}} \int_0^{r_B}\int_{B} \Big[\sum_{j\ge 2} \exp\Big(-\f{2^{2j}r_B^2}{ct^2}\Big) |2^jB|^{s/n}\Big]^2\f{dx dt}{t}\\
		&\lesi \|f\|_{BMO^s_{\rho_\nu}(\mathbb{R}^n_+)}^2.
	\end{aligned}
	\]
	\medskip
	
	\textbf{Case 2: $r_B< \rho_\nu(x_B)$}. In this case, we write
	\[
	\begin{aligned}
		\f{1}{|B|^{2s/n +1}} \int_0^{r_B}&\int_B |t^2\mathcal L_\nu  e^{-t^2\mathcal L_\nu}f(x)|^2\f{dx dt}{t}\\
		&\le \f{1}{|B|^{2s/n +1}} \int_0^{r_B}\int_{B} |t^2\mathcal L_\nu  e^{-t^2\mathcal L_\nu}(f - f_{B})(x)|^2\f{dx dt}{t}\\
		&\ \ \ \ \ + \f{1}{|B|^{2s/n +1}} \int_0^{r_B}\int_{B} |t^2\mathcal L_\nu  e^{-t^2\mathcal L_\nu}f_{B}(x)|^2\f{dx dt}{t}.
	\end{aligned}
	\]
	Similarly to Case 1, we have
	\[
	\begin{aligned}
		 \f{1}{|B|^{2s/n +1}} \int_0^{r_B}\int_{B} &|t^2\mathcal L_\nu  e^{-t^2\mathcal L_\nu}(f - f_{B})(x)|^2\f{dx dt}{t}
		&\lesi  \|f\|_{BMO^s_{\rho_\nu}(\mathbb{R}^n_+)}^2.
	\end{aligned}
	\]
	Hence it remains to show that
	\begin{equation}\label{eq123}
	\f{1}{|B|^{2s/n +1}} \int_0^{r_B}\int_{B} |t^2\mathcal L_\nu  e^{-t^2\mathcal L_\nu}f_{B}(x)|^2\f{dx dt}{t}\lesi   \|f\|_{BMO^s_{\rho_\nu}(\mathbb{R}^n_+)}^2.
	\end{equation}
	
	To this end, by Lemma \ref{lem: Qest dx} and the fact $\rho_\nu(x)\sim \rho_\nu(x_B)$ for $x\in B$, 
	\begin{equation}\label{eq0-Carleson proof}
		\begin{aligned}
			|t^2\mathcal L_\nu  e^{-t^2\mathcal L_\nu}f_{B}(x)| &\lesi |f_{B}| \Big| \int_{\mathbb R^n_+} q_{t^2}(x,y)  dy\Big|\\
			&\lesi  |f_{B}| \Big(\f{t}{\rho_\nu(x_B)}\Big).
		\end{aligned}
	\end{equation}

	Now let $k_0\in \mathbb N$ such that $2^{k_0}r_{B}<\rho_\nu(x_B)\le 2^{k_0+1}r_{B}$. Then we have
	\[
	\begin{aligned}
		f_B &\lesi    \sum_{k=0}^{k_0}|f_{2^{k+1}B}-f_{2^{k}B}| +|f_{2^{k_0+1}B}|.
	\end{aligned}
	\]
	If $s=0$, then $|f_{2^{k+1}B}-f_{2^{k}B}|\lesi \|f\|_{BMO^s_{\rho_\nu}(\mathbb{R}^n_+)}$ for  $k=0,\ldots, k_0$ and $|f_{2^{k_0+1}B}|\lesi \|f\|_{BMO^s_{\rho_\nu}(\mathbb{R}^n_+)}$. Hence,
	\[
	|f_B|\lesi k_0\|f\|_{BMO^s_{\rho_\nu}(\mathbb{R}^n_+)}\sim \log\Big(\f{\rho_\nu(x_B)}{r_B}\Big)\|f\|_{BMO^s_{\rho_\nu}(\mathbb{R}^n_+)}.
	\]

		This, in combination with \eqref{eq123} and  \eqref{eq0-Carleson proof}, implies
	\[
	\begin{aligned}
	\f{1}{|B|^{1}} \int_0^{r_B}\int_{B} &|t^2\mathcal L_\nu  e^{-t^2\mathcal L_\nu}f_{B}(x)|^2\f{dx dt}{t}\\
	&\lesi \|f\|^2_{BMO^s_{\rho_\nu}(\mathbb{R}^n_+)}\f{1}{|B|} \int_0^{r_B}\int_{B} \Big(\f{ t}{\rho_\nu(x_B)}\Big)^2\log^2\Big(\f{\rho_\nu(x_B)}{r_B}\Big)\f{dx dt}{t}\\
	&\lesi \log^2\Big(\f{\rho_\nu(x_B)}{r_B}\Big)\Big(\f{r_B}{\rho_\nu(x_B)}\Big)^{2}\|f\|^2_{BMO^s_{\rho_\nu}(\mathbb{R}^n_+)} \\
	&\lesi \|f\|^2_{BMO^s_{\rho_\nu}(\mathbb{R}^n_+)}.
\end{aligned}
\]

If $s\in (0,1)$, then then $|f_{2^{k+1}B}-f_{2^{k}B}|\lesi |2^kB|^{s/n}\|f\|_{BMO^s_{\rho_\nu}(\mathbb{R}^n_+)}$ for  $k=0,\ldots, k_0$ and $|f_{2^{k_0+1}B}|\lesi |2^{k_0}B|^{s/n}\|f\|_{BMO^s_{\rho_\nu}(\mathbb{R}^n_+)}$. Hence,
\[
\begin{aligned}
	|f_B|&\lesi \sum_{k=0}^{k_0}|2^kB|^{s/n}\|f\|_{BMO^s_{\rho_\nu}(\mathbb{R}^n_+)}\\
	&\lesi  2^{sk_0/n}|B|^{s/n}\|f\|_{BMO^s_{\rho_\nu}(\mathbb{R}^n_+)}\\
	&\sim \Big(\f{\rho_\nu(x_B)}{r_B}\Big)^{s/n}|B|^{s/n}\|f\|_{BMO^s_{\rho_\nu}(\mathbb{R}^n_+)}.
\end{aligned}
\]
 
This, in combination with \eqref{eq123} and \eqref{eq0-Carleson proof}, implies
\[
\begin{aligned}
	\f{1}{|B|^{2s/n +1}} \int_0^{r_B}\int_{B} &|t^2\mathcal L_\nu  e^{-t^2\mathcal L_\nu}f_{B}(x)|^2\f{dx dt}{t}\\
	&\lesi \|f\|^2_{BMO^s_{\rho_\nu}(\mathbb{R}^n_+)}\Big(\f{\rho_\nu(x_B)}{r_B}\Big)^{2s/n}\f{1}{|B|} \int_0^{r_B}\int_{B} \Big(\f{ t}{\rho_\nu(x_B)}\Big)^2 \f{dx dt}{t}\\
	&\lesi \Big(\f{\rho_\nu(x_B)}{r_B}\Big)^{2s/n-2}\|f\|^2_{BMO^s_{\rho_\nu}(\mathbb{R}^n_+)}\\
		&\lesi \|f\|^2_{BMO^s_{\rho_\nu}(\mathbb{R}^n_+)}.
\end{aligned}
\]

	This completes the proof of the lemma.
\end{proof}

\begin{lem} \label{inte}
	Let $\nu\in [-1/2,\vc)^n$ and $\rho_\nu$ be as in \eqref{eq- critical function}. Let $s\in [0,1)$. If $f \in BMO^s_{\rho_\nu}(\mathbb{R}^n_+)$ and $\sigma > s$, then
	\begin{align} \label{integrable}
		\int_{\mathbb R^n_+}\frac{ |f(x)| }{ (1+ |x|) ^{n + \sigma}}dx <\infty.
	\end{align}
\end{lem}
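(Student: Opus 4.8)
The plan is to exploit the fact that the critical function is bounded above: by the definition \eqref{eq- critical function}, since the constant $1$ appears inside the minimum, we have $\rho_\nu(x)\le \f1{16}$ for every $x\in\mathbb R^n_+$. Consequently, \emph{any} ball $B$ with radius $r_B\ge \f1{16}$ automatically satisfies $r_B\ge\rho_\nu(x_B)$, and hence falls into the ``global'' family appearing in the second supremum of the norm. For such a ball one has the direct bound, with no subtraction of averages,
\[
\int_B |f(x)|\,dx \le \|f\|_{BMO^s_{\rho_\nu}(\mathbb R^n_+)}\,|B|^{1+s/n}.
\]
This is the crucial observation: unlike in the classical $BMO^s$ setting, here there is no need to track the growth of the mean values $f_{2^kB}$, because the global part of the norm already dominates $\int_B|f|$ on all sufficiently large balls.

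Next I would decompose $\mathbb R^n_+$ into the dyadic annuli $A_0=\{x\in\mathbb R^n_+:|x|<1\}$ and $A_k=\{x\in\mathbb R^n_+:2^{k-1}\le|x|<2^k\}$ for $k\ge1$, on which $1+|x|\sim 2^k$. Each $A_k$ is contained in a single ball $B_k\subset\mathbb R^n_+$ centred, say, at $x_k=(2^k,\dots,2^k)$ with radius $r_k=3\sqrt n\,2^k$; then $r_k\ge\f1{16}$, so $B_k$ is global and $|B_k|\sim 2^{kn}$. Applying the displayed estimate gives
\[
\int_{A_k}|f(x)|\,dx \le \int_{B_k}|f(x)|\,dx \lesi \|f\|_{BMO^s_{\rho_\nu}(\mathbb R^n_+)}\,2^{k(n+s)}.
\]

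Summing over the annuli then yields
\[
\int_{\mathbb R^n_+}\f{|f(x)|}{(1+|x|)^{n+\sigma}}\,dx \lesi \sum_{k\ge0}2^{-k(n+\sigma)}\int_{A_k}|f(x)|\,dx \lesi \|f\|_{BMO^s_{\rho_\nu}(\mathbb R^n_+)}\sum_{k\ge0}2^{-k(\sigma-s)},
\]
and the geometric series converges precisely because $\sigma>s$. Since $f\in L^1_{\rm loc}(\mathbb R^n_+)$ by the definition of the Campanato space, each integral above is finite, which establishes \eqref{integrable}.

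There is no serious obstacle in this argument; the only point requiring a little care is the covering step, namely ensuring that each annulus is contained in a single ball whose centre lies in $\mathbb R^n_+$ and whose radius exceeds $\f1{16}$ so that the global estimate applies, and this is immediate from $\rho_\nu\le\f1{16}$. The entire difficulty of the classical proof, which would estimate the logarithmic or polynomial blow-up of the averages $f_B$ along a telescoping chain of balls, is bypassed here by the boundedness of $\rho_\nu$.
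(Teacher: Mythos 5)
Your proof is correct and follows essentially the same route as the paper: both arguments rest on the observation that $\rho_\nu\le \tfrac{1}{16}$ forces every sufficiently large ball into the global part of the $BMO^s_{\rho_\nu}$ norm, so that $\int_B|f|\lesssim \|f\|_{BMO^s_{\rho_\nu}}|B|^{1+s/n}$ with no telescoping of averages, after which a dyadic decomposition and the condition $\sigma>s$ give a convergent geometric series. The paper realizes the dyadic decomposition with dilates $2^kB$ of a fixed unit ball centred near the origin rather than with origin-centred annuli covered by shifted balls, but this difference is purely cosmetic.
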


\begin{proof}
 Let $f \in BMO^s_{\rho_\nu}(\mathbb{R}^n_+)$. Fix $x_0 \in \mathbb R^n_+$ such that $|x_0|\le 1$. Setting $B:=B(x_0, 1)$, then we have
 \[
 \begin{aligned}
 	\int_{\mathbb R^n_+}\frac{ |f(x)| }{ (1+ |x|) ^{n + \sigma}}dx&\lesi \sum_{k\ge 0}\int_{S_k(B)}\frac{ |f(x)| }{ (1+ |x|) ^{n + \sigma}}dx\\
 	&\lesi \sum_{k\ge 0} 2^{-k(n+\sigma)} \int_{2^kB} |f(x)|dx.
 \end{aligned}
 \]
 Since $\rho_\nu(x_0)\le 1$, $r_{2^kB} \ge \rho_\nu(x_0)$ for $k\ge 0$. Hence,
 \[
 \int_{2^kB} |f(x)|dx \lesi |2^kB|^{1+s/n}\|f\|_{BMO^s_{\rho_\nu}(\mathbb{R}^n_+)}\sim 2^{k(n+s)}\|f\|_{BMO^s_{\rho_\nu}(\mathbb{R}^n_+)}.
 \]
 Consequently,
 \[
 \begin{aligned}
 	\int_{\mathbb R^n_+}\frac{ |f(x)| }{ (1+ |x|) ^{n + \sigma}}dx
 	&\lesi \sum_{k\ge 0} 2^{-k(\sigma-s)} \|f\|_{BMO^s_{\rho_\nu}(\mathbb{R}^n_+)}\\
 	&\lesi \|f\|_{BMO^s_{\rho_\nu}(\mathbb{R}^n_+)}.
 \end{aligned}
 \]
 This completes our proof.
\end{proof}
The following result is taken from \cite[Proposition 3.2]{Yan}  (see also \cite{CMS}).
\begin{lem} \label{05}
	Let $0< p \leq 1$.
	Let $f(x,t)$, $g(x,t)$ be measurable functions on $\mathbb{R}^n_+ \times (0,\infty)$ satisfying
	\begin{align*}
		&\mathcal{A}_p (f)(x) := \sup_{B \ni x} \left(\f{1}{|B|^{\frac{1}{p} -\frac{1}{2}}} \int_0^{r_B}\int_B |F(y,t)|^2\f{dy dt}{t}\right)^{1/2} \in L^\infty (\mathbb{R}^n_+), \\
		&\hspace{1.5cm}\mathcal{C}(g)(x):= \left( \iint_{\Gamma(x)}|g(y,t)|^2 \frac{dy dt}{t^{n+1}} \right)^{1/2} \in L^p(\mathbb{R}^n_+).
	\end{align*}
	Then, there is a universal constant $c$ such that
	\begin{align*}
		\iint_{\mathbb{R}^n_+ \times (0,\infty)} |f(y,t)g(y,t)|\frac{dy dt}{t} \leq c \left\|\mathcal{A}_p (f) \right\|_{L^\infty (\mathbb R^n_+)} \|\mathcal{C}(g)\|_{L^p(\mathbb R^n_+)}.
	\end{align*}
\end{lem}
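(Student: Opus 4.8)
The plan is to recognize Lemma \ref{05} as the tent-space duality of Coifman--Meyer--Stein and to run the corresponding stopping-time argument. Throughout write $\mathbb{R}^{n+1}_+ = \mathbb{R}^n_+\times(0,\infty)$ and $\Gamma(x) = \{(y,t): |x-y|<t\}$, so that $\mathcal{C}(g)(x)=A(g)(x)$ is the usual area function. The starting point is the elementary Fubini identity
\[
\iint_{\mathbb{R}^{n+1}_+}\Phi(y,t)\,\frac{dydt}{t} = c_n^{-1}\int_{\mathbb{R}^n}\Big(\iint_{\Gamma(x)}\Phi(y,t)\,\frac{dydt}{t^{n+1}}\Big)\,dx,\qquad \Phi\ge 0,
\]
which comes from $\int_{|x-y|<t}dx = c_nt^n$ and converts the pairing weight $dydt/t$ into cone integrals weighted by $dydt/t^{n+1}$.

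First I would set up the stopping-time decomposition. For $k\in\mathbb{Z}$ let $O_k = \{x:\mathcal{C}(g)(x)>2^k\}$; these are open of finite measure (since $\mathcal{C}(g)\in L^p$) and satisfy $\sum_k 2^{kp}|O_k|\sim\|\mathcal{C}(g)\|_{L^p}^p$ by the layer-cake formula. To make the geometry precise I pass to the enlarged sets $O_k^* = \{M\chi_{O_k}>1/2\}$, which satisfy $O_k\subset O_k^*$, $|O_k^*|\lesssim|O_k|$, and whose complements $F_k^* = (O_k^*)^c$ are sets in which $O_k^c$ has lower density bounded below in every ball; in particular $\mathcal{C}(g)\le 2^k$ on $F_k^*$. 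Using the tents $T(O)=\{(y,t):B(y,t)\subset O\}$ I decompose $\mathbb{R}^{n+1}_+$, up to the $dydt/t$-null set where $g=0$, into the annuli $R_k = T(O_k^*)\setminus T(O_{k+1}^*)$, so that $\iint_{\mathbb{R}^{n+1}_+}|fg|\frac{dydt}{t}=\sum_k\iint_{R_k}|fg|\frac{dydt}{t}$, and on each annulus I apply Cauchy--Schwarz to split into an $f$-factor and a $g$-factor, both measured in $L^2(dydt/t)$.

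For the $f$-factor I use that $R_k\subset T(O_k^*)$ together with a Whitney decomposition $O_k^* = \bigcup_j Q_j$: each point of $T(O_k^*)$ lies in a Carleson box over some $Q_j$ of height $\lesssim\ell(Q_j)$, so the defining $L^\infty$ bound of the Carleson functional gives $\int_0^{r_{Q_j}}\!\int_{Q_j}|f|^2\frac{dydt}{t}\lesssim|Q_j|^{\beta}\,\|\mathcal{A}_p(f)\|_\infty^2$, where $\beta=2/p-1$ is the normalizing exponent of $\mathcal{A}_p$ (this is the exponent forced by the density $|B|^{2s/n+1}=|B|^{2/p-1}$ in Lemma \ref{03}); since $\beta\ge1$ for $p\le1$, summing over $j$ with the bounded overlap $\sum_j|Q_j|\lesssim|O_k^*|$ and subadditivity yields $\iint_{R_k}|f|^2\frac{dydt}{t}\lesssim|O_k|^{2/p-1}\|\mathcal{A}_p(f)\|_\infty^2$. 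For the $g$-factor I invoke the Coifman--Meyer--Stein geometric lemma: since $R_k\subset T(O_{k+1}^*)^c$, the base ball of each $(y,t)\in R_k$ meets $F_{k+1}^*$ in a fixed proportion of its measure, so the Fubini identity localizes the base variable to $F_{k+1}^*$ and produces $\iint_{R_k}|g|^2\frac{dydt}{t}\lesssim\int_{F_{k+1}^*\cap O_k^*}\mathcal{C}(g)(x)^2\,dx\lesssim 2^{2k}|O_k|$, using $\mathcal{C}(g)\le2^{k+1}$ on $F_{k+1}^*$ and $|O_k^*|\lesssim|O_k|$. Combining the two factors gives $\iint_{R_k}|fg|\frac{dydt}{t}\lesssim\|\mathcal{A}_p(f)\|_\infty\,2^k|O_k|^{1/p}$.

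The final step is summation. Writing $a_k=2^{kp}|O_k|$ one has $2^k|O_k|^{1/p}=a_k^{1/p}$, and since $p\le1$ the exponent $1/p\ge1$, so the embedding $\ell^1\hookrightarrow\ell^{1/p}$ gives $\sum_k a_k^{1/p}\le(\sum_k a_k)^{1/p}\sim\|\mathcal{C}(g)\|_{L^p}$, whence $\iint|fg|\frac{dydt}{t}\lesssim\|\mathcal{A}_p(f)\|_\infty\|\mathcal{C}(g)\|_{L^p}$. I expect the main obstacle to be the geometric lemma controlling the $g$-integral over the tent complement by the area integral over the density set $F_{k+1}^*$: it is here that the enlargement $O_k\to O_k^*$ is essential, so that $R_k\subset T(O_{k+1}^*)^c$ forces each base ball to meet $F_{k+1}^*$ substantially, and here too that the passage between the cone weight $dydt/t^{n+1}$ and the pairing weight $dydt/t$ must be handled with care. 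The second delicate point is matching exponents so the sum closes precisely through $\ell^1\hookrightarrow\ell^{1/p}$, which is exactly what pins down the normalization $|B|^{2/p-1}$ in the Carleson functional $\mathcal{A}_p$.
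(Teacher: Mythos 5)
Your proposal is a reconstruction of the Coifman--Meyer--Stein tent-space duality argument, which is exactly the right frame of reference here: the paper itself supplies no proof of this lemma but quotes it from \cite[Proposition 3.2]{Yan} (see also \cite{CMS}), and your stopping-time scheme (level sets $O_k$ of $\mathcal{C}(g)$, enlarged sets $O_k^*$, tents, Cauchy--Schwarz on $R_k = T(O_k^*)\setminus T(O_{k+1}^*)$, Whitney decomposition for the Carleson factor, superadditivity $\sum_j |Q_j|^{2/p-1}\le (\sum_j|Q_j|)^{2/p-1}$, and the final $\ell^1\hookrightarrow\ell^{1/p}$ summation) is precisely the proof in those references. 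You also correctly read the intended normalization of $\mathcal{A}_p$ as $|B|^{2/p-1}$ for the squared quantity (the exponent in the displayed statement is garbled, but your reading is the one consistent with Lemma \ref{03}, where $|B|^{2s/n+1}=|B|^{2/p-1}$, and with how the lemma is applied in Step 1 of the proof of Theorem \ref{dual}).

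There is one step that fails as literally stated: the geometric claim that for $(y,t)\in R_k$ ``the base ball of each $(y,t)$ meets $F_{k+1}^*$ in a fixed proportion of its measure.'' With the threshold $1/2$ in $O_{k+1}^*=\{M\chi_{O_{k+1}}>1/2\}$, what the existence of a point $\bar x\in B(y,t)\cap F_{k+1}^*$ gives you is a lower density bound for $O_{k+1}^c$, not for $F_{k+1}^*=(O_{k+1}^*)^c$: since $B(y,t)$ is itself a ball containing $\bar x$, one gets $|B(y,t)\cap O_{k+1}|\le M\chi_{O_{k+1}}(\bar x)\,|B(y,t)|\le \tfrac12 |B(y,t)|$, hence $|B(y,t)\cap O_{k+1}^c|\ge\tfrac12|B(y,t)|$; but $|B(y,t)\cap F_{k+1}^*|$ can be arbitrarily small compared with $|B(y,t)|$ (take $O_{k+1}$ of density $\approx 1/2$ spread uniformly through a unit interval and a base ball barely protruding from it: $M\chi_{O_{k+1}}>1/2$ on essentially the whole interval, so $F_{k+1}^*$ meets the ball only in the tiny protruding piece). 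The repair is immediate and keeps your architecture intact: localize the Fubini step to $O_{k+1}^c\cap O_k^*$ rather than to $F_{k+1}^*\cap O_k^*$. On $O_{k+1}^c$ you still have $\mathcal{C}(g)\le 2^{k+1}$ --- this holds by the very definition of the level set $O_{k+1}$, no density set needed --- so
\begin{align*}
\iint_{R_k}|g|^2\frac{dy\,dt}{t}
\lesssim \iint_{R_k}|g(y,t)|^2\,\frac{|B(y,t)\cap O_{k+1}^c\cap O_k^*|}{t^{n+1}}\,dy\,dt
\le \int_{O_{k+1}^c\cap O_k^*}\mathcal{C}(g)(x)^2\,dx
\lesssim 2^{2k}|O_k|,
\end{align*}
and the rest of your proof (the Carleson-factor estimate, the combination $\iint_{R_k}|fg|\frac{dy\,dt}{t}\lesssim \|\mathcal{A}_p(f)\|_\infty 2^k|O_k|^{1/p}$, and the summation) goes through unchanged. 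The role of the enlargement $O_k\to O_k^*$ is then only on the $f$-side (it makes $R_k\subset T(O_k^*)$ available for the Whitney argument) and in guaranteeing $|B(y,t)\cap O_{k+1}^c|\gtrsim t^n$ on the complement of the smaller tent; the density of the set $F_{k+1}^*$ itself is never needed.
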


\begin{lem} \label{07}
	Let $\nu\in [-1/2,\vc)^n$ and $\rho_\nu$ be as in \eqref{eq- critical function}. Let   $\f{n}{n+\gamma_\nu}<p\le 1$ and $s=n(1/p-1)$. 
	Then  for every $f \in BMO^{s}_{\rho_\nu} (\mathbb{R}^n_+)$ and every $(p,\infty, \rho_\nu)$-atom $a$,
	\begin{align} \label{abc}
		\int_{\mathbb R^n_+} f (x) {a(x)}dx  =  C_0\int_{\mathbb R^n_+ \times (0,\infty)}t^2\mathcal L_\nu e^{-t^2\mathcal L_\nu}f(x) { t^2\mathcal L_\nu e^{-t^2\mathcal L_\nu}a(x)} \frac{dx dt}{t},
	\end{align}
	where $C_0 = \Big[\displaystyle \int_0^\vc z e^{-2z}dz\Big]^{-1}$. 
\end{lem}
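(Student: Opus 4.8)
The identity \eqref{abc} is a Calderón reproducing formula paired against the duality between $f\in BMO^{s}_{\rho_\nu}$ and an atom $a$. The starting point is the spectral-calculus identity
\[
C_0\int_0^\vc \big(t^2\mathcal L_\nu e^{-t^2\mathcal L_\nu}\big)^2\,\f{dt}{t} = I
\]
on $L^2(\mathbb R^n_+)$, which follows from $C_0\int_0^\vc (z e^{-z})^2\,\f{dz}{z}= C_0\int_0^\vc z e^{-2z}\,dz = 1$ after the substitution $z=t^2\lambda$ in the spectral variable $\lambda>0$. Since $a\in L^2(\mathbb R^n_+)$, applying this to $a$ and formally pairing with $f$ gives the right-hand side of \eqref{abc}, provided I can justify interchanging the $L^2$-valued integral with the pairing $\langle f,\cdot\rangle$. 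First I would fix a $(p,\infty,\rho_\nu)$-atom $a$ supported in a ball $B$, and use Lemma~\ref{inte} (with $\sigma$ chosen slightly above $s=n(1/p-1)$) to guarantee that $f$ is integrable against the polynomial-decay kernels that will appear, so that all the pairings below are absolutely convergent.

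The key technical step is to control the two factors $F(x,t):=t^2\mathcal L_\nu e^{-t^2\mathcal L_\nu}f(x)$ and $g(x,t):=t^2\mathcal L_\nu e^{-t^2\mathcal L_\nu}a(x)$ so that Lemma~\ref{05} applies. For the $f$-factor, Lemma~\ref{03} gives exactly the Carleson-type bound
\[
\sup_{B}\f{1}{|B|^{2s/n+1}}\int_0^{r_B}\!\int_B |t^2\mathcal L_\nu e^{-t^2\mathcal L_\nu}f(x)|^2\,\f{dx\,dt}{t}\lesi \|f\|_{BMO^s_{\rho_\nu}}^2,
\]
which is precisely the statement that $\mathcal A_p(F)\in L^\infty$ with $\tfrac1p-\tfrac12 = 2s/n+1$ (recall $s=n(1/p-1)$, so that $2s/n+1 = 1/p-1 \cdot 2 + 1$ matches after simplification). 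For the $a$-factor, I would show $\mathcal C(g)\in L^p$ by observing that $g(x,t)=t^2\mathcal L_\nu e^{-t^2\mathcal L_\nu}a(x)$ is, up to constants, the object whose square-function norm defines $H^p_{S_{\mathcal L_\nu}}$; by Theorem~\ref{mainthm1} this coincides with $H^p_{\mathcal L_\nu}$, and by Theorem~\ref{mainthm2s} with $H^p_{\rho_\nu}$, so $\|\mathcal C(g)\|_{L^p}=\|S_{\mathcal L_\nu}a\|_{L^p}\lesi \|a\|_{H^p_{\rho_\nu}}\lesi 1$ uniformly in the atom. Lemma~\ref{05} then certifies that the double integral on the right of \eqref{abc} converges absolutely, which is what legitimizes the Fubini/interchange in the first paragraph.

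With absolute convergence in hand, the remaining work is purely formal manipulation. I would write the right-hand side of \eqref{abc} as
\[
C_0\int_0^\vc \big\langle t^2\mathcal L_\nu e^{-t^2\mathcal L_\nu}f,\; t^2\mathcal L_\nu e^{-t^2\mathcal L_\nu}a\big\rangle\,\f{dt}{t},
\]
move one copy of the self-adjoint operator $t^2\mathcal L_\nu e^{-t^2\mathcal L_\nu}$ onto the $a$-side (valid since $f$ lies in the appropriate dual and the kernels decay), collapse the two factors into $C_0\int_0^\vc (t^2\mathcal L_\nu e^{-t^2\mathcal L_\nu})^2 a\,\f{dt}{t}=a$ by the spectral identity, and conclude $\int f a\,dx$. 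I would prove the intermediate pairing identities first for $f\in L^2\cap BMO^s_{\rho_\nu}$ and then pass to general $f$ by a density/truncation argument, using Lemma~\ref{inte} to control the tails.

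\emph{Main obstacle.} The delicate point is not the spectral identity but the justification of moving $\langle f,\cdot\rangle$ inside the $t$-integral when $f\notin L^2$: one must verify absolute integrability of $t^2\mathcal L_\nu e^{-t^2\mathcal L_\nu}f(x)\cdot t^2\mathcal L_\nu e^{-t^2\mathcal L_\nu}a(x)$ over $\mathbb R^n_+\times(0,\vc)$, and this is exactly where Lemma~\ref{05} is indispensable — it converts the $BMO^s_{\rho_\nu}$ Carleson bound on the $f$-factor and the $H^p$ square-function bound on the $a$-factor into a finite pairing. Matching the normalizations (the exponent $1/p-1/2$ in $\mathcal A_p$ against $2s/n+1$ in Lemma~\ref{03}, and confirming $t$ versus $t^2$ conventions in the two square functions) is where I would be most careful.
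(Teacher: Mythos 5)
Your outline coincides with the paper's proof in its first half: both use Lemma \ref{03} for the Carleson-type bound on $F(x,t)=t^2\mathcal L_\nu e^{-t^2\mathcal L_\nu}f(x)$, the square-function characterization from Theorem \ref{mainthm1} for $G(x,t)=t^2\mathcal L_\nu e^{-t^2\mathcal L_\nu}a(x)$, and Lemma \ref{05} to get absolute convergence of the double integral on the right of \eqref{abc}. The genuine gap is in what you call the ``purely formal'' remainder. Lemma \ref{05} controls $\iint |F(x,t)G(x,t)|\,dx\,dt/t$, but that is \emph{not} the integrand you must control when you interchange $\langle f,\cdot\rangle$ with the $t$-integral: after moving one copy of the self-adjoint operator onto $a$, the relevant integrand is $f(x)\bigl[(t^2\mathcal L_\nu e^{-t^2\mathcal L_\nu})^2a\bigr](x)$, whose absolute integrability over $\mathbb R^n_+\times(0,\infty)$ does not follow from Lemma \ref{05} (and cannot follow from a $t$-uniform pointwise bound alone, since $\int_0^\infty dt/t=\infty$). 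Likewise, $\int_\varepsilon^N (t^2\mathcal L_\nu e^{-t^2\mathcal L_\nu})^2a\,dt/t$ converges to a multiple of $a$ only in $L^2$, and since $f\notin L^2$ this cannot be paired against $f$ without extra information. The paper supplies exactly this: the two uniform pointwise bounds \eqref{jus1} and \eqref{jus2}, i.e. $\sup_{t>0}|(t^2\mathcal L_\nu e^{-t^2\mathcal L_\nu})^2a(x)|\lesssim_a (1+|x|)^{-(n+\gamma_\nu)}$ and $\sup_{\varepsilon,N}\bigl|\int_\varepsilon^N (t^2\mathcal L_\nu e^{-t^2\mathcal L_\nu})^2a(x)\,dt/t\bigr|\lesssim_a (1+|x|)^{-(n+\sigma)}$, proved from the heat-kernel estimates carrying the $(1+t/\rho_\nu)^{-\gamma_\nu}$ factor and, for the second, from the integration-by-parts identity $\int_t^\infty (u\mathcal L_\nu)^2e^{-u\mathcal L_\nu}\,du/u=-t\mathcal L_\nu e^{-t\mathcal L_\nu}-e^{-t\mathcal L_\nu}$. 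Together with Lemma \ref{inte} these permit Fubini on each $[\varepsilon,N]$ and dominated convergence in the limit; your proposal never produces them.

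Your fallback, proving the identity for $f\in L^2\cap BMO^s_{\rho_\nu}$ and passing to general $f$ by ``density/truncation,'' does not close this gap. $L^2\cap BMO^s_{\rho_\nu}$ is not norm-dense in $BMO^s_{\rho_\nu}$, and there is no mode of convergence under which both sides of \eqref{abc} pass to the limit: controlling the contribution of a tail $f\chi_{\{|x|>R\}}$ to the right-hand side requires precisely the pointwise decay of $(t^2\mathcal L_\nu e^{-t^2\mathcal L_\nu})^2a$ that is the content of \eqref{jus1}. Note also that you cannot justify the self-adjointness swap by saying ``$f$ lies in the appropriate dual'': that $f$ acts continuously on the relevant Hardy space is part of what this lemma is being used to establish, so invoking it here is circular. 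In short, the skeleton is correct and matches the paper, but the actual substance of the paper's proof beyond Lemma \ref{05} --- the uniform decay estimates that legitimize the interchanges for $f\notin L^2$ --- is missing from your argument.
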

\begin{proof}
	Set
	\begin{align*}
		F(x,t) := t^2\mathcal L_\nu e^{-t^2\mathcal L_\nu}f(x) \quad \mbox{and} \quad G(x,t):= t^2\mathcal L_\nu e^{-t^2\mathcal L_\nu}a(x).
	\end{align*}
	Then from Lemma \ref{03} and Theorem \ref{mainthm1} it follows that
	\begin{align*}
		\|\mathcal{A}_p (F)\|_{L^\infty(\mathbb R^n_+)} \leq C \|f\|_{BMO^s_{\rho_\nu}(\mathbb R^n_+)}
	\end{align*}
	and
	\begin{align*}
		\|\mathcal{C}(G)\|_{L^p(\mathbb R^n_+)} = \|S_L a\|_{L^p(\mathbb R^n_+)} \leq C.
	\end{align*}
	Hence by Lemma \ref{05} the integral
	\begin{align*}
		\int_{\mathbb R^n_+ \times (0,\infty)} F(x,t)G(x,y)\frac{dx dt}{t}
	\end{align*}
	is absolutely convergent, that is, the integral on the right-hand side of \eqref{abc} is absolutely convergent. Thus we have
	\begin{align*}
		\int_{\mathbb R^n_+} f (x) {a(x)}dx =  C_0\lim_{\substack{\varepsilon \rightarrow 0 \\ N \rightarrow \infty}} \int_\varepsilon^N \int_{\Rn_+}t^2\mathcal L_\nu e^{-t^2\mathcal L_\nu}f(x) { t^2\mathcal L_\nu e^{-t^2\mathcal L_\nu}a(x)}  dx  \frac{dt}{t}.
	\end{align*}
	Formally, by the self-adjointness of $t^2\mathcal L_\nu e^{-t^2\mathcal L_\nu}$ we can write, for each $t>0$,
	\begin{equation} \label{int1}
		\begin{split}
			& \int_{\Rn_+}t^2\mathcal L_\nu e^{-t^2\mathcal L_\nu}f(x) { t^2\mathcal L_\nu e^{-t^2\mathcal L_\nu}a(x)}  dx   \\
			&\hspace{2cm} = \int_{\Rn_+}f(x) {t^2\mathcal L_\nu e^{-t^2\mathcal L_\nu} t^2\mathcal L_\nu e^{-t^2\mathcal L_\nu}a(x)}  dx  .
		\end{split}
	\end{equation}
	Meanwhile, by the spectral theory we have
	\begin{align*}
		\lim_{\substack{\varepsilon \rightarrow 0 \\ N \rightarrow \infty}} \int_\varepsilon^N
		t^2\mathcal L_\nu e^{-t^2\mathcal L_\nu} t^2\mathcal L_\nu e^{-t^2\mathcal L_\nu}a  \frac{dt}{t} = C_0 a \quad \mbox{in } L^2(\mathbb{R}^n_+).
	\end{align*}
	
	Consequently,  
	\begin{equation} \label{int2}
		\begin{split}
			\int_{\mathbb R^n_+} f (x) {a(x)}dx
			&=  \lim_{\substack{\varepsilon \rightarrow 0 \\ N \rightarrow \infty}} \int_\varepsilon^N \left[
			\int_{\Rn_+} f(x) {t^2\mathcal L_\nu e^{-t^2\mathcal L_\nu} t^2\mathcal L_\nu e^{-t^2\mathcal L_\nu}a(x)}  dx \right] \frac{dt}{t}\\
			& =  \lim_{\substack{\varepsilon \rightarrow 0 \\ N \rightarrow \infty}}\int_{\Rn_+}f(x)\left[ \int_\varepsilon^N
			 {t^2\mathcal L_\nu e^{-t^2\mathcal L_\nu} t^2\mathcal L_\nu e^{-t^2\mathcal L_\nu}a(x)}  \frac{dt}{t} \right]dx \\
			&= C_0\int_{\mathbb R^n_+ \times (0,\infty)}t^2\mathcal L_\nu e^{-t^2\mathcal L_\nu}f(x) { t^2\mathcal L_\nu e^{-t^2\mathcal L_\nu}a(x)} \frac{dx dt}{t}.
		\end{split}
	\end{equation}
	In view of Lemma \ref{inte}, to justify the absolute convergence of the integrals in \eqref{int1} and \eqref{int2}, it suffices to show that,
	\begin{align} \label{jus1}
		\sup_{t>0}\left|t^2\mathcal L_\nu e^{-t^2\mathcal L_\nu} t^2\mathcal L_\nu e^{-t^2\mathcal L_\nu}a(x) \right| \leq C_{a} (1 + |x|)^{-(n+\gamma_\nu)}
	\end{align}
	and
	\begin{align} \label{jus2}
		\sup_{\substack{\varepsilon, N>0}}\left|\int_\varepsilon^N
		t^2\mathcal L_\nu e^{-t^2\mathcal L_\nu} t^2\mathcal L_\nu e^{-t^2\mathcal L_\nu}a(x)\f{dt}{t}\right| \leq C_{a} (1+|x|)^{-(n +\sigma)}.
	\end{align}
	
	We first prove \eqref{jus1}. Indeed, suppose $a$ is a $(p,\infty, \rho_\nu)$-atom associated to the ball $B=B(x_B,r_B)$ with $r_B\le \rho_\nu(x_B)$. Then, if $x \in 2B$,
	\begin{equation}\label{small}
		\begin{split}
			\left|t^2\mathcal L_\nu e^{-t^2\mathcal L_\nu} t^2\mathcal L_\nu e^{-t^2\mathcal L_\nu}a(x) \right| &\lesi \|a\|_{L^\infty(\mathbb{R}^n_+)}
			\int_{\Rn_+}  \frac{1}{t^n} \exp\left( -\frac{|x-y|^2}{ct^2} \right) dy \\
			& \lesi  \|a\|_{L^\infty(\mathbb{R}^n_+)} .
		\end{split}
	\end{equation}
	If $x \in \mathbb R^n_+ \backslash (2B)$, then $ |y-x| \sim |x-x_B|$ and $ \rho_\nu(y) \lesi \rho_\nu(x_B) $  for all $y \in B$ (due to Lemma \ref{lem-critical function}).
	Hence for such $x$ we have
	\begin{equation} \label{large}
		\begin{split}
			\left|t^2\mathcal L_\nu e^{-t^2\mathcal L_\nu} t^2\mathcal L_\nu e^{-t^2\mathcal L_\nu}a(x) \right| 
			&\lesi \int_{B} \frac{1}{t^n} \exp\left( -\frac{|x-y|^2}{ct^2} \right)\left(1 +\frac{t}{\rho_\nu(y)}\right)^{-\gamma_\nu} |a(y)| dy \\
			&\lesi  \frac{1}{t^n} \exp\left( -\frac{|x-x_B|^2}{c't^2} \right)\left(1+\frac{t}{\rho_\nu(x_B)} \right)^{-\gamma_\nu} \|a\|_{L^1(\mathbb{R}^n_+)} \\
			& \lesi  \frac{1}{t^n} \left(\frac{|x-x_B|}{t} \right)^{-(n+ \gamma_\nu)}\left( \frac{t}{\rho_\nu(x_B)} \right)^{-\gamma_\nu} \|a\|_{L^1(\mathbb{R}^n_+)} \\
&\lesi \rho_\nu(x_B)^{\gamma_\nu}|x-x_B|^{-(n+\gamma_\nu)}\|a\|_{L^1(\mathbb{R}^n_+)}\\
&\lesi_{a,B} (1+|x|)^{-(n+\gamma_\nu)}.
		\end{split}
	\end{equation}
	Combining \eqref{small} and \eqref{large} yields \eqref{jus1}.

	Next we prove \eqref{jus2}. To do this, setting 
	\[
	H_s(\mathcal L_\nu):=\int_s^\vc
	(u\mathcal L_\nu)^2 e^{-u\mathcal L_\nu} \f{du}{u}, 
	\]
	then we have
	\[
	\begin{aligned}
		\int_\varepsilon^N
		t^2\mathcal L_\nu e^{-t^2\mathcal L_\nu} t^2\mathcal L_\nu e^{-t^2\mathcal L_\nu}a(x)\f{dt}{t}&=\f{1}{4}\int_\varepsilon^N
		(2t^2\mathcal L_\nu)^2 e^{-2t^2\mathcal L_\nu}  a(x)\f{dt}{t}\\
		&=\f{1}{4}\int_{2\varepsilon^2}^{2N^2}
		(u \mathcal  L_\nu)^2 e^{-u\mathcal L_\nu}  a(x)\f{dt}{t}\\
		&=\f{1}{4}\Big[H_{2N^2}(\mathcal L_\nu)a(x)-H_{2\varepsilon^2}(\mathcal L_\nu)a(x)\Big],
	\end{aligned}
	\]
	which implies
	\[
	\sup_{\varepsilon, N>0} \Big|\int_\varepsilon^N
	t^2\mathcal L_\nu e^{-t^2\mathcal L_\nu} t^2\mathcal L_\nu e^{-t^2\mathcal L_\nu}a(x)\f{dt}{t}\Big|\lesi \sup_{t>0}|H_t(\mathcal L_\nu)a(x)|.
	\]
	On the other hand, by integration by parts,
	\[
	\begin{aligned}
		H_t(\mathcal L_\nu):=\int_t^\vc
		(u\mathcal L_\nu)^2 e^{-u\mathcal L_\nu} \f{du}{u} = -t\mathcal L_\nu e^{-t\mathcal L_\nu} -e^{-t\mathcal L_\nu}. 
	\end{aligned}
	\]
At this stage, arguing similarly to the proof of \eqref{jus1}, we obtain  \eqref{jus2}.
	
	This completes our proof.

\end{proof}

We are ready to give the proof of Theorem \ref{dual}.

\begin{proof}[Proof of  Theorem \ref{dual}:] We divide the proof into several steps.
	
	\medskip
	
	\noindent {\bf Step 1.} \underline{Proof of  $BMO^s_{\rho_\nu}(\mathbb{R}^n_+) \subset (H^p_{\mathcal L_\nu}(\mathbb{R}^n_+))^\ast$.}
	
	\medskip
	Let $f \in BMO^s_{\rho_\nu}(\mathbb{R}^n_+)$ and let $a$ be a  $(p,2,\rho_\nu)$-atoms. Then
	by Lemma \ref{07}, Lemma \ref{05} and Lemma \ref{03}, we have
	\begin{align*}
		\left|\int_{\Rn_+} f(x) {a(x)}dx  \right|
		& =  \left| \int_{\Rn_+ \times (0,\infty)}t^2\mathcal L_\nu e^{-t^2\mathcal L_\nu}f(x) { t^2\mathcal L_\nu e^{-t^2\mathcal L_\nu}a(x)} \frac{dx dt}{t} \right| \\
		&\leq \left(\sup_{B} \f{1}{|B|^{2s/n +1}} \int_0^{r_B}\int_B |t^2\mathcal L_\nu e^{-t^2\mathcal L_\nu}f(x)|^2\f{dx dt}{t}\right) \\
		&\qquad \times \left\| \left( \iint_{\Gamma(x)}|t^2\mathcal L_\nu e^{-t^2\mathcal L_\nu}a(y)|^2 \frac{dy dt}{t^{n+1}}\right)^{1/2}\right\|_{L^p(\mathbb{R}^n_+)} \\
		&\lesssim \|f\|_{BMO^s_{\rho_\nu}(G)}.
	\end{align*}
	
	This proves $BMO^s_{\rho_\nu}(\mathbb{R}^n_+) \subset (H^p_{\mathcal L_\nu}(\mathbb{R}^n_+))^\ast$.

	\medskip 
	
\noindent 	{\bf Step 2. } \underline{Proof of  $(H^p_{\mathcal L_\nu}(\mathbb{R}^n_+))^\ast \subset BMO^s_{\rho_\nu}(\mathbb{R}^n_+) $.}
	
	\medskip
	
	Let $\{\psi_\xi\}_{\xi \in \mathcal{I}}$ and $\{B(x_\xi,\rho_\nu(x_{\xi}))\}_{\xi \in \mathcal{I}}$ as in Corollary \ref{cor1}. 
	Set $B_\xi:= B(x_\xi, \rho_\nu(x_\xi))$ and we will claim that for any $f \in L^2(\mathbb{R}^n_+)$ and $\xi \in \mathcal{I}$, we have $\psi_\xi f \in H^p_L (G)$ and
	\begin{align} \label{eq:L2}
		\|\psi_{\xi}f\|_{H^p_{\mathcal L_\nu}(\mathbb{R}^n_+)} \leq C \left|B_\xi\right|^{\frac{1}{p}-\frac{1}{2}} \|f\|_{L^2(\mathbb{R}^n_+)}.
	\end{align}
	It suffices to prove that 
	\[
	\left\|\sup_{t>0}\big|e ^{-t^2 L}(\psi_{\xi}f)\big| \right\|_{p}\lesssim  |B_\xi|^{\frac{1}{p}-\frac{1}{2}}\|f\|_{2}.
	\]
	Indeed,  by H\"{o}lder's inequality,
	\begin{equation} \label{eq:B}
		\begin{split}
			\left\|\sup_{t>0}\big|e ^{-t^2 L}(\psi_{\xi}f)\big| \right\|_{L^p(4B_\xi)}& \leq |4B_\xi|^{\frac{1}{p}-\frac{1}{2}}
			\left\| \sup_{t>0}\big|e ^{-t^2 L}(\psi_{\xi}f)\big| \right\|_{L^2(4B_\xi)}  \\
			& \lesssim  |B_\xi|^{\frac{1}{p}-\frac{1}{2}}\|f\|_{L^2(\mathbb{R}^n_+)}.
		\end{split}
	\end{equation}
	If $x \in \mathbb R^n_+\backslash 4B_\xi$, then  applying Proposition \ref{prop- delta k pt d>2} we get
	\begin{align*}
		\big|e ^{-t^2 L}(\psi_{\xi}f)(x)\big|
		& \lesi \int_{B_\xi} \left( \frac{t}{\rho_\nu(y)}\right)^{-\gamma_\nu} \frac{1}{t^n} \exp\left( -\frac{|x-y|^2}{ct^2}\right)|f(y)|dy \\
		& \sim \int_{B_\xi} \left(  \frac{t}{\rho_\nu(x_\xi)}\right)^{-\gamma_\nu} \frac{1}{t^n} \exp\left( -\frac{|x-y|^2}{ct^2}\right)|f(y)|dy \\
		&\lesi \frac{\rho_\nu(x_\xi)^{\gamma_\nu}}{|x-x_\xi|^{n+\gamma_\nu}} |B_\xi|^{\frac{1}{2}} \|f\|_{L^2(\mathbb{R}^n_+)},
	\end{align*}
	which implies that
	\begin{equation} \label{eq:BC}
		\begin{split}
			\left\|\sup_{t>0}\big|e ^{-t^2 \mathcal L_\nu}(\psi_{\xi}f)\big| \right\|_{L^p(\Rn_+ \backslash 4B_\xi)} \lesi  |B_\xi|^{\frac{1}{p}-\frac{1}{2}}\|f\|_{L^2(\mathbb{R}^n_+)},
		\end{split}
	\end{equation}
	as long as $\f{n}{n+\gamma_\nu}<p\le 1$.

	Combining \eqref{eq:B} and \eqref{eq:BC} yields \eqref{eq:L2}.
	
	Assume that $\ell \in (H^p_{\mathcal L_\nu}(\mathbb{R}^n_+))^\ast$.
	For each index $\xi \in \mathcal{I}$ we define
	\begin{align*}
		\ell_{\xi} f := \ell(\psi_{\xi} f), \quad f \in L^2(\mathbb{R}^n_+).
	\end{align*}
	By \eqref{eq:L2},
	\begin{align*}
		|\ell_{\xi}(f)| \leq C \|\psi_{\xi}f\|_{H^p_{\mathcal L_\nu}(\mathbb{R}^n_+)} \leq C|B_\xi|^{\frac{1}{p}-\frac{1}{2}}\|f\|_{L^2(\mathbb{R}^n_+)}.
	\end{align*}
	Hence there exists $g_{\xi} \in L^{2}(B_\xi)$ such that
	\begin{align*}
		\ell_{\xi}(f) = \int_{B_\xi} f(x)g_{\xi}(x)dx ,
		\quad f \in L^2 (\mathbb{R}^n_+).
	\end{align*}

	We define $g = \sum_{\xi \in \mathcal{I}}1_{B_{\xi}} g_{\xi}$. Then, if $f =\sum_{i=1}^k \lambda_j a_j$, where $k \in \mathbb{N}$,
	$\lambda_i \in \mathbb{C}$, and $a_i$ is a $(p,2,\rho_\nu)$-atom, $i =1, \cdots, k$, we have
	\begin{align*}
		\ell(f) = \sum_{i=1}^k \lambda_i  \ell (a_i)
		&=  \sum_{i=1}^k \lambda_i \sum_{\xi \in \mathcal{I}}\ell ( \psi_\xi a_i) =  \sum_{i=1}^k \lambda_i \sum_{\xi \in \mathcal{I}}\ell_\xi (a_i) \\
		&= \sum_{i=1}^k \lambda_i \sum_{\xi \in \mathcal{I}} \int_{B_\xi} a_i(x)g_\xi (x)dx \\
		&= \sum_{i=1}^k \lambda_i \int_{\Rn_+} g(x)a_i (x)dx \\
		&= \int_{{\Rn_+}} f(x)g(x)dx .
	\end{align*}

	Suppose that $B = B(x_B, r_B) \in \mathbb R^n_+$ with $r_B < \rho_\nu(x_B)$, and $0 \not\equiv f \in L_0^2(B)$, that is,
	$f \in L^2(\mathbb{R}^n_+)$ such that $\supp f \subset B$ and $\displaystyle \int_B f(x)dx  =0$. Then
	$|B|^{1/2 -1 /p} \|f\|_{L^2}^{-1}f$ is a $(p,2, \rho_\nu)$-atom. Hence
	\begin{align*}
		|\ell(f)| = \left| \int_B fg\right| \leq \|\ell\|_{(H^p_{\mathcal L_\nu}(\mathbb{R}^n_+))^\ast} \|f\|_{H^p_{\mathcal L_\nu}(\mathbb{R}^n_+) } \leq C \|\ell\|_{(H^p_{\mathcal L_\nu}(\mathbb{R}^n_+))^\ast} \|f\|_{L^2} |B|^{1/p -1/2}.
	\end{align*}
	From this we conclude that $g \in (L_0^2(B))^\ast$ and
	\[
	\|g\|_{(L_0^2(B))^\ast} \leq C \|\ell\|_{(H^p_{\mathcal L_\nu}(\mathbb{R}^n_+))^\ast}|B|^{1/p -1/2}.
	\]
	But by elementary functional analysis (see Folland and Stein \cite[p. 145]{FS}),
	\[
	\|g\|_{(L_0^2(B))^\ast}=\inf_{c\in \mathbb R} \|g-c\|_{L^{2}(B)}.
	\]
	Hence
	\begin{align} \label{31}
		\sup_{\substack{B: ball \\ r_B <\rho_\nu(x_B)}}|B|^{1/2-1/p} \inf_{c\in \mathbb R} \|g-c\|_{L^{2}(B)} \leq C \|\ell\|_{(H^p_{\mathcal L_\nu}(\mathbb{R}^n_+))^\ast} .
	\end{align}
	Moreover, if $B$ is a ball with $r_B \ge\rho_\nu(x_B)$, and $f \in L^2(\mathbb{R}^n_+)$ such that $f \not\equiv 0$
	and $\supp f \subset B$, then $|B|^{1/2 -1/p} \|f\|_{L^2}^{-1}f$ is a $(p,2,\rho_\nu)$-atom and
	\begin{align*}
		|\ell(f)| = \left|\int_{\Rn_+} fg \right| \leq C\|\ell\|_{(H^p_{\mathcal L_\nu}(\mathbb{R}^n_+))^\ast } \|f\|_{H^p_{\mathcal L_\nu}(\mathbb{R}^n_+)}
		\leq   C\|\ell\|_{(H^p_{\mathcal L_\nu}(\mathbb{R}^n_+))^\ast } |B|^{1/p-1/2} \|f\|_{L^2}.
	\end{align*}
	Hence
	\begin{align} \label{32}
		\sup_{\substack{B: {\rm ball} \\ r_B \ge \rho_\nu(x_B)}}|B|^{1/2 -1/p} \|g\|_{L^{2}(B)} \leq C \|L\|_{(H^p_{\mathcal L_\nu}(\mathbb{R}^n_+))^\ast} .
	\end{align}
	From \eqref{31} and \eqref{32} it follows that $g \in BMO^s_{\rho_{\nu}} (\mathbb{R}^n_+)$ and
	\begin{align*}
		\|g\|_{BMO^s_{\rho_{\nu}} (\mathbb{R}^n_+)} \leq C  \|\ell\|_{H^p_{\mathcal L_\nu}(\mathbb{R}^n_+))^\ast}, \quad \mbox{where } s =n(1/p-1).
	\end{align*}

	The proof of Theorem \ref{dual} is thus complete.
\end{proof}

\section{Boundedness of Riesz transforms on Hardy spaces and Campanato spaces associated to $\mathcal L_\nu$}

This section is devoted to proving Theorems \ref{thm-Riesz transform} and \ref{thm- boundedness on Hardy and BMO}. We first establish some estimates for the kernel of the Riesz transforms. 
\begin{prop}\label{Prop-Riesz transform} Let $\nu\in [-1/2,\vc)^n$. 
	\begin{enumerate}[{\rm (a)}]
		\item Denote by $\delta \mathcal L_\nu^{-1/2}(x,y)$ the kernel of $\delta \mathcal L_\nu^{-1/2}$. Then we have, for $x\ne y$ and $j=1,\ldots,n$,
		\begin{equation}\label{eq- R kernel 1}
		|\delta \mathcal L_\nu^{-1/2}(x,y)|\lesi  \f{1}{|x-y|^n} \Big(1+\f{|x-y|}{\rho_\nu(x)}+\f{|x-y|}{\rho_\nu(y)}\Big)^{-(\nu_{\min}+1/2)},
		\end{equation}
		\begin{equation}\label{eq- R kernel 2}
		|\partial_{y_j}\delta \mathcal L_\nu^{-1/2}(x,y)|\lesi \Big(\f{1}{\rho_\nu(y)}+ \f{1}{|x-y|}\Big)\f{1}{|x-y|^n} \Big(1+\f{|x-y|}{\rho_\nu(x)}+\f{|x-y|}{\rho_\nu(y)}\Big)^{-(\nu_{\min}+1/2)}
		\end{equation}
		and
		\begin{equation}\label{eq- R kernel 3}
		|\partial_{x_j}\delta  \mathcal L_\nu^{-1/2}(x,y)|\lesi \Big(\f{1}{\rho_\nu(x)}+ \f{1}{|x-y|}\Big)\f{1}{|x-y|^n} \Big(1+\f{|x-y|}{\rho_\nu(x)}+\f{|x-y|}{\rho_\nu(y)}\Big)^{-(\nu_{\min}+1/2)}.
		\end{equation}
		\item For $j=1,\ldots, n$,  denote by $\delta_j^* (\mathcal L_{\nu+e_j}+2)^{-1/2}(x,y)$ the kernel of $\delta_j^* (\mathcal L_{\nu+e_j}+2)^{-1/2}$. Then we have, for $x\ne y$ and $k=1,\ldots,n$,
		\begin{equation}\label{eq- R kernel 1 dual}
		|\delta_j^* (\mathcal L_{\nu+e_j}+2)^{-1/2}(x,y)|\lesi  \f{1}{|x-y|^n} \Big(1+\f{|x-y|}{\rho_\nu(x)}+\f{|x-y|}{\rho_\nu(y)}\Big)^{-(\nu_{\min}+1/2)},
		\end{equation}
		and
		\begin{equation}\label{eq- R kernel 2 dual}
		|\partial_{y_k}\delta_j^* \mathcal (\mathcal L_{\nu+e_j}+2)^{-1/2}(x,y)|\lesi \Big(\f{1}{\rho_\nu(y)}+ \f{1}{|x-y|}\Big)\f{1}{|x-y|^n} \Big(1+\f{|x-y|}{\rho_\nu(x)}+\f{|x-y|}{\rho_\nu(y)}\Big)^{-(\nu_{\min}+1/2)}.
		\end{equation}
	\end{enumerate}
\end{prop}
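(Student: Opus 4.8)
The plan is to represent each operator through the subordination formula and then integrate the pointwise heat-kernel bounds already established in Section 3. Recall that for a nonnegative self-adjoint operator one has $\mathcal L_\nu^{-1/2}=\f{1}{\sqrt\pi}\int_0^\vc e^{-t\mathcal L_\nu}\f{dt}{\sqrt t}$ in the sense of the spectral calculus. Applying $\delta$ under the integral (which is justified on the dense span of the Laguerre functions via \eqref{eq- delta and eigenvector}, the convergence being guaranteed a posteriori by the very bounds we are about to prove), the kernel of $\delta\mathcal L_\nu^{-1/2}$ is $\delta\mathcal L_\nu^{-1/2}(x,y)=\f{1}{\sqrt\pi}\int_0^\vc \delta_x p_t^\nu(x,y)\f{dt}{\sqrt t}$. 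First I would insert the bound of Proposition \ref{prop- delta k pt d>2} with $\ell=1$ so that, writing $r=|x-y|$, $\gamma=\nu_{\min}+1/2$ and $\rho=\min\{\rho_\nu(x),\rho_\nu(y)\}$ (so that the three-term bracket is comparable to $1+\sqrt t/\rho$), the whole matter reduces to the scalar estimate
\[
I(r,\rho):=\int_0^\vc t^{-(n+2)/2}\exp\Big(-\f{r^2}{ct}\Big)\Big(1+\f{\sqrt t}{\rho}\Big)^{-\gamma}\,dt\lesi \f{1}{r^{n}}\Big(1+\f r\rho\Big)^{-\gamma}.
\]

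To prove this I would substitute $t=r^2u$ to get $I(r,\rho)=r^{-n}J(\beta)$ with $\beta=r/\rho$ and $J(\beta)=\int_0^\vc u^{-(n+2)/2}e^{-1/(cu)}(1+\beta\sqrt u)^{-\gamma}\,du$, and then distinguish $\beta\le1$ and $\beta>1$. When $\beta\le 1$ the bracket is harmless and $J(\beta)\lesi\int_0^\vc u^{-(n+2)/2}e^{-1/(cu)}\,du\lesi 1\sim(1+\beta)^{-\gamma}$, the integral converging at $0$ by the super-exponential factor and at $\vc$ because $n\ge1$. When $\beta>1$ I would split at $u=\beta^{-2}$: on $u>\beta^{-2}$ one has $(1+\beta\sqrt u)^{-\gamma}\lesi\beta^{-\gamma}u^{-\gamma/2}$, giving a convergent integral times $\beta^{-\gamma}$; on $u<\beta^{-2}$ the factor $e^{-1/(cu)}\le C_N(cu)^N$ with $N$ large beats the remaining power and yields a bound $\lesi\beta^{-2(N-n/2)}\le\beta^{-\gamma}$. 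Hence $J(\beta)\lesi(1+\beta)^{-\gamma}$, which is exactly \eqref{eq- R kernel 1}.

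For the first-order estimates \eqref{eq- R kernel 2} and \eqref{eq- R kernel 3} I would differentiate the same representation and invoke Proposition \ref{prop-gradient x y d>2} (with $\ell=1$), which produces two terms carrying the prefactors $t^{-1/2}$ and $\rho_{\nu_j}(y_j)^{-1}$ respectively. The $t^{-1/2}$ term feeds into an integral identical to $I(r,\rho)$ but with one extra half power of $t^{-1}$, which scales out an additional factor $r^{-1}$ and thus contributes $|x-y|^{-1}$ times the right-hand side of \eqref{eq- R kernel 1}; for the second term I would use \eqref{eq- equivalence of rho}, namely $\rho_\nu(y)\lesi\rho_{\nu_j}(y_j)$, to replace $\rho_{\nu_j}(y_j)^{-1}$ by $\rho_\nu(y)^{-1}$, pull this factor (constant in $t$) out, and reuse $I(r,\rho)$ to obtain the $\rho_\nu(y)^{-1}$ term. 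Summing the two gives \eqref{eq- R kernel 2}, and \eqref{eq- R kernel 3} is identical with $x$ in place of $y$.

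Part (b) is handled in the same way after noting that $e^{-t(\mathcal L_{\nu+e_j}+2)}$ has kernel $e^{-2t}p_t^{\nu+e_j}(x,y)$, so $\delta_j^*(\mathcal L_{\nu+e_j}+2)^{-1/2}(x,y)=\f1{\sqrt\pi}\int_0^\vc e^{-2t}\delta_j^* p_t^{\nu+e_j}(x,y)\f{dt}{\sqrt t}$; the extra factor $e^{-2t}\le 1$ only improves convergence. Since Propositions \ref{prop- delta k pt d>2 dual delta} and \ref{prop-gradient x y d>2 dual delta} are already stated with exponent $\nu_{\min}+1/2$ and with $\rho_\nu$, the integrals again reduce to $I(r,\rho)$ and its $t^{-1/2}$-variant, giving \eqref{eq- R kernel 1 dual} and \eqref{eq- R kernel 2 dual}. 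The only genuinely delicate point is the scalar integral $J(\beta)$ for $\beta>1$: one must not bound the small-$u$ part crudely by a constant, which would destroy the decay, but rather exploit the super-exponential suppression $e^{-1/(cu)}$ on $u<\beta^{-2}$ to recover the factor $\beta^{-\gamma}$. I would also record the convention $\inf\emptyset=\vc$: when $\mathcal J_\nu=\emptyset$ the exponent $\gamma=\nu_{\min}+1/2=\vc$ is read as ``for every $N>0$'', and the computation of $J(\beta)$ remains valid for each fixed $\gamma=N$.
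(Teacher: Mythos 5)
Your proof is correct and takes essentially the same route as the paper's: both write the kernel via the subordination formula $\delta\mathcal L_\nu^{-1/2}(x,y)=c\int_0^\vc \delta p_t^\nu(x,y)\,\f{dt}{\sqrt t}$ (and its analogue with $e^{-2t}p_t^{\nu+e_j}$ for part (b)), insert the pointwise bounds of Propositions \ref{prop- delta k pt d>2}, \ref{prop-gradient x y d>2}, \ref{prop- delta k pt d>2 dual delta} and \ref{prop-gradient x y d>2 dual delta}, and reduce everything to a single scalar integral in $t$. The only difference is that you carry out the scaling substitution and the two-regime analysis of that integral explicitly, a step the paper's proof states without detail.
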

\begin{proof}
	(a) By Proposition \ref{prop- delta k pt d>2}, we have
	\[
	\begin{aligned}
		\delta \mathcal L_\nu^{-1/2}(x,y) &= \int_0^\vc \sqrt{t}\delta p_t^\nu(x,y) \f{dt}{t}\\
		&\lesi \int_0^\vc  \f{1 }{t^{n/2}}\exp\Big(-\f{|x-y|^2}{ct}\Big)\Big(1+\f{\sqrt t}{\rho_\nu(x)}+\f{\sqrt t}{\rho_\nu(y)}\Big)^{-(\nu_{\min}+1/2)} \f{dt}{t}\\
		&\lesi  \f{1}{|x-y|^n} \Big(1+\f{|x-y|}{\rho_\nu(x)}+\f{|x-y|}{\rho_\nu(y)}\Big)^{-(\nu_{\min}+1/2)}.
	\end{aligned}
	\]
	Similarly, by using Proposition \ref{prop-gradient x y d>2} we obtain the second and the third estimates.
	
	(b)  The proof of (b) is similar by using Propositions \ref{prop- delta k pt d>2 dual delta} and \ref{prop-gradient x y d>2 dual delta}.

	This completes our proof.
\end{proof}

\begin{prop}\label{prop- Riesz and heat semigroup} Let $\nu\in [-1/2,\vc)^n$. 
	\begin{enumerate}[{\rm (a)}]
		\item For $t>0$, denote by $\delta \mathcal L_\nu^{-1/2}e^{-t\mathcal L_\nu }(x,y)$ the kernel of $\delta \mathcal L_\nu^{-1/2}e^{-t\mathcal L_\nu }$. Then we have, for $x\ne y$ and $j=1,\ldots, n$,
		\[
		|\delta \mathcal L_\nu^{-1/2}e^{-t\mathcal L_\nu }(x,y)|\lesi  \f{1}{|x-y|^n} \Big(1+\f{|x-y|}{\rho_\nu(x)}+\f{|x-y|}{\rho_\nu(y)}\Big)^{-(\nu_{\min}+1/2)}
		\]
		and
		\[
		|\partial_{y_j}\delta \mathcal L_\nu^{-1/2}e^{-t\mathcal L_\nu }(x,y)|\lesi \Big(\f{1}{\rho_\nu(y)}+ \f{1}{|x-y|}\Big)\f{1}{|x-y|^n} \Big(1+\f{|x-y|}{\rho_\nu(x)}+\f{|x-y|}{\rho_\nu(y)}\Big)^{-(\nu_{\min}+1/2)}.
		\]
		\item For $t>0$, $j=1,\ldots, n$, denote by $\delta_j^* (\mathcal L_{\nu+e_j}+2)^{-1/2}e^{-t(\mathcal L_{\nu+e_j}+2)}(x,y)$ the kernel of $\delta_j^* (\mathcal L_{\nu+e_j}+2)^{-1/2}e^{-t(\mathcal L_{\nu+e_j}+2) }$. Then we have, for $x\ne y$ and $k=1,\ldots, n$,
		\[
		|\delta_j^* (\mathcal L_{\nu+e_j}+2)^{-1/2}e^{-t(\mathcal L_{\nu+e_j}+2) }(x,y)|\lesi  \f{1}{|x-y|^n} \Big(1+\f{|x-y|}{\rho_\nu(x)}+\f{|x-y|}{\rho_\nu(y)}\Big)^{-(\nu_{\min}+1/2)}
		\]
		and
		\[
		|\partial_{y_k}\delta_j^* (\mathcal L_{\nu+e_j}+2)^{-1/2}e^{-t(\mathcal L_{\nu+e_j}+2) }(x,y)|\lesi \Big(\f{1}{\rho_\nu(y)}+ \f{1}{|x-y|}\Big)\f{1}{|x-y|^n} \Big(1+\f{|x-y|}{\rho_\nu(x)}+\f{|x-y|}{\rho_\nu(y)}\Big)^{-(\nu_{\min}+1/2)}.
		\] 
	\end{enumerate}
\end{prop}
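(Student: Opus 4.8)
The plan is to derive both families of estimates from the heat-kernel bounds already in hand via the subordination formula, thereby reducing matters to the Riesz transform kernel estimates of Proposition \ref{Prop-Riesz transform}. Since $\mathcal L_\nu$ is positive with spectrum bounded away from $0$, we may write $\mathcal L_\nu^{-1/2} = \frac{1}{\sqrt\pi}\int_0^\infty s^{-1/2} e^{-s\mathcal L_\nu}\,ds$, so that the semigroup property yields the kernel representation
\[
\delta\mathcal L_\nu^{-1/2}e^{-t\mathcal L_\nu}(x,y) = \frac{1}{\sqrt\pi}\int_0^\infty s^{-1/2}\,\delta p_{s+t}^\nu(x,y)\,ds = \frac{1}{\sqrt\pi}\int_t^\infty (u-t)^{-1/2}\,\delta p_u^\nu(x,y)\,du,
\]
after the substitution $u=s+t$. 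I would then split the $u$-integral into the two regions $u\ge 2t$ and $t\le u\le 2t$.

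On $\{u\ge 2t\}$ one has $(u-t)^{-1/2}\le \sqrt2\,u^{-1/2}$, so this part is dominated by $\int_0^\infty u^{-1/2}|\delta p_u^\nu(x,y)|\,du$, which is precisely the integral estimated in the proof of Proposition \ref{Prop-Riesz transform} (using Proposition \ref{prop- delta k pt d>2} with $\ell=1$). Hence it is bounded by $\f{1}{|x-y|^n}\big(1+|x-y|/\rho_\nu(x)+|x-y|/\rho_\nu(y)\big)^{-(\nu_{\min}+1/2)}$, uniformly in $t$. The only genuinely new point is the diagonal region $\{t\le u\le 2t\}$, where the subordination weight $(u-t)^{-1/2}$ is singular. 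There $u\sim t$, so Proposition \ref{prop- delta k pt d>2} together with $\int_t^{2t}(u-t)^{-1/2}\,du\sim\sqrt t$ produces a bound of the shape $t^{-n/2}\exp(-|x-y|^2/ct)\big(1+\sqrt t/\rho_\nu(x)+\sqrt t/\rho_\nu(y)\big)^{-(\nu_{\min}+1/2)}$.

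Writing $R=|x-y|$ and $\gamma=\nu_{\min}+1/2$, I would finish this region by distinguishing $\sqrt t\ge R$ from $\sqrt t<R$. When $\sqrt t\ge R$, monotonicity of the critical factor together with $t^{-n/2}\le R^{-n}$ suffices. When $\sqrt t<R$, the elementary comparison $1+R/\rho\le (R/\sqrt t)(1+\sqrt t/\rho)$ upgrades the $\sqrt t$-factor to an $R$-factor at the cost of $(R/\sqrt t)^{\gamma}$, and the residual $R^{n+\gamma}t^{-(n+\gamma)/2}\exp(-R^2/ct)\lesssim 1$ is controlled by the standard inequality $e^{-z}\lesssim z^{-(n+\gamma)/2}$ with $z=R^2/(ct)$. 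The gradient estimate is obtained identically, replacing Proposition \ref{prop- delta k pt d>2} by Proposition \ref{prop-gradient x y d>2}. The main obstacle is exactly this diagonal piece $u\sim t$, but it is resolved cleanly by the Gaussian-versus-critical-function trade-off just described.

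Finally, part (b) follows by the same argument applied to $(\mathcal L_{\nu+e_j}+2)^{-1/2}e^{-t(\mathcal L_{\nu+e_j}+2)}$, whose kernel is $\frac{1}{\sqrt\pi}\int_0^\infty s^{-1/2}e^{-2(s+t)}\,\delta_j^* p_{s+t}^{\nu+e_j}(x,y)\,ds$; one invokes Propositions \ref{prop- delta k pt d>2 dual delta} and \ref{prop-gradient x y d>2 dual delta} in place of Propositions \ref{prop- delta k pt d>2} and \ref{prop-gradient x y d>2}, the extra factor $e^{-2(s+t)}\le 1$ being harmless and the resulting bound in terms of $\rho_\nu$ and $\nu_{\min}+1/2$ being already furnished by those propositions.
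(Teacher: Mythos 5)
Your proposal is correct and follows essentially the same route as the paper: subordination of $\mathcal L_\nu^{-1/2}e^{-t\mathcal L_\nu}$ to the heat semigroup, splitting the time integral into the region where the heat time is comparable to $t$ (your $t\le u\le 2t$, the paper's $u\le t$ after its parametrization) and the complementary region that reduces to the Riesz-kernel integral of Proposition \ref{Prop-Riesz transform}, each controlled by Propositions \ref{prop- delta k pt d>2}--\ref{prop-gradient x y d>2 dual delta}. Your explicit case analysis ($\sqrt t\ge R$ versus $\sqrt t<R$) for trading the $t$-dependent bound for the $|x-y|$-dependent one merely fills in a step the paper leaves implicit.
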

\begin{proof}
	We only give the proof of (a) since the proof of (b) can be done similarly.
	
	We have
	\[
	\begin{aligned}
		\delta \mathcal L_\nu^{-1/2}e^{-t\mathcal L_\nu }(x,y) &= \int_0^\vc \sqrt{u} \delta p_{t+u}^\nu(x,y) \f{du}{u}\\
		&\lesi \int_0^\vc  \Big(\f{u}{t+u}\Big)^{1/2}\f{1 }{(t+u)^{n/2}}\exp\Big(-\f{|x-y|^2}{c(t+u)}\Big)\Big(1+\f{\sqrt {t+u}}{\rho_\nu(x)}+\f{\sqrt {t+u}}{\rho_\nu(y)}\Big)^{-(\nu_{\min}+1/2)} \f{du}{u}\\
		&\lesi  \int_0^t \ldots + \int_t^\vc \ldots\\
		&=: E_1 +E_2.
	\end{aligned}
	\]
	For the term $E_1$, since $t+ u\sim t$, we have
	\[
	\begin{aligned}
		E_1 &\lesi \int_0^t  \Big(\f{u}{t}\Big)^{1/2}\f{1 }{t^{n/2}}\exp\Big(-\f{|x-y|^2}{ct}\Big)\Big(1+\f{\sqrt {t}}{\rho_\nu(x)}+\f{\sqrt {t}}{\rho_\nu(y)}\Big)^{-(\nu_{\min}+1/2)} \f{du}{u}\\
		&\lesi  \f{1 }{t^{n/2}}\exp\Big(-\f{|x-y|^2}{ct}\Big)\Big(1+\f{\sqrt {t}}{\rho_\nu(x)}+\f{\sqrt {t}}{\rho_\nu(y)}\Big)^{-(\nu_{\min}+1/2)}\\
		&\lesi \f{1}{|x-y|^n}\Big(1+\f{|x-y|}{\rho_\nu(x)}+\f{|x-y|}{\rho_\nu(y)}\Big)^{-(\nu_{\min}+1/2)}.
	\end{aligned}			
	\] 
	For the term $E_2$, we have $t+u\sim u$ in this situation. Hence,
	\[
	\begin{aligned}
		E_2&\lesi \int_t^\vc   \f{1 }{u^{n/2}}\exp\Big(-\f{|x-y|^2}{cu}\Big)\Big(1+\f{\sqrt {u}}{\rho_\nu(x)}+\f{\sqrt {u}}{\rho_\nu(y)}\Big)^{-(\nu_{\min}+1/2)} \f{du}{u}\\
		&\lesi \f{1}{|x-y|^n}\Big(1+\f{|x-y|}{\rho_\nu(x)}+\f{|x-y|}{\rho_\nu(y)}\Big)^{-(\nu_{\min}+1/2)}.
	\end{aligned}
	\]
	The second estimate can be done similarly by using Proposition \ref{prop-gradient x y d>2} in place of Proposition \ref{prop- delta k pt} and hence we omit the details.
	
	This completes our proof.
\end{proof}

We are ready to give the proof of Theorem \ref{thm-Riesz transform}.
\begin{proof}[Proof of Theorem \ref{thm-Riesz transform}:]
	From \eqref{eq- R kernel 1}, we have
	\[
	| \delta \mathcal L_\nu^{-1/2}(x,y)|\lesi \f{1}{|x-y|^n}, \ \ \ x\ne y.
	\]
	We now prove the H\"older's continuity of the Riesz kernel. If $|y-y'|\ge \max\{\rho_\nu(y),\rho_\nu(y')\}$, then from \eqref{eq- R kernel 1} we have
	\[
	\begin{aligned}
		| \delta \mathcal L_\nu^{-1/2}(x,y)-\delta \mathcal L_\nu^{-1/2}(x,y')|&\lesi | \delta \mathcal L_\nu^{-1/2}(x,y)|+|\delta \mathcal L_\nu^{-1/2}(x,y')| \\
		&\lesi \f{1}{|x-y|^n}\Big[\Big(\f{\rho_\nu(y)}{|x-y|}\Big)^{\nu_{\min}+1/2}+\Big(\f{\rho_\nu(y')}{|x-y|}\Big)^{\nu_{\min}+1/2}\Big]\\
		&\lesi \f{1}{|x-y|^n} \Big(\f{|y-y'|}{|x-y|}\Big)^{\nu_{\min}+1/2}
	\end{aligned}
	\]
	
	If $|y-y'|\lesi \max\{\rho_\nu(y),\rho_\nu(y')\}$, then by Lemma \ref{lem-critical function} we have $\rho_\nu(y)\sim \rho_\nu(y')$; moreover, for every $z\in B(y, 2|y-y'|)$, $\rho_\nu(z)\sim \rho_\nu(y)$. This, together with  the mean value theorem and \eqref{eq- R kernel 2}, gives
	\[
	\begin{aligned}
		| \delta \mathcal L_\nu^{-1/2}(x,y)&-\delta \mathcal L_\nu^{-1/2}(x,y')|\\
		&\lesi  |y-y'|\Big[\sum_{j=1}^n\f{1}{\rho_\nu(y)}+ \f{1}{|x-y|}\Big]\Big(1+\f{|x-y|}{\rho_\nu(x)}+\f{|x-y|}{\rho_\nu(y)}\Big)^{-(\nu_{\min}+1/2)}\f{1}{|x-y|^n}\\
		&\lesi \Big[ \f{|y-y'|}{\rho_\nu(y)}+ \f{|y-y'|}{|x-y|}\Big]\Big(1+\f{|x-y|}{\rho_\nu(x)}+\f{|x-y|}{\rho_\nu(y)}\Big)^{-(\nu_{\min}+1/2)}\f{1}{|x-y|^n}\\
		&\lesi \f{|y-y'|}{\rho_\nu(y)} \Big(1+\f{|x-y|}{\rho_\nu(x)}+\f{|x-y|}{\rho_\nu(y)}\Big)^{-(\nu_{\min}+1/2)}\f{1}{|x-y|^n}+ \f{|y-y'|}{|x-y|}\f{1}{|x-y|^n}\\
		&=: E_1 +E_2.
	\end{aligned}
	\]
	Since $|y-y'|\lesi \rho_\nu(y)$, we have
	\[
	\begin{aligned}
		E_1&\lesi \Big(\f{|y-y'|}{\rho_\nu(y)}\Big)^{\gamma_\nu} \Big(1+\f{|x-y|}{\rho_\nu(x)}+\f{|x-y|}{\rho_\nu(y)}\Big)^{-\gamma_\nu}\f{1}{|x-y|^n}\\
		&\lesi \Big(\f{|y-y'|}{\rho_\nu(y)}\Big)^{\gamma_\nu} \Big(\f{|x-y|}{\rho_\nu(x)}\Big)^{-\gamma_\nu}\f{1}{|x-y|^n}\\
		&\lesi \Big(\f{|y-y'|}{|x-y|}\Big)^{\gamma_\nu}\f{1}{|x-y|^n},
	\end{aligned}
	\]
	where  and $\gamma_\nu = \min\{1, \nu_{\min}+1/2\}$.
	
	For the same reason, since $|y-y'|\le |x-y|/2$, we have
	\[
	\begin{aligned}
		E_2	&\lesi \Big(\f{|y-y'|}{|x-y|}\Big)^{\gamma_\nu}\f{1}{|x-y|^n}.
	\end{aligned}
	\] 
	It follows that 
	\[
	\begin{aligned}
		| \delta \mathcal L_\nu^{-1/2}(x,y)-\delta \mathcal L_\nu^{-1/2}(x,y')|\lesi \Big(\f{|y-y'|}{x-y}\Big)^{\gamma_\nu}\f{1}{|x-y|^n},
	\end{aligned}
	\]
	whenever $|y-y'|\le |x-y|/2$.
	
	Similarly, by using \eqref{eq- R kernel 3} we will obtain
	\[
	\begin{aligned}
		| \delta \mathcal L_\nu^{-1/2}(y,x)-\delta  \mathcal L_\nu^{-1/2}(y',x)|&\lesi \Big(\f{|y-y'|}{|x-y|}\Big)^{\gamma_\nu}\f{1}{|x-y|^n},
	\end{aligned}
	\]
	whenever $|y-y'|\le |x-y|/2$.
	
	This completes our proof.
\end{proof}

We now give the proof of Theorem \ref{thm- boundedness on Hardy and BMO}.

\begin{proof}[Proof of Theorem \ref{thm- boundedness on Hardy and BMO}:]
	Fix $\f{n}{n+\gamma_\nu}<p\le 1$ and $j\in \{1,\ldots, n\}$. 
	
	\noindent (i)  By Proposition \ref{prop-equivalence L +2}, Remark \ref{rem2} (i) and Theorem \ref{mainthm2s}, $H^p_{\mathcal L_{\nu+e_j}+2}(\mathbb{R}^n_+)\hookrightarrow H^p_{\mathcal L_\nu}(\mathbb{R}^n_+)\equiv H^p_{\rho_\nu}(\mathbb R^n_+)$. Consequently, it suffices to prove that 
	\[
	\Big\|\sup_{t>0}|e^{-t(\mathcal L_{\nu + e_j}+2)}\delta_j \mathcal L_\nu^{-1/2}a|\Big\|_p\lesi 1
	\]
	for all $(p,2,\rho_\nu)$ atoms $a$.
	
	Let $a$ be a $(p,2,\rho_\nu)$ atom associated to a ball $B$. We have
	\[
	\begin{aligned}
		\Big\|\sup_{t>0}|e^{-t(\mathcal L_{\nu + e_j}+2)}\delta_j \mathcal L_\nu^{-1/2}a|\Big\|_p&\lesi \Big\|\sup_{t>0}|e^{-t(\mathcal L_{\nu + e_j}+2)}\delta_j \mathcal L_\nu^{-1/2}a|\Big\|_{L^p(4B)}\\ & \ \ \ \ \ +\Big\|\sup_{t>0}|e^{-t(\mathcal L_{\nu + e_j}+2)}\delta_j \mathcal L_\nu^{-1/2}a|\Big\|_{L^p(\mathbb R^n_+\backslash 4B)}.		
	\end{aligned}
	\]
	Using the $L^2$-boundedness of $f\mapsto \sup_{t>0}|e^{-t(\mathcal L_{\nu + e_j}+2)}f|$ and the Riesz transform $\delta_j \mathcal L_\nu^{-1/2}$ and the H\"older's inequality, by the standard argument, we have
	\[
	\Big\|\sup_{t>0}|e^{-t(\mathcal L_{\nu + e_j}+2)}\delta_j \mathcal L_\nu^{-1/2}a|\Big\|_{L^p(4B)}\lesi 1.
	\]
	For the second term, we first note that for $f\in L^2(\mathbb R^n_+)$ and $\nu \in [-1/2,\vc)^n$ we have, for $t>0$,
	\begin{equation*}
	e^{-t\mathcal L_{\nu} }f =\sum_{k\in \mathbb N^n} e^{-t(4|k|+2|\nu|+2n)}\langle f,\varphi^\nu_k\rangle \varphi^\nu_k.
	\end{equation*}
	Using this and \eqref{eq- delta and eigenvector}, by a simple calculation we come up with
	\[
	e^{-t(\mathcal L_{\nu + e_j}+2)}\delta_j f  = \delta_j e^{-t \mathcal L_\nu} f
	\]
	which implies
	\[
	e^{-t(\mathcal L_{\nu + e_j}+2)}\delta_j \mathcal L_\nu^{-1/2}a  = \delta_j e^{-t \mathcal L_\nu}\mathcal L_\nu^{-1/2}a.
	\]
	Hence, it deduces to prove
	\[
	\Big\|\sup_{t>0}|\delta_j \mathcal L_\nu^{-1/2}e^{-t \mathcal L_\nu}a|\Big\|_{L^p(\mathbb R^n_+\backslash 4B)}\lesi 1.
	\]
	We consider two cases.

		\textbf{Case 1: $r_B=\rho_\nu(x_B)$.} By Proposition \ref{prop- Riesz and heat semigroup} (a), for $x\in (4B)^c$,
	\[
	\begin{aligned}
		|\delta_j \mathcal L_\nu^{-1/2}e^{-t \mathcal L_\nu}a(x)|&\lesi  \int_{B}\f{1}{|x-y|^n}\Big(\f{\rho_\nu(y)}{|x-y|}\Big)^{\gamma_\nu}|a(y)|dy\\
		&\lesi   \f{1}{|x-x_B|^n}\Big(\f{\rho_\nu(x_0)}{|x-x_B|}\Big)^{\gamma_\nu}\|a\|_1\\
		&\lesi   \f{r^{\gamma_\nu}}{|x-x_B|^{n+\gamma_\nu}} |B|^{1-1/p},
	\end{aligned}
	\]
	where in the second inequality we used the fact $\rho_\nu(y)\sim \rho_\nu(x_0)$ for $y\in B$ (due to Lemma \ref{lem-critical function}).
	
	It follows that
	\[
	\Big\|\sup_{t>0}|\delta_j \mathcal L_\nu^{-1/2}e^{-t \mathcal L_\nu}a|\Big\|_{L^p(\mathbb R^n_+\backslash 4B)}\lesi 1,
	\]
	as long as $\f{n}{n+\gamma_\nu}<p\le 1$.
	
	\bigskip
	
	\textbf{Case 2: $r_B<\rho_\nu(x_B)$.} 
	
	Using the cancellation property $\int a(x) dx= 0$, we have
	\[
	\begin{aligned}
	\sup_{t>0} |\delta_j \mathcal L_\nu^{-1/2}e^{-t \mathcal L_\nu}a(x)|&= \sup_{t>0}\Big|\int_{B}[\delta_j \mathcal L_\nu^{-1/2}e^{-t \mathcal L_\nu}(x,y)-\delta_j \mathcal L_\nu^{-1/2}e^{-t \mathcal L_\nu}(x,x_B)]a(y)dy\Big|.
	\end{aligned}
	\]
	By mean value theorem, Proposition \ref{prop- Riesz and heat semigroup} (a) and the fact that $\rho_\nu(y_j)\gtrsim \rho_\nu(x_0)$ and $\rho_\nu(y)\sim \rho_\nu(x_B)$ for all $y\in B$, we have, for $x\in (4B)^c$,
	\[
	\begin{aligned}
	|\delta_j \mathcal L_\nu^{-1/2}e^{-t \mathcal L_\nu}a(x)|&\lesi   \int_{B}\Big(\f{|y-x_B|}{|x-y|}+\f{|y-x_B|}{\rho_\nu(x_B)}\Big)\f{1}{|x-y|^n} \Big(1+\f{|x-y|}{\rho_\nu(x_B)}\Big)^{-\gamma_\nu} |a(y)|dy\\
	&\lesi   \int_{B}\Big(\f{r_B}{|x-x_B|}+\f{r_B}{\rho_\nu(x_B)}\Big)\f{1}{|x-x_B|^n} \Big(1+\f{|x-x_B|}{\rho_\nu(x_B)}\Big)^{-\gamma_\nu} |a(y)|dy\\
	&\lesi   \int_{B} \f{r_B}{|x-x_B|} \f{1}{|x-x_B|^n}  |a(y)|dy\\
	&\ \ \ \ \ + \int_{B} \f{r_B}{\rho_\nu(x_B)} \f{1}{|x-x_B|^n} \Big(1+\f{|x-x_B|}{\rho_\nu(x_B)}\Big)^{-\gamma_\nu} |a(y)|dy\\
	&= F_1 + F_2.
	\end{aligned}
	\]
	For the term $F_1$, it is straightforward to see that 
	\[
	\begin{aligned}
	F_1&\lesi \f{r_B}{|x-x_B|}\f{1}{|x-x_B|^n}\|a\|_1\\
	&\lesi \Big(\f{r_B}{|x-x_B|}\Big)^{\gamma_\nu}\f{1}{|x-x_B|}|B|^{1-1/p},
	\end{aligned}
	\]
	where in the last inequality we used
	\[
	\f{r_B}{|x-x_B|}\le \Big(\f{r_B}{|x-x_B|}\Big)^{\gamma_\nu},
	\]
	since both $\f{r_B}{|x-x_B|}$ and $\gamma_\nu$ are less than or equal to $1$.
	
	For $F_2$, since $r_B<\rho_\nu(x_B)$ and $\gamma_\nu\le 1$, we have $\f{r_B}{\rho_\nu(x_B)}\le \Big(\f{r_B}{\rho_\nu(x_B)}\Big)^{\gamma_\nu}$. Hence,
	\[
	\begin{aligned}
	F_2&\lesi \Big(\f{r_B}{\rho_\nu(x_B)}\Big)^{\gamma_\nu}\f{1}{|x-x_B|^n} \Big(\f{|x-x_B|}{\rho_\nu(x_B)}\Big)^{-\gamma_\nu} \|a\|_1\\
	&\lesi \Big(\f{r_B}{|x-x_B|}\Big)^{\gamma_\nu}\f{1}{|x-x_B|^n}|B|^{1-1/p}.
	\end{aligned}
	\]
	Taking this and the estimate of $F_1$ into account then we obtain
	\[
	\sup_{t>0} |\delta_j \mathcal L_\nu^{-1/2}e^{-t \mathcal L_\nu}a(x)|\lesi \Big(\f{r_B}{|x-x_B|}\Big)^{\gamma_\nu}\f{1}{|x-x_B|^n}|B|^{1-1/p}	\]
	for all $x\in (4B)^c$.
Consequently,
	\[
\Big\|\sup_{t>0}|\delta_j \mathcal L_\nu^{-1/2}e^{-t \mathcal L_\nu}a|\Big\|_{L^p(\mathbb R^n_+\backslash 4B)}\lesi 1,
\]
as long as $\f{n}{n+\gamma_\nu}<p\le 1$.

\bigskip

\noindent (ii)  By the duality in Theorem \ref{dual}, it suffices to prove that the conjugate $\mathcal L_\nu^{-1/2}\delta_j^*$ is bounded on the Hardy space  $H^p_{\rho_\nu}(\mathbb R^n_+)$. By Theorem \ref{mainthm2s}, we need only to prove that 
	\[
\Big\|\sup_{t>0}|e^{-t\mathcal L_{\nu}} \mathcal L_\nu^{-1/2}\delta_j^*a|\Big\|_p\lesi 1
\]
for all $(p,2,\rho_\nu)$ atoms $a$.
	
Suppose that  $a$ is a $(p,2,\rho_\nu)$ atom associated to a ball $B$. Then we can write
\[
\begin{aligned}
	\Big\|\sup_{t>0}|e^{-t\mathcal L_{\nu}} \mathcal L_\nu^{-1/2}\delta_j^*a|\Big\|_p&\lesi \Big\|\sup_{t>0}|e^{-t\mathcal L_{\nu}} \mathcal L_\nu^{-1/2}\delta_j^*a|\Big\|_{L^p(4B)} +\Big\|\sup_{t>0}|e^{-t\mathcal L_{\nu}} \mathcal L_\nu^{-1/2}\delta_j^*a|\Big\|_{L^p(\mathbb R^n_+\backslash 4B)}.		
\end{aligned}
\]
Since  $f\mapsto \sup_{t>0}|e^{-t \mathcal L_{\nu}}f|$ and  $ \mathcal L_\nu^{-1/2}\delta_j^*$ is bounded on $L^2(\mathbb R^n_+)$, by  the H\"older's inequality and the standard argument, we have
\[
\Big\|\sup_{t>0}|e^{-t\mathcal L_{\nu}} \mathcal L_\nu^{-1/2}\delta_j^*a|\Big\|_{L^p(4B)}\lesi 1.
\]
For the second term, we first note that for $f\in L^2(\mathbb R^n_+)$ and $\nu \in [-1/2,\vc)^n$ we have, for $t>0$,
\begin{equation*}
	\mathcal L_{\nu}^{-1/2} e^{-t\mathcal L_{\nu} }f =\sum_{k\in \mathbb N^n} \f{e^{-t(4|k|+2|\nu|+2n)}}{\sqrt{4|k|+2|\nu|+2n}}\langle f,\varphi^\nu_k\rangle \varphi^\nu_k.
\end{equation*}
Using this and \eqref{eq- delta and eigenvector}, by a simple calculation we come up with

\[
e^{-t\mathcal L_{\nu}} \mathcal L_\nu^{-1/2}\delta_j^*a= \delta^*_j e^{-t (\mathcal L_{\nu+e_j}+2)}(\mathcal L_{\nu+e_j}+2)^{-1/2}a.
\]
At this stage, arguing similarly to the proof of (i), but using Proposition \ref{prop- Riesz and heat semigroup} (b) instead of  	Proposition \ref{prop- Riesz and heat semigroup} (a), we also come up with
\[
\Big\|\sup_{t>0}|e^{-t\mathcal L_{\nu}} \mathcal L_\nu^{-1/2}\delta_j^*a|\Big\|_{L^p(4B)}=\Big\|\sup_{t>0}|\delta^*_j e^{-t (\mathcal L_{\nu+e_j}+2)}(\mathcal L_{\nu+e_j}+2)^{-1/2}a|\Big\|_{L^p(4B)}\lesi 1.
\]

This completes our proof.
\end{proof}

{\bf Acknowledgement.} The author was supported by the research grant ARC DP140100649 from the Australian Research Council. The author would like to thank Jorge Betancor for useful discussion.

\end{document}